\documentclass[12pt,a4paper]{amsart}
\usepackage{amsfonts}
\usepackage{amsthm}
\usepackage{amsmath}
\usepackage{amscd}
\usepackage[latin2]{inputenc}
\usepackage{t1enc}
\usepackage[mathscr]{eucal}
\usepackage{indentfirst}
\usepackage{graphicx}
\usepackage{graphics}
\usepackage{pict2e}
\usepackage{epic}
\numberwithin{equation}{section}
\usepackage[margin=2cm]{geometry}
\usepackage{epstopdf}

\usepackage{textcomp}
\usepackage{mathrsfs}
\usepackage{amsthm}
\usepackage{fancyhdr}
\usepackage{indentfirst}
\usepackage{hyperref}
\usepackage{graphicx}
\usepackage{newlfont}
\usepackage{amssymb}
\usepackage{amsmath}
\usepackage{latexsym}
\usepackage{amsthm}
\usepackage{amscd}
\usepackage{varioref}
\usepackage{enumitem}

\theoremstyle{plain}
\newtheorem{thm}{Theorem}[section]
\newtheorem{lem}[thm]{Lemma}
\newtheorem{cor}[thm]{Corollary}
\newtheorem{prop}[thm]{Proposition}
\newtheorem{oss}[thm]{Observation}
\newtheorem{es}[thm]{Example}
\newtheorem{stm}[thm]{Statement}

 \theoremstyle{definition}
\newtheorem{defi}[thm]{Definition}
\newtheorem{conj}[thm]{Conjecture}

\newtheorem{?}[thm]{Problem}

\begin{document}

\title{Thurston's metric on Teichm\"uller space of semi-translation surfaces}

\author[F. Wolenski]{Federico Wolenski}

\address{Dipartimento di Matematica G. Castelnuovo, Universit\'a Sapienza di
Roma, Piazzale A. Moro 5, 00185, Roma, Italy} 

\email{wolenski@mat.uniroma1.it}


 \keywords{Thurston's metric, Teichm\"uller space, semi-translation surface}

\begin{abstract} 
The present paper is composed of two parts. In the first one we define two pseudo-metrics $L_F$ and $K_F$ on the Teichm\"uller space of semi-translation surfaces $\mathcal{TQ}_g(\underline k,\epsilon)$, which are the symmetric counterparts to the metrics defined by William Thurston on $\mathcal{T}_g^n$. We prove some nice properties of $L_F$ and $K_F$, most notably that they are complete pseudo-metrics. In the second part we define their asymmetric analogues $L_F^a$ and $K_F^a$ on $\mathcal{TQ}_g^{(1)}(\underline k,\epsilon)$ and prove that their equality depends on two statements regarding 1-Lipschitz maps between polygons. We are able to prove the first statement, but the second one remains a conjecture: nonetheless, we explain why we believe it is true.
\end{abstract}

\maketitle

\tableofcontents

\bigskip
\section{Introduction} 
\bigskip

Denote by $\mathcal{T}_g^n$ the Teichm\"uller space of Riemann surfaces of genus $g\ge 2$ and $n\ge 0$ punctures. William Thurston in \cite{Th} defined the following asymmetric metric $L$ on $\mathcal{T}_g^n$:
given any two hyperbolic surfaces $X,X'\in \mathcal{T}_g^n$, their distance with respect to $L$ defined as 
$$L(X,X')=\inf\limits_{\varphi\in Diff^+_0(S_g^n)}\log(Lip(\varphi)_X^{X'}),$$
where $Diff^+_0(S_g^n)$ is the group of diffeomorphisms of $S_g^n$ homotopic to the identity and 
$$Lip(\varphi)_X^{X'}=\sup\limits_{x,y\in S_g^n}\frac{d_{X'}(\varphi(x),\varphi(y))}{d_X(x,y)}$$ 
is the Lipschitz constant of $\varphi$ computed with respect to the hyperbolic metrics of $X$ and $X'$.\\
The main result of \cite{Th} is that for every $X,X'\in \mathcal{T}_g^n$ it results 
\begin{equation}
L(X,X')=K(X,X'),
\end{equation}
where $K$ is another asymmetric metric on $\mathcal{T}_g^n$ defined as
$$K(X,X')=\sup\limits_{\alpha\in \mathcal{S}}\log\left(\frac{\hat l_{X'}(\alpha)}{\hat l_X(\alpha)}\right)$$ 
with $\mathcal{S}$ being the set of homotopy classes of simple closed curves on $S_g^n$ and $\hat l_X(\alpha)$ being the length of the geodesic representative for $X$ of the homotopy class of $\alpha$.\\
The equality (1.1) has been proved by Thurston using the properties of measured laminations on $S_g^n$. Roughly, one could say that the idea of the proof is to triangulate the surface with hyperbolic triangles and then use the fact that for any $c > 1$ there is a $c$-Lipschitz homeomorphism of a filled hyperbolic triangle to itself which maps each side to itself, multiplying arc length on the side by $c$.\\
The Teichm\"uller space endowed with the Thurston's metric is a geodesic space; A.Papadopoulos and G.Th\'eret proved that it is also a complete asymmetric space (\cite{PT}).\\

Every semi-translation surface defines a singular flat metric on $S_g$: the idea of the present paper is to investigate how the definitions of Thurston's metric $L$ and $K$ on $\mathcal{T}_g^n$ could be adapted to the case of flat singular metrics. 

W.A. Veech already did something similar in \cite{Ve2} defining a complex-valued distance map $D_0$ on the Teichmuller space $\mathcal{TQ}_g^n(\underline k,\epsilon)$ of semi-translation structures on $S_g^n$.\\ 
We copy the definition of $D_0$ maintaining the original notation of Veech:
$$D_0(q_1,q_2):=\inf\limits_{\varphi\in Diff_0^+(S_g^n)}\alpha(\varphi^*q_1,q_2),$$
$$ \alpha(\varphi^*q_1,q_2):=\sup\limits_{x\in S_g^n}\left( \sup\limits_{(U_i,f_i)\in q_i, x\in U_1\cap U_2}\left(\limsup_{x'\rightarrow x}Log\left(\left(\frac{f_1(\varphi(x'))-f_1(\varphi(x))}{f_2(x')-f_2(x)}\right)^2\right)\right)\right),$$\\
where $q_i$, $i=1,2$ is regarded as a semi-translation structures and $f_i:U_i\rightarrow \mathbb{C}$, $U_i\subset S_g^n$, are natural charts of $q_i$. The map $Log$ is a branch of the complex logarithm.\\
The real part of $\alpha(\varphi^*q_1,q_2)$ is the Lipschitz constant of $\varphi$ computed with respect to the metrics $|q_1|$ and $|q_2|$ and consequently the real part of the distance function $D_0$ is asymmetric.\\
Veech claimed that the map $D_0$ is a complete pseudo-metric on $\mathcal{TQ}_g^n(\underline k,\epsilon)$ (the proof should be contained in unpublished preprints \cite{Ve3}).\\

We defined the pseudo-metric $L_F$ on $\mathcal{TQ}_g(\underline k,\epsilon)$ which is the symmetric analogue to the Thurston's metric:
$$L_F(q_1,q_2):=\inf\limits_{\varphi\in Diff^+_0(S_g,\Sigma)}\mathcal{L}_{q_1}^{q_2}(\varphi),$$$$ \mathcal{L}_{q_1}^{q_2}(\varphi):=\sup\limits_{p\in S_g\setminus \Sigma }\left(\sup\limits_{v\in T_pS_g,||v||_{q_1}=1}\left|\log(||d\varphi_pv||_{q_2})\right|\right).$$
One should notice that $L_F$ is different from the real part of Veech's distance function $D_0$.\\
A first, notable, inequality regarding $L_F$ is given by proposition 2.4: it results 
$$L_F(q_1,q_2)\ge d_{\mathcal{T}}(X_1,X_2),$$
where $X_1,X_2$ are the points in $\mathcal{T}_g$ corresponding to the conformal structures underlying the quadratic differentials.\\
The metric $L_F$ endows $\mathcal{TQ}_g(\underline k,\epsilon)$ with the structure of proper and complete space (propositions 2.8 and 2.10) and the standard topology of $\mathcal{TQ}_g(\underline k,\epsilon)$ (which is the one induced by its structure of complex manifold) is finer than the topology induced by $L_F$ (proposition 2.6). Furthermore, there is a metric $\mathbb{P}L_F$ on $\mathbb{P}\mathcal{TQ}_g(\underline k,\epsilon)$ induced by $L_F$, and the topology it induces is equal to the standard topology of the projectification of $\mathcal{TQ}_g(\underline k,\epsilon)$.\\
Motivated by Thurston's work, we defined another metric $K_F$ on $\mathcal{TQ}_g(\underline k,\epsilon)$ through ratios of lengths of saddle connections:
$$K_F(q_1,q_2):=\max\{K_F^a(q_1,q_2),K_F^a(q_2,q_1)\},$$
$$K^a_F(q_1,q_2):=\sup\limits_{\gamma\in SC(q_1)}\log\left(\frac{\hat l_{q_2}(\gamma)}{\hat l_{q_1}(\gamma)}\right),$$
where $SC(q_1)$ is the set of saddle connections of $q_1$ (geodesics for the flat metric meeting singular points only at their extremities), and $\hat l_{q_i}(\gamma)$ is the length of the geodesic representative for the metric $|q_i|$ of the homotopy class of $\gamma$ with fixed endpoints.\\
While it is possible to prove $L_F(q_1,q_2)=K_F(q_1,q_2)$ if $q_1$ and $q_2$ are on the same orbit of the action of $GL(2,\mathbb{R})^+$ (proposition 2.13), in the general case we were not able to adapt Thurston's proof of $L=K$. This is mainly because, as it is explained in the end of section 2, we believe it is not possible to find a flat analogue to the large class of geodesics of $L$ which Thurston uses in the proof of $L=K$.\\

In section 3 we introduced an asymmetric analogue $L_F^a$ to $L_F$ on $\mathcal{TQ}^{(1)}_g(\underline k,\epsilon)$ (which is the subset of $\mathcal{TQ}_g(\underline k,\epsilon)$ corresponding to surfaces of unitary area) defined as 
$$L^a_F(q_1,q_2):=\inf\limits_{\varphi\in \mathcal{D}} \log(Lip(\varphi)_{q_1}^{q_2}),$$
$$Lip(\varphi)_{q_1}^{q_2}=\sup\limits_{p\in S_g\setminus \Sigma}\left(\sup\limits_{v\in T_pS_g, ||v||_{q_1}=1} ||d\phi_pv||_{q_2}\right) ,$$
with $\mathcal{D}$ being the set of functions $\varphi:S_g\rightarrow S_g$ which are homotopic to the identity, differentiable almost everywhere and which fix the points of $\Sigma$. \\

We are able to reduce the proof of the equality of $L_F^a$ and $K_F^a$ on $\mathcal{TQ}_g^{(1)}(\underline{k},\epsilon)$ to the proof of two statements (corresponding to following theorem 1.1 and conjecture 1.2) about 1-Lischitz maps between planar polygons. In order to give the reader an idea of the reasonings involved, we briefly state them in a slightly simplified version.\\

Consider two planar polygons $\Delta$ and $\Delta'$ such that there is an injective function 
$$\iota:Vertices(\Delta)\rightarrow Vertices(\Delta')$$
which to every vertex $v$ of $\Delta$ associates a unique vertex $\iota(v)=v'$. Suppose both $\Delta$ and $\Delta'$ have exactly three vertices with strictly convex internal angle, which we denote $x_i$ and $x_i'$, $i=1,2,3$ respectively. \\
Suppose furthermore that for every $x,y\in Vertices(\Delta)$ it results
$$d_\Delta(x,y)\ge d_{\Delta'}(x',y'),$$
where $d_\Delta$ (resp. $d_{\Delta'}$) is the intrinsic Euclidean metric inside $\Delta$ (resp. $\Delta'$): $d_\Delta(x,y)$ (resp. $d_{\Delta'}(x',y')$) is defined as the infimum of the lengths, computed with respect to the Euclidean metric, of all paths from $x$ to $y$ (resp. from $x'$ to $y'$) entirely contained in $\Delta$ (resp. in $\Delta'$). \\
We say that vertices of $\Delta$ and of $\iota(Vertices(\Delta))$ are \textit{disposed in the same order} if it is possible to choose two parametrizations $\gamma:[0,1]\rightarrow \partial \Delta$ and $\gamma_1:[0,1]\rightarrow \partial \Delta'$ such that $\gamma(0)=x_1$, $\gamma_1(0)=x_1'$ and $\gamma,\gamma_1$ meet respectively vertices of $\Delta$ and of $\Delta'$ in the same order.

\begin{thm}
If $Vertices(\Delta)$ and $\iota(Vertices(\Delta))$ are disposed in the same order, then there is a 1-Lipschitz map $f:\Delta\rightarrow \Delta'$ (with respect to the intrinsic Euclidean metrics of the polygons) which sends vertices to corresponding vertices.
\end{thm}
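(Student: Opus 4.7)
The plan is to decompose $\Delta$ and $\Delta'$ using their intrinsic geodesic triangles and build $f$ piece by piece. In $\Delta$, let $\gamma_{ij}$ denote the intrinsic shortest path from $x_i$ to $x_j$; it is polygonal, bending only at reflex vertices. The same-order hypothesis should ensure that the three arcs $\gamma_{12},\gamma_{23},\gamma_{31}$ are pairwise disjoint apart from common endpoints, so they enclose a central region $T\subset\Delta$ and cut $\Delta\setminus T$ into at most three ``ears'' $E_i$. In the non-degenerate case where the $\gamma_{ij}$ pass through the interior (rather than running along $\partial\Delta$), the region $T$ is a convex planar polygon, because at every vertex of $T$ the interior angle on the $T$-side is strictly less than $\pi$ --- equal to $\theta-\pi<\pi$ at a bending reflex vertex of interior angle $\theta$, and strictly less than $\pi$ at each $x_i$ by hypothesis. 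Perform the analogous decomposition $T',E_i'\subset\Delta'$ and note $\ell(\gamma_{ij})\ge\ell(\gamma_{ij}')$ by hypothesis.

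Since $T$ is convex, its intrinsic metric coincides with the ambient Euclidean one, so the pairwise Euclidean distances among $\{x_1,x_2,x_3\}\subset T$ equal $\ell(\gamma_{ij})$ and dominate those in $T'$. By Kirszbraun's theorem, the vertex correspondence $x_i\mapsto x_i'$ extends to a $1$-Lipschitz map $\mathbb{R}^2\to\mathbb{R}^2$, and composing with the nearest-point projection onto the convex set $T'$ produces $f_T\colon T\to T'$ which is $1$-Lipschitz and satisfies $f_T(x_i)=x_i'$. To prepare for gluing I would arrange that $f_T$ sends each arc $\gamma_{ij}$ onto $\gamma_{ij}'$ by arclength rescaling with factor $\ell(\gamma_{ij}')/\ell(\gamma_{ij})\le 1$. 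For each ear $E_i$, construct $f_{E_i}\colon E_i\to E_i'$ matching $f_T$ on the shared arc $\gamma_{jk}$ and sending the remaining vertices according to $\iota$, by induction on the total vertex count applied to a suitably strengthened statement (one allowing fewer than three strictly convex vertices and prescribed $1$-Lipschitz boundary data along a distinguished arc). Gluing $f_T$ with the three $f_{E_i}$ along the $\gamma_{ij}$ yields the desired global $f$.

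The principal obstacle is formulating and closing the induction on the ears: each $E_i$ has only two strictly convex vertices and must be mapped with prescribed boundary data along its geodesic side, so one needs a carefully tailored generalization of the theorem that is weak enough to be true but strong enough to sustain the induction. A secondary but more subtle difficulty is the degenerate possibility that every $\gamma_{ij}$ partially runs along $\partial\Delta$ (as happens in simple Y-shaped or dented triangular polygons, where the reflex vertices force the taut string to hug the boundary), in which case $T$ coincides with $\Delta$ and the ears collapse --- the decomposition then provides no reduction, and a direct argument must be given, presumably via a continuous deformation through $1$-Lipschitz maps or by exploiting the ordering hypothesis to match the two polygons through an explicit piecewise linear homotopy.
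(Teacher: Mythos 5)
Your proposed decomposition never gets off the ground. In any polygon $\Delta$ with exactly three strictly convex vertices $x_1,x_2,x_3$ and all other vertices reflex, each boundary arc from $x_i$ to $x_j$ (the one not through $x_k$) consists entirely of reflex vertices, and a path turning at a vertex where the angle on the polygon side exceeds $\pi$ is automatically a local, hence global, geodesic. So $\gamma_{ij}$ is \emph{always} that boundary arc, $T=\Delta$ always, and there are never any ears. The case you flagged as a ``secondary but more subtle difficulty'' is in fact the \emph{only} case; the ``non-degenerate case'' on which your Kirszbraun-plus-convexity argument rests is vacuous for these polygons, and the entire reduction collapses.

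There is a second, independent gap in the Kirszbraun step. Even granting a convex $T$ and $T'$, you cannot ``arrange'' that $f_T$ restricts to an arclength rescaling on each $\gamma_{ij}$: Kirszbraun extends a map from a subset, so you would first have to prescribe the map on all of $\partial T$ and check it is $1$-Lipschitz for the ambient Euclidean metric. It need not be. For $p,q$ near a corner $x_j$ on the two incident arcs, the law of cosines gives $d(p,q)^2=s^2+t^2-2st\cos\alpha_j$ and $d(f(p),f(q))^2=(\lambda_{ij}s)^2+(\lambda_{jk}t)^2-2\lambda_{ij}\lambda_{jk}st\cos\alpha_j'$, and when $\alpha_j'>\alpha_j$ and the rescaling factors $\lambda$ are close to $1$ this can exceed $d(p,q)^2$. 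Nothing in the hypotheses controls the angles at the $x_i'$, only the vertex distances. Also, a single rescaling factor per arc would not send reflex vertices $w_m\in\gamma_{ij}$ to the prescribed $w_m'$, which the theorem requires. Finally, the ``suitably strengthened statement'' needed to run your induction on the ears is left unformulated, which is exactly where the real work lives.

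For comparison, the paper avoids all of these issues by never trying to peel off convex pieces. It enlarges the target class to ``degenerate polygons comparable with $P$'' (allowing one-dimensional components, extra vertices, and multiple vertices) precisely so that the class is closed under cutting along a diagonal $\overline{v'w'}$ whenever $l(\overline{v'w'})=l(\overline{vw})$. It then defines two explicit local moves---``elementary steps'' of type one (stretch a side) and type two (stretch a smooth diagonal via a quadrilateral with one reflex corner)---each of which comes with a concrete $1$-Lipschitz map, and shows by induction on the number of vertices that a finite sequence of such moves transforms $P'$ into $P$; the desired $f:P\to P'$ is the composition of the inverse chain. The structural insight is choosing the class of targets so that the cut-and-recurse step is legal, which is exactly what your ear decomposition cannot deliver here.
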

\begin{conj}
If $Vertices(\Delta)$ and $\iota(Vertices(\Delta))$ are not disposed in the same order, then for every point $p\in \Delta$ there is a point $p'\in \Delta'$ such that $$d_\Delta(p,x_i)\ge d_{\Delta'}(p',x_i'), \quad i=1,2,3.$$
\end{conj}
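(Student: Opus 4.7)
The plan is to reduce the statement, by reflecting $\Delta'$, to a setting in which the three strictly convex vertices are cyclically ordered in the same way as those of $\Delta$, and then to recast the conclusion as a Helly-type statement about three intrinsic balls. Pick a reflection $\sigma$ of $\mathbb{R}^{2}$ and set $\tilde\Delta' = \sigma(\Delta')$, $\tilde x_i' = \sigma(x_i')$. Reflections are isometries for the intrinsic Euclidean metric, so $d_{\tilde\Delta'}(\tilde x_i', \tilde x_j') = d_{\Delta'}(x_i', x_j') \le d_\Delta(x_i, x_j)$, and the vertices $\tilde x_1', \tilde x_2', \tilde x_3'$ are now in the same cyclic order as $x_1, x_2, x_3$. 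Finding the required $p'\in \Delta'$ is equivalent to finding $\tilde p' = \sigma(p')$ in $\tilde\Delta'$ with $d_{\tilde\Delta'}(\tilde p',\tilde x_i') \le d_\Delta(p,x_i)$. The price of the reduction is that the ordering of the reflex vertices of $\tilde\Delta'$ no longer matches that of $\Delta$, which is precisely the obstruction to invoking Theorem 1.1 directly.

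Fix $p\in\Delta$, write $d_i = d_\Delta(p,x_i)$, and introduce the closed intrinsic balls
\[
B_i \;=\; \{q \in \tilde\Delta' : d_{\tilde\Delta'}(q,\tilde x_i') \le d_i\}, \qquad i=1,2,3.
\]
A point in $B_1 \cap B_2 \cap B_3$ is exactly a valid $\tilde p'$. Each $B_i$ is nonempty (it contains $\tilde x_i'$), and the triangle inequality in $\Delta$ combined with the vertex hypothesis gives $d_i + d_j \ge d_\Delta(x_i,x_j) \ge d_{\tilde\Delta'}(\tilde x_i',\tilde x_j')$; hence a suitable point on the intrinsic geodesic between $\tilde x_i'$ and $\tilde x_j'$ lies in $B_i \cap B_j$. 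The entire content of the conjecture is therefore the upgrade from pairwise to triple intersection for the family $\{B_i\}$.

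To achieve this upgrade, I would exploit the following structural features of intrinsic balls in a flat polygon: each $B_i$ is closed, connected, and (using that the cut locus of $\tilde x_i'$ is a piecewise-linear tree that does not separate sublevel sets) simply connected, while $B_i \cap \partial\tilde\Delta'$ is a single arc containing $\tilde x_i'$. The matching cyclic order of $\tilde x_1', \tilde x_2', \tilde x_3'$ along $\partial\tilde\Delta'$ then forces the three boundary arcs $B_i \cap \partial\tilde\Delta'$ to be cyclically interlocked in a way that rules out the ``Borromean'' configuration responsible for empty triple intersection with nonempty pairwise ones; a topological Helly theorem for simply connected subsets of a topological disk, applied under this cyclic constraint, should then conclude.

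The main obstacle is precisely this last step: it is here that the cyclic ordering of the three convex vertices, as opposed to the bare distance inequalities, must genuinely be used, and it is precisely the reflex vertices of $\tilde\Delta'$---whose ordering no longer matches that of $\Delta$---that threaten to deform some $B_i$ so as to destroy either its simple-connectedness or the expected cyclic arrangement of the $B_i \cap \partial\tilde\Delta'$. An alternative, and perhaps equivalent, route would be to carve out of $\tilde\Delta'$ along a carefully chosen system of geodesic arcs a sub-polygon whose vertex ordering matches that of $\Delta$, to which Theorem 1.1 applies directly; the resulting $1$-Lipschitz image of $p$ would then serve as $\tilde p'$. The technically delicate point is to choose the cutting system so that the distance hypothesis survives in the sub-polygon, and this is where I expect the bulk of the work to lie, in line with the author's remark that a full proof has so far eluded us.
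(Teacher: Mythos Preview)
The statement you are addressing is a \emph{conjecture} in the paper; the paper does not prove it. What the paper offers instead is a reformulation (Conjecture~3.34: the existence of a sub\-polygon $\widehat\Delta\subset\Delta'$ comparable to $\Delta$, to which Theorem~3.21 would then apply), together with some partial results (Propositions~3.32, 3.33, 3.35, 3.36) handling the simplest instances of a single change of side or of two adjacent vertices swapping order. Your ``alternative route'' in the last paragraph is exactly this reformulation, and you correctly identify that the difficulty lies in preserving the distance hypotheses after cutting.

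Your primary route, however, rests on a misreading of the hypothesis. ``Not disposed in the same order'' in the paper's sense is \emph{not} about the cyclic orientation of the three strictly convex vertices $x_1',x_2',x_3'$: since the definition allows an independent choice of orientation for each of the two boundary parametrizations, the cyclic order of the $x_i'$ can always be made to agree with that of the $x_i$. The obstruction is entirely in the \emph{reflex} vertices --- a vertex $u\in\overline{x_2x_3}$ whose image $u'$ lies on $\overline{x_1'x_3'}$, or two reflex vertices on the same side whose images swap. Your reflection $\sigma$ therefore does nothing useful: the reflex vertices were the problem before the reflection and remain the problem after it.

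The Helly recasting is correct as far as it goes --- the balls $B_i$ are indeed convex (a simply connected planar polygon with its intrinsic metric is $\mathrm{CAT}(0)$, e.g.\ by Reshetnyak gluing), and the pairwise intersections are nonempty by the triangle inequality. But convexity and pairwise intersection do not force a triple intersection: three round disks in $\mathbb{R}^2$ already give the standard counterexample. So the entire content of the conjecture is the step you label ``cyclic interlocking rules out the Borromean configuration'', and for this you supply only a heuristic. In particular, the assertion that $B_i\cap\partial\tilde\Delta'$ is a single arc can fail once the boundary of $\tilde\Delta'$ has reflex vertices, and the topological Helly theorems you allude to require control on the connectivity of all sub-intersections, which you have not established. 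As it stands your proposal is a reformulation plus an honest acknowledgment of the gap, not a proof; and the part of it that is concrete coincides with the paper's own suggested approach.
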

We were able to prove theorem 1.1, which corresponds to theorem 3.21 of section 3, but not conjecture 1.2, which corresponds to conjecture 5.31 of section 3: we will still explain why we believe it must be true.\\
We proved the following theorem, which is the main result of this paper.
\begin{thm}
If conjecture 1.2 is true, then for every $q_1,q_2\in \mathcal{TQ}_g^{(1)}(\underline{k},\epsilon)$, it results
$$L_F^a(q_1,q_2)= K^a_F(q_1,q_2).$$
\end{thm}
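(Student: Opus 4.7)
The argument has the standard two-step structure. The inequality $L_F^a(q_1,q_2)\ge K_F^a(q_1,q_2)$ is the easy direction: given $\varphi\in\mathcal{D}$ with $M=Lip(\varphi)_{q_1}^{q_2}$ and any $\gamma\in SC(q_1)$, the image $\varphi(\gamma)$ is a piecewise-differentiable path in the homotopy class of $\gamma$ rel endpoints and has $q_2$-length at most $M\cdot\hat l_{q_1}(\gamma)$. Hence $\hat l_{q_2}(\gamma)\le M\cdot\hat l_{q_1}(\gamma)$; taking sup over $\gamma$ and then infimum over $\varphi$ yields the inequality.

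For the reverse direction, set $K:=e^{K_F^a(q_1,q_2)}$, so that $\hat l_{q_2}(\gamma)\le K\cdot\hat l_{q_1}(\gamma)$ for every $\gamma\in SC(q_1)$; the plan is to construct a $K$-Lipschitz element of $\mathcal{D}$ triangle by triangle. Fix a triangulation $\mathcal{T}$ of $(S_g,q_1)$ by saddle connections. For each $q_1$-triangle $T\in\mathcal{T}$, straighten its three boundary edges on $q_2$ to their geodesic representatives rel endpoints; the result bounds a geodesic polygon $T'$ on $q_2$ with exactly three strictly convex corners (the images of the vertices of $T$) and possibly additional flat or reflex vertices on its sides, arising where the geodesic representative of a saddle connection wraps around a singular point of $q_2$. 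Developing $T$ and $T'$ in $\mathbb{R}^2$ produces planar polygons $\Delta$ and $\Delta'$ whose intrinsic metrics coincide with the flat metrics on $T$ and $T'$, and the saddle-connection bound, combined with the triangle inequality for the intrinsic metric, yields $d_\Delta(v,w)\ge d_{\Delta'}(v',w')/K$ for each pair of corresponding vertices.

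With this setup, Theorem 1.1 applied to $K\cdot\Delta$ and $\Delta'$ produces a $1$-Lipschitz map $K\cdot\Delta\to\Delta'$, equivalently a $K$-Lipschitz map $\Delta\to\Delta'$ sending corresponding vertices to each other, whenever the three corners of $\Delta'$ are disposed along $\partial\Delta'$ in the same order as the three vertices of $\Delta$ along $\partial\Delta$. When this fails, Conjecture 1.2 is invoked to supply, in the interior of $\Delta'$, auxiliary points with the required distance bounds to the three corners; inserting these as new vertices subdivides $\Delta$ and $\Delta'$ into matched sub-polygons now fitting the hypothesis of Theorem 1.1, to which the theorem is then applied piece by piece. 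The local maps on all triangles of $\mathcal{T}$ glue along shared saddle connections into a global map $\varphi\colon S_g\to S_g$ that is $K$-Lipschitz with respect to $|q_1|$ and $|q_2|$, piecewise affine (hence a.e. differentiable) and fixes $\Sigma$; since each local piece is a homeomorphism onto a polygon in the same relative homotopy class, $\varphi$ is homotopic to the identity and therefore lies in $\mathcal{D}$, giving $L_F^a(q_1,q_2)\le\log K=K_F^a(q_1,q_2)$.

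The delicate step is the subdivision phase in the \emph{wrong order} case: Conjecture 1.2 furnishes only pointwise distance data, so converting it into a $1$-Lipschitz map on the whole polygon requires a recursive subdivision whose termination and cyclic-order preservation on each sub-polygon must be verified. Equally delicate is compatibility across shared edges: the parametrizations of a common saddle connection coming from the two adjacent triangles must agree, which may force a joint refinement of the triangulation, and the stitched map must remain continuous. Once these compatibility issues are resolved, the argument reduces to a formal application of the two planar statements together with a limiting argument to replace the bound by $K$ by a bound by $K+\varepsilon$ for arbitrary $\varepsilon>0$ if strict inequality cannot be attained.
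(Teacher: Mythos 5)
Your proposal is a genuinely different route from the paper's. The paper does \emph{not} build the map triangle-by-triangle on a fixed triangulation of the surface. Instead, it passes to the universal cover, normalizes $\sigma_2:=q_2/e^{K_F^a(q_1,q_2)}$ so that the goal is a $1$-Lipschitz map, and then constructs that map one point at a time on a dense countable subset of a fundamental domain, in the style of Valentine's proof of Kirszbraun's theorem. At each step, the admissible images form an intersection of convex sets (closed balls around previously-placed images, and sets $V_\theta$ that enforce $\pi_1$-equivariance); the version of Helly's theorem for uniquely geodesic spaces reduces nonemptiness of this intersection to the three-fold case, and only there do Theorem 3.21 and Conjecture 3.31 enter, applied to filled $d_{\widetilde q_1}$- and $d_{\widetilde\sigma_2}$-geodesic triangles spanned by arbitrary triples of already-placed points, not to the triangles of a triangulation. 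The map then extends by equivariance and density and descends to $S_g$.

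Your proposal has two concrete gaps, both at exactly the places you yourself flag as ``delicate.'' First, the gluing: Theorem 1.1 gives a $1$-Lipschitz map sending vertices to corresponding vertices, but it says nothing about the restriction of that map to a side of $\Delta$; two adjacent triangles sharing a saddle connection will in general produce two different $1$-Lipschitz maps on the shared edge, and nothing in the statement of Theorem 1.1 lets you synchronize them. (This is essentially the obstruction the author describes in Section 2.4 around Example 2.14 for the triangulation approach; the asymmetric pseudo-metric and the class $\mathcal{D}$ are precisely what let the paper avoid choosing a triangulation at all.) Second, your use of Conjecture 1.2 overreaches: the conjecture only asserts, for each $p\in\Delta$, the existence of a single point $p'\in\Delta'$ with $d_{\Delta'}(p',x_i')\le d_\Delta(p,x_i)$ for the three \emph{corners}. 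It gives no control on distances from $p'$ to the concave boundary vertices of $\Delta'$, so inserting $p$ and $p'$ as new vertices does not produce sub-polygons satisfying the hypotheses of Theorem 1.1, and your ``recursive subdivision'' therefore cannot even get started, let alone be shown to terminate. In the paper, Conjecture 1.2 is used only to produce the existence of a single point $\widetilde p_{n+1}'$ in the triple-ball intersection, which is exactly as much as it delivers; upgrading it to a local Lipschitz map is what the whole Helly/Kirszbraun scaffolding is for. The easy direction $L_F^a\ge K_F^a$ in your proposal is fine and agrees with the paper.
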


Instead of following Thurston's approach, we proved theorem 1.3 adapting the idea of F. A. Valentine's proof (\cite{Va}) of Kirszbraun's theorem for $\mathbb{R}^2$.
\begin{thm} 
Let $S\subset \mathbb{R}^2$ be any subset and $f:S\rightarrow \mathbb{R}^2$ a 1-Lipschitz map.\\
Given any set $T$ which contains $S$, it is possible to extend $f$ to a 1-Lipschitz map $\hat f:T\rightarrow \mathbb{R}^2$ such that $\hat f(T)$ is contained in the convex hull of $f(S)$.
\end{thm}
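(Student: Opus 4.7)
The plan is to reduce Theorem 1.4 to the classical Kirszbraun extension theorem in $\mathbb{R}^2$ and then force the image into the desired convex set by composition with a metric projection. Concretely, I will first invoke Valentine's version of Kirszbraun's theorem in $\mathbb{R}^2$ (the very result whose proof method is being adapted elsewhere in the paper) to produce some 1-Lipschitz extension $\tilde f : T \to \mathbb{R}^2$ of $f$, with no a priori restriction on where its values lie.

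Next, I set $C$ to be the closed convex hull of $f(S)$ and let $\pi_C : \mathbb{R}^2 \to C$ be the nearest-point projection onto $C$. Because $C$ is a closed convex subset of the plane, $\pi_C$ is single-valued and 1-Lipschitz. Define $\hat f := \pi_C \circ \tilde f$. Then $\hat f$ is 1-Lipschitz as a composition of 1-Lipschitz maps, $\hat f(T) \subseteq C$ by construction, and for $s \in S$ we have $f(s) \in f(S) \subseteq C$, so $\pi_C(f(s)) = f(s)$ and hence $\hat f|_S = f$. This already proves the theorem with $\mathrm{conv}(f(S))$ replaced by its closure.

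The only residual issue is the gap between $\mathrm{conv}(f(S))$ and its closure $C$. In the application to Theorem 1.3 the relevant set $S$ is finite (a vertex set of a polygon), so $\mathrm{conv}(f(S))$ is automatically compact and the two coincide; nothing further is needed there. For the statement in full generality I would instead argue one point at a time, in Valentine's own spirit: given any 1-Lipschitz partial extension $g : S_0 \to \mathrm{conv}(f(S))$ with $S \subseteq S_0 \subsetneq T$ and any $t \in T \setminus S_0$, the intersection
\[
\bigcap_{s \in S_0} \overline{B}\bigl(g(s), |t - s|\bigr)
\]
is non-empty by the standard one-point extension lemma; projecting any element of this intersection onto $\mathrm{conv}(f(S))$ lands back inside that convex hull while preserving every disk inequality, since each $g(s)$ already lies in $\mathrm{conv}(f(S))$ and projection onto a convex set is 1-Lipschitz. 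A Zorn's lemma (or transfinite induction) argument then assembles these one-point extensions into a full extension of $f$ defined on all of $T$.

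I do not anticipate a serious obstacle here. Once Valentine's Kirszbraun theorem is in hand, the convex-hull refinement reduces to the standard 1-Lipschitz property of metric projection onto a closed convex set; the only real delicacy is the closed-versus-open convex-hull issue, which is either vacuous (finite $S$, as in the intended application) or handled by the point-by-point argument above.
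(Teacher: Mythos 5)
Your proof is correct, and it is in fact more careful than what the paper offers, since the paper never proves this statement: it labels it ``(Kirszbraun)'', cites Valentine, lists Lemma 3.6 (the triangle lemma), Helly's theorem, and Proposition 3.8 (the disk-intersection lemma), and then says ``one easily deduces'' the theorem. Your route is genuinely different in one essential point. Proposition 3.8 alone does \emph{not} give the convex-hull conclusion: for $S=\{(0,0),(1,0)\}$, $f=\mathrm{id}$, and $t=(1/2,1)$, the admissible set $\overline B(f(s_1),|t-s_1|)\cap\overline B(f(s_2),|t-s_2|)$ is a lens not contained in $\mathrm{conv}(f(S))=[0,1]\times\{0\}$. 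Your composition with the nearest-point projection $\pi_C$ onto the closed convex hull $C$ is precisely what repairs this, because $\pi_C$ is $1$-Lipschitz and fixes each $f(s)\in C$, so it carries any Kirszbraun extension (or any one-point-extension candidate) back into $C$ without damaging a single disk inequality. In that sense you have filled a real gap in the paper's ``one easily deduces''.

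You also correctly flag a defect in the paper's \emph{statement}: as written, with $\mathrm{conv}(f(S))$ rather than its closure, the theorem is false. Take $S=(0,1)\times\{0\}$, $f=\mathrm{id}$, $T=[0,1]\times\{0\}$, $t=(0,0)$: any $1$-Lipschitz extension must satisfy $|\hat f(t)-s|\le |t-s|=s$ for all $s\in(0,1)$, forcing $\hat f(t)=(0,0)\notin\mathrm{conv}(f(S))$. So no argument, including your transfinite one, can land the image in the non-closed hull; the correct conclusion is that $\hat f(T)$ lies in the \emph{closed} convex hull of $f(S)$, and this is all your projection argument delivers. Since the paper only uses the result for finite $S$ (vertex sets of polygons), where the hull is automatically compact, nothing downstream is affected, but the statement itself should say ``closed convex hull''.
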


\bigskip
\section{Symmetric pseudo-metrics $L_F$ and $K_F$}
\bigskip
\subsection{Teichm\"uller space of semi-translation surfaces} In this preliminary part we introduce semi-translation surfaces and their Teichm\"uller spaces, underlining some of their major properties.

\begin{defi}
A semi-translation surface is a closed topological surface $S_g$ of genus $g\ge 2$ endowed with a semi-translation structure, that is:
\begin{enumerate}[label=(\roman*)]
\item a finite set of points $\Sigma\subset S_g$ and an atlas of charts on $S_g\setminus \Sigma$ to $\mathbb{C}$ such that \ transition maps are of the form $z\mapsto \pm z+c$ with $c\in \mathbb{C}$,
\item a flat singular metric on $S_g$ such that for each point $p\in \Sigma$ there is a homeomorphism of a neighborhood of $p$ with a neighborhood of a cone angle of $\pi(k+2)$ for some $k>0$, which is an isometry away from $p$ (we call such point a singular point of order $k$). Furthermore, charts of the atlas of $(i)$ are isometries for the flat singular metric.
\end{enumerate}
\end{defi}

Equivalently, a semi-translation surface can be defined as a closed Riemann surface $X$ endowed with a non-vanishing holomorphic quadratic differential $q$. Indeed, natural coordinates for $q$ and the metric $|q|$ endow $S_g$ with a semi-translation structure. Conversely, given a semi-translation structure one can obtain a quadratic differential by setting $q=dz^2$ on $S_g\setminus \Sigma$ (where $z$ is a coordinate of the charts of the semi-translation structure) and $q=z^{k}dz^2$ in a neighborhood of a singular point. It is clear then that the sum of the orders of singular points is $4g-4$.\\
A semi-translation surface is naturally endowed with a locally $Cat(0)$ metric. Actually, one can extend the definition to allow the quadratic differentials to have at most simple poles (and consequently cone angles of $\pi$), but then the resulting metric will not be locally $Cat(0)$ anymore.\\

The flat singular metric $|q|$ can be nicely characterized (see \cite{St}) stating that its local geodesics are continuous maps $\gamma:\mathbb{R}\rightarrow S_g$ such that for every $t\in \mathbb{R}$:
\begin{itemize}
\item if $\gamma(t)\notin\Sigma$, then there is a neighborhood $U$ of $t$ in $\mathbb{R}$ such that $\gamma|_U$ is an Euclidean  segment,
\item if $\gamma(t)\in\Sigma$, then there is a small neighborhood $V$ of $\gamma(t)$ in $S_g$ and an $\epsilon>0$ small enough such that the angles defined by $\gamma([t,t+\epsilon))$ and $\gamma((t-\epsilon,t])$ in $V$ are both at least $\pi$.
\end{itemize}
We say that a \textit{saddle connection} on $(X,q)$ is a geodesic for the flat metric going from a singularity to a singularity, without any singularities in the interior of the segment. \\
Since the metric $|q|$ is locally $Cat(0)$, for any arc $\gamma$ with endpoints in $\Sigma$ there always is a unique geodesic representative in the homotopy class of $\gamma$ with fixed endpoints. This geodesic representative is a concatenation of saddle connections.\\
Finally, we define the $systole$ of a semi-translation surface $(X,q)$, and denote it with $sys(q)$, to be the length of the shortest saddle connection.\\

Given any $g\ge 2$ and $m\ge 1$, fix a finite set of points $\Sigma=\{p_1,\dots,p_m\}\subset S_g$ and an $m$-ple $\underline{k}=(k_1,k_2,\dots,k_m)\in \mathbb{N}^m$ such that $\sum_{l=1}^mk_l=4g-4$.\\
We denote by $\mathcal{S}\Omega(\underline k,\epsilon,\Sigma)$ the set of semi-translation surfaces on $S_g$ with singularities prescribed by $\underline k$ on the points of $\Sigma$ (i.e. it has a zero of order $k_i$ on $p_i$, $i=1,\dots,m$) and holonomy determined by $\epsilon\in \{\pm 1\}$ ($\epsilon=1$ in case of trivial holonomy and $\epsilon=-1$ otherwise). \\
Consider the group $Diff^+(S_g,\Sigma)$ of diffeomorphisms of $S_g$ which fix the points of $\Sigma$ and its subgroup $Diff^+_0(S_g,\Sigma)$ consisting of diffeomorphisms homotopic to the identity.\\
We define the Teichm\"uller and moduli space of semi-translation surfaces with singularities prescribed by $\underline k$ on $\Sigma$ and holonomy defined by $\epsilon$ in the following way:
$$\mathcal{TQ}(\underline k,\epsilon,\Sigma):=\mathcal{S}\Omega(\underline k,\epsilon,\Sigma)/Diff^+_0(S_g,\Sigma),\quad \mathcal{Q}(\underline k,\epsilon,\Sigma):=\mathcal{S}\Omega(\underline k,\epsilon,\Sigma)/Diff^+(S_g,\Sigma),$$
where the two groups of diffeomorphisms act by pullback.\\
We will denote $\mathcal{TQ}_g(\underline k,\epsilon,\Sigma)$ and $\mathcal{Q}(\underline k,\epsilon,\Sigma)$ simply as $\mathcal{TQ}_g(\underline k,\epsilon)$ and $\mathcal{Q}(\underline k,\epsilon,\Sigma)$ in order lighten the notation: one should keep in mind that in the definition is implicit the choice of $\Sigma$.\\
Furthermore, we denote simply by $q$ an element of $\mathcal{TQ}_g(\underline k,\epsilon)$ and $\mathcal{Q}(\underline k,\epsilon,\Sigma)$: the fact that it is an equivalence class will be clear from the context.\\
As it is explained in the following theorem, the spaces $\mathcal{TQ}_g(\underline k,\Sigma)$ have a nice structure of complex manifold.
\begin{thm}
Each space $\mathcal{TQ}_g(\underline k,1)$ has the structure of a complex manifold of dimension $2g+m-1$, while $\mathcal{TQ}_g(\underline k,-1)$ has the structure of a complex manifold of dimension $2g+m-2$.

\end{thm}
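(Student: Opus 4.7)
The plan is to construct explicit local complex coordinates on $\mathcal{TQ}_g(\underline k,\epsilon)$ using relative period maps, which is the standard route for the moduli spaces of abelian and quadratic differentials. Throughout, I fix a base point $q_0 \in \mathcal{TQ}_g(\underline k, \epsilon)$ represented on $(S_g,\Sigma)$ and I look for a period chart defined on a neighborhood $U$ of $q_0$.

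First I would treat the case $\epsilon=1$. Here the transition functions of the atlas have the form $z \mapsto z+c$, so the semi-translation structure is genuinely a translation structure, and equivalently $q = \omega^2$ for a globally defined holomorphic $1$-form $\omega$ on $X$ vanishing on $\Sigma$ with prescribed orders $\underline{k}/2$ (assuming evenness where needed; otherwise one is already in the $\epsilon=-1$ situation). Choosing a smooth triangulation of $(S_g,\Sigma)$ by saddle connections of $q_0$, I would define the period map
$$\Phi : U \longrightarrow H^1(S_g,\Sigma;\mathbb{C}), \qquad q\longmapsto [\omega],$$
which records the integrals of $\omega$ over a basis of relative $1$-cycles. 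The developing-map/triangulation argument (displace each vertex in $\mathbb{C}$ by the prescribed small amount and re-glue) shows that $\Phi$ is a local homeomorphism: a small perturbation of the period data produces a unique nearby semi-translation structure. The complex dimension then follows from the long exact sequence of the pair $(S_g,\Sigma)$,
$$0 \to H^1(S_g;\mathbb{C}) \to H^1(S_g,\Sigma;\mathbb{C}) \to \widetilde{H}^0(\Sigma;\mathbb{C}) \to 0,$$
which yields $\dim_{\mathbb{C}} H^1(S_g,\Sigma;\mathbb{C}) = 2g + (m-1) = 2g+m-1$.

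For $\epsilon=-1$ the obstruction to taking a global square root of $q$ is killed by passing to the canonical orientation double cover $\pi:\widetilde{S} \to S_g$ branched exactly over the singularities of \emph{odd} order; on $\widetilde{S}$ one has $\pi^* q = \omega^2$ with deck involution $\sigma$ satisfying $\sigma^*\omega = -\omega$. The assignment $q \mapsto [\omega]$ now yields a period map into the $\sigma^*$-anti-invariant subspace
$$\Phi^- : U \longrightarrow H^1(\widetilde{S},\widetilde{\Sigma};\mathbb{C})^{-},$$
and the same triangulation-plus-developing-map argument, carried out equivariantly on the cover, shows $\Phi^-$ is a local homeomorphism. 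To compute the target dimension, let $m_1$ be the number of odd-order singularities and $m_2 = m - m_1$; Riemann–Hurwitz gives $\widetilde g = 2g-1+m_1/2$ and $|\widetilde\Sigma|=2m-m_1$, so $\dim H^1(\widetilde S,\widetilde\Sigma;\mathbb{C}) = 4g + 2m - 3$. Using the $\sigma^*$-splitting together with the identification of the invariant part with (a version of) $H^1(S_g,\Sigma;\mathbb{C})$ of dimension $2g+m-1$, the anti-invariant part has dimension $2g+m-2$, as required.

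The main obstacle, in both cases, is making rigorous the claim that $\Phi$ (resp.\ $\Phi^-$) is a local homeomorphism, i.e.\ that a prescribed small deformation of the relative periods can be realized by an honest semi-translation structure in the chosen stratum. The cleanest way to do this is to fix a geodesic triangulation $\mathcal{T}$ of $(S_g,\Sigma)$ by saddle connections of $q_0$, observe that each Euclidean triangle is determined up to translation by the periods of its two sides, and verify that if the perturbed periods lie in a sufficiently small open set then the gluing data remain compatible (preserving the cyclic order around each vertex and the prescribed cone angle $\pi(k_i+2)$). Transitions between charts based at different triangulations are then affine over $\mathbb{C}$, giving a complex-manifold atlas of the stated dimension; uniqueness of the structure follows from the faithfulness of $\Phi$ on Teichmüller classes, for which the quotient by $\mathrm{Diff}_0^+(S_g,\Sigma)$ is essential.
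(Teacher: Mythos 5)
The paper states this theorem without proof; it is a standard structural fact about strata of abelian and quadratic differentials (going back to Veech, Masur, and others, and implicitly relying on the period-coordinate description that the paper later invokes when discussing $\mathbb{T}_{std}$). Your outline supplies precisely that standard argument: a relative period map to $H^1(S_g,\Sigma;\mathbb{C})$ when $\epsilon=1$, and to the $\sigma^*$-anti-invariant part of the relative cohomology of the orientation double cover when $\epsilon=-1$, with the triangulation/developing argument sketching why these are local homeomorphisms with complex-affine transitions. The Riemann--Hurwitz count and the transfer identification of the $\sigma^*$-invariant subspace with $H^1(S_g,\Sigma;\mathbb{C})$ are correct and produce the stated dimensions $2g+m-1$ and $2g+m-2$. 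One small slip worth fixing: the short exact sequence extracted from the long exact sequence of the pair $(S_g,\Sigma)$ reads
\[
0 \to \widetilde H^0(\Sigma;\mathbb{C}) \to H^1(S_g,\Sigma;\mathbb{C}) \to H^1(S_g;\mathbb{C}) \to 0,
\]
with the arrows pointing the opposite way to what you wrote --- the reduced $H^0(\Sigma)$ sits inside $H^1(S_g,\Sigma;\mathbb{C})$ as the image of the connecting homomorphism, and $H^1(S_g;\mathbb{C})$ is the quotient, not a subspace. The dimension count $2g+m-1$ is unaffected. You rightly flag that the substantive work is the local-homeomorphism claim for the period map; your triangulate-and-reglue sketch is the way this is done rigorously, and passing to the quotient by $\mathrm{Diff}_0^+(S_g,\Sigma)$ is indeed what makes the chart well defined.
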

Unfortunately, the spaces $\mathcal{Q}(\underline k,\epsilon)$ have only the structure of complex orbifolds of the same dimension of $\mathcal{TQ}(\underline k,\epsilon)$.\\
There is a natural action of $GL(2,\mathbb{R})^+$ on $\mathcal{TQ}_g(\underline k,\epsilon)$ and $\mathcal{Q}_g(\underline k,\epsilon)$: for each \mbox{$A\in GL(2,\mathbb{R})^+$} and each quadratic differential $q$, the element $A\cdot q$ is the quadratic differential obtained post-composing the natural charts of $q$ with $A$. 

\bigskip
\subsection{Definitions of flat Thurston's metrics}

Fix any genus $g\ge 2$ and consider the Teichm\"uller space of semi-translation surfaces $\mathcal{TQ}_g(\underline k,\epsilon)$ with singularities on $\Sigma\subset S_g$ prescribed by the $m$-ple $\underline k=(k_1,\dots, k_m)\in\mathbb{N}^m$ such that $\sum_{i=1}^mk_i=4g-4$ and holonomy determined by $\epsilon\in\{+1,-1\}$.  \\
We will introduce now all flat analogues to Thurston's metrics.\\
First we define the following function $L_F$, which is a symmetric analogue to Thurston's metric $L$. 
$$L_F:\mathcal{TQ}_g(\underline k,\epsilon)\times \mathcal{TQ}_g(\underline k,\epsilon)\rightarrow \mathbb{R}, $$
$$L_F(q_1,q_2):=\inf\limits_{\varphi\in Diff^+_0(S_g,\Sigma)}\mathcal{L}_{q_1}^{q_2}(\varphi),$$ 
$$\mathcal{L}_{q_1}^{q_2}(\varphi):=\sup\limits_{p\in S_g\setminus \Sigma }\left(\sup\limits_{v\in T_pS_g,||v||_{q_1}=1}\left|\log(||d\varphi_pv||_{q_2})\right|\right).$$\\
The quantity $\mathcal{L}_{q_1}^{q_2}(\varphi)$ can be rewritten as
$$\mathcal{L}_{q_1}^{q_2}(\varphi)=\max\{\log(Lip_{q_1}^{q_2}(\varphi)),-\log(lip_{q_1}^{q_2}(\varphi))\},$$  
with $Lip_{q_1}^{q_2}(\varphi)$ being the \textit{upper Lipschitz constant} of $\varphi$:
$$Lip_{q_1}^{q_2}(\varphi):=\sup\limits_{p\in S_g\setminus \Sigma}\left(\sup\limits_{v\in T_pS_g,||v||_{q_1}=1}||d\varphi_pv||_{q_2}\right)$$
and $lip_{q_1}^{q_2}(\varphi)$ being the \textit{lower Lipschitz constant} of $\varphi$:
$$lip_{q_1}^{q_2}(\varphi):=\inf\limits_{p\in S_g\setminus \Sigma}\left(\inf\limits_{w\in T_pS_g,||w||_{q_1}=1}||d\varphi_pw||_{q_2}\right).$$
\\
We define also an asymmetric analogue to $L$ on $\mathcal{TQ}_g^{(1)}(\underline{k},\epsilon)$ 

$$L^a_F:\mathcal{TQ}_g^{(1)}(\underline{k},\epsilon)\times \mathcal{TQ}_g^{(1)}(\underline{k},\epsilon)\rightarrow \mathbb{R}$$
associating to any pair $q_1,q_2 \in\mathcal{TQ}_g^{(1)}(\underline{k},\epsilon)$ of semi-translation surfaces of unitary area the quantity

$$L^a_F(q_1,q_2):=\inf\limits_{\varphi\in \mathcal{D}} \log(Lip(\varphi)_{q_1}^{q_2}),$$
$$Lip(\varphi)_{q_1}^{q_2}=\sup\limits_{p\in S_g\setminus \Sigma}\left(\sup\limits_{v\in T_pS_g, ||v||_{q_1}=1} ||d\varphi_pv||_{q_2}\right),$$
where $\mathcal{D}$ is the set of functions $\varphi:S_g\rightarrow S_g$ which are homotopic to the identity, differentiable almost everywhere and which fix the points of $\Sigma$. \\
Since $Diff_0^+(S_g,\Sigma)\subset \mathcal{D}$, one can immediately deduce $L_F(q_1,q_2)\ge L_F^a(q_1,q_2)$ for every $q_1,q_2 \in\mathcal{TQ}_g^{(1)}(\underline{k},\epsilon)$.\\

We define two flat counterparts to the metric $K$, which are $K_F^a$ and $K_F$. The first one is asymmetric and the second one is its symmetrization.\\
In particular, for every $q_1,q_2\in \mathcal{TQ}_g(\underline k,\epsilon)$, we set
$$K^a_F(q_1,q_2):=\sup\limits_{\gamma\in SC(q_1)}\log\left(\frac{\hat l_{q_2}(\gamma)}{\hat l_{q_1}(\gamma)}\right),$$
where $SC(q_1)$ is the set of saddle connections of $q_1$, and $\hat l_{q_i}(\gamma)$ is the length of the geodesic representative for $|q_i|$ in the homotopy class of $\gamma$ with fixed endpoints. \\Finally the symmetric analogue to $K$ is defined as
$$K_F(q_1,q_2):=\max\{K_F^a(q_1,q_2),K_F^a(q_2,q_1)\}$$
for every $q_1,q_2\in  \mathcal{TQ}_g(\underline k,\epsilon)$.\\

In this section we will study the properties of $L_F$ and $K_F$ and explain the difficulties in trying to prove $L_F=K_F$ on $\mathcal{TQ}_g(\underline k,\epsilon)$.\\
These difficulties can be solved considering $L_F^a$ instead of $L_F$: the fact that $L_F^a$ is asymmetric and the infimum is taken over functions in $\mathcal{D}$ will play a crucial role. \\
Indeed, $L_F^a$ is defined specifically to get $L_F^a=K_F^a$: the next section will be completely devoted to the proof of such equality.\\

We now begin the study of the properties of $L_F$. 

\begin{prop}
The function $L_F$ is a symmetric pseudo-metric on $\mathcal{TQ}_g(\underline k,\epsilon)$.\\
\end{prop}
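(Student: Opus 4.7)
The plan is to verify the four defining properties of a symmetric pseudo-metric: non-negativity, vanishing on the diagonal, symmetry, and the triangle inequality. Non-negativity and the diagonal property are essentially immediate: each contribution to the supremum defining $\mathcal{L}_{q_1}^{q_2}(\varphi)$ is an absolute value, so $\mathcal{L}_{q_1}^{q_2}(\varphi)\ge 0$ and hence $L_F\ge 0$; and taking $\varphi=\mathrm{id}$ in the definition of $L_F(q,q)$ gives $d\varphi_p=\mathrm{id}$ and $\|d\varphi_p v\|_{q}=\|v\|_{q}=1$ on every unit vector, so $\mathcal{L}_{q}^{q}(\mathrm{id})=0$.

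For symmetry, I would establish the pointwise identity $\mathcal{L}_{q_1}^{q_2}(\varphi)=\mathcal{L}_{q_2}^{q_1}(\varphi^{-1})$ for every $\varphi\in\mathrm{Diff}_0^+(S_g,\Sigma)$, which since the set $\mathrm{Diff}_0^+(S_g,\Sigma)$ is closed under inversion, will immediately give $L_F(q_1,q_2)=L_F(q_2,q_1)$ upon taking the infimum. The computation is the standard one: reindexing the outer sup by $p'=\varphi(p)$ and writing $(d\varphi^{-1})_{\varphi(p)}=(d\varphi_p)^{-1}$, for any $w$ with $\|w\|_{q_2}=1$ I set $v=(d\varphi_p)^{-1}w/\|(d\varphi_p)^{-1}w\|_{q_1}$, which is a unit vector for $\|\cdot\|_{q_1}$ and satisfies the inverse relation $\|(d\varphi_p)^{-1}w\|_{q_1}=1/\|d\varphi_p v\|_{q_2}$. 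Since $|\log(1/x)|=|\log x|$ and the correspondence $w\mapsto v$ is bijective on unit tangent vectors (because $d\varphi_p$ is invertible), the two suprema agree pointwise.

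For the triangle inequality, I would show the sub-additivity $\mathcal{L}_{q_1}^{q_3}(\varphi_2\circ\varphi_1)\le \mathcal{L}_{q_1}^{q_2}(\varphi_1)+\mathcal{L}_{q_2}^{q_3}(\varphi_2)$. By the chain rule, for $v$ a unit vector in $\|\cdot\|_{q_1}$ at $p$ and $u=(d\varphi_1)_p v$, the multiplicativity
$$\|d(\varphi_2\circ\varphi_1)_p v\|_{q_3}=\|u\|_{q_2}\,\bigl\|(d\varphi_2)_{\varphi_1(p)}(u/\|u\|_{q_2})\bigr\|_{q_3}$$
turns into an additive identity after taking logs, and the ordinary triangle inequality for $|\,\cdot\,|$ gives the sub-additivity because $u/\|u\|_{q_2}$ is a unit vector in $\|\cdot\|_{q_2}$ at $\varphi_1(p)$. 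Since composition of two orientation-preserving diffeomorphisms homotopic to the identity fixing $\Sigma$ still lies in $\mathrm{Diff}_0^+(S_g,\Sigma)$, passing to the infimum yields $L_F(q_1,q_3)\le L_F(q_1,q_2)+L_F(q_2,q_3)$.

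None of the four steps is genuinely hard; the only point where I would be a little careful is checking that the pointwise computation for symmetry is valid under the sup (i.e.\ that the bijection $w\leftrightarrow v$ of unit vectors at $p$ is well-defined), and that the composition used in the triangle inequality indeed remains in $\mathrm{Diff}_0^+(S_g,\Sigma)$. The fact that $L_F$ is only a pseudo-metric (the separation axiom is not claimed) means no extra work is needed to rule out degenerate pairs with $L_F(q_1,q_2)=0$ but $q_1\ne q_2$.
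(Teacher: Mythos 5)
Your proof is correct and takes essentially the same route as the paper: the vanishing on the diagonal via $\varphi=\mathrm{id}$, the identity $\mathcal{L}_{q_1}^{q_2}(\varphi)=\mathcal{L}_{q_2}^{q_1}(\varphi^{-1})$ for symmetry (which the paper phrases through $lip_{q_1}^{q_2}(\varphi)=1/Lip_{q_2}^{q_1}(\varphi^{-1})$), and the sub-additivity $\mathcal{L}_{q_1}^{q_3}(\varphi\circ\psi)\le\mathcal{L}_{q_2}^{q_3}(\varphi)+\mathcal{L}_{q_1}^{q_2}(\psi)$ for the triangle inequality; you merely supply the pointwise computations the paper leaves implicit.
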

\begin{proof}
It is clear that $L(q,q)=|\log(Lip_q^q(Id))|=0$ for all $q\in\mathcal{TQ}_g(\underline k,\epsilon)$.\\
The equality
$$lip_{q_1}^{q_2}(\varphi)=\frac{1}{Lip_{q_2}^{q_1}(\varphi^{-1})}$$
grants 
$$\mathcal{L}_{q_1}^{q_2}(\varphi)=\mathcal{L}_{q_2}^{q_1}(\varphi^{-1})$$
and thus the symmetry of $L_F$.\\
The triangular inequality follows from the inequality 
$$\mathcal{L}_{q_1}^{q_3}(\varphi\circ \psi)\le \mathcal{L}_{q_2}^{q_3}(\varphi)+\mathcal{L}_{q_1}^{q_2}(\psi).$$
Finally, one could easily note that, given any $q_1\in \mathcal{TQ}_g(\underline k,\epsilon)$, it results \mbox{$L_F(q_1,q_2)=0$} exactly for all $q_2\in \mathcal{TQ}_g(\underline k,\epsilon)$ such that $q_2=e^{i\theta}q_1$.
\end{proof}

Since it results $L_F(q_1,q_2)=0$ if and only if $q_1$ and $q_2$ are in the same orbit of the action of the unitary group $U(1)\subset \mathbb{C}^*$, it follows that $L_F$ can be considered as a metric on the space of flat singular metrics with singularities prescribed by $\underline k$ and holonomy prescribed by $\epsilon$.\\
For the same reason, $L_F$ descends to a metric $\mathbb{P} L_F$ on the projectivization $\mathbb{P}\mathcal{TQ}_g(\underline k,\epsilon)=\mathcal{TQ}_g(\underline k,\epsilon)/\mathbb{C}^*=\mathcal{TQ}^{(1)}_g(\underline k,\epsilon)/U(1)$ by setting
$$\mathbb{P} L_F([q_1],[q_2]):=L_F\left(\frac{q_1}{Area(q_1)},\frac{q_2}{Area(q_2)}\right).$$

The first result we present on the pseudo-metric $L_F$ is an inequality concerning the Teichm\"uller metric $d_{\mathcal{T}}$.

\begin{prop}\label{tecl}
For any $q_1,q_2\in \mathcal{TQ}_g(\underline k,\epsilon)$, denote by $X_1,X_2\in \mathcal{T}_g$ the points in the Teichm\"uller space relative to the corresponding conformal structure. 
It results:
$$L_F(q_1,q_2)\ge d_{\mathcal{T}}(X_1,X_2)$$
In case there is a Teichm\"uller map between $X_1$ and $X2$ with respect to the differentials $q_1$ and $q_2$ the last inequality is an equality.
\end{prop}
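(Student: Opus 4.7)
The plan is to compare pointwise derivative data of an arbitrary $\varphi\in Diff_0^+(S_g,\Sigma)$ with the quasiconformal dilatation. At every non-singular point $p$, the map $d\varphi_p:(T_pS_g,|q_1|)\to (T_{\varphi(p)}S_g,|q_2|)$ is a linear isomorphism of Euclidean planes and hence admits two singular values $\sigma_1(p)\ge\sigma_2(p)>0$, which are exactly the pointwise upper and lower Lipschitz constants of $d\varphi_p$. In particular
$$\sigma_1(p)\le Lip_{q_1}^{q_2}(\varphi),\qquad \sigma_2(p)\ge lip_{q_1}^{q_2}(\varphi).$$
Because the conformal structures on $X_1$ and $X_2$ are precisely those induced by the flat metrics $|q_1|$ and $|q_2|$, the pointwise quasiconformal dilatation of $\varphi$ is $K_p(\varphi)=\sigma_1(p)/\sigma_2(p)$, and its global dilatation is $K(\varphi)=\sup_p K_p(\varphi)$.

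For the inequality $L_F\ge d_{\mathcal{T}}$, combining the two bounds above gives
$$K_p(\varphi)=\frac{\sigma_1(p)}{\sigma_2(p)}\le \frac{Lip_{q_1}^{q_2}(\varphi)}{lip_{q_1}^{q_2}(\varphi)},$$
hence $\log K(\varphi)\le \log Lip_{q_1}^{q_2}(\varphi)-\log lip_{q_1}^{q_2}(\varphi)\le 2\mathcal{L}_{q_1}^{q_2}(\varphi)$, so $\tfrac{1}{2}\log K(\varphi)\le \mathcal{L}_{q_1}^{q_2}(\varphi)$. Every $\varphi\in Diff_0^+(S_g,\Sigma)$ is a fortiori a quasiconformal homeomorphism $X_1\to X_2$ homotopic to the identity, so taking the infimum over $\varphi$ on both sides produces $d_{\mathcal{T}}(X_1,X_2)\le L_F(q_1,q_2)$.

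For the equality case I would use the structural property of a Teichm\"uller map: in natural coordinates of $q_1$ and $q_2$ it is affine with constant singular values $\sqrt{K}$ and $1/\sqrt{K}$, where $K=e^{2 d_{\mathcal{T}}(X_1,X_2)}$ (working with the normalisation in which the Teichm\"uller map preserves area). This makes $Lip_{q_1}^{q_2}(\varphi)=\sqrt{K}$ and $lip_{q_1}^{q_2}(\varphi)=1/\sqrt{K}$ both realised as constants, and therefore
$$\mathcal{L}_{q_1}^{q_2}(\varphi)=\max\bigl\{\tfrac{1}{2}\log K,\tfrac{1}{2}\log K\bigr\}=\tfrac{1}{2}\log K=d_{\mathcal{T}}(X_1,X_2).$$
Combined with the previously established lower bound this forces $L_F(q_1,q_2)=d_{\mathcal{T}}(X_1,X_2)$.

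The main subtlety I expect is that the Teichm\"uller map is only smooth on $S_g\setminus \Sigma$ and fails to be a diffeomorphism at the zeros of $q_1, q_2$, so strictly speaking it is not an element of $Diff_0^+(S_g,\Sigma)$. I would handle this by approximating the Teichm\"uller map in the $C^0$ topology by diffeomorphisms fixing $\Sigma$ and homotopic to the identity, obtained by smoothing the map in arbitrarily small neighbourhoods of the singularities. Since the supremum in $\mathcal{L}_{q_1}^{q_2}$ is already taken on $S_g\setminus \Sigma$, these approximations can be chosen to have dilatation tending to $K$, which realises the infimum in the definition of $L_F$ as a limit and closes the argument.
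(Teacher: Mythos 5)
Your argument is correct and follows essentially the same route as the paper: bound the global quasiconformal dilatation $K(\varphi)$ by the ratio $Lip_{q_1}^{q_2}(\varphi)/lip_{q_1}^{q_2}(\varphi)$ using that the natural coordinates of $q_1,q_2$ are holomorphic, compare with $K(h)$ for the extremal Teichm\"uller map $h$, and in the equality case read off the constant singular values $\sqrt{K}$ and $1/\sqrt{K}$ of $h$ in natural coordinates. One point where you go beyond the paper: you correctly flag that the Teichm\"uller map is generally not smooth at the zeros of the quadratic differentials and hence not literally an element of $Diff_0^+(S_g,\Sigma)$, and you sketch a smoothing approximation to close the gap; the paper's proof silently treats $h$ as an admissible competitor, so your remark is a legitimate tightening of the argument rather than a different approach.
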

\begin{proof}
For every $\varphi\in Diff^+_0(S_g,\Sigma)$ and $p\in S_g\setminus \Sigma$ we define the quantities 
$$Lip_{q_1}^{q_2}(\varphi)_p:=\sup\limits_{v\in T_pS_g,||v||_{q_1}=1}||d\varphi_pv||_{q_2},$$
$$lip_{q_1}^{q_2}(\varphi)_p:=\inf\limits_{w\in T_pS_g,||w||_{q_1}=1}||d\varphi_pw||_{q_2}.$$
Then, since the global dilatation $K(\varphi)$ is independent of the holomorphic charts and thus can be computed in the natural coordinates respectively of $q_1$ and $q_2$, we get the inequality
$$K(\varphi)=\sup\limits_{p\in S_g\setminus \Sigma}\frac{Lip_{q_1}^{q_2}(\varphi)_p}{lip_{q_1}^{q_2}(\varphi)_p}\le \frac{Lip_{q_1}^{q_2}(\varphi)}{lip_{q_1}^{q_2}(\varphi)}.$$
Since for every $\varphi\in Diff^+_0(S_g,\Sigma)$ it also results
$$K(h)\le K(\varphi),$$
where $K(h)$ is the global dilatation of a Teichm\"uller map $h$ such that $d_{\mathcal{T}}(X_1,X_2)=\frac 1 2\log(K(h))$, combining the last two inequalities we get that it can not be at the same time
$$Lip_{q_1}^{q_2}(\varphi)<\sqrt{K(h)}\text{ and } lip_{q_1}^{q_2}(\varphi)>\frac{1}{\sqrt{K(h)}}$$
and this implies the inequality $L(q_1,q_2)\ge d_{\mathcal{T}}(X_1,X_2)$.\\
Finally, in case $h$ is a Teichm\"uller map with respect to the quadratic differentials $q_1$ and $q_2$, then, since $h$ can be written in local coordinates as
$$h(x+iy)=\sqrt{K(h)}x+\frac{i}{\sqrt{K(h)}}y$$
it follows 
$$Lip_{q_1}^{q_2}(h)=\sqrt{K(h)},\quad lip_{q_1}^{q_2}(h)=\frac{1}{\sqrt{K(h)}}$$
and thus the equality of the claim.\\ 
\end{proof}

\begin{oss}
Notice that in the proof of proposition 2.4 the fact that the metric induced by the quadratic differential is locally $Cat(0)$ is never used. 
For this reason, one could allow the quadratic differentials to have simple poles on the marked points and define $L_F$ in the same way.\\
Then the same inequality $L_F(q_1,q_2)\ge d_{\mathcal{T}}(X_1,X_2)$ will be true for $X_1,X_2\in \mathcal{T}_g^n$.
\end{oss}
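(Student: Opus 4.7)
The plan is to reinspect the proof of Proposition 2.4 line by line and verify that neither of its two key ingredients, namely the pointwise dilation identity and the extremality of the Teichm\"uller map, actually uses local CAT(0) geometry. First I would allow $q_1,q_2$ to be meromorphic quadratic differentials on $X_1,X_2\in\mathcal{T}_g^n$ with at most simple poles at the $n$ marked points of $\Sigma$. The norms $\|\cdot\|_{q_i}$ are defined on $T_pS_g$ for any $p\in S_g\setminus\Sigma$ using the natural charts of $q_i$, which are local isometries onto open subsets of $\mathbb C$ away from $\Sigma$; the definitions of $Lip_{q_1}^{q_2}(\varphi)$, $lip_{q_1}^{q_2}(\varphi)$ and hence of $L_F(q_1,q_2)$ therefore extend verbatim. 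In particular the pointwise identity
$$K(\varphi)_p=\frac{Lip_{q_1}^{q_2}(\varphi)_p}{lip_{q_1}^{q_2}(\varphi)_p}$$
remains valid at every $p\in S_g\setminus\Sigma$, since it is a conformal statement expressed in the natural coordinates of $q_1,q_2$, where those differentials are non-vanishing and finite.

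Next I would invoke Teichm\"uller's theorem in the punctured setting: between any two points $X_1,X_2\in\mathcal{T}_g^n$ there is a Teichm\"uller map $h$ realizing $d_{\mathcal{T}}(X_1,X_2)=\tfrac12\log K(h)$, whose initial and terminal differentials are integrable holomorphic quadratic differentials with at most simple poles at the marked points. Thus $K(h)\le K(\varphi)$ for every $\varphi\in Diff^+_0(S_g,\Sigma)$, and the argument of 2.4 shows that
$$Lip_{q_1}^{q_2}(\varphi)<\sqrt{K(h)}\quad\text{and}\quad lip_{q_1}^{q_2}(\varphi)>\frac{1}{\sqrt{K(h)}}$$
cannot hold simultaneously, yielding $\mathcal{L}_{q_1}^{q_2}(\varphi)\ge \tfrac12\log K(h)=d_{\mathcal{T}}(X_1,X_2)$ and, after taking the infimum in $\varphi$, the desired inequality.

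The only subtlety I foresee is a regularity issue at $\Sigma$: the infimum defining $L_F$ is over $Diff^+_0(S_g,\Sigma)$, while Teichm\"uller's extremality is stated for quasiconformal homeomorphisms. I would handle this by the standard density of smooth diffeomorphisms in the space of quasiconformal self-maps homotopic to the identity (with respect to the dilation), which is already used implicitly in 2.4; no new analytic ingredient is needed. Consequently, I expect the whole proof to reduce to transcribing the argument of 2.4 with ``quadratic differential'' replaced by ``quadratic differential with at most simple poles at $\Sigma$'', together with a one-line reference to the punctured Teichm\"uller theorem.
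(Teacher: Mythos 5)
Your proposal is correct and follows the same approach the paper intends: the observation is established precisely by noting that the proof of Proposition 2.4 is a purely infinitesimal/conformal computation that never invokes the $\mathrm{Cat}(0)$ structure, so it transfers verbatim to quadratic differentials with simple poles using the punctured Teichm\"uller theorem. The regularity point you raise is a non-issue for the inequality direction, since any $\varphi\in Diff^+_0(S_g,\Sigma)$ is already a quasiconformal map homotopic to the identity, so $K(h)\le K(\varphi)$ holds directly without any density argument.
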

\bigskip
\subsection{Induced topology of $L_F$}

We define \textit{standard topology} on $\mathcal{TQ}_g(\underline k,\epsilon)$, and denote it by $\mathbb{T}_{std}$, the topology induced by the structure of complex manifold, that is, the topology induced by the period maps. Given a sequence $\{q_n\}_{n\in\mathbb{N}}\subset \mathcal{TQ}_g(\underline k,\epsilon)$, we write $q_n\rightarrow q$ to denote its convergence to $q\in \mathcal{TQ}_g(\underline k,\epsilon)$ with respect to the standard topology.\\ 
Similarly, we denote by $\mathbb{T}_{L_F}$ the topology on $\mathcal{TQ}_g(\underline k,\epsilon)$ induced by $L_F$.

\begin{prop}\label{finer}
The topology $\mathbb{T}_{std}$ is finer than $\mathbb{T}_{L_F}$.
\end{prop}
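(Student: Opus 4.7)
The plan is to establish the equivalent sequential statement: if $q_n\to q$ in the standard topology, then $L_F(q,q_n)\to 0$. Since $\mathbb{T}_{std}$ is first countable and $\mathbb{T}_{L_F}$ is pseudo-metrizable, this sequential formulation is equivalent to the proposition. It therefore suffices to exhibit, for each $q'$ in a small period-coordinate neighbourhood of $q$, a diffeomorphism $\varphi\in Diff^+_0(S_g,\Sigma)$ whose quantity $\mathcal{L}_{q}^{q'}(\varphi)$ tends to $0$ as $q'\to q$.

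First I would fix a geodesic triangulation $\tau$ of $(S_g,q)$ whose vertex set is $\Sigma$ and whose edges are saddle connections; such a triangulation exists since any maximal family of non-crossing saddle connections decomposes $S_g$ into Euclidean triangles. In period coordinates around $q$, the holonomy vector of each edge of $\tau$ depends continuously on the semi-translation structure, so for $q'$ sufficiently close to $q$ there is a combinatorially identical geodesic triangulation $\tau'$ of $q'$ with edge holonomies arbitrarily close to those of $\tau$. Using developing charts, I would then define a map $\varphi:S_g\to S_g$ that fixes $\Sigma$ pointwise and is affine on each triangle of $\tau$, sending it onto the corresponding triangle of $\tau'$. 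Since $\varphi$ coincides with the identity on $\Sigma$ and amounts to a simplicial interpolation over a finite $1$-skeleton, it is isotopic to the identity relative to $\Sigma$. On each triangle $T$, the upper and lower Lipschitz constants of $\varphi$ equal the largest and smallest singular values of the unique linear map $A_T\in GL(2,\mathbb{R})$ sending the edge-vectors of $T$ to those of its image; as these pairs of vectors converge in $\mathbb{R}^2$, one has $A_T\to I$, and taking the maximum over the finite set of triangles gives $\mathcal{L}_q^{q'}(\varphi)\to 0$.

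The principal technical step is the passage from the piecewise-affine $\varphi$ to an honest diffeomorphism in $Diff^+_0(S_g,\Sigma)$. Inside a narrow collar of the $1$-skeleton of $\tau$, I would smooth $\varphi$ by interpolating its two adjacent affine pieces via a bump-function partition of unity. The key observation that makes this smoothing harmless is that, as $q'\to q$, the two affine pieces along any edge both converge to the identity in $GL(2,\mathbb{R})$, so their $C^1$-interpolation stays close to the identity and its own derivative remains uniformly bounded in operator norm close to $1$. This produces a diffeomorphism $\tilde\varphi\in Diff^+_0(S_g,\Sigma)$ with $\mathcal{L}_q^{q'}(\tilde\varphi)\to 0$, whence $L_F(q,q')\to 0$. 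Quantifying the derivative of the smoothing in terms of the closeness of the adjacent affine pieces, uniformly near the cone points of $\Sigma$ where many triangles meet, is the step I expect to require the most care.
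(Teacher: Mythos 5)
Your argument follows the same route as the paper's: triangulate $S_g$ by saddle connections of $q$, observe that for $q'$ sufficiently close to $q$ in period coordinates the corresponding arcs remain saddle connections of $q'$, build the piecewise-affine comparison map in natural coordinates, and check that its dilatation tends to $1$ by looking at the singular values of the affine pieces on each Euclidean triangle. The one place you go beyond the paper is the smoothing step: the paper simply writes its comparison map as an element of $Diff^+_0(S_g,\Sigma)$ although that map is only piecewise affine (hence not $C^1$ along the $1$-skeleton), so your collar-interpolation together with the observation that both adjacent affine pieces converge to the identity fills a small but genuine gap in the published argument. As a side remark, you also correctly speak of singular values where the paper says ``eigenvalues.''
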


We will prove the equivalent claim that for every sequence \mbox{$\{q_n\}_{n\in\mathbb{N}}\subset \mathcal{TQ}_g(\underline k,\epsilon)$} the convergence $q_n\rightarrow q$ implies $\lim\limits_{n\rightarrow \infty}L_F(q_n,q)=0$.\\

To this end, we need to first make an observation concerning Euclidean triangles.\\ 
Denote by $\Xi$ the set of non-degenerate Euclidean triangles $T\subset \mathbb{R}^2$ with one vertex in the origin of $\mathbb{R}^2$: since every triangle $T\in \Xi$ can be identified by the coordinates of its two vertices different from the origin, $\Xi$ can be considered as a subset of $\mathbb{R}^4$.\\
Given any sequence $\{T_n\}_{n\in\mathbb{N}}$ in $\Xi$, we say that it converges to $T\in \Xi$, and write $T_n\rightarrow T$, if $\{T_n\}_{n\in\mathbb{N}}$ converges to $T$ as a sequence of $\mathbb{R}^4$ with respect to the standard Euclidean metric. For every $n\in\mathbb{N}$ consider the affine map $A_n$ which sends $T_n$ to $T$ and denote by $\sigma_{1}(A_n),\sigma_{2}(A_n)$ its eigenvalues. It is easy to verify that if $T_n\rightarrow T$ then $\lim\limits_{n\rightarrow \infty}\sigma_1(A_n)=1$, $\lim\limits_{n\rightarrow \infty}\sigma_2(A_n)=1$.
\begin{proof}
In order to prove the proposition, given any sequence $\{q_n\}_{n\in\mathbb{N}}\subset \mathcal{TQ}_g(\underline k,\epsilon)$ such that $q_n\rightarrow q$, we will find a sequence of maps $A_n\in Diff^+_0(S_g,\Sigma)$ with the property $\mathcal{L}_{q}^{q_n}(A_n)\rightarrow 0$. The claim then will follow from the inequality \mbox{$L(q_n,q)\le \mathcal{L}_{q}^{q_n}(A_n)$}.\\
If $q_n\rightarrow q$ then one could find a collection of arcs $\Gamma=\{\gamma_j\}_{j=1}^{3(m+2g-2)}$ with endpoints in $\Sigma$ which triangulate $S_g$ and an $n_0>0$ such that the geodesic representative of the homotopy class of every $\gamma_j$ for $|q|$ and $|q_n|$, $n>n_0$, is a saddle connection.\\
The geodesic representatives of the homotopy classes of the arcs in $\Gamma$ for $|q|$ (resp. $|q_n|$), provide us of a set of Euclidean triangles $\Xi_q=\{T_l\}_{l=1}^{2(k+2g-2)}$ (resp $\Xi_{q_n}=\{T_l^n\}_{l=1}^{2(k+2g-2)}$) which cover $S_g$. Using period coordinates of $\mathcal{TQ}_g(\underline k,\epsilon)$ one can indeed observe that $q_n\rightarrow q$ implies that every triangle $T^n_l$ converges to $T_l$ in the sense explained in the observation preceding this proof.\\
For every $n\in\mathbb{N}$, we define by $A_n\in Diff^+_0(S_g,\Sigma)$ the map which is piecewise affine in natural coordinates respectively of $q_n$ and $q$, and which on every triangle $T_l^n$ of $\Xi_{q_n}$ is the affine map $A_n^l$ which sends $T_l^n$ to the corresponding triangle $T_l$ of $\Xi_{q}$. \\
As before, we denote by $\sigma_1(A_n^l),\sigma_2(A_n^l)$ the eigenvalues of $A_n^l$. Since it results
$$Lip_{q}^{q_n}(A_n^l)=\max\limits_{l=1,\dots ,2(m+2g-2)}\left(\max\{\sigma_1(A_n^l),\sigma_2(A_n^l)\}\right)$$
$$lip_{q}^{q_n}(A_n^l)=\min\limits_{l=1,\dots ,2(m+2g-2)}\left(\min\{\sigma_1(A_n^l),\sigma_2(A_n^l)\}\right)$$
the claim of the proposition follows from the preceding observation about Euclidean triangles.
\end{proof}

From proposition \ref{finer}, it follows that compact sets of $\mathbb{T}_{std}$ are also compact sets of $\mathbb{T}_{L_F}$. It is thus useful to characterize them in a way which is similar to the statement of Mumford's compactness criterion.\\
Before doing so, let us fix once and for all some notation: for any arc $\gamma$ in $S_g$ with endpoints in $\Sigma$ and any quadratic differential $q\in\mathcal{TQ}_g(\underline k,\epsilon)$, we denote by $l_q(\gamma)$ the length of $\gamma$ with respect to the metric $|q|$ and by $\hat l_q(\gamma)$ the length of the geodesic representative for $|q|$ in the homotopy class of $\gamma$ with fixed endpoints.\\

The following proposition about compact sets of $\mathbb{T}_{std}$ is a consequence of proposition 1, section 3, of \cite{KMS}, which establishes the compactness of subsets of quadratic differentials with lower bound on the area.
\begin{prop}\label{compa}
Fix $\epsilon,L>0$ and a collection of arcs $\Gamma=\{\gamma_i\}_{i=1}^{3(m+2g-2)}$ with endpoints in $\Sigma$ which triangulates $S_g$.\\
Define the subset $\mathcal{K}_{\epsilon,L}\subset \mathcal{TQ}_g(\underline k,\epsilon)$ as the set of quadratic differentials $q$ which satisfy the following two conditions.
\begin{enumerate}[label=(\roman*)]
\item $sys(q)\ge \epsilon$,
\item $\sum\limits_{i=1}\limits^{3(m+2g-2)}\hat l_{q}(\gamma_i)\le L$.
\end{enumerate}
The set $\mathcal{K}_{\epsilon,L}$ is a compact set of $\mathbb{T}_{std}$.
\end{prop}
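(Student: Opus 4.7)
The plan is to prove sequential compactness: given a sequence $\{q_n\}\subset\mathcal{K}_{\epsilon,L}$, I will exhibit a subsequence converging in $\mathbb{T}_{std}$ to a limit that still lies in $\mathcal{K}_{\epsilon,L}$. The argument combines the cited \cite{KMS} compactness theorem at the level of the moduli space with a finiteness argument for short arcs to lift convergence back to the Teichm\"uller space. Since $\mathbb{T}_{std}$ is second-countable and metrizable, sequential compactness will give compactness.

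The starting observation is that condition $(ii)$ forces a uniform upper bound on the area. The triangulation $\Gamma$ decomposes each flat surface $(S_g,|q|)$ into $2(m+2g-2)$ Euclidean triangles whose sides are concatenations of saddle connections. Since $\hat l_q(\gamma_i)\le L$ for every $i$, each such triangle has every side bounded by $L$, so its Euclidean area is at most $L^2/2$; summing, $Area(q)\le (m+2g-2)L^2$ for every $q\in \mathcal{K}_{\epsilon,L}$. Together with the systole bound $sys(q)\ge\epsilon$, the \cite{KMS} result implies that the image of $\mathcal{K}_{\epsilon,L}$ in the moduli space $\mathcal{Q}(\underline k,\epsilon)$ has compact closure. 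For a given sequence $\{q_n\}$, I extract a subsequence and choose $\varphi_n\in Diff^+(S_g,\Sigma)$ so that $\varphi_n^*q_n\to \hat q$ in $\mathcal{TQ}_g(\underline k,\epsilon)$ (standard topology).

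The central step is to remove the auxiliary diffeomorphisms $\varphi_n$. Each arc $\varphi_n(\gamma_i)$ has geodesic length bounded by $L$ on $\varphi_n^*q_n$, hence, in the limit, essentially bounded by $L$ on the fixed surface $\hat q$. The standard finiteness result for flat surfaces, namely that only finitely many homotopy classes of arcs with endpoints in $\Sigma$ have geodesic length bounded by any fixed constant, combined with a diagonal extraction, allows me to find a subsequence along which $[\varphi_n(\gamma_i)]=[\psi(\gamma_i)]$ for every $i$, for a single fixed $\psi\in Diff^+(S_g,\Sigma)$. Because $\Gamma$ triangulates $S_g$, it is filling in the strong sense that a diffeomorphism of $S_g$ fixing $\Sigma$ and preserving each homotopy class $[\gamma_i]$ is necessarily isotopic (rel $\Sigma$) to the identity; hence $\varphi_n\circ\psi^{-1}\in Diff^+_0(S_g,\Sigma)$ for every large $n$. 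Replacing $\hat q$ by $\psi^{-1*}\hat q$, this gives $q_n\to\psi^{-1*}\hat q$ in $\mathbb{T}_{std}$.

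It remains to check that the limit $q:=\psi^{-1*}\hat q$ still satisfies $(i)$ and $(ii)$, i.e., that $\mathcal{K}_{\epsilon,L}$ is closed. Condition $(ii)$ passes to the limit because $q\mapsto\hat l_q(\gamma_i)$ is continuous on $\mathcal{TQ}_g(\underline k,\epsilon)$: on each cell of the cell decomposition induced by the geodesic arc system it is piecewise Euclidean in period coordinates, and it extends continuously across the common boundaries. For $(i)$, a saddle connection on $q$ of length strictly less than $\epsilon$ would deform to a saddle connection on $q_n$ of almost the same length for $n$ large, contradicting $sys(q_n)\ge\epsilon$. The hard part I expect is the lifting step, where I pass from convergence in moduli space to convergence in Teichm\"uller space: both the finiteness of homotopy classes of short arcs and the precise statement of filling for triangulations have to be invoked carefully, and this is the point at which the hypothesis that $\Gamma$ triangulates the whole surface, rather than being merely a partial arc system, plays its essential role.
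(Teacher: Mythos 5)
The paper actually supplies no proof of this proposition---it only asserts that it follows from Proposition 1, Section 3 of \cite{KMS}. Your argument is a correct and complete expansion of what that citation must be supplying: condition (ii) bounds the area, condition (i) bounds the systole, KMS then yields compactness in the moduli space, and the lifting back to $\mathcal{TQ}_g(\underline k,\epsilon)$ uses exactly the two facts you identify (finiteness of homotopy classes of arcs of bounded geodesic length on a fixed flat surface, and the Alexander-method statement that a diffeomorphism preserving the isotopy class of every arc of a filling triangulation rel $\Sigma$ is isotopic to the identity rel $\Sigma$). Closedness of $\mathcal{K}_{\epsilon,L}$ is handled as you say. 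Three small repairs are in order, none of which affects the soundness. First, with the standard pullback convention $\hat l_{\varphi^*q}(\alpha)=\hat l_q(\varphi(\alpha))$, the arcs with $\varphi_n^*q_n$-geodesic length at most $L$ are $\varphi_n^{-1}(\gamma_i)$, not $\varphi_n(\gamma_i)$; the subsequent bookkeeping then produces $\varphi_n=\psi\circ\eta_n$ with $\eta_n\in Diff_0^+(S_g,\Sigma)$ and $q_n\to(\psi^{-1})^*\hat q$, as you conclude. Second, the $q$-geodesic representatives of the $\gamma_i$ may pass through additional zeros, so the faces of the induced decomposition are flat disks bounded by concatenations of saddle connections rather than genuine Euclidean triangles; the area bound still holds but should be obtained from the isoperimetric inequality for a simply connected, nonpositively curved flat disk with geodesic boundary of bounded perimeter, rather than from a base-times-height estimate. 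Third, ``essentially bounded by $L$'' should be made precise: Proposition 2.6 gives $\hat l_{\hat q}(\varphi_n^{-1}(\gamma_i))\le e^{L_F(\hat q,\varphi_n^*q_n)}\,L<2L$ for large $n$, and the finiteness argument is applied to the (finite) set of homotopy classes of $\hat q$-geodesic length less than $2L$.
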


Using this characterization of compact sets we can prove the following proposition.

\begin{prop}\label{proper}
Each Teichm\"uller space $\mathcal{TQ}(\underline k,\epsilon)$ endowed with the pseudo-metric $L_F$ is a proper topological space.
\end{prop}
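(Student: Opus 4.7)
The plan is to show that every closed ball $\overline{B}_{L_F}(q_0,R)$ is contained in one of the compact sets $\mathcal{K}_{\epsilon_0,L_0}$ from Proposition \ref{compa}, and then use Proposition \ref{finer} to promote compactness from $\mathbb{T}_{std}$ to $\mathbb{T}_{L_F}$. Fix $q_0\in \mathcal{TQ}_g(\underline k,\epsilon)$, $R>0$, and a triangulating collection $\Gamma=\{\gamma_i\}_{i=1}^{3(m+2g-2)}$ of arcs with endpoints in $\Sigma$ whose homotopy classes have geodesic representatives for $|q_0|$ that are saddle connections. For any $q_1$ with $L_F(q_0,q_1)\le R$ and any $\eta>0$, pick $\varphi\in Diff_0^+(S_g,\Sigma)$ with $\mathcal{L}_{q_0}^{q_1}(\varphi)\le R+\eta$, so that simultaneously
\[
Lip_{q_0}^{q_1}(\varphi)\le e^{R+\eta}\quad\text{and}\quad lip_{q_0}^{q_1}(\varphi)\ge e^{-(R+\eta)}.
\]

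For the length bound (condition (ii) of Proposition \ref{compa}), let $\hat\gamma_i$ be the geodesic representative of $\gamma_i$ for $|q_0|$. Since $\varphi$ is homotopic to the identity rel $\Sigma$, the image $\varphi(\hat\gamma_i)$ lies in the same homotopy class with fixed endpoints as $\gamma_i$, so
\[
\hat l_{q_1}(\gamma_i)\le l_{q_1}(\varphi(\hat\gamma_i))\le Lip_{q_0}^{q_1}(\varphi)\,\hat l_{q_0}(\gamma_i)\le e^{R+\eta}\hat l_{q_0}(\gamma_i).
\]
Summing over $i$ and letting $\eta\to 0$ yields $\sum_i \hat l_{q_1}(\gamma_i)\le e^R\sum_i \hat l_{q_0}(\gamma_i)=:L_0$. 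For the systole bound (condition (i)), let $\gamma$ be a shortest saddle connection of $q_1$. Then $\varphi^{-1}$ has upper Lipschitz constant $1/lip_{q_0}^{q_1}(\varphi)\le e^{R+\eta}$, and $\varphi^{-1}(\gamma)$ lies in the same homotopy class rel endpoints as $\gamma$; since $\gamma$ is a non-constant geodesic, its homotopy class is non-trivial, so its geodesic representative in $|q_0|$ is a non-empty concatenation of saddle connections of $q_0$. Hence
\[
sys(q_0)\le \hat l_{q_0}(\gamma)\le l_{q_0}(\varphi^{-1}(\gamma))\le e^{R+\eta}\,l_{q_1}(\gamma)=e^{R+\eta}\,sys(q_1),
\]
and letting $\eta\to 0$ gives $sys(q_1)\ge e^{-R}sys(q_0)=:\epsilon_0$. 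Therefore $\overline{B}_{L_F}(q_0,R)\subset \mathcal{K}_{\epsilon_0,L_0}$.

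By Proposition \ref{compa}, $\mathcal{K}_{\epsilon_0,L_0}$ is compact in $\mathbb{T}_{std}$, and by Proposition \ref{finer} the identity map from $\mathbb{T}_{std}$ to $\mathbb{T}_{L_F}$ is continuous, so $\mathcal{K}_{\epsilon_0,L_0}$ is also compact in $\mathbb{T}_{L_F}$. Any $L_F$-closed bounded set $B$ is an $L_F$-closed subset of this compact set, hence compact in $\mathbb{T}_{L_F}$, proving properness. The only delicate points I anticipate are the justification that homotopies realized by $\varphi$ preserve homotopy classes of arcs with fixed endpoints in $\Sigma$ (which uses that $\varphi$ fixes $\Sigma$ pointwise and is homotopic to the identity through such maps), and the remark that the geodesic representative used in the systole bound is non-trivial; both are quick but need to be stated explicitly.
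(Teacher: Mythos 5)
Your proof is correct and follows essentially the same approach as the paper's: bound the lengths of geodesic representatives of arcs from above and below using the Lipschitz and co-Lipschitz constants of a near-optimal comparison map, place the closed $L_F$-ball inside one of the sets $\mathcal{K}_{\epsilon_0,L_0}$ of Proposition~\ref{compa}, and then use Proposition~\ref{finer} to transfer compactness from $\mathbb{T}_{std}$ to $\mathbb{T}_{L_F}$. Your write-up is in fact more careful than the paper's at two points the paper leaves implicit: you separate the verification of the systole condition (i) from the triangulation-length condition (ii), deriving the lower bound $sys(q_1)\ge e^{-R}sys(q_0)$ by applying the co-Lipschitz bound to a shortest saddle connection of $q_1$ and noting that its $|q_0|$-geodesic representative is a nontrivial concatenation of $q_0$-saddle connections, whereas the paper only gives the length-ratio inequalities for arcs $\gamma$ fixed in advance and asserts the conclusion; and you record explicitly the passage $\eta\to 0$ to get uniform bounds depending only on $R$ and $q_0$.
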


\begin{proof}
We prove that closed balls $B_{L_F}^R(q)$ of $L_F$,
$$B_{L_F}^R(q):=\{q'\in \mathcal{TQ}_g(\underline k,\epsilon)|L_F(q,q')\le R\}$$
are contained in a compact subset of $\mathbb{T}_{std}$: thanks to the result of proposition \ref{finer} they will be contained also in a compact set of $\mathbb{T}_{L_F}$.\\
Let $\gamma$ be any geodesic arc for $|q|$ with endpoints in $\Sigma$, and $\gamma_n$ the geodesic representative of its homotopy class for the metric $|q_n|$. Then it follows
$$\frac{l_q(\gamma)}{l_{q_n}(\gamma_n)}\le \frac{l_q(\varphi(\gamma_n))}{l_{q_n}(\gamma_n)}\le \sup\limits_{p\in S_g\setminus \Sigma}\left(\sup_{v\in T_pS_g,||v||_{q_n}=1}||d\varphi_pv||_q\right),$$ 

$$\frac{l_{q_n}(\gamma_n)}{l_{q}(\gamma)}\le \frac{l_{q_n}(\varphi(\gamma))}{l_{q}(\gamma)}\le \sup\limits_{p\in S_g\setminus \Sigma}\left(\sup_{v\in T_pS_g,||v||_{q}=1}||d\varphi_pv||_{q_n}\right)$$ 
and from the fact that $L_F(q,q_n)$ is bounded it follows that it can not happen $$\lim\limits_{n\rightarrow \infty}\hat l_{q_n}(\gamma)= 0 \text{ or } \lim\limits_{n\rightarrow \infty}\hat l_{q_n}(\gamma)= \infty.$$
\end{proof}

By abuse of notation we will denote by $\mathbb{T}_{std}$ and $\mathbb{T}_{L_F}$ the induced topologies on $\mathbb{P}\mathcal{TQ}_g(\underline k,\epsilon)$. 

\begin{prop}
$\mathbb{T}_{std}$ and $\mathbb{T}_{L_F}$ are the same topology on $\mathbb{P}\mathcal{TQ}_g(\underline k,\epsilon)$.
\end{prop}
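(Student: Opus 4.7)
The plan is to combine Propositions 2.6 and 2.8 with the characterization of the zero locus of $L_F$ given in Proposition 2.3. By Proposition 2.6 the identity $(\mathcal{TQ}_g(\underline k,\epsilon),\mathbb{T}_{std})\to (\mathcal{TQ}_g(\underline k,\epsilon),\mathbb{T}_{L_F})$ is continuous; both topologies are invariant under the $\mathbb{C}^*$-action, so passing to the quotient yields continuity of $(\mathbb{P}\mathcal{TQ}_g(\underline k,\epsilon),\mathbb{T}_{std})\to (\mathbb{P}\mathcal{TQ}_g(\underline k,\epsilon),\mathbb{T}_{L_F})$, which is one of the two inclusions. It therefore suffices to show that every $\mathbb{P}L_F$-convergent sequence also converges in the standard topology.

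For the converse, let $[q_n]\to [q]$ in $\mathbb{T}_{L_F}$. Choose unit-area representatives $q_n$ and $q$, so that $L_F(q_n,q)\to 0$. By Proposition 2.8 the set $\{q_n\}$, being contained in a closed $L_F$-ball around $q$, lies in a $\mathbb{T}_{std}$-compact subset of $\mathcal{TQ}_g(\underline k,\epsilon)$. Any subsequence $q_{n_k}$ therefore admits a further subsequence converging in $\mathbb{T}_{std}$ to some $q'$. By Proposition 2.6 the same subsequence converges to $q'$ also in $\mathbb{T}_{L_F}$; combining this with $L_F(q_n,q)\to 0$ and the triangle inequality gives $L_F(q',q)=0$. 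By the last line of the proof of Proposition 2.3 this forces $q'=e^{i\theta}q$ for some $\theta\in\mathbb{R}$, hence $[q']=[q]$ in the projectivization.

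Since every subsequence of $[q_n]$ has a sub-subsequence that converges to $[q]$ in $\mathbb{T}_{std}$, and the projectivization is Hausdorff and first countable as a quotient of a complex manifold, the full sequence $[q_n]$ converges to $[q]$ in $\mathbb{T}_{std}$. This gives the reverse inclusion $\mathbb{T}_{L_F}\supseteq \mathbb{T}_{std}$ on $\mathbb{P}\mathcal{TQ}_g(\underline k,\epsilon)$, so combined with the first paragraph we obtain equality.

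The principal obstacle is conceptual rather than technical. On $\mathcal{TQ}_g(\underline k,\epsilon)$ itself the two topologies are genuinely different, precisely because $L_F$ collapses the $U(1)$-orbits identified by Proposition 2.3; one could not hope for equality there. The argument works in the projectivization exactly because the additional quotient by $\mathbb{C}^*\supset U(1)$ absorbs this ambiguity, while Proposition 2.8 supplies the compactness needed to upgrade $L_F$-convergence to $\mathbb{T}_{std}$-convergence up to $U(1)$.
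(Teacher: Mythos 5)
Your proof is correct and follows essentially the same route as the paper's: one inclusion comes from Proposition 2.6, and the other from combining properness (Proposition 2.8) with the triangle inequality and the identification of the zero locus of $L_F$ from Proposition 2.3. You are in fact a bit more careful than the paper in the final step, explicitly running the ``every subsequence has a sub-subsequence converging to $[q]$'' argument to upgrade from subsequential convergence to convergence of the full sequence, whereas the paper simply passes to one subsequence and leaves this point implicit. (Incidentally, the subsequence lemma you invoke holds in any topological space, so the appeal to Hausdorffness and first countability of the projectivization, while harmless, is not needed.)
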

\begin{proof}
It will be sufficient to prove that $\mathbb{T}_{L_F}$ is finer than $\mathbb{T}_{std}$ and thus that for every sequence \mbox{$\{q_n\}_{n\in\mathbb{N}}\subset \mathcal{TQ}^{(1)}_g(\underline k,\epsilon)$} such that $\lim\limits_{n\rightarrow \infty}L_F(q_n,q)= 0$ it follows that there exists $c\in U(1)$ with the property $q_n\rightarrow cq$.\\
Since $\lim\limits_{n\rightarrow \infty}L_F(q_n,q)= 0$ it follows that $\{q_n\}_{n\in\mathbb{N}}$ is contained in a closed ball of $L_F$ and thus in a compact set. Up to passing to a subsequence we can state that there is $q'\in \mathcal{TQ}_g(\underline k,\epsilon)$ such that $q_n\rightarrow q'$.
Since 
$$L_F(q,q')\le L_F(q,q_n)+L_F(q_n,q')$$
it follows $q'=e^{i\theta}q$.
\end{proof}

In the following theorem we establish another similarity between $L_F$ and Thurston's asymmetric metric $L$: $L_F$ is a complete pseudo-metric.\\
The notion of completeness makes sense also for pseudo-metrics: a pseudo-metric $d$ on a topological space $X$ is complete if every Cauchy sequence for $d$ admits at least one limit point for $d$. Thus in the proof of the following theorem we will prove that every Cauchy sequence for $L_F$ admits at least one limit point for $L_F$.

\begin{thm}
Every Teichm\"uller space $\mathcal{TQ}_g(\underline k,\epsilon)$ and its quotient $\mathbb{P}\mathcal{TQ}_g(\underline k,\epsilon)$, endowed respectively with the metrics $L_F$ and $\mathbb{P} L_F$, are complete pseudo-metric spaces.
\end{thm}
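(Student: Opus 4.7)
The plan is to combine properness (Proposition 2.8) with the comparison of topologies (Proposition 2.6) to conclude that closed $L_F$-balls are relatively compact for $\mathbb{T}_{L_F}$, and then invoke the standard fact that any Cauchy sequence with a convergent subsequence is itself convergent. This is the same scheme one uses to deduce completeness from the Heine--Borel property of $\mathbb{R}^n$, and it carries over to our pseudo-metric setting without modification.

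First I would take a Cauchy sequence $\{q_n\}_{n\in\mathbb{N}}\subset \mathcal{TQ}_g(\underline k,\epsilon)$ for $L_F$. Being Cauchy it is bounded, so for some $R>0$ and some $n_0$ one has $\{q_n\}\subset B_{L_F}^R(q_{n_0})$. By the proof of Proposition 2.8 this closed ball sits inside a compact set $K$ of $\mathbb{T}_{std}$; by Proposition 2.6 the identity is continuous from $(\mathcal{TQ}_g(\underline k,\epsilon),\mathbb{T}_{std})$ to $(\mathcal{TQ}_g(\underline k,\epsilon),\mathbb{T}_{L_F})$, hence $K$ is compact also for $\mathbb{T}_{L_F}$. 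Consequently I may extract a subsequence $\{q_{n_k}\}_{k\in\mathbb{N}}$ converging for $L_F$ to some $q\in K$.

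Next I would upgrade subsequential convergence to full convergence using only the triangle inequality. Given $\varepsilon>0$, choose $N$ such that $L_F(q_n,q_m)<\varepsilon/2$ for every $n,m\ge N$, and then choose $k$ with $n_k\ge N$ and $L_F(q_{n_k},q)<\varepsilon/2$. Then
$$L_F(q_n,q)\le L_F(q_n,q_{n_k})+L_F(q_{n_k},q)<\varepsilon$$
for every $n\ge N$, so $\{q_n\}$ has $q$ as a limit point for $L_F$, which is exactly what completeness of a pseudo-metric space requires. For the projective statement, I would note that $L_F$ is $U(1)$-invariant: by Proposition 2.3 one has $L_F(q,e^{i\theta}q)=0$, and the triangle inequality gives $L_F(q_1,e^{i\theta}q_2)=L_F(q_1,q_2)$. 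Hence for unit-area representatives $\mathbb{P}L_F([q_1],[q_2])=L_F(q_1,q_2)$; any Cauchy sequence for $\mathbb{P}L_F$ lifts to a Cauchy sequence of unit-area representatives, which converges in $L_F$ by the first part, and projects back to give the desired limit.

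The argument has no real obstacle once Propositions 2.6 and 2.8 are in hand, and the main conceptual point to keep straight is that we need only existence of a limit point (not uniqueness) for a pseudo-metric, so the equivalence class of the limit under the $U(1)$-action is irrelevant. The only checks worth writing out carefully are the $U(1)$-invariance of $L_F$, needed to pass between $L_F$ on unit-area representatives and $\mathbb{P}L_F$, and the compactness transfer from $\mathbb{T}_{std}$ to $\mathbb{T}_{L_F}$, which is immediate from Proposition 2.6.
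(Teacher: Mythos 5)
Your proof is correct and follows the same overall scheme as the paper's: show the Cauchy sequence is contained in a $\mathbb{T}_{std}$-compact set, transfer compactness via Proposition 2.6, and extract a limit point. The only cosmetic difference is that you invoke Proposition 2.8 as a black box (Cauchy $\Rightarrow$ bounded $\Rightarrow$ contained in a closed $L_F$-ball $\Rightarrow$ contained in a compact set), whereas the paper re-derives the length bounds inline rather than citing properness; the underlying inequalities are the same.
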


\begin{proof}
We prove that any Cauchy sequence $\{q_n\}_{n\in\mathbb{N}}$ for $L_F$ on $\mathcal{TQ}_g(\underline k,\epsilon)$ is contained in a compact set of $\mathbb{T}_{std}$: from proposition 2.6 it will follow that $\{q_n\}_{n\in \mathbb{N}}$ is contained in a compact set of $\mathbb{T}_{L_F}$ and therefore is convergent. We will use the same inequalities of the proof of proposition \ref{proper}.\\ 
Consider any Cauchy sequence $\{q_n\}_{n\in\mathbb{N}}$ for $L_F$ on $\mathcal{TQ}_g(\underline k,\epsilon)$, given any arc $\gamma$ on $S_g$ with endpoints in $\Sigma$ denote by $\gamma_{n}$ the geodesic representative of the homotopy class of $\gamma$ for the metric $|q_n|$. Then for every $\varphi\in Diff_0^+(S_g,\Sigma)$ it results:
$$\frac{ l_{q_m}(\gamma_m)}{l_{q_n}(\gamma_n)}\le \frac{l_{q_m}(\varphi(\gamma_n))}{l_{q_n}(\gamma_n)}\le Lip_{q_n}^{q_m}(\varphi)$$ 
and thus the sequence $\{\log(l_{q_n}(\gamma_n))\}_{n\in\mathbb{N}}$ is a Cauchy sequence and consequently bounded: this means that $\{q_n\}_{n\in \mathbb{N}}$ is contained in a set of the form described in proposition 2.7.\\
The completeness of $(\mathbb{P}\mathcal{TQ}_g(\underline k,\epsilon),\mathbb{P} L_F)$ follows from the same reasoning considering a Cauchy sequence $\{q_n\}_{n\in\mathbb{N}}\subset \mathcal{TQ}^{(1)}_g(\underline k,\epsilon)$.
\end{proof}

Finally, it is worth mentioning that the mapping class group $\Gamma(S_g,\Sigma)$ acts on $\mathcal{TQ}_g(\underline k,\epsilon)$ by isometries of $L_F$: in particular for every $q_1,q_2\in \mathcal{TQ}_g(\underline k,\epsilon)$ and \mbox{$\psi\in \Gamma(S_g,\Sigma)$} it results
$$L_F(q_1,q_2)=L_F(\psi\cdot q_1,\psi\cdot q_2),$$
where $\psi\cdot q$ is the pullback by $\psi^{-1}$ of the quadratic differential $q$. This result follows from the equality 
$$\mathcal{L}_{q_1}^{q_2}(\varphi)=\mathcal{L}_{\psi\cdot q_1}^{\psi\cdot q_2}(\psi\circ\varphi\circ \psi^{-1})$$
for every $\varphi\in Diff_0^+(S_g,\Sigma)$ and the fact that the conjugation of $Diff_0^+(S_g,\Sigma)$ by any element of $\Gamma(S_g,\Sigma)$ is an isomorphism of $Diff_0^+(S_g,\Sigma)$.\\
Since the action of the mapping class group $\Gamma(S_g,\Sigma)$ on $\mathcal{TQ}_g(\underline k,\epsilon)$ is also properly discontinuous, one gets that the metric $L_F$ descends also to a metric $\hat L_F$ on $\mathcal{Q}(\underline k,\epsilon)$,
$$\hat L_F(\hat q_1,\hat q_2)=\inf L_F(q_1,q_2),$$
where the infimum is taken over all liftings $q_1,q_2$ to $\mathcal{TQ}_g(\underline k,\epsilon)$ of $\hat q_1,\hat q_2\in \mathcal{Q}_g(\underline k,\epsilon)$.
\begin{prop}
The space $\mathcal{Q}_g(\underline k,\epsilon)$ endowed with the metric $\hat L_F$ is a complete pseudo-metric space.
\end{prop}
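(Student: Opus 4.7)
The plan is to exploit the completeness of $L_F$ on $\mathcal{TQ}_g(\underline k,\epsilon)$ (the theorem just proved) together with the fact, recalled in the paragraph preceding this proposition, that $\Gamma(S_g,\Sigma)$ acts on $\mathcal{TQ}_g(\underline k,\epsilon)$ by $L_F$-isometries. I would take an arbitrary Cauchy sequence $\{\hat q_n\}_{n\in\mathbb{N}}$ for $\hat L_F$, pass as usual to a subsequence (still denoted $\{\hat q_n\}$) satisfying $\hat L_F(\hat q_n,\hat q_{n+1})<2^{-n}$, lift it inductively to a Cauchy sequence upstairs, apply the completeness of $L_F$, and project the limit back down.

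For the inductive lift, I would pick any $q_1\in\mathcal{TQ}_g(\underline k,\epsilon)$ projecting to $\hat q_1$, and, assuming $q_n$ has been chosen, use the definition of $\hat L_F$ as an infimum to find some pair of lifts $q',q''$ of $\hat q_n,\hat q_{n+1}$ with $L_F(q',q'')<\hat L_F(\hat q_n,\hat q_{n+1})+2^{-n}<2^{-n+1}$. Since $\Gamma(S_g,\Sigma)$ acts transitively on each fiber, there exists $\psi\in\Gamma(S_g,\Sigma)$ with $\psi\cdot q'=q_n$; I then set $q_{n+1}:=\psi\cdot q''$. By the isometry property, $L_F(q_n,q_{n+1})=L_F(q',q'')<2^{-n+1}$, and $q_{n+1}$ is a lift of $\hat q_{n+1}$.

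A standard telescoping bound then shows that $\{q_n\}$ is Cauchy in $(\mathcal{TQ}_g(\underline k,\epsilon),L_F)$, so the completeness theorem already established provides a limit point $q\in\mathcal{TQ}_g(\underline k,\epsilon)$. From the inequality $\hat L_F(\hat q_n,\hat q)\le L_F(q_n,q)$, which is immediate from the definition of $\hat L_F$ (with $\hat q$ the class of $q$), the class $\hat q$ is a limit point of the extracted subsequence; since the original sequence was Cauchy in $\hat L_F$, it too converges to $\hat q$, giving the claimed completeness.

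The only genuinely non-formal step is the inductive lifting: a priori, approximating the infimum defining $\hat L_F(\hat q_n,\hat q_{n+1})$ could force us to modify the already chosen $q_n$, which would break the induction. The isometric action of $\Gamma(S_g,\Sigma)$ on $(\mathcal{TQ}_g(\underline k,\epsilon),L_F)$ is exactly what allows one to transport the approximating pair so that its first element becomes $q_n$ while preserving the value of $L_F$; equivalently, for any fixed lift of $\hat q_n$ the infimum in $\hat L_F$ is realised by varying only the lift of $\hat q_{n+1}$. Beyond this remark no further obstacle is expected, and in particular the proper discontinuity of the $\Gamma(S_g,\Sigma)$-action plays no role in the completeness argument itself.
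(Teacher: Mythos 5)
Your argument is correct. The paper itself dispatches this proposition in one sentence --- ``The proof is identical to the one of proposition 2.10'' --- i.e.\ it gestures at rerunning the compactness argument: show that an $\hat L_F$-Cauchy sequence lies in a compact subset of the moduli space in the standard topology, then extract a limit. Carried out honestly, that route needs a Mumford-type compactness criterion for $\mathcal{Q}_g(\underline k,\epsilon)$ (an analogue of proposition~2.7 formulated so as to be invariant under the mapping-class-group action), since the fixed triangulation $\Gamma$ used in proposition~2.7 does not descend directly to the quotient. Your approach sidesteps that: you lift the (subsampled) Cauchy sequence inductively, using the transitivity of the $\Gamma(S_g,\Sigma)$-action on fibres together with the fact that this action is by $L_F$-isometries to ensure consecutive lifts stay close, apply the already-proved completeness of $L_F$ on $\mathcal{TQ}_g(\underline k,\epsilon)$, and push the limit down via $\hat L_F(\hat q_n,\hat q)\le L_F(q_n,q)$. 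This is the standard ``completeness descends along an isometric quotient'' scheme; it buys you a cleaner reduction (no new compactness lemma needed) at the cost of the inductive bookkeeping, and your closing remark correctly identifies that proper discontinuity of the action plays no role here. In short: correct proof, and a more explicit and self-contained route than the one the paper sketches.
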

\begin{proof}
The proof is identical to the one of proposition 2.10.
\end{proof}

\bigskip
\subsection{Properties of the pseudo-metric $K_F$}


A first analogy with the metric $L_F$ is given by the fact that $K_F$ has all the properties we just proved for $L_F$ and in particular it follows:
\begin{thm}
The function $K_F$ is a complete and proper symmetric pseudo-metric on $\mathcal{TQ}_g(\underline k, \epsilon)$. 
\end{thm}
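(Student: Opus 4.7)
The plan is to establish the pseudo-metric axioms first and then piggyback on the compactness tools already built up for $L_F$, after proving a one-line inequality between the two functions.

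For the axioms, symmetry is built into the definition, $K_F(q,q)=0$ because every ratio $\hat l_q(\gamma)/\hat l_q(\gamma)$ equals $1$, and nonnegativity follows since this ratio is achieved in the sup. The nontrivial point is the triangle inequality, which reduces to proving $K^a_F(q_1,q_3)\le K^a_F(q_1,q_2)+K^a_F(q_2,q_3)$. Given a saddle connection $\gamma$ of $q_1$, the $q_1$ to $q_2$ factor $\log(\hat l_{q_2}(\gamma)/\hat l_{q_1}(\gamma))$ is bounded by $K^a_F(q_1,q_2)$ by definition, but the $q_2$ to $q_3$ factor involves an arc that need not be a $q_2$-saddle connection. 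Here I would use the fact, stated in the excerpt, that the $|q_2|$-geodesic representative of $\gamma$ decomposes as a concatenation of $q_2$-saddle connections $\gamma_1^{(2)}\cdots\gamma_k^{(2)}$. Since the analogous concatenation of their $q_3$-geodesic representatives is in the homotopy class of $\gamma$ with fixed endpoints, one gets $\hat l_{q_3}(\gamma)\le\sum_i\hat l_{q_3}(\gamma_i^{(2)})\le e^{K^a_F(q_2,q_3)}\sum_i\hat l_{q_2}(\gamma_i^{(2)})=e^{K^a_F(q_2,q_3)}\hat l_{q_2}(\gamma)$, which closes the telescoping.

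Next I would observe $K_F\le L_F$. Indeed, for any $\varphi\in Diff_0^+(S_g,\Sigma)$ and any arc $\gamma$ with endpoints in $\Sigma$, pushing forward the $q_1$-geodesic representative gives $\hat l_{q_2}(\gamma)\le Lip_{q_1}^{q_2}(\varphi)\,\hat l_{q_1}(\gamma)$, so $K^a_F(q_1,q_2)\le\log Lip_{q_1}^{q_2}(\varphi)\le\mathcal{L}_{q_1}^{q_2}(\varphi)$, and taking infima and the symmetric maximum yields $K_F\le L_F$. Together with Proposition 2.6 this shows that $\mathbb{T}_{std}$ is also finer than $\mathbb{T}_{K_F}$.

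For properness I would verify that a closed ball $B^R_{K_F}(q)$ is contained in a set of the type $\mathcal{K}_{\epsilon,L}$ of Proposition 2.7, hence compact in $\mathbb{T}_{std}$ and therefore in the coarser $\mathbb{T}_{K_F}$. Fixing a triangulation $\Gamma=\{\gamma_i\}$ of $S_g$ by arcs with endpoints in $\Sigma$, the telescoping argument above (extended to any arc, not just saddle connections) gives $\hat l_{q'}(\gamma_i)\le e^{R}\hat l_q(\gamma_i)$ for every $q'\in B^R_{K_F}(q)$, which bounds the sum in condition (ii). For the systole bound (i), if $\alpha$ is the shortest $q'$-saddle connection, then $sys(q')=\hat l_{q'}(\alpha)\ge e^{-R}\hat l_q(\alpha)\ge e^{-R}sys(q)$, since any geodesic representative decomposes into saddle connections and is therefore at least as long as the shortest. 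Both hypotheses of Proposition 2.7 are satisfied, proving properness.

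Finally, completeness follows the template of the proof of Theorem 2.10. A Cauchy sequence $\{q_n\}$ for $K_F$ is bounded, hence lies in a closed ball and thus in a compact set of $\mathbb{T}_{std}$ by the previous paragraph; extracting a subsequence $q_{n_k}\to q$ in $\mathbb{T}_{std}$ and using that $\mathbb{T}_{std}$ is finer than $\mathbb{T}_{K_F}$ gives $K_F(q_{n_k},q)\to 0$, and the Cauchy property propagates this to the whole sequence. The only step I expect to be mildly delicate is the extension of $K^a_F$-type inequalities from saddle connections to arbitrary arcs with endpoints in $\Sigma$ used in both the triangle inequality and the properness step; everything else is a direct adaptation of the $L_F$ arguments already in Section 2.
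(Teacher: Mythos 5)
Your proof establishes everything the theorem literally claims (pseudo-metric axioms, properness, completeness), and the mechanism is essentially the one the paper intends: adapt the $L_F$ arguments using the decomposition of geodesic representatives into saddle connections. You supply detail where the paper does not — the telescoping argument for the triangle inequality, the extension of the length inequalities from saddle connections to arbitrary arcs, and the verification of both hypotheses of Proposition~2.7 — and these are all correct. The one point of divergence is that the paper's proof of this theorem devotes itself almost entirely to the separation property $K_F(q_1,q_2)=0\Leftrightarrow q_2=e^{i\theta}q_1$, treating the remaining properties as adapting directly from Section~2.3; you omit this property. It is not required for the statement as written (a pseudo-metric need not separate points), but the paper proves it here because it is invoked two sentences later, when $\mathbb{P}K_F$ is declared to be a metric on the projectivization $\mathbb{P}\mathcal{TQ}_g(\underline k,\epsilon)$. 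The paper's argument for it uses exactly the saddle-connection decomposition you already deploy for the triangle inequality: if $K_F(q_1,q_2)=0$, take a $q_1$-saddle connection $\sigma$, decompose its $|q_2|$-representative $\tau$ into $q_2$-saddle connections $\tau_1,\dots,\tau_k$, and decompose each $|q_1|$-representative $\sigma_i$ of $\tau_i$; the two inequalities $K^a_F(q_1,q_2)\le 0$ and $K^a_F(q_2,q_1)\le 0$ together with $\sum l_{q_1}(\sigma_i)\ge l_{q_1}(\sigma)$ force all inequalities to be equalities, showing $\tau$ is a single saddle connection of the same length. Since this runs on the same machinery you already have, it would be a short addition, but as written your proof leaves it out.
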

\begin{proof}
All the previous proofs for $L_F$ adapt to $K_F$ (in particular, the fact that $q_n\rightarrow q$ implies $K_F(q_n,q)\rightarrow q$ is a direct consequence of the definition of period maps), except for 
$$K_F(q_1,q_2)=0 \text{ if and only if } q_1=e^{i\theta}q_2$$
which can be proved as we now explain.\\
If $q_1=e^{i\theta}q_2$ then $q_1$ and $q_2$ induce the same flat metric on $S_g$ and consequently \mbox{$K_F(q_1,q_2)=0$}, so let us prove the other implication.\\
If $K_F(q_1,q_2)=0$, then consider any saddle connection $\sigma$ of $q_1$ and let $\tau$ be the geodesic representative for $|q_2|$ in the homotopy class of $\sigma$. The curve $\tau$ is a concatenation of saddle connections $\tau_1,\dots,\tau_k$ of $q_2$ and since $K_F^a(q_1,q_2)\le 0$ it results
$$l_{q_1}(\sigma)\ge l_{q_2}(\tau_1)+\dots l_{q_2}(\tau_k).$$
For each $i=1,\dots,k$ let $\sigma_i$ be the geodesic representative for the metric $|q_1|$ in the homotopy class of $\tau_i$. Since $K_F^a(q_2,q_1) \le 0$ it follows
$$l_{q_2}(\tau_1)+\dots l_{q_2}(\tau_k)\ge l_{q_1}(\sigma_1)+\dots+l_{q_1}(\sigma_k)$$
and, since the concatenation $\sigma_1*\dots *\sigma_k$ is in the same homotopy class of $\sigma$, it also results
$$l_{q_1}(\sigma_1)+\dots+l_{q_1}(\sigma_k)\ge l_{q_1}(\sigma).$$
These inequalities can be realized at the same time only if they are equalities, and since $\sigma$ is the only geodesic representative in its homotopy class it follows that $\tau$ must be a saddle connection of $q_2$: we have thus proved that if $K_F(q_1,q_2)=0$ then the geodesic representative for $|q_2|$ (resp. for $|q_1|$) of any saddle connection of $q_1$ (resp. of $q_2$) must be a saddle connection of the same length.\\
At this point the claim is basically already proved, since $q_1$ and $q_2$ give triangulations of $S_g$ by saddle connections of the same length.

\end{proof}

We can define on $\mathbb{P}\mathcal{TQ}_g(\underline k,\epsilon)$ the metric $\mathbb{P} K_F$ in the same way we defined $\mathbb{P} L_F$ and prove that its induced topology $\mathbb{T}_{K_F}$ coincides with the standard topology $\mathbb{T}_{std}$.\\

As for the metrics $L$ and $K$ on $\mathcal{T}_g$, the inequality 
$$L_F(q_1,q_2)\ge K_F(q_1,q_2), \quad \forall q_1,q_2\in\mathcal{TQ}_g(\underline k,\epsilon)$$
is straightforward, while proving the inverse inequality is a much harder problem, which could be solved finding a function $\varphi\in Diff_0^+(S_g,\Sigma)$ such that 
$$\mathcal{L}_{q_1}^{q_2}(\varphi)\le K_F(q_1,q_2).$$
Before studying the general case, let us first state a much simpler fact.
\begin{prop}
Given any $q\in\mathcal{TQ}_g(\underline k,\epsilon)$ and any $A\in GL(2,\mathbb{R})^+$, it results
$$L_F(q,A\cdot q)=K_F(q,A\cdot q)=\log(\sigma),$$
where $\sigma:=\max\{\sigma_1(A),\sigma_1(A)^{-1},\sigma_2(A),\sigma_2(A)^{-1}\}$ and $\sigma_1(A), \sigma_2(A)$ are the two eigenvalues of $A$.
\end{prop}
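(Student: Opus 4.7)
The plan is to sandwich $\log\sigma$ between an explicit upper bound for $L_F(q, A\cdot q)$ and an explicit lower bound for $K_F(q, A\cdot q)$, and then close the chain using the already-noted inequality $L_F \geq K_F$. First I would reduce to the case where $A$ is diagonal with positive entries $\lambda_1, \lambda_2$: writing $A = R_1 D R_2$ via singular value decomposition, the rotations $R_1, R_2 \in SO(2)$ act on any semi-translation structure by multiplication of the natural charts by a unimodular complex number, so Proposition 2.3 gives $L_F(q, R\cdot q) = 0$ for any rotation $R$. Combining this with the triangle inequality, $L_F(q, A\cdot q) = L_F(R_2 \cdot q, D R_2\cdot q)$, reducing the problem to $A = D$, for which the singular values match the eigenvalues named in the statement.

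For the upper bound I would test $\mathcal{L}_q^{A\cdot q}$ on the identity: a natural chart for $A\cdot q$ is a natural chart for $q$ post-composed with $A$, so in natural coordinates the Jacobian of $\mathrm{id}:(S_g,|q|)\to(S_g,|A\cdot q|)$ is literally the matrix $A$, yielding $Lip_q^{A\cdot q}(\mathrm{id}) = \max\{\lambda_1, \lambda_2\}$ and $lip_q^{A\cdot q}(\mathrm{id}) = \min\{\lambda_1, \lambda_2\}$, so $\mathcal{L}_q^{A\cdot q}(\mathrm{id}) = \log\sigma$ and thus $L_F(q, A\cdot q) \leq \log\sigma$. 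For the lower bound on $K_F$, the $GL(2,\mathbb{R})^+$-action preserves straight segments in natural coordinates, so a saddle connection $\gamma$ of $q$ with holonomy vector $v_\gamma$ remains a saddle connection of $A\cdot q$ along the same underlying arc, with holonomy vector $Av_\gamma$, and consequently
$$\frac{\hat l_{A\cdot q}(\gamma)}{\hat l_q(\gamma)} = \frac{\|Av_\gamma\|}{\|v_\gamma\|}.$$
Passing to the supremum over $\gamma \in SC(q)$ and using the classical density of the directions of saddle connections of a semi-translation surface in $S^1$, this supremum reaches the maximum singular value $\max\{\lambda_1, \lambda_2\}$, giving $K_F^a(q, A\cdot q) = \log\max\{\lambda_1,\lambda_2\}$; the symmetric computation with $A^{-1}$ gives $K_F^a(A\cdot q, q) = \log\max\{\lambda_1^{-1},\lambda_2^{-1}\}$, hence $K_F(q, A\cdot q) = \log\sigma$.

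The three estimates then force $L_F(q, A\cdot q) = K_F(q, A\cdot q) = \log\sigma$. The only delicate ingredient is the density of holonomy directions of saddle connections in $S^1$, which I would cite as a well-known fact on flat surfaces rather than reprove here. Once this is granted, the identity map already realises the infimum defining $L_F$, so no non-trivial optimising diffeomorphism needs to be constructed, in sharp contrast with the general equality problem $L_F = K_F$ discussed at the end of the section.
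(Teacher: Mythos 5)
Your proof is correct and follows essentially the same route as the paper: the identity map in natural coordinates gives the upper bound $L_F(q,A\cdot q)\le\log\sigma$, the density of saddle-connection directions (Masur) gives the lower bound on $K_F$, and the chain closes with $L_F\ge K_F$. The only differences are cosmetic: you make the SVD reduction to a diagonal matrix explicit and compute both one-sided quantities $K_F^a$ exactly, whereas the paper compresses this into a single inequality using the absolute value of the log-ratio and a ``without loss of generality'' alignment of $\sigma_1(A)$ with the horizontal direction.
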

\begin{proof}
Without loss of generality, we can suppose $\sigma_1(A)$ is realized in the horizontal direction of $q$ and $\sigma_2(A)$ in the vertical direction. Notice furthermore that it results 
$$\log(\sigma)=\mathcal{L}_{q}^{A\cdot q}(Id).$$
If $\sigma=\sigma_1(A)$ or $\sigma=\sigma_1(A)^{-1}$, then a saddle connection in the horizontal direction will have stretch factor $\sigma$: although it is not always possible to suppose the existence of such geodesic, it is a consequence of theorem 2 of \cite{Ma2} that the directions of saddle connections of a quadratic differential are dense in $S^1$. Consequently, we can always consider a sequence $\{\gamma_n\}_{n\in \mathbb{N}}$ of saddle connections of $q$ asymptotic in the horizontal direction: this means that it results $\lim\limits_{n\rightarrow \infty}\theta(\gamma_n)=0$, where $\theta(\gamma_n)$ is the difference between the direction of $\gamma_n$ and the horizontal direction.\\
Then it follows
$$K_F(q,A\cdot q)\ge \lim\limits_{n\rightarrow \infty}\left|\log\left(\frac{\hat l_{A\cdot q}(\gamma_n)}{\hat l_{q}(\gamma_n)}\right)\right|=\log(\sigma)\ge L_F(q,A\cdot q)$$
and from $K_F(q,A\cdot q)\le L_F(q,A\cdot q)$ one gets $K_F(q,A\cdot q)= L_F(q,A\cdot q)=\log (\sigma)$.
If $\sigma=\sigma_2(A)$ or $\sigma=\sigma_2(A)^{-1}$, one can repeat the same reasoning for the vertical direction.
\end{proof}

Considering the general case, one could be tempted to adapt the ideas behind Thurston's proof in \cite{Th} to the case of $L_F$ and $K_F$. Specifically, one could try to build a flat analogue to Thurston's stretch maps. \\

We thought the more natural approach to try to do so was to triangulate $S_g$ by saddle connections: clearly this could work only locally on $\mathcal{TQ}_g(\underline k,\epsilon)$, since for quadratic differentials $q_1,q_2$ too far apart there will not be any triangulation $\Gamma=\{\gamma_i\}_{i=1}^{3(m+2g-2)}$ of $S_g$ by arcs and a continuous path $t\mapsto q_t$ which connects $q_1$ and $q_2$ and is such that the geodesic representative of the homotopy class of each $\gamma_i$ is a saddle connection for all $q_t$.\\
Another possibility concerned the use of a flat counterpart to geodesic laminations, called \textit{flat lamination} (for definitions and properties we refer the reader to \cite{Mo}) in order to obtain a triangulation of $S_g$.\\
Unfortunately, both approaches suffered of the same problem: instead of hyperbolic triangles, singular flat metrics require the use of Euclidean triangles. 
Indeed, one can triangulate a semi-translation surface $(X,q)$ with Euclidean triangles and stretch each side of each Euclidean triangle by the same factor $c>1$ as in proposition 2.2 of \cite{Th}, but then the resulting semi-translation surface will simply be $c\cdot q$.\\
The point is that in this case the sides of the triangles of the triangulation should be stretched by different factors. When trying to do so, one should notice that there are plenty of couples of Euclidean triangles $T_1,T_2$ with each side stretched by a factor lower or equal to $c>1$, and such that there could be no homeomorphism $f:T_1\rightarrow T_2$ which sends sides to corresponding sides and with $Lip(f)\le c$.

\begin{es}Consider the equilateral triangle $T_1$ with sides of length 1 and the isosceles triangle $T_2$ with base side of length 1 and height $\sqrt{3}$. Then clearly the maximal stretching of the sides of $T_1$ and $T_2$ is $\frac{\sqrt{13}}{2}$, while each homeomorphism \mbox{$f:T_1\rightarrow T_2$} which sends sides to corresponding sides must also send the arc parametrizing the height of $T_1$ to an arc of length at least $\sqrt{3}$. This implies that the Lipschitz constant of such $f$ must be at least $2>\frac{\sqrt{13}}{2}$.
\end{es}

The fundamental fact enlightened by the previous conter-example is that, if one tries to obtain a diffeomorphism $\varphi\in Diff_0^+(S_g,\Sigma)$ with $\mathcal{L}_{q_1}^{q_2}(\varphi)=K_F(q_1,q_2)$ by defining it first on the Euclidean triangles of a triangulation of $S_g$, then $\mathcal{L}_{q_1}^{q_2}(\varphi)$ should be attained along a curve of the triangulation. As a consequence, when searching for flat analogues to Thurston's stretch maps, one should impose strict conditions on the triangles considered.\\
As we made clear before, for $q_1$ and $q_2$ sufficiently close in $\mathcal{TQ}_g(\underline k,\epsilon)$, there is a triangulation $\Gamma=\{\gamma_i\}_{i=1}^{3(m+2g-2)}$ of $S_g$ by arcs with endpoints in $\Sigma$ such that the geodesic representative of each $\gamma_i$ for $|q_1|$ and $|q_2|$ is a saddle connection. This procedure provides us of a collection $\Xi^1=\{T^1_j\}_{j=1}^{2(m+2g-2)}$ of Euclidean triangles in the natural coordinates of $q_1$ and a collection $\Xi^2=\{T^2_j\}_{j=1}^{2(m+2g-2)}$ of Euclidean triangles in the natural coordinates of $q_2$.\\
Our problem is now to establish if there is a triangulation $\Gamma$ of $S_g$ such that it is possible to obtain a function $\varphi\in Diff_0^+(S_g,\Sigma)$ with $\mathcal{L}_{q_1}^{q_2}(\varphi)=K_F(q_1,q_2)$ by defining it first on each couple of corresponding triangles of $\Xi^1$ and $\Xi^2$.\\

To this end one should consider the following fact:\\

\textit{Given two Euclidean triangles $T_1,T_2$ with sides labeled, consider the set $L(T_1,T_2)$ of Lipschitz constants of diffeomorphisms $f:T_1\rightarrow T_2$ which send sides to corresponding sides in a linear way. \\
The minimum of $L(T_1,T_2)$ is the Lipschitz constant of the affine map $A$ which maps $T_1$ in $T_2$.}\\
 
Note that we considered functions which are linear on the sides of the triangles since we want the Lipschitz constant to be equal to ratio of lengths of a side. This suggests the fact that the function $\varphi$ we are trying to obtain should be affine on each triangle $T_j^1$ and that its greater eigenvalue should be attained on the most stretched side of $\Gamma$.\\

Finally, we see that this last condition imposes a very strong constrain on the collections $\Xi_1$ and $\Xi_2$ and consequently on the triangulation $\Gamma$. Since this problem is related to the nature of Euclidean triangles, it does not seem likely to be solved using flat laminations.\\

For the reasons we just explained, we were not able to prove the local equality $L_F=K_F$ trying to adapt Thurston's approach. In section 3 we will explain another approach we used to prove that the equality of two asymmetric pseudo-metrics $L_F^a$ and $K_F^a$ on $\mathcal{TQ}_g^{(1)}(\underline k,\epsilon)$ depends on two statements about 1-Lipschitz maps between polygons.
\bigskip
\subsection{Geodesics of $L_F$}

In the previous discussion we explained why we are not able to produce a flat counterpart to Thurston's stretch lines, but it is interesting nonetheless to investigate what do geodesics of $L_F$ look like.\\ 
We could only find geodesics of $L_F$ which are also geodesics of $K_F$: this is because the only feasible strategy to find geodesics $t\mapsto q_t$ of $L_F$ we could think of was to find functions $\varphi_t\in Diff_0^+(S_g,\Sigma)$ such that $\mathcal{L}_q^{q_t}(\varphi_t)=t=K_F(q,q_t)$ and then conclude from $K_F(q,q_t)\le L_F(q,q_t)$.\\

As one can easily notice, these geodesics of $L_F$ are very particular: as soon as some hypothesis are lighten, one can no longer be sure to find functions $\varphi_t$ such that $\mathcal{L}_q^{q_t}(\varphi_t)=t=K_F(q,q_t)$.\\

Let us explain first how to obtain geodesics of $L_F$ and $K_F$ entirely contained in one orbit of $GL(2,\mathbb{R})^+$.

\begin{prop}
Consider any $q\in \mathcal{TQ}_g(\underline k,\epsilon)$, and any pair of continuous functions $$\theta:[0,1]\rightarrow [0,2\pi), \quad f:[0,1]\rightarrow \mathbb{R}^+$$
such that for every $t_0,t_1\in [0,1]$, $t_0<t_1$ it results 
$$e^{t_1/t_0}\ge \max\Big\{\frac{f(t_1)}{f(t_0)},\frac{f(t_0)}{f(t_1)}\Big\}.$$
Using these data one can produce four geodesics for $K_F$ and $L_F$ starting at $q$ of the following form
$$\Phi^j:[0,1]\rightarrow \mathcal{TQ}_g(\underline k,\epsilon), \quad t\mapsto q^j_t:= e^{i\theta(t)}\cdot \Sigma_t^j\cdot q,\quad j=1,2,3,4,$$ 
where $\Sigma_t^j$ is one of the following four diagonal matrices
$$\Sigma_t^1:=\begin{pmatrix} e^t &0\\0&f(t)\end{pmatrix}\quad \Sigma_t^2:=\begin{pmatrix} e^{-t} &0\\0&f(t)\end{pmatrix}\quad \Sigma_t^3:=\begin{pmatrix} f(t) &0\\0&e^t\end{pmatrix}\quad \Sigma_t^4:=\begin{pmatrix} f(t) &0\\0&e^{-t}\end{pmatrix}.$$

\end{prop}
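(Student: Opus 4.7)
The strategy is to use Proposition 2.13 together with the invariance of $L_F$ and $K_F$ under multiplication by $U(1)$, and reduce the verification to a direct eigenvalue computation for the diagonal matrices $\Sigma^j_t$.

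First I would remove the $e^{i\theta(t)}$ factor. Since Proposition 2.3 (and its analogue for $K_F$ proved in Theorem 2.12) gives $L_F(q_1,e^{i\vartheta}q_1)=K_F(q_1,e^{i\vartheta}q_1)=0$, the triangle inequality yields, for any $t_0,t_1\in [0,1]$,
$$L_F(\Phi^j(t_0),\Phi^j(t_1))=L_F(\Sigma^j_{t_0}\cdot q,\Sigma^j_{t_1}\cdot q),\qquad K_F(\Phi^j(t_0),\Phi^j(t_1))=K_F(\Sigma^j_{t_0}\cdot q,\Sigma^j_{t_1}\cdot q).$$
So the rotation component contributes nothing to the distances, which is reassuring and essentially reduces the problem to four paths inside single $GL(2,\mathbb{R})^+$-orbits.

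Second, I would write $\Sigma^j_{t_1}\cdot q=A_j(t_0,t_1)\cdot(\Sigma^j_{t_0}\cdot q)$ with $A_j(t_0,t_1):=\Sigma^j_{t_1}(\Sigma^j_{t_0})^{-1}$. These are diagonal. In case $j=1$ the eigenvalues of $A_1(t_0,t_1)$ are $e^{t_1-t_0}$ and $f(t_1)/f(t_0)$; the other three cases are entirely analogous, producing pairs drawn from $\{e^{\pm(t_1-t_0)},f(t_1)/f(t_0)\}$. By Proposition 2.13 applied to the quadratic differential $q':=\Sigma^j_{t_0}\cdot q$, I get
$$L_F(\Sigma^j_{t_0}\cdot q,\Sigma^j_{t_1}\cdot q)=K_F(\Sigma^j_{t_0}\cdot q,\Sigma^j_{t_1}\cdot q)=\log\sigma,$$
where $\sigma$ is the maximum of the eigenvalues of $A_j(t_0,t_1)$ and their inverses.

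Third, the hypothesis imposed on $f$ is precisely what forces $\sigma=e^{t_1-t_0}$ (for $t_0<t_1$): indeed the condition guarantees that both $f(t_1)/f(t_0)$ and its reciprocal lie in $[e^{-(t_1-t_0)},e^{t_1-t_0}]$, so the extremal factor is always the one coming from the exponential. Hence
$$L_F(\Phi^j(t_0),\Phi^j(t_1))=K_F(\Phi^j(t_0),\Phi^j(t_1))=t_1-t_0$$
for every $0\le t_0<t_1\le 1$ and every $j\in\{1,2,3,4\}$. Saturating the triangle inequality along each partition of $[0,1]$, this identity says exactly that each $\Phi^j$ is an arc-length parametrized geodesic for both $L_F$ and $K_F$.

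The only subtlety is the case analysis that tracks which eigenvalue of $A_j(t_0,t_1)$ has modulus $\ge 1$; it is algebraic and handled uniformly by the symmetry between the four diagonal matrices $\Sigma^j_t$ under exchanging rows and inverting one diagonal entry. Everything else is a direct invocation of Propositions 2.3 and 2.13 and Theorem 2.12.
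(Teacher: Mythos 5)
Your proof is correct and follows essentially the same route as the paper: reduce to the relation $\Phi^j(t_1)=A\cdot\Phi^j(t_0)$ inside a $GL(2,\mathbb{R})^+$-orbit, invoke Proposition 2.13, and use the hypothesis on $f$ (read, as intended, with $e^{t_1-t_0}$ rather than the typo $e^{t_1/t_0}$) to conclude $\sigma=e^{t_1-t_0}$. The only cosmetic difference is that you strip off the $U(1)$ rotation first via the pseudo-metric's degeneracy, whereas the paper writes $A=e^{i\theta(t_1)}\Sigma e^{-i\theta(t_0)}$ and applies Proposition 2.13 to that composite directly.
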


\begin{proof}

The proof is identical for all four geodesics, so we will just prove it for $\Phi^1$. \\
For any $t_0,t_1\in [0,1]$, $t_0<t_1$ it results $q_{t_1}^{1}=A\cdot q_{t_0}^1$, where $A=e^{i\theta(t_1)}\cdot  \Sigma \cdot e^{-i\theta(t_0)}$ and $\Sigma$ is the following diagonal matrix
$$\Sigma:=\begin{pmatrix} e^{t_1-t_0} & 0\\ 0&\frac{f(t_1)}{f(t_0)} \end{pmatrix}.$$
Since $\Phi^1$ is contained in a $GL(2,\mathbb{R})^+$-orbit, one can apply previous proposition 2.13 and get $K_F(q_{t_0}^1,q_{t_1}^1)=L_F(q_{t_0}^1,q_{t_1}^1)=t_1-t_0.$

\end{proof}

Given any Teichm\"uller geodesic $$\Psi:[0,1]\rightarrow \mathcal{T}_g, \quad t\mapsto [(X_t,h_t)]$$ with initial differential $q$ on $X$, we define its lifting on $\mathcal{TQ}_g(\underline k,\epsilon)$ to be $$\widetilde \Psi:[0,1]\rightarrow \mathcal{TQ}_g(\underline k,\epsilon),\quad t\mapsto q_t$$ where $q_t$ is the holomorphic quadratic differential on $X_t$ such that $h_t:X\rightarrow X_t$ is a Teichm\"uller map with respect to $q$ and $q_t$ and with dilatation $e^{2t}$. 
\begin{prop}
Liftings to $\mathcal{TQ}_g(\underline k,\epsilon)$ of Teichm\"uller geodesics are geodesics for $L_F$ and $K_F$.
\end{prop}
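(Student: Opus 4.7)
The plan is to show that both $L_F$ and $K_F$ coincide with the Teichm\"uller parameter along $\widetilde\Psi$, and then deduce the geodesic property from additivity. Concretely, my target would be to prove
$$L_F(q_{t_0},q_{t_1})=K_F(q_{t_0},q_{t_1})=t_1-t_0 \qquad \text{for all } 0\le t_0<t_1\le 1,$$
which, combined with the triangle inequality, immediately gives that $\widetilde\Psi$ is a geodesic for both pseudo-metrics.

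The $L_F$ equality would be handled entirely by Proposition \ref{tecl}. By construction of the lifting and the standard calculus of Teichm\"uller maps, the composition $h_{t_0,t_1}:=h_{t_1}\circ h_{t_0}^{-1}:X_{t_0}\to X_{t_1}$ is itself a Teichm\"uller map with respect to $q_{t_0}$ and $q_{t_1}$, with dilatation $e^{2(t_1-t_0)}$. Hence $d_{\mathcal T}(X_{t_0},X_{t_1})=\frac{1}{2}\log e^{2(t_1-t_0)}=t_1-t_0$, and the equality case of Proposition \ref{tecl} delivers $L_F(q_{t_0},q_{t_1})=t_1-t_0$.

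For $K_F$ the upper bound $K_F(q_{t_0},q_{t_1})\le L_F(q_{t_0},q_{t_1})=t_1-t_0$ is free, and the matching lower bound I would obtain by recycling the density argument already used in the proof of Proposition 2.13. In natural coordinates of $q_{t_0}$ the map $h_{t_0,t_1}$ acts diagonally, stretching the horizontal direction by $e^{t_1-t_0}$ and compressing the vertical by $e^{-(t_1-t_0)}$. Although a saddle connection of $q_{t_0}$ in exactly the horizontal direction need not exist, by Masur's theorem (Theorem 2 of \cite{Ma2}, invoked in Proposition 2.13) the set of directions of saddle connections is dense in $S^1$; so one can pick $\{\gamma_n\}\subset SC(q_{t_0})$ with $\theta(\gamma_n)\to 0$. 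For such $\gamma_n$ the ratio $\hat l_{q_{t_1}}(\gamma_n)/\hat l_{q_{t_0}}(\gamma_n)$ tends to $e^{t_1-t_0}$, forcing $K_F^a(q_{t_0},q_{t_1})\ge t_1-t_0$ and hence the claimed equality.

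With both identities in hand, for any $t_0<t_1<t_2$ the values on the three pairs add up tautologically, which is exactly the geodesic condition for $L_F$ and (separately) for $K_F$. I do not expect any serious obstacle: the only real geometric input is the density-of-saddle-connections step, which is already packaged in the proof of Proposition 2.13, while everything else is bookkeeping with Proposition \ref{tecl} and the triangle inequality. The one point that deserves care, rather than being a true difficulty, is the verification that $h_{t_0,t_1}$ really is a Teichm\"uller map with respect to $q_{t_0}$ and $q_{t_1}$ (and not merely some extremal quasiconformal map), which follows from how the lifting $\widetilde\Psi$ is built.
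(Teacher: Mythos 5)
Your proposal is correct, but it reaches the result by a slightly different route than the paper. Both arguments share the key geometric observation: writing $h_t$ in natural coordinates as $h_t(x+iy)=e^t x+ie^{-t}y$, so that $q_{t_1}=\begin{pmatrix}e^{t_1-t_0}&0\\0&e^{-(t_1-t_0)}\end{pmatrix}\cdot q_{t_0}$, i.e.\ the lifting stays inside a single $GL(2,\mathbb R)^+$-orbit. From there the paper simply observes that $\widetilde\Psi$ is a special case of the family $\Phi^1$ of Proposition 2.15 (take $\theta\equiv 0$, $f(t)=e^{-t}$), and that proposition (which in turn invokes Proposition 2.13) already delivers $L_F(q_{t_0},q_{t_1})=K_F(q_{t_0},q_{t_1})=t_1-t_0$. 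You instead compute $L_F$ via the equality case of Proposition \ref{tecl}, using that $h_{t_1}\circ h_{t_0}^{-1}$ is a Teichm\"uller map for the pair $(q_{t_0},q_{t_1})$; this is a legitimate shortcut and makes the link with the Teichm\"uller metric more explicit, though the underlying calculation ($Lip=\sqrt{K(h)}$, $lip=1/\sqrt{K(h)}$ in natural coordinates) is the same one hiding inside Proposition 2.13. For the $K_F$ lower bound you re-run the density-of-saddle-connection-directions argument; the paper avoids redoing this by citing Proposition 2.13 as a black box. Net effect: yours is a bit longer and repeats one computation, but it is sound and perhaps more transparent about where the number $t_1-t_0$ comes from; the paper's is more economical by reusing the $GL(2,\mathbb R)^+$-orbit result it has already established.
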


\begin{proof}
The claim follows immediately from the previous proposition: one just has to notice that the Teichm\"uller map $h_t$ can be locally written in natural coordinates of $q$ and $q_t$ as
$$h_t(x+iy)=e^{t}x+ie^{-t}y.$$
\end{proof}

At this point, one could be tempted to try to obtain other geodesics using the result of proposition 2.13. In particular, considering functions $\theta$ and $f$ as in proposition $2.15$, one may wonder if it could be possible to impose for example $q_t:=\Sigma^1_t\cdot e^{i\theta(t)}\cdot q$.\\ 
The answer is no: since the direction where the stretching $e^t$ is obtained varies, there is no hope to get $L_F(q_{t_0},q_{t_1})=t_1-t_0$ or $K_F(q_{t_0},q_{t_1})=t_1-t_0$.\\

It is possible however to obtain other kinds of geodesics modifying only one part of the semi-translation surface, as we will now explain.

\begin{prop}
Let $q\in\mathcal{TQ}_g(\underline k,\epsilon)$ be a semi-translation surface which contains a flat cylinder $C$ of height $h>0$ such that there is at least one saddle connection entirely contained in $C$ which realizes the height of the cylinder.\\
The arc $\Phi:[0,1]\rightarrow \mathcal{TQ}_g(\underline k,\epsilon)$, $t\mapsto q_t$, where $q_t$ is the semi-translation surface obtained from $q$ changing the height of the flat cylinder to $e^th$, is a geodesic for $L_F$ and $K_F$.
\end{prop}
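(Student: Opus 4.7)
The goal is to prove that $L_F(q_{t_0}, q_{t_1}) = K_F(q_{t_0}, q_{t_1}) = t_1 - t_0$ for every $0 \le t_0 < t_1 \le 1$. By the triangle inequality this implies the geodesic identity $L_F(q_{t_0}, q_{t_2}) = L_F(q_{t_0}, q_{t_1}) + L_F(q_{t_1}, q_{t_2})$ and the analogous one for $K_F$. Since $L_F \ge K_F$ on the whole space, it suffices to prove the lower bound $K_F(q_{t_0}, q_{t_1}) \ge t_1 - t_0$ and the upper bound $L_F(q_{t_0}, q_{t_1}) \le t_1 - t_0$.

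For the lower bound, let $\sigma$ be the saddle connection entirely contained in $C$ realizing its height, which exists by hypothesis. In $q_{t_0}$ the curve $\sigma$ is a vertical Euclidean segment of length $e^{t_0}h$ joining a singularity on one boundary component of $C$ to one on the other; in $q_{t_1}$, since the structure differs from $q_{t_0}$ only by a vertical dilation of $C$ by factor $e^{t_1-t_0}$, the natural identification sends $\sigma$ to a vertical saddle connection of length $e^{t_1}h$ in the same homotopy class with fixed endpoints. The $Cat(0)$ property of the flat metric guarantees that each of these is the unique geodesic representative in its homotopy class, so $\hat l_{q_{t_0}}(\sigma) = e^{t_0}h$ and $\hat l_{q_{t_1}}(\sigma) = e^{t_1}h$. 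Applying the definition of $K^a_F$ to $\sigma \in SC(q_{t_0})$ yields $K^a_F(q_{t_0}, q_{t_1}) \ge t_1 - t_0$, and hence $K_F(q_{t_0}, q_{t_1}) \ge t_1 - t_0$.

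For the upper bound, for each $\eta > 0$ I build $\varphi_\eta \in Diff_0^+(S_g, \Sigma)$ equal to the identity on $S_g \setminus C$ and of the form $(x, y) \mapsto (x, \rho_\eta(y))$ on $C$, in natural coordinates identifying $C \subset q_{t_0}$ with $S^1 \times [0, e^{t_0}h]$ and $C \subset q_{t_1}$ with $S^1 \times [0, e^{t_1}h]$. The profile $\rho_\eta : [0, e^{t_0}h] \to [0, e^{t_1}h]$ is a smooth strictly increasing function satisfying $\rho_\eta(0) = 0$, $\rho_\eta(e^{t_0}h) = e^{t_1}h$, with $\rho_\eta'(0) = \rho_\eta'(e^{t_0}h) = 1$ and $1 \le \rho_\eta'(y) \le e^{t_1-t_0} + \eta$; such $\rho_\eta$ exists by concentrating the stretching at a rate close to $e^{t_1-t_0}$ on most of the interval, with thin smooth transitions of vanishing width joining the boundary value $1$. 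The matching of first derivatives at $\partial C$ ensures that $\varphi_\eta$ is a smooth diffeomorphism of $S_g$ fixing $\Sigma$ and homotopic to the identity. Its differential in natural coordinates equals $\operatorname{diag}(1, \rho_\eta'(y))$, giving $Lip_{q_{t_0}}^{q_{t_1}}(\varphi_\eta) \le e^{t_1-t_0}+\eta$ and $lip_{q_{t_0}}^{q_{t_1}}(\varphi_\eta) = 1$, so that $\mathcal{L}_{q_{t_0}}^{q_{t_1}}(\varphi_\eta) \le \log(e^{t_1-t_0}+\eta)$. Letting $\eta \to 0$ yields $L_F(q_{t_0}, q_{t_1}) \le t_1 - t_0$.

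The main technical delicacy is the construction of $\rho_\eta$: any single profile satisfying $\rho'=1$ at both endpoints must, by the mean value theorem, overshoot $e^{t_1-t_0}$ somewhere inside, so the upper bound is obtained only in the limit and is not realized by a single diffeomorphism. A secondary point is the smoothness of $\varphi_\eta$ at the singularities lying on $\partial C$, which can be ensured by keeping the stretching region away from small neighborhoods of those singularities and transitioning smoothly to the identity there; these modifications do not affect the Lipschitz bounds in the limit $\eta \to 0$.
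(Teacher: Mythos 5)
Your proof is correct and follows the same overall strategy as the paper's: establish $K_F(q_{t_0},q_{t_1})\ge t_1-t_0$ by evaluating the stretch of the saddle connection $\sigma$ that realizes the height of $C$, establish $L_F(q_{t_0},q_{t_1})\le t_1-t_0$ by exhibiting a competitor map supported in $C$, and conclude from $L_F\ge K_F$. The substantive difference is in the upper bound. The paper simply declares that the map which is the affine dilation $\operatorname{diag}(1,e^{t_1-t_0})$ on $C$ and the identity on its complement lies in $Diff^+_0(S_g,\Sigma)$; but this map is only piecewise smooth, with a jump in the differential across $\partial C$, so it is not literally a diffeomorphism. You spotted that smoothness forces the competitor to deviate from the affine dilation near $\partial C$, and that any smooth profile $\rho_\eta$ with $\rho_\eta'=1$ at both endpoints must overshoot $e^{t_1-t_0}$ in the interior, so the bound is realized only in the limit over the family $\varphi_\eta$. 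Your version of the argument is therefore more careful, at the cost of a small extra verification (the smoothness near the singularities on $\partial C$, which you correctly note can be handled by localizing the stretching away from them). Your treatment of the $K_F$ side is also slightly leaner: the paper asserts that all other saddle connections are stretched by a strictly smaller factor, a geometric claim it does not justify, whereas you need only the lower bound from $\sigma$ and let $L_F\le t_1-t_0$ together with $L_F\ge K_F$ deliver the reverse inequality.
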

\begin{proof}
Denote by $\gamma_1,\dots,\gamma_2$ the saddle connections entirely contained in $C$ which realize the height of the cylinder. Clearly, if $h$ is stretched by $e^t$ then the length of $\gamma_1,\dots,\gamma_k$ is stretched by the same factor.\\
For any $t_0,t_1\in [0,1]$, $t_0<t_1$, the semi-translation surface $q_{t_1}$ is obtained from $q_{t_0}$ stretching the height of the cylinder by the factor $e^{t_1-t_0}$. All saddle connections of $q_{t_0}$ different from $\gamma_1,\dots,\gamma_k$ are stretched by a factor which is smaller than $e^{t_1-t_0}$, and consequently one can conclude $$K_F(q_{t_1},q_{t_0})=\log\left(\frac{\hat l_{q_{t_1}}(\gamma_i)}{\hat l_{q_{t_0}}(\gamma_i)}\right)=t_1-t_0.$$
Without loss of generality, we can suppose the direction of the saddle connection $\gamma_1,\dots,\gamma_k$ is the vertical one. Consequently there is a function $\varphi\in Diff^+_0(S_g,\Sigma)$ which, in natural coordinates of $q_{t_0}$ and $q_{t_1}$, can be written as the affine function $\begin{pmatrix} 1& 0\\0& e^{t_1-t_0}\end{pmatrix} $ on the cylinder and as the  identity  on the complement of the cylinder.\\
From $L_F(q_{t_0},q_{t_1})\le\mathcal{L}_{q_{t_0}}^{q_{t_1}}(\varphi)=t_1-t_0=K_F(q_{t_0},q_{t_1})$  and  $L_F(q_{t_0},q_{t_1})\ge K_F(q_{t_0},q_{t_1})$ one gets the last desired equality $L_F(q_{t_1},q_{t_0})=t_1-t_0$.\\
\end{proof}

The idea behind the previous proposition can be applied also to the case of a semi-translation surface $q$ obtained \textit{gluing two semi-translation surfaces $q_1,q_2$ along a slit in the horizontal direction}. This means that one cuts two slits of the same length, one in $q_1$ and one in $q_2$, both in the horizontal direction. Each $q_i$ will then have boundary consisting of two segments: each segment of the boundary of $q_1$ will be glued with a segment of the boundary of $q_2$ and the resulting surface $q$ will have two singularities of total angle $4\pi$ at the extremities of the slit. 

\begin{prop}
Let $q\in\mathcal{TQ}_g(\underline k,\epsilon)$ be a semi-translation surface obtained gluing two semi-translation surfaces $q_1,q_2$ along a slit in the horizontal direction. Furthermore, suppose that $q_1$ is such that it contains a sequence of saddle connections $\{\gamma_n\}_{n\in \mathbb{N}}$ asymptotic in the vertical direction (i.e. the limit of the differences of their directions with the vertical direction is zero) such that no $\gamma_n$ intersects the slit. \\
Then one obtains the geodesic $\Phi:[0,1]\rightarrow \mathcal{TQ}_g(\underline k,\epsilon)$, $t\mapsto q_t$, where $q_t$ is the semi-translation surface obtained gluing $\begin{pmatrix}1&0 \\ 0&e^t \end{pmatrix}\cdot q_1$ and  $q_2$ along the same slit.

\end{prop}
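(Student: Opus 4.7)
The plan is to mirror Proposition 2.17: for any $t_0<t_1$ in $[0,1]$ I would establish the two inequalities
\begin{equation*}
K_F(q_{t_0},q_{t_1}) \;\geq\; t_1-t_0 \quad\text{and}\quad L_F(q_{t_0},q_{t_1}) \;\leq\; t_1-t_0,
\end{equation*}
and then conclude via the general inequality $L_F\geq K_F$, which together force both distances to equal $t_1-t_0$ along the whole path, making $\Phi$ a geodesic for both metrics.

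For the upper bound on $L_F$, I would construct $\varphi\in Diff^+_0(S_g,\Sigma)$ acting, in natural coordinates, as the affine map $\begin{pmatrix}1&0\\0&e^{t_1-t_0}\end{pmatrix}$ on the region of $q_{t_0}$ coming from the stretched $q_1$, and as the identity on the region coming from $q_2$. Since the slit is horizontal, each of its points is fixed by a vertical stretching, so $\varphi$ is well defined and continuous across the gluing; a standard smoothing in an arbitrarily small neighborhood of the slit (exactly as in Proposition 2.17) produces a diffeomorphism whose pointwise singular values remain in the interval $[1,e^{t_1-t_0}]$, so that $\mathcal{L}_{q_{t_0}}^{q_{t_1}}(\varphi)=t_1-t_0$.

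For the lower bound I would test $K_F^a(q_{t_0},q_{t_1})$ along $\{\gamma_n\}_{n\in\mathbb{N}}$. Writing the segment $\gamma_n\subset q_1$ with horizontal component $x_n$ and vertical component $y_n$, the hypothesis that $\gamma_n$ avoids the slit ensures that inside each $q_t$ the curve $\gamma_n$ is still a straight segment between the same two singularities, hence a saddle connection of $q_t$ (and therefore, by the local $Cat(0)$ property, the geodesic representative of its homotopy class), of length
\begin{equation*}
\hat l_{q_t}(\gamma_n)=\sqrt{x_n^2+e^{2t}y_n^2}.
\end{equation*}
Because $\gamma_n$ is asymptotic to the vertical direction we have $x_n/y_n\to 0$, whence a direct computation gives
\begin{equation*}
\lim_{n\to\infty}\frac{\hat l_{q_{t_1}}(\gamma_n)}{\hat l_{q_{t_0}}(\gamma_n)} \;=\; e^{t_1-t_0},
\end{equation*}
so $K_F^a(q_{t_0},q_{t_1})\geq t_1-t_0$, as required.

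The main subtle point is the invariance of each $\gamma_n$ under the surgery: this is exactly what the hypothesis that no $\gamma_n$ meets the slit buys us, because otherwise the image of $\gamma_n$ in $q_t$ could bend at the slit and cease to be a geodesic of $|q_t|$, in which case the length formula above, and with it the limit computation, would fail. The smoothness issue for $\varphi$ across the slit is comparatively minor and is handled exactly as in the proof of Proposition 2.17.
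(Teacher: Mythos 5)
Your proposal is correct and follows essentially the same route as the paper: the paper also notes that the identity map on $S_g$ (which in natural coordinates is exactly your piecewise-affine $\varphi$ — the vertical stretch on the $q_1$ part and the identity on the $q_2$ part) gives $\mathcal{L}_{q_{t_0}}^{q_{t_1}}(\mathrm{Id})=t_1-t_0$, and that the saddle connections $\gamma_n$, being undisturbed by the surgery, give $K_F(q_{t_0},q_{t_1})=\lim_{n\to\infty}\log(\hat l_{q_{t_1}}(\gamma_n)/\hat l_{q_{t_0}}(\gamma_n))=t_1-t_0$. You have merely made explicit the length formula $\hat l_{q_t}(\gamma_n)=\sqrt{x_n^2+e^{2t}y_n^2}$ and the ensuing limit, which the paper leaves implicit.
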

\begin{proof}
The idea of the proof is very similar to the one of the previous proposition. \\
First of all notice that $q_t$ is a well-defined semi-translation surface since the slit is horizontal and $q_1$ is stretched only in the vertical direction. \\
Then, for every $t_0,t_1\in [0,1]$, $t_0<t_1$, from the fact that no $\gamma_n$ intersects the slit it follows 
$$K_F(q_{t_0},q_{t_1})=\lim_{n\rightarrow \infty}\log\left(\frac{\hat l_{q_{t_1}}(\gamma_n)}{\hat l_{q_{t_0}}(\gamma_n)}\right)=t_1-t_0.$$
One can then conclude noting $L_F(q_{t_0},q_{t_1})\le\mathcal{L}_{q_{t_0}}^{q_{t_1}}(Id)=t_1-t_0.$
\end{proof}

\bigskip
\section{Equality of asymmetric pseudo-metrics $L_F^a$ and $K_F^a$}
\bigskip
In this section we investigate the equality of two asymmetric pseudo-metrics $L_F^a$ and $K_F^a$ on each Teichm\"uller space $\mathcal{TQ}^{(1)}_g(\underline k,\epsilon)$ of holomorphic quadratic differentials of unitary area without simple poles. \\
In particular, using the method we develop in this section, the equality of $L_F^a$ and $K_F^a$ on whole $\mathcal{TQ}^{(1)}_g(\underline k,\epsilon)$ can be proved if two statements about 1-Lipschitz maps between planar polygons are true. We are able to prove the first statement, but the second one remains a conjecture: nonetheless, we explain why we believe it is true.

For any $g\ge 2$ and any Teichm\"uller space $\mathcal{TQ}_g^{(1)}(\underline{k},\epsilon)$ of holomorphic quadratic differentials of unitary area without simple poles, we define the function 

$$L^a_F:\mathcal{TQ}_g^{(1)}(\underline{k},\epsilon)\times \mathcal{TQ}_g^{(1)}(\underline{k},\epsilon)\rightarrow \mathbb{R}$$
associating to any pair $q_1,q_2 \in\mathcal{TQ}_g^{(1)}(\underline{k},\epsilon)$ of semi-translation surfaces of unitary area the quantity

$$L^a_F(q_1,q_2):=\inf\limits_{\varphi\in \mathcal{D}} \log(Lip(\varphi)_{q_1}^{q_2}),$$
$$Lip(\varphi)_{q_1}^{q_2}=\sup\limits_{p\in S_g\setminus \Sigma}\left(\sup\limits_{v\in T_pS_g, ||v||_{q_1}=1} ||d\varphi_pv||_{q_2}\right),$$
where $\mathcal{D}$ is the set of functions $\varphi:S_g\rightarrow S_g$ which are homotopic to the identity, differentiable almost everywhere and which fix the points of $\Sigma$.

\begin{prop}
The function $ L^a_F$ is an asymmetric pseudo-metric on $\mathcal{TQ}_g^{(1)}(\underline{k},\epsilon)$.
\end{prop}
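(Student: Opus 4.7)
The plan is to verify the three axioms of an asymmetric pseudo-metric for $L_F^a$: (a) $L_F^a(q,q)=0$; (b) non-negativity $L_F^a(q_1,q_2)\ge 0$; and (c) the triangle inequality. Symmetry is not required.

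For (a), the identity belongs to $\mathcal{D}$ and satisfies $Lip(\mathrm{id})_q^q=1$, so $L_F^a(q,q)\le 0$; combined with (b) this gives equality. For (b), I take any $\varphi\in\mathcal{D}$ with $K:=Lip(\varphi)_{q_1}^{q_2}<\infty$ (otherwise this $\varphi$ contributes $+\infty$ and can be discarded). Such a $\varphi$ is $K$-Lipschitz as a map $(S_g,|q_1|)\to(S_g,|q_2|)$, and being homotopic to the identity on a closed connected surface it has degree one and is therefore surjective. The standard Hausdorff-measure inequality for Lipschitz maps then gives
$$1=Area(q_2)=\mathcal{H}^2_{|q_2|}(\varphi(S_g))\le K^2\cdot\mathcal{H}^2_{|q_1|}(S_g)=K^2\cdot Area(q_1)=K^2,$$
so $K\ge 1$ and $\log K\ge 0$; passing to the infimum over $\mathcal{D}$ yields $L_F^a(q_1,q_2)\ge 0$. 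This is where the unit-area hypothesis, built into $\mathcal{TQ}_g^{(1)}(\underline k,\epsilon)$, is essential.

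For (c), I fix $\varepsilon>0$ and choose $\varphi,\psi\in\mathcal{D}$ with
$$\log Lip(\varphi)_{q_1}^{q_2}\le L_F^a(q_1,q_2)+\varepsilon,\qquad \log Lip(\psi)_{q_2}^{q_3}\le L_F^a(q_2,q_3)+\varepsilon,$$
and study the composition $\psi\circ\varphi$. Assuming both upper Lipschitz constants are finite (else the inequality is trivial), I verify $\psi\circ\varphi\in\mathcal{D}$: it is homotopic to the identity and fixes $\Sigma$ because both factors do, and since the Lipschitz map $\varphi$ sends Lebesgue-null sets to Lebesgue-null sets, the preimage under $\varphi$ of the null set where $\psi$ fails to be differentiable is itself null, so $\psi\circ\varphi$ is differentiable almost everywhere on $S_g$. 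At such points the chain rule yields
$$||d(\psi\circ\varphi)_p v||_{q_3}=||d\psi_{\varphi(p)}(d\varphi_p v)||_{q_3}\le Lip(\psi)_{q_2}^{q_3}\cdot Lip(\varphi)_{q_1}^{q_2}\cdot ||v||_{q_1},$$
and hence $Lip(\psi\circ\varphi)_{q_1}^{q_3}\le Lip(\psi)_{q_2}^{q_3}\cdot Lip(\varphi)_{q_1}^{q_2}$. Taking logarithms and letting $\varepsilon\to 0$ gives $L_F^a(q_1,q_3)\le L_F^a(q_1,q_2)+L_F^a(q_2,q_3)$.

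The only delicate step is showing that $\psi\circ\varphi\in\mathcal{D}$, specifically the almost-everywhere differentiability of the composition; this hinges on the fact that an element of $\mathcal{D}$ with finite upper Lipschitz constant is genuinely Lipschitz and therefore preserves null sets under preimage. Once this is settled, the remaining computations are routine.
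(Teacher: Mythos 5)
Your proof follows essentially the same route as the paper: non-negativity is deduced from the observation that maps in $\mathcal{D}$ have degree one, hence are surjective, with the unit-area hypothesis turning surjectivity into the bound $Lip(\varphi)_{q_1}^{q_2}\ge 1$; and the triangle inequality comes from composing two near-optimal maps. You make explicit the area estimate that the paper only gestures at, and you also address the closure of $\mathcal{D}$ under composition, which the paper silently assumes.

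That last verification, however, contains a flawed step. You argue that $\varphi^{-1}(N)$ is null, where $N$ is the null set on which $\psi$ fails to be differentiable, ``since the Lipschitz map $\varphi$ sends Lebesgue-null sets to Lebesgue-null sets.'' That implication runs the wrong way: Lipschitz maps send null sets to null sets under the \emph{forward} image, not under preimage. A Lipschitz map of degree one can still collapse a set of positive measure to a single point, so $\varphi^{-1}(N)$ can perfectly well have positive measure, and your stated reason does not establish that $\psi\circ\varphi$ is differentiable a.e., let alone that the pointwise chain rule holds a.e. The fix is simpler than what you attempted: if $Lip(\varphi)_{q_1}^{q_2}$ and $Lip(\psi)_{q_2}^{q_3}$ are finite then $\varphi$ and $\psi$ are genuine Lipschitz maps between the flat surfaces, so the composition $\psi\circ\varphi$ is itself Lipschitz with constant at most $Lip(\psi)_{q_2}^{q_3}\cdot Lip(\varphi)_{q_1}^{q_2}$; Rademacher's theorem then gives almost-everywhere differentiability of $\psi\circ\varphi$ directly, and at any point of differentiability the operator norm of $d(\psi\circ\varphi)_p$ is bounded by the global Lipschitz constant. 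This yields the needed inequality $Lip(\psi\circ\varphi)_{q_1}^{q_3}\le Lip(\psi)_{q_2}^{q_3}\,Lip(\varphi)_{q_1}^{q_2}$ without invoking a pointwise chain rule at all. With that substitution your argument is correct and matches the paper's.
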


\begin{proof}
It is clear that $L^a_F(q,q)=0$ for every $q\in \mathcal{TQ}_g(\underline k,\epsilon)$ and that $L^a_F$ is not symmetric.\\
Note that every function $\varphi\in\mathcal{D}$ must be surjective, since it has degree 1: from this fact it follows $Lip(\varphi)_{q_1}^{q_2}\ge 1$ and $Lip(\varphi)_{q_1}^{q_2}= 1$ if and only if $q_2=e^{i\theta}q_1$. \\
Finally, $L^a_F$ satisfies the triangular inequality since for every couple of functions \mbox{$\varphi,\phi\in\mathcal{D}$} it follows
$$Lip(\varphi\circ\phi)_{q_1}^{q_3}\le Lip(\varphi)_{q_2}^{q_3}Lip(\phi)_{q_1}^{q_2}.$$
\end{proof}
The other pseudo-metric we consider in the present section is $K_F^a$: for every $q_1,q_2\in \mathcal{TQ}^{(1)}_g(\underline k,\epsilon)$, $K_F^a(q_1,q_2)$ is defined as 
$$K^a_F(q_1,q_2):=\sup\limits_{\gamma\in SC(q_1)}\log\left(\frac{\hat l_{q_2}(\gamma)}{\hat l_{q_1}(\gamma)}\right).$$
For every $q_1,q_2\in \mathcal{TQ}^{(1)}_g(\underline k,\epsilon)$ it clearly results $$L_F^a(q_1,q_2)\ge K_F^a(q_1,q_2).$$

With the techniques exposed in the present section we are able to reduce the proof of the equality of $L_F^a$ and $K_F^a$ on the whole $\mathcal{TQ}_g^{(1)}(\underline{k},\epsilon)$ to the proof of two statements about 1-Lischitz maps between planar polygons. Given their importance, we feel it is necessary to briefly anticipate them now in a slightly simplified version.\\

Consider two planar polygons $\Delta$ and $\Delta'$ such that there is an injective function 
$$\iota:Vertices(\Delta)\rightarrow Vertices(\Delta')$$
which to every vertex $v$ associates a unique vertex $\iota(v)=v'$. Suppose both $\Delta$ and $\Delta'$ have exactly three vertices with strictly convex internal angle, which we denote $x_i$ and $x_i'$, $i=1,2,3$ respectively. \\
Suppose furthermore that for every $x,y\in Vertices(\Delta)$ it results
$$d_\Delta(x,y)\ge d_{\Delta'}(x',y'),$$
where $d_\Delta$ (resp. $d_{\Delta'}$) is the intrinsic Euclidean metric inside $\Delta$ (resp. $\Delta'$): $d_\Delta(x,y)$ (resp. $d_{\Delta'}(x',y')$) is defined as the infimum of the lengths, computed with respect to the Euclidean metric, of all paths from $x$ to $y$ (resp. from $x'$ to $y'$) entirely contained in $\Delta$ (resp. in $\Delta'$). \\
We say that vertices of $\Delta$ and of $\iota(Vertices(\Delta))$ are \textit{disposed in the same order} if it is possible to choose two parametrizations $\gamma:[0,1]\rightarrow \partial \Delta$ and $\gamma_1:[0,1]\rightarrow \partial \Delta'$ such that $\gamma(0)=x_1$, $\gamma_1(0)=x_1'$ and $\gamma,\gamma_1$ meet respectively vertices of $\Delta$ and of $\Delta'$ in the same order.

\begin{stm}(Theorem 3.21) 
If $Vertices(\Delta)$ and $\iota(Vertices(\Delta))$ are disposed in the same order, then there is a 1-Lipschitz map $f:\Delta\rightarrow \Delta'$ (with respect to the intrinsic Euclidean metrics of the polygons) which sends vertices to corresponding vertices.
\end{stm}
\begin{stm}(Conjecture 3.31) 
If $Vertices(\Delta)$ and $\iota(Vertices(\Delta))$ are not disposed in the same order, then for every point $p\in \Delta$ there is a point $p'\in \Delta'$ such that $$d_\Delta(p,x_i)\ge d_{\Delta'}(p',x_i'), \quad i=1,2,3.$$
\end{stm}
We were able to prove the first statement, which corresponds to following theorem 3.21, but not the second one, which from now on will be referred to as conjecture 3.31: we will still explain why we believe it must be true.\\
We state the following theorem, which is the main result of this paper.
\begin{thm}\label{bigthm}
If conjecture 3.31 is true, then for every $q_1,q_2\in \mathcal{TQ}_g^{(1)}(\underline{k},\epsilon)$, it results
$$L_F^a(q_1,q_2)= K^a_F(q_1,q_2).$$
\end{thm}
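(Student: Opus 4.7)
The inequality $L_F^a(q_1,q_2)\ge K_F^a(q_1,q_2)$ is immediate, so only the reverse inequality needs proof. Both sides change by the same additive constant when the flat metric $|q_2|$ is rescaled by a positive scalar (the area is no longer unit, but this does not affect the argument); hence, passing to a rescaled representative, it suffices to prove that whenever $K_F^a(q_1,q_2)\le 0$ there exists $\varphi\in\mathcal{D}$ with $Lip(\varphi)_{q_1}^{q_2}\le 1$.

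The plan is to build $\varphi$ piecewise on a triangulation of $S_g$ whose 1-skeleton consists of saddle connections of $q_1$. Fix a geodesic triangulation $\Gamma=\{\gamma_i\}$ of $(S_g,|q_1|)$ and let $T_1,\dots,T_N$ be its faces, Euclidean triangles in natural charts of $q_1$. For each arc $\gamma_i$, let $\hat\gamma_i$ denote its $|q_2|$-geodesic representative with fixed endpoints, which is a concatenation of saddle connections of $q_2$; the three arcs bounding the face $T_j$ enclose a planar polygon $\Delta'_j$, in the natural charts of $q_2$, with exactly three strictly convex corners (at the images of the vertices of $T_j$) and possibly additional reflex corners at the joints of saddle connections of $q_2$. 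The hypothesis $K_F^a(q_1,q_2)\le 0$ yields
\[
d_{\Delta'_j}(v',w')\le d_{T_j}(v,w)
\]
for every pair $v,w$ of convex corners of $T_j$, so the data $(T_j,\Delta'_j)$ satisfy the hypotheses of Theorem~3.21 or of Conjecture~3.31. In the first case (corners disposed in the same order) Theorem~3.21 produces a 1-Lipschitz map $f_j:T_j\to\Delta'_j$ sending convex corners to convex corners; in the second case Conjecture~3.31 supplies, for every $p\in T_j$, an admissible image $p'\in\Delta'_j$ meeting the three distance constraints simultaneously, which is precisely the input the Valentine construction of Theorem~1.4 requires to extend the 1-Lipschitz map on the three corners of $T_j$ to a 1-Lipschitz $f_j:T_j\to\Delta'_j$.

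Finally I glue: before building the $f_j$ I fix, for each edge $\gamma_i$, a canonical 1-Lipschitz parametrization of $\hat\gamma_i$, and I impose that each $f_j$ restrict to this common parametrization on the corresponding side of $\partial T_j$. The $f_j$ then agree on shared edges and assemble into a continuous map $\varphi:S_g\to S_g$ fixing $\Sigma$, differentiable almost everywhere, and sending each arc $\gamma_i$ to a curve in the same homotopy class relative to $\Sigma$; it follows that $\varphi$ is homotopic to the identity and therefore lies in $\mathcal{D}$, with $Lip(\varphi)_{q_1}^{q_2}\le 1$ face by face. The main obstacle is the case in which the convex corners of $T_j$ and $\Delta'_j$ are not disposed in the same order: here Conjecture~3.31 is indispensable, and one must further verify that the Valentine-type extension can be performed consistently with the preselected edge parametrizations, so that the local 1-Lipschitz pieces truly assemble into a global 1-Lipschitz map rather than one which is only locally 1-Lipschitz on each face.
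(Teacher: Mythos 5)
Your proposal takes a genuinely different route from the paper, and it contains a real gap that is not merely a detail to verify. The paper does not triangulate $S_g$ and glue local maps; it constructs $\phi$ globally from the start by lifting to the universal cover, where $|\widetilde q_1|$ and $|\widetilde\sigma_2|$ are $Cat(0)$, and then extending a map defined on $\pi^{-1}(\Sigma)$ to a dense countable subset of a fundamental domain one point at a time. At each stage the existence of an admissible image is reduced, via Helly's lemma for uniquely geodesic spaces, to intersections of three convex sets (closed balls and the constraint sets $V_\theta$ that encode $\pi_1$-equivariance), and the hard case — three balls, Theorem~3.11 — is where Theorem~3.21 and Conjecture~3.31 enter, applied to filled geodesic triangles in the $Cat(0)$ covers. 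The limit map is automatically globally $1$-Lipschitz and equivariant, so it descends to $\phi\in\mathcal{D}$; there is no gluing to make consistent.

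Your version has two problems. First, the ``planar polygon $\Delta'_j$ enclosed by the arcs $\hat\gamma_i$'' is not well-defined on $S_g$: the $|q_2|$-geodesic representatives of the three sides of $T_j$ can cross one another, share subarcs, or fail to bound an embedded disk, since $|q_2|$ is only \emph{locally} $Cat(0)$. This is precisely why the paper passes to the universal cover (Proposition~3.20), but doing so reintroduces the equivariance constraint that your construction does not address. Second, and more fundamentally, the gluing step is not supported by the tools you cite. Theorem~3.21 and Conjecture~3.31 give no control over the boundary values of $f_j$: they guarantee \emph{some} $1$-Lipschitz map sending vertices to corresponding vertices, not one agreeing with a pre-chosen edge parametrization. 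Conversely, if you first fix parametrizations of the $\hat\gamma_i$ and try to extend by Theorem~1.4, you need the union of these parametrizations to be $1$-Lipschitz from $(\partial T_j,d_{T_j})$ — i.e.\ $1$-Lipschitz \emph{across} the triangle, not just along each side — and you need the Kirszbraun extension to land in the right place. Neither follows from $K_F^a(q_1,q_2)\le 0$, which only controls distances between singular points. In effect you would need a strengthened boundary-value version of Theorem~3.21, which is neither stated nor proved in the paper; indeed the discussion at the end of Section~2 explains that triangulation-and-stretch schemes of exactly this type run into obstructions, and the Helly-based point-by-point extension was chosen precisely to sidestep the consistency problem you have left open.
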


We proved theorem \ref{bigthm} using an approach similar to a proof by F.A. Valentine (which can be found in \cite{Va}) of Kirszbraun's theorem for $\mathbb{R}^2$ (firstly proved by M.D. Kirszbraun in \cite{Ki}).

\begin{thm} (Kirszbraun)\\\label{kirsz}
Let $S\subset \mathbb{R}^2$ be any subset and $f:S\rightarrow \mathbb{R}^2$ a 1-Lipschitz map.\\
Given any set $T$ which contains $S$, it is possible to extend $f$ to a 1-Lipschitz map $\hat f:T\rightarrow \mathbb{R}^2$ such that $\hat f(T)$ is contained in the convex hull of $f(S)$.
\end{thm}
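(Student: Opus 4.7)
The plan is to follow F.\,A.\ Valentine's classical two-step strategy. First, I would establish a \emph{one-point extension lemma}: given any 1-Lipschitz map $f:S\to \mathbb{R}^2$ and any point $p\in \mathbb{R}^2\setminus S$, there exists $q\in \operatorname{conv}(f(S))$ such that $|q-f(x)|\le |p-x|$ for every $x\in S$. Once this lemma is available, the extension to all of $T$ follows by a standard Zorn's lemma argument applied to the poset of 1-Lipschitz extensions of $f$ with image in $\operatorname{conv}(f(S))$, ordered by inclusion of graphs: the union of a chain is again 1-Lipschitz with image in $\operatorname{conv}(f(S))$, so a maximal element $\hat f$ exists, and its domain must coincide with $T$, since otherwise the one-point extension lemma could be used to enlarge it.

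The one-point extension lemma is equivalent to showing that the intersection $\bigcap_{x\in S}\overline{B}(f(x),|p-x|)$ is non-empty. Since each of these closed disks is compact, by the finite intersection property it suffices to treat finite subfamilies; by Helly's theorem in $\mathbb{R}^2$ it then suffices to show that any three disks of the family have a common point. This three-disk case is the main geometric obstacle and the step I expect to require genuine work. Concretely: given $x_1,x_2,x_3,p\in \mathbb{R}^2$ and points $f(x_1),f(x_2),f(x_3)$ satisfying $|f(x_i)-f(x_j)|\le |x_i-x_j|$ for $i,j\in\{1,2,3\}$, one must exhibit $q\in \mathbb{R}^2$ with $|q-f(x_i)|\le |p-x_i|$ for $i=1,2,3$. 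Following Valentine, I would argue by comparing the triangles $x_1x_2x_3$ and $f(x_1)f(x_2)f(x_3)$: after reducing to the generic case in which the three circles $\{|z-f(x_i)|=|p-x_i|\}$ are in general position (if one disk contains another, the conclusion is immediate), the side-length inequalities force the three pairwise circle intersections to share a common point, which can be checked by a direct trigonometric computation comparing the angles at the vertices of the two triangles.

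To guarantee that the value produced by the lemma lies in $C:=\operatorname{conv}(f(S))$, I would take any valid $q$ from the previous step and replace it by its nearest-point projection $\pi(q)$ onto the closed convex set $C$. Since every $f(x)$ belongs to $C$, the contractive property of projection onto a closed convex subset of a Hilbert space yields
\[
|\pi(q)-f(x)|\le |q-f(x)|\le |p-x| \quad \text{for every } x\in S,
\]
so the 1-Lipschitz constraint is preserved while the value now lies in $C$. This makes the Zorn iteration coherent: at every stage the image has not left $C$, and $\operatorname{conv}(f(S))$ remains the relevant convex set throughout, so the maximal extension $\hat f:T\to \mathbb{R}^2$ satisfies $\hat f(T)\subset \operatorname{conv}(f(S))$, which is precisely the required conclusion.
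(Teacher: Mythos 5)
Your proposal follows the same Valentine-style route the paper invokes: the triangle-comparison lemma together with Helly's theorem in the plane give the one-point extension, and a Zorn-type argument completes the transfinite step. Your added move of projecting the candidate value onto the convex hull is a clean way to make explicit the constraint $\hat f(T)\subseteq\operatorname{conv}(f(S))$, which the paper's sketch leaves implicit; just note that nearest-point projection requires a \emph{closed} convex target, so one should work with the closed convex hull of $f(S)$ throughout.
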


The key ingredients of Valentine's proof of Kirszbraun theorem are the following two lemmas.

\begin{lem}\label{triang}
Fix two Euclidean triangles $\Delta(x_1,x_2,x_3)$ and $\Delta(x_1',x_2',x_3')$ in $\mathbb{R}^2$ such that 
$$|x_i'-x_j'|\le |x_i-x_j|\quad \textit{ for every }i,j=1,2,3.$$
Then for any $x_4\in \mathbb{R}^4$ there is a point $x_4'$ contained in $\Delta(x_1',x_2',x_3')$ such that 
$$|x_4'-x_i'|\le |x_4-x_i|\quad \textit{for every } i=1,2,3.$$
\end{lem}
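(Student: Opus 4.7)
The plan is to first reduce the problem to showing that the three closed disks $D_i := \overline{B}(x_i', r_i)$, with $r_i := |x_4 - x_i|$, share a common point in $\mathbb{R}^2$. Indeed, given any $y \in D_1 \cap D_2 \cap D_3$, the orthogonal projection $x_4' := \pi_{\Delta'}(y)$ onto the convex set $\Delta' := \Delta(x_1',x_2',x_3')$ lies in the triangle by construction; since $\pi_{\Delta'}$ is $1$-Lipschitz and each $x_i' \in \Delta'$ is fixed by it, $|x_4' - x_i'| \le |y - x_i'| \le r_i$ as required.

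To produce such a $y$, I would minimise the convex coercive function $\phi(y) := \max_i\bigl(|y - x_i'| - r_i\bigr)$, which attains its infimum at some $y^*$; setting $c := \phi(y^*)$, the triple intersection is non-empty precisely when $c \le 0$. The optimality condition $0 \in \partial \phi(y^*)$, together with the identification of the subdifferential of each $|\cdot - x_i'|$ as the corresponding unit radial vector, expresses $y^*$ as a convex combination of the active centres: $y^* = \sum_i \nu_i x_i'$ with $\nu_i \ge 0$, $\sum_i \nu_i = 1$, and $|y^* - x_i'| = r_i + c$ whenever $\nu_i > 0$.

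The crux is a short comparison built from the variance identity $\sum_{i<j} \nu_i \nu_j |p_i - p_j|^2 = \sum_i \nu_i |p_i - \bar p|^2$, valid whenever $\bar p = \sum_i \nu_i p_i$ and $\sum_i \nu_i = 1$. Applied to $\{x_i'\}$ with barycentre $y^*$, it gives $\sum_i \nu_i(r_i + c)^2 = \sum_{i<j} \nu_i \nu_j |x_i' - x_j'|^2$; applied to $\{x_i\}$ with barycentre $\tilde y := \sum_i \nu_i x_i$, and combined with the Lagrange-type identity $\sum_i \nu_i |x_4 - x_i|^2 = |x_4 - \tilde y|^2 + \sum_i \nu_i |\tilde y - x_i|^2$, it yields $\sum_i \nu_i r_i^2 \ge \sum_{i<j} \nu_i \nu_j |x_i - x_j|^2$. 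Splicing these via the hypothesis $|x_i' - x_j'| \le |x_i - x_j|$ gives $\sum_i \nu_i (r_i + c)^2 \le \sum_i \nu_i r_i^2$, i.e.\ $c(2\bar r + c) \le 0$ with $\bar r := \sum_i \nu_i r_i \ge 0$. Since $c > 0$ would make both factors positive, we conclude $c \le 0$.

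I expect the main obstacle to be not the geometric chain of inequalities, which is rigid once set up, but the subdifferential and active-set bookkeeping at $y^*$: verifying that $y^*$ really is a $\nu$-convex combination of the active $x_i'$'s in every configuration, including degenerate situations where the active set is a singleton (forcing $y^* = x_{i_0}'$ and $c = -r_{i_0} \le 0$ directly) or where some $r_i$ vanishes (so that $\bar r = 0$ and one must recover $c \le 0$ by inspection). These edge cases are where the clean calculus-of-variations argument risks breaking down, and I would handle them by direct case analysis before invoking the variance identity in the generic case.
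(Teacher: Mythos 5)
The paper does not actually prove this lemma; it is quoted verbatim as one of the ``key ingredients of Valentine's proof of Kirszbraun's theorem'' and the reader is referred to \cite{Va}. Your proposal is therefore a supplement rather than a rederivation of anything in the paper, and it is a correct one: the variational argument you outline --- minimise $\phi(y)=\max_i(|y-x_i'|-r_i)$, read off a convex combination from stationarity, and close with the variance (parallel-axis) identity plus the side-length comparison --- is essentially the standard proof of the ball-intersection step, going back to Kirszbraun himself and appearing in Federer's treatment; Valentine's 1943 paper gives a more elementary geometric case analysis. Your preliminary reduction via the metric projection onto the convex set $\Delta(x_1',x_2',x_3')$ is a clean way to enforce $x_4'\in\Delta'$, which some classical expositions handle less elegantly.

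One piece of bookkeeping deserves to be made explicit, since you flag it as the likely weak spot. Stationarity $0\in\partial\phi(y^*)$ produces a relation $\sum_{i\in A}\lambda_i u_i=0$ with $u_i=(y^*-x_i')/|y^*-x_i'|$ unit vectors and $(\lambda_i)$ convex weights on the active set $A$; this is not immediately $y^*=\sum\lambda_i x_i'$. Under the contradiction hypothesis $c>0$ one has $|y^*-x_i'|=r_i+c>0$ for all active $i$, so substituting $u_i=(y^*-x_i')/(r_i+c)$ and renormalising $\nu_i\propto\lambda_i/(r_i+c)$ does give $y^*=\sum_i\nu_i x_i'$ with $\nu_i\ge 0$, $\sum\nu_i=1$, and $|y^*-x_i'|=r_i+c$ whenever $\nu_i>0$; your two applications of the variance identity then go through verbatim and yield $c(2\bar r+c)\le 0$, hence $c\le 0$. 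The degenerate singleton case $A=\{i_0\}$ forces $y^*=x_{i_0}'$ (else $\partial\phi(y^*)$ is a single unit vector, never zero) and thus $c=-r_{i_0}\le 0$ directly, and $\bar r=0$ collapses the final inequality to $c^2\le 0$ --- both exactly as you anticipate. With this reweighting spelled out, the argument is complete.
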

The second lemma is often referred to as \textit{Helly's theorem} (firstly proved by E.Helly in \cite{He}).
\begin{lem}(Helly)\\
Let $F$ be any family of compact and convex subsets of $\mathbb{R}^n$. Suppose that for every $C_1,\dots, C_{n+1}\in F$ it results 
$$\bigcap_{i=1}^{n+1}C_i\neq \emptyset$$
then it also results
$$\bigcap_{C\in F}C\neq \emptyset.$$
\end{lem}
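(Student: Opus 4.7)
The plan is to reduce the general statement to a finite-family version, and then prove the finite version by induction on the number of sets, using Radon's partition theorem as the crucial geometric input.

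First I would observe that, since every $C \in F$ is compact, the intersection $\bigcap_{C \in F} C$ is non-empty if and only if every finite subfamily has non-empty intersection: fix any $C_0 \in F$, regard each other $C \in F$ as the closed subset $C \cap C_0$ of the compact space $C_0$, and apply the finite intersection property characterization of compactness. Consequently it suffices to show that for any $N \geq n+1$ and any $C_1, \dots, C_N \in F$, the intersection $\bigcap_{i=1}^N C_i$ is non-empty.

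I would prove this finite version by induction on $N$. The base case $N = n+1$ is exactly the hypothesis of the lemma. For the inductive step $N \geq n+2$, the inductive hypothesis applied to $\{C_1, \dots, C_N\} \setminus \{C_i\}$ yields a point $x_i \in \bigcap_{j \neq i} C_j$ for each $i \in \{1, \dots, N\}$. If $x_i = x_j$ for some $i \neq j$, then this common point already lies in all of the $C_l$ and we are done. Otherwise $x_1, \dots, x_N$ are $N \geq n+2$ distinct points of $\mathbb{R}^n$, and I would invoke Radon's theorem to obtain a partition $\{1, \dots, N\} = I \sqcup J$ together with a point $y \in \mathrm{conv}\{x_i : i \in I\} \cap \mathrm{conv}\{x_j : j \in J\}$. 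Then $y \in C_l$ for every $l$: indeed, if $l \in I$, each $x_j$ with $j \in J$ satisfies $x_j \in \bigcap_{m \neq j} C_m \subseteq C_l$, so the convexity of $C_l$ forces $\mathrm{conv}\{x_j : j \in J\} \subseteq C_l$ and hence $y \in C_l$; the case $l \in J$ is symmetric.

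The only non-routine ingredient, and the step I expect to be the main obstacle, is Radon's theorem itself. I would establish it by a short linear algebra argument: lifting the $x_i$ to $(x_i, 1) \in \mathbb{R}^{n+1}$, the inequality $N \geq n+2$ guarantees a non-trivial linear relation $\sum_i \lambda_i (x_i, 1) = 0$, and separating indices according to the sign of $\lambda_i$ and rescaling the two sides by the (equal) absolute values of their weights produces the claimed common convex combination. This is precisely the place where the critical dimension bound $n+1$ of the hypothesis enters; once Radon's theorem is available, the remainder of the proof is essentially bookkeeping.
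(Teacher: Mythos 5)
The paper does not prove this lemma at all: it states Helly's theorem as a classical fact and cites Helly's original 1923 paper, so there is no in-paper argument to compare against. Your proposal is a correct and essentially the standard modern proof. The reduction from an arbitrary compact family to finite subfamilies via the finite intersection property is exactly right (and is the only place where compactness, rather than just closedness and boundedness of one representative, is used). The induction on $N$ with Radon's theorem as the engine is the canonical argument, and your verification that $y\in C_l$ for all $l$ — by noting that if $l\in I$ then every $x_j$ with $j\in J$ already lies in $C_l$, so $\mathrm{conv}\{x_j:j\in J\}\subseteq C_l$ by convexity — is complete. Two small remarks: the case distinction ``$x_i=x_j$ for some $i\neq j$'' is harmless but unnecessary, since Radon's theorem applies verbatim to a multiset of $n+2$ points and simply returns that repeated point; and in the linear-algebra proof of Radon you should split indices into $\{\lambda_i>0\}$ and $\{\lambda_i<0\}$ (discarding the zeros), and observe that both parts are non-empty because the relation is non-trivial and the weights sum to zero. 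With those cosmetic fixes the argument is airtight.
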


Together, these two lemmas imply the ensuing proposition, from which one easily deduces theorem \ref{kirsz}.
\begin{prop}
Given any two collections $\{B_{r_j}(x_j)\}_{j\in J}$ and $\{B_{r_j}(x_j')\}_{j\in J}$ of closed disks in $\mathbb{R}^2$ with the same radii and with centers such that 
$$|x_i'-x_j'|\le |x_i-x_j|.$$
Then, if 
$$\bigcap\limits_{j\in J}B_{r_j}(x_j)\neq \emptyset$$
it follows
$$\bigcap\limits_{j\in J}B_{r_j}(x_j')\neq \emptyset.$$
\end{prop}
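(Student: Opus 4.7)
The plan is to apply Helly's theorem to reduce the claim to an assertion about three disks, and to handle that three-disk base case via Lemma 3.5. Since each $B_{r_j}(x_j')$ is compact and convex, Lemma 3.6 (Helly in dimension $n=2$) tells me that $\bigcap_{j\in J}B_{r_j}(x_j')\ne\emptyset$ as soon as every three disks in the primed family share a common point. So the whole argument reduces to a three-disk verification.

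To carry this out, I would fix three indices $j_1,j_2,j_3\in J$ and abbreviate $r_i:=r_{j_i}$, $x_i:=x_{j_i}$, $x_i':=x_{j_i}'$. By hypothesis there is a point $x_4\in\bigcap_{j\in J}B_{r_j}(x_j)$, so in particular $|x_4-x_i|\le r_i$ for $i=1,2,3$. The contraction assumption $|x_i'-x_j'|\le|x_i-x_j|$ on the centres then allows me to invoke Lemma 3.5 on the triangles $\Delta(x_1,x_2,x_3)$ and $\Delta(x_1',x_2',x_3')$, which provides a point $x_4'\in\Delta(x_1',x_2',x_3')$ with $|x_4'-x_i'|\le|x_4-x_i|\le r_i$ for $i=1,2,3$. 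Hence $x_4'\in B_{r_1}(x_1')\cap B_{r_2}(x_2')\cap B_{r_3}(x_3')$, which is exactly the hypothesis needed for Helly's theorem, and the claim follows.

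The one subtle point I would have to address is degeneracy: Lemma 3.5 is stated for non-degenerate Euclidean triangles, whereas the centres $x_1,x_2,x_3$ (or their images $x_1',x_2',x_3'$) need not be in general position. A clean way around this is a perturbation-and-limit argument: replace the primed centres by a nearby generic triple (keeping the contraction hypothesis up to an $\varepsilon$), apply the non-degenerate version of Lemma 3.5, and then extract a limit point using the compactness of the intersection of closed disks as $\varepsilon\to 0$. Alternatively, the cases of coincident or collinear centres can be handled directly, either by reducing to fewer disks or by appealing to an elementary one-dimensional analogue of Lemma 3.5. Since all other steps are immediate consequences of the two quoted lemmas, this degeneracy handling is the only place where real care is needed.
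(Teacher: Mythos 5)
Your proposal is correct and follows exactly the route the paper indicates: the paper gives no separate proof of this proposition, stating only that Lemmas 3.5 and 3.6 (the triangle comparison lemma and Helly's theorem) together imply it, and your argument makes that implication explicit in the expected way — reduce to triples by Helly, then apply the triangle lemma to the common point of the unprimed disks. Your caution about degeneracy is reasonable to raise, though you should note that the paper's statement of Lemma 3.5 does not in fact include a non-degeneracy hypothesis, so one may equally well read that lemma as already covering collinear or coincident centres and dispense with the perturbation step.
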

\bigskip

We performed a similar reasoning in order to find a function $\phi\in \mathcal{D}$ such that $$Lip(\phi)_{q_1}^{\sigma_2}=1,$$
where $\sigma_2$ is the rescaled differential 
$$\sigma_2:=\frac{q_2}{e^{K^a_F(q_1,q_2)}}.$$
The existence of such function $\phi$ proves the equality 
\begin{equation}
e^{L_F^a(q_1,\sigma_2)}=e^{K_F^a(q_1,\sigma_2)}=1
\end{equation}
and consequently, since for every $c>0$ it follows 
$$c e^{L_F^a(q_1,q_2)}=e^{L_F^a(q_1,cq_2)},\quad ce^{K_F^a(q_1,q_2)}=e^{K_F^a(q_1,cq_2)}$$
multiplying both termes of equation $(5.1)$ by $e^{K^a_F(q_1,q_2)}$ and then composing with the logarithm, one gets the desired result
$$L_F^a(q_1,q_2)=K_F^a(q_1,q_2).$$

It is important to specify that in our proof we used the following version of Helly's lemma, which can be found in \cite{Iv}.

\begin{lem}\label{hellyiv}
Let $X$ be a uniquely geodesic space of compact topological dimension $n<\infty$. If $\{A_j\}_{j\in J}$ is any finite collection of convex sets in $X$ such that every subcollection of cardinality at most $n+1$ has a nonempty intersection, then 
$$\bigcap\limits_{j\in J} A_j\neq \emptyset.$$
\end{lem}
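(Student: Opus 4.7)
The plan is to proceed by induction on the cardinality $|J|$, the base case $|J|\leq n+1$ being immediate from the hypothesis. For the inductive step, suppose $|J|=N>n+1$ and that the lemma holds for all strictly smaller finite collections of convex sets in $X$. Applying the inductive hypothesis to each of the $N$ sub-collections $\{A_j\}_{j\neq i}$ --- which retain the $(n+1)$-wise intersection property --- yields a point $x_i\in\bigcap_{j\neq i}A_j$ for every $i\in J$.

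Next, exploiting that $X$ is uniquely geodesic, I would build a continuous ``barycentric'' map $f:\Delta^{N-1}\to X$ from the standard $(N-1)$-simplex, sending the $i$-th vertex $v_i$ to $x_i$, by an iterated geodesic-midpoint construction (or any fixed rule assigning, to each formal convex combination $\sum t_j v_j$, a point of $X$ obtained as a geodesic combination of the $x_j$'s). The crucial feature of this construction is that whenever $T\subseteq J$ and a geodesically convex set $A\subseteq X$ contains every $x_j$ with $j\in T$, the image of the face spanned by $\{v_j:j\in T\}$ lies entirely in $A$. In particular, for the face $\sigma_i$ of $\Delta^{N-1}$ opposite $v_i$ we have $f(\sigma_i)\subseteq A_i$, because every $x_j$ with $j\neq i$ belongs to $A_i$ by construction of the $x_j$'s.

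Setting $C_i:=f^{-1}(A_i)$, the desired conclusion $\bigcap_{j\in J}A_j\neq\emptyset$ reduces to showing $\bigcap_{i\in J}C_i\neq\emptyset$. Arguing by contradiction, suppose this intersection is empty; then $\{C_i^c\}_{i\in J}$ is an open cover of $\Delta^{N-1}$, and the inclusion $\sigma_i\subseteq C_i$ forces $C_i^c$ to be disjoint from $\sigma_i$, i.e., contained in the open star of $v_i$. From here, a Knaster--Kuratowski--Mazurkiewicz--style combinatorial argument must be combined with the dimension bound $\dim X\leq n<N-1$: the factorization of the whole situation through a space whose topological dimension is strictly smaller than $N-1$ is to be shown incompatible with the existence of a star-respecting open cover of the $(N-1)$-simplex by the $C_i^c$.

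The main obstacle is exactly this last dimensional step. In the Euclidean case $X=\mathbb{R}^n$, Radon's lemma furnishes a direct combinatorial replacement, but in the general uniquely geodesic setting one must bring in the topological covering dimension via a Brouwer-type fixed-point input. A second, roughly equivalent, route is to observe that in a uniquely geodesic space every convex set (and more generally every nonempty sub-intersection $\bigcap_{j\in T}A_j$) is star-shaped along unique geodesics and therefore contractible; by the nerve theorem the nerve of $\{A_j\}$ is homotopy equivalent to $\bigcup_j A_j$, and the $(n+1)$-wise intersection hypothesis forces this nerve to contain the full $n$-skeleton of $\Delta^{N-1}$, hence to be $(n-1)$-connected, after which the bound $\dim X\le n$ obstructs the vanishing of $\bigcap_j A_j$. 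Converting either of these two routes into a rigorous argument is the delicate part of the proof; the rest is a formal induction together with the unique-geodesic interpolation used to construct $f$.
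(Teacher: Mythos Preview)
The paper does not supply its own proof of this lemma; it is quoted verbatim from Ivanov's note \cite{Iv} and used as a black box. So there is no in-paper argument to compare your proposal against.

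That said, your sketch has a real gap beyond the one you already flag. You assert the existence of a \emph{continuous} barycentric map $f:\Delta^{N-1}\to X$ built by iterated geodesic interpolation, but in a merely uniquely geodesic space the geodesic $\gamma_{xy}$ need not depend continuously on its endpoints $(x,y)$; unique existence is strictly weaker than continuous dependence. Without continuity of $f$ the sets $C_i=f^{-1}(A_i)$ need not be closed, the KKM/Brouwer machinery does not apply, and the nerve-theorem route likewise breaks (the standard nerve lemma needs a good \emph{open} cover, and contractibility of the intersections uses a continuous geodesic contraction). Ivanov's argument handles exactly this point, and it is not a formality; you should either consult \cite{Iv} or add a hypothesis (e.g.\ that $X$ is CAT(0), which is all the present paper actually needs, and which does give continuous geodesics).

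The second gap you name yourself: the passage from ``$\dim X\le n<N-1$'' to a contradiction with a star-respecting cover of $\Delta^{N-1}$ is the substantive content of the lemma, and neither of your two proposed routes is carried through. The inductive setup and the face-to-$A_i$ inclusion are correct and standard, but as written the proposal is an outline rather than a proof.
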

If $q$ is a holomorphic quadratic differential on a closed Riemann surface of genus $g\ge 2$ one can consider a universal cover $\pi:\widetilde S_g\rightarrow S_g$ and the pullback $\widetilde q$ of $q$ on $\widetilde S_g$. Then $|\widetilde q|$ induces a metric which is $Cat(0)$ and consequently uniquely geodesic. But if $q$ has poles then $|\widetilde q|$ does not induce an uniquely geodesic metric space: this is the reason why our proof could not be adapted to the Teichm\"uller space of quadratic differentials with poles.
\bigskip

One should notice that the equality $L_F^a=K_F^a$ could be implied by a version of Kirszbraun theorem which suits semi-translation surfaces (without simple poles). The generalization of theorem \ref{kirsz} which could be considered closer to semi-translation surfaces was proved by S.Alexander, V.Kapovitch and A.Petrunin in \cite{AKP} and applies to the case of functions from complete $CBB(k)$ spaces (spaces with curvature bounded below by $k$) to complete $Cat(k)$ spaces (spaces with curvature bounded above by $k$). Since semi-translation surfaces are only locally $Cat(0)$ spaces, unfortunately the theorem of \cite{AKP} does not apply to our case.\\

At this point it should be more clear why we decided to prove the equality of the two pseudo-metrics $L_F^a,K_F^a$ instead of the equality of the two pseudo-metrics $L_F,K_F$ studied in the preceding section.\\
Indeed, one reason is that it is more convenient to study asymmetric pseudo-metrics, since it is more complicated to control both Lipschitz constants (the lower and the upper one) at once: for an attempt in this direction in the simple case of the unit square see \cite{DP}.\\
The other reason is that using this kind of \textit{Kirszbraun approach} there is no hope to obtain an injective 1-Lipschitz function. This is the reason why we defined $L_F^a$ as the infimum of Lipschitz constants of functions in $\mathcal{D}$.\\
Finally one should notice that the condition of unitary area of the two semi-translation surfaces $q_1$ and $q_2$ will never be used in the proof. We could actually prove the equality of $L_F^a$ and $K_F^a$ on the whole $\mathcal{TQ}_g(\underline k,\epsilon)$, where the two pseudo-metrics are much more degenerate.  \\

The next section is devoted to the explanation of our proof of the construction of the function $\phi\in \mathcal{D}$ such that $Lip(\phi)_{q_1}^{\sigma_2}=1$.

\subsection{Proof of the equality}


Let $\pi:\widetilde S_g\rightarrow S_g$ be a universal cover. Lifting through $\pi$ the complex structure of $X_1$ and the differential $q_1$ one obtains the metric universal cover  $\pi:(\widetilde X_1,|\widetilde q_1|)\rightarrow (X_1,|q_1|)$ and doing the same thing to $X_2$ and $q_2$ one obtains the metric universal cover $\pi:(\widetilde X_2,|\widetilde \sigma_2|)\rightarrow (X_2,|\sigma_2|)$.\\
Denote by $d_{\widetilde {q_1}}$ the $Cat(0)$ metric induced by $|\widetilde q_1|$ and by $d_{\widetilde {\sigma_2}}$ the $Cat(0)$ metric induced by $|\widetilde \sigma_2|$. In order to avoid confusion, when we will want to underline that a point of $\widetilde S_g$ is regarded as a point of $\widetilde X_2$, we will denote it with an additional prime symbol: for example a point $\widetilde x\in \pi^{-1}(\Sigma)$ will be denoted as $\widetilde x$ if regarded as a point of $\widetilde X_1$ and $\widetilde x'$ if regarded as a point of $\widetilde X_2$.\\
For every couple of points $\widetilde x,\widetilde y\in \widetilde X_1$, $\overline{ \widetilde x\widetilde y}$ is the $d_{\widetilde q_1}$-geodesic from $\widetilde x$ to $\widetilde y$. Since there will be no ambiguity, we will denote geodesics of $d_{\widetilde \sigma_2}$ in the same way: for every couple of points $\widetilde x',\widetilde y'\in \widetilde X_2$, $\overline{ \widetilde x'\widetilde y'}$ is the $d_{\widetilde \sigma_2}$-geodesic from $\widetilde x'$ to $\widetilde y'$.\\

Fix a point $x_0\in\Sigma\subset S_g$ and $\widetilde x_0\in \pi^{-1}(x_0)$: as it is well known, the group $\pi_1(S_g,x_0)$ acts on $\widetilde S_g$ and for every $\gamma\in\pi_1(S_g,x_0)$, $\widetilde x\in \widetilde S_g$ it results
$$\gamma\cdot \widetilde x=\widetilde \tau(1),$$
where $\widetilde \tau$ is the lifting of $\gamma * \pi(\widetilde \sigma)$ ($\widetilde \sigma$ is any path in $\widetilde S_g$ from $\widetilde x_0$ to $\widetilde x$) such that $\widetilde \tau(0)=\widetilde x_0$.\\

Fix a fundamental domain $P\subset (\widetilde X_1,d_{\widetilde q_1})$ for the action of $\pi_1(S_g,x_0)$, suppose $\widetilde x_0\in P$.\\
We want to build a map $\hat\phi:\hat U\rightarrow (\widetilde X_2,d_{\widetilde \sigma_2})$ (where $\hat U$ is a dense countable subset of $P$ which includes the zeroes of $\widetilde q_1$ contained in $P$), such that for every couple of points $\widetilde x,\widetilde y\in \hat U$ (eventually equal) and every $\gamma\in \pi_1(S_g,x_0)$, it results 
\begin{equation}
d_{\widetilde \sigma_2}(\hat \phi(\widetilde x),\gamma\cdot \hat \phi(\widetilde y))\le d_{\widetilde q_1}(\widetilde x,\gamma\cdot \widetilde y)
\end{equation}
and for every zero $\widetilde z$ of $\widetilde q_1$ contained in $P$ it results $\hat \phi(\widetilde z)=\widetilde z'$ (notice that $\widetilde q_1$ and $\widetilde q_2$ have zeroes in the same points, which are the points of $\pi^{-1}(\Sigma)$).\\

Having done so, we define the dense subset $\widetilde U:=\pi_1(S_g,x_0)\cdot \hat U$ of $\widetilde X_1$ and extend the function $\hat \phi$ by equivariance to a function $\widetilde \phi^U:\widetilde U\rightarrow \widetilde X_2$, imposing
$$\widetilde \phi^U(\gamma\cdot \widetilde x):=\gamma\cdot\hat \phi(\widetilde x)$$
for every $\gamma\in \pi_1( X_1,x_0),\widetilde x\in \hat U$.\\
Notice that for every $\gamma_1\cdot \widetilde x_1,\gamma_2\cdot \widetilde x_2\in \widetilde U$ it results: 
$$d_{\widetilde \sigma_2}( \widetilde \phi^U(\gamma_1\cdot \widetilde x_1),\widetilde  \phi^U(\gamma_2\cdot \widetilde x_2))=d_{\widetilde \sigma_2}(\gamma_1\cdot \hat \phi(\widetilde x_1),\gamma_2\cdot \hat \phi(\widetilde x_2))=d_{\widetilde \sigma_2}( \hat \phi(\widetilde x_1),(\gamma_1^{-1} * \gamma_2)\cdot \hat \phi(\widetilde x_2))\le$$$$\le d_{\widetilde q_1}(\widetilde x_1,(\gamma_1^{-1}* \gamma_2)\cdot \widetilde x_2))=d_{\widetilde q_1}(\gamma_1\cdot \widetilde x_1, \gamma_2\cdot \widetilde x_2)$$ 
and consequently $\widetilde \phi^U$ can be extended to a function $\widetilde \phi:(\widetilde X_1,|\widetilde q_1|)\rightarrow (\widetilde X_2,|\widetilde \sigma_2|)$ which has Lipschitz constant 1. \\
In particular, for every point $\widetilde x\in \widetilde X\setminus \widetilde U$ we define $\widetilde \phi(\widetilde x)$ as 
$$\widetilde \phi(\widetilde x):=\lim\limits_{n\rightarrow \infty}\widetilde \phi^U(\widetilde x_n),$$
where $\{\widetilde x_n\}_{n\in \mathbb{N}}\subset \widetilde U$ is a sequence such that $\lim\limits_{n\rightarrow \infty}\widetilde x_n=x$: since $\widetilde \phi^U$ is 1-Lipschitz on $\widetilde U$, the limit in the definition of $\widetilde \phi(\widetilde x)$ exists and does not depend from the chosen sequence $\{\widetilde x_n\}_{n\in \mathbb{N}}$.\\
Notice furthermore that $\widetilde \phi$ is equivariant for the action of $\pi_1(S_g,x_0)$: for every $\widetilde x\in \widetilde X\setminus \widetilde U$ and $\gamma\in \pi_1(S_g,x_0)$ consider a sequence $\{\widetilde x_n\}_{n\in \mathbb{N}}\subset \widetilde U$ such that $\lim\limits_{n\rightarrow \infty}\widetilde x_n=\widetilde x$, then it results $\lim\limits_{n\rightarrow \infty}\gamma\cdot\widetilde x_n=\gamma\cdot \widetilde x$ and consequently
$$\widetilde \phi(\gamma\cdot \widetilde x)= \lim\limits_{n\rightarrow \infty}\widetilde \phi^U(\gamma\cdot \widetilde x_n)=\lim\limits_{n\rightarrow \infty}\gamma\cdot\widetilde \phi^U(\widetilde x_n)=\gamma\cdot\lim\limits_{n\rightarrow \infty}\widetilde \phi^U(\widetilde x_n)=\gamma\cdot\widetilde \phi(\widetilde x).$$

We have proved that $\widetilde \phi$ descends to a function $\phi:(X_1,q_1)\rightarrow (X_2,\sigma_2)$ which is $1-$Lipschitz and such that
$$(\phi)_*=Id:\pi_1(S_g,x_0)\rightarrow \pi_1(S_g,x_0)$$ 
which implies that $\phi$ is homotopic to the identity.\\
In the rest of the section we will explain how to obtain a function $\hat \phi$ which satisfies previous inequality $(5.2)$.\\

We have imposed $\hat\phi(\widetilde z)=\widetilde z'$ for every zero $\widetilde z$ of $\widetilde q_1$ which is contained in $P$, so we have to verify
$$d_{\widetilde \sigma_2}(\widetilde z_1',\gamma\cdot\widetilde z_2')\le d_{\widetilde q_1}(\widetilde z_1,\gamma\cdot \widetilde z_2)$$ 
for every pair of zeroes $\widetilde z_1,\widetilde z_2$ of $\widetilde q_1$ contained in $P$ and every $\gamma\in \pi_1(S_g,x_0)$. \\
Notice that it results $d_{\widetilde \sigma_2}(\widetilde z_1',\gamma\cdot\widetilde z_2')=\hat l_{\sigma_2}(\tau)$, where $\hat l_{\sigma_2}(\tau)$ is the length of the geodesic representative for $|\sigma_2|$ of the homotopy class (with fixed endpoints) of $\pi(\widetilde\tau)$ and $\widetilde \tau$ is any arc in $\widetilde X_2$ from $\widetilde z_1'$ to $\gamma\cdot \widetilde z_2'$. In the same way it results $d_{q_1}(\widetilde z_1,\gamma\cdot\widetilde z_2)=\hat l_{q_1}(\tau)$. \\
Let $\tau^{q_1}$ be the geodesic representative for $|q_1|$ of the homotopy class (with fixed endpoints) of $\tau$ and suppose $\tau^{q_1}$ is a concatenation of $k\ge 1$ saddle connections  $\tau_1^{q_1},\dots ,\tau_k^{q_1}$. \\
From the definition of $\sigma_2$ it follows
$$l_{q_1}(\tau_i^{q_1})\ge \hat l_{\sigma_2}(\tau_i^{q_1})$$
for every $i=1,\dots,k$. We thus obtain the following inequalities: 
$$d_{q_1}(\widetilde z_1,\gamma\cdot\widetilde z_2)=l_{q_1}(\tau^{q_1})=\sum\limits_{i=1,\dots,k} l_{q_1}(\tau_i^{q_1})\ge \sum\limits_{i=1,\dots,k}\hat l_{\sigma_2}(\tau_i^{q_1})\ge \hat l_{\sigma_2}(\tau)=d_{\sigma_2}(\widetilde z_1',\gamma\cdot\widetilde z_2').$$

Now we are going to define the function $\hat \phi$ on $\hat U$ one point at a time.\\
Let $\widetilde p_1\in P\setminus \pi^{-1}(\Sigma)$ be the first point (besides the zeroes of $\widetilde q_1$) on which we want to define $\hat\phi$: we have to find $\hat\phi(\widetilde p_1)\in \widetilde{X}_2$ such that
\begin{equation}
d_{\widetilde \sigma_2}(\hat\phi(\widetilde p_1),\gamma\cdot \widetilde x')\le d_{\widetilde {q_1}}(\widetilde p_1,\gamma\cdot \widetilde x)
\end{equation}
for every zero $\widetilde x$ of $\widetilde q_1$ contained in $P$ and for every $\gamma\in \pi_1(S_g,x_0)$. The point $\hat\phi(\widetilde p_1)$ should also satisfy the condition
\begin{equation}
d_{\widetilde \sigma_2}(\hat\phi(\widetilde p_1),\theta\cdot \hat\phi(\widetilde p_1))\le d_{\widetilde q_1}(\widetilde p_1,\theta\cdot \widetilde p_1)
\end{equation}
for every $\theta\in \pi_1(S_g,x_0)$.\\
Notice that, in order for equation $(5.3)$ to be always satisfied, it is sufficient to check only the distances of $\widetilde p_1$ from the zeroes $\gamma\cdot \widetilde x$ such that $\overline{\widetilde p_1(\gamma\cdot \widetilde x)}$ is smooth and does not contain other zeroes. Indeed, suppose  $\overline{\widetilde p_1(\gamma\cdot \widetilde x)}$ is the concatenation of the following segments
$$\overline{\widetilde p_1(\gamma\cdot \widetilde x)}=\overline{\widetilde p_1(\gamma_1\cdot \widetilde x_1)}*\widetilde \tau_1^{q_1}*\dots*\widetilde \tau_l^{q_1},$$
where:
\begin{itemize}
\item $\gamma_1\in \pi_1(S_g,x_0)$, 
\item $\widetilde x_1$ is a zero of $\widetilde q_1$ contained in $P$,
\item $\widetilde \tau_i^{q_1}$ are saddle connections for $\widetilde q_1,$
\end{itemize}
then from the inequality
$$d_{\widetilde \sigma_2}(\hat \phi(\widetilde p_1),\gamma_1\cdot \widetilde x_1')\le d_{\widetilde q_1}(\widetilde p_1,\gamma_1\cdot \widetilde x_1)$$
and the definition of $\sigma_2$ it will follow
$$d_{\widetilde q_1}(\widetilde p_1,\gamma\cdot \widetilde{x})=d_{\widetilde q_1}(\widetilde p_1,\gamma_1\cdot \widetilde x_1)+\sum\limits_{i=1,\dots,l}l_{\widetilde q_1}(\widetilde \tau_i^{q_1})\ge$$ $$\ge d_{\widetilde \sigma_2}(\hat \phi(\widetilde p_1),\gamma_1\cdot \widetilde x_1')+\sum\limits_{i=1,\dots,l}\hat l_{\widetilde \sigma_2}(\widetilde \tau_i^{q_1}) \ge d_{\widetilde \sigma_2}(\hat \phi(\widetilde p_1),\gamma\cdot \widetilde x').$$
For the same reason it suffices to verify equation $(5.4)$ only for $\theta\in \pi_1(S_g,x_0)$ such that $\overline{\widetilde p_1(\theta\cdot \widetilde p_1)}$ is smooth and does not contain zeroes of $\widetilde q_1$.\\

We define the following two sets: 
$$\mathcal{X}(\widetilde p_1):=\{\widetilde z\in \pi^{-1}(\Sigma)\text{ such that } \overline{\widetilde z\widetilde p_1} \text{ is smooth and does not contain other zeroes of } \widetilde{q_1} \},$$ 
$$\Theta(\widetilde p_1):=\{\gamma\in \pi_1(S_g,x_0) \text{ such that }  \overline{\widetilde p_1(\theta\cdot \widetilde p_1)} \text{ is smooth and does not contain zeroes of } \widetilde{q_1} \}.$$
For every $\theta\in \Theta(\widetilde p_1)$ we define the set
$$V_\theta:=\{\widetilde p'\in \widetilde X_2\enskip |\enskip d_{\widetilde \sigma_2}(\widetilde p',\theta\cdot \widetilde p')\le d_{\widetilde q_1}(\widetilde p_1,\theta\cdot \widetilde p_1) \}.$$
\begin{lem}
For every $\theta\in \Theta(\widetilde p_1)$, the set $V_\theta$ is convex in $(\widetilde X_2,d_{\widetilde \sigma_2})$.
\end{lem}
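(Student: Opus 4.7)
The plan is to recognize $V_\theta$ as the sublevel set $\{\widetilde p'\in \widetilde X_2 \mid D_\theta(\widetilde p')\le c\}$ of the displacement function
$$D_\theta(\widetilde p'):=d_{\widetilde \sigma_2}(\widetilde p',\theta\cdot \widetilde p'),\qquad c:=d_{\widetilde q_1}(\widetilde p_1,\theta\cdot \widetilde p_1),$$
and to deduce the claim from the general fact that displacement functions of isometries of a $\mathrm{CAT}(0)$ space are convex along geodesics.

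The two ingredients I would invoke are: (i) $(\widetilde X_2,d_{\widetilde \sigma_2})$ is a $\mathrm{CAT}(0)$ space, which was already noted in the excerpt since $\sigma_2$ has no simple poles and the pulled-back flat singular metric on the universal cover is $\mathrm{CAT}(0)$; (ii) $\theta \in \pi_1(S_g,x_0)$ acts on $\widetilde X_2$ as a deck transformation of $\pi\colon\widetilde X_2\to X_2$ and hence as an isometry of $d_{\widetilde \sigma_2}$, because the metric $|\widetilde \sigma_2|$ is by construction $\pi_1$-invariant.

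From these two facts I would argue as follows. Given any $d_{\widetilde \sigma_2}$-geodesic $\gamma\colon[0,1]\to \widetilde X_2$, the image $\theta\cdot\gamma$ is again a geodesic with the same parametrization, by (ii). The standard $\mathrm{CAT}(0)$ convexity of distance (applied to the pair of geodesics $\gamma$ and $\theta\cdot\gamma$) then yields that $t\mapsto d_{\widetilde \sigma_2}(\gamma(t),\theta\cdot\gamma(t))$ is a convex function of $t$, i.e.
$$D_\theta(\gamma(t))\le (1-t)\,D_\theta(\gamma(0)) + t\,D_\theta(\gamma(1)),\qquad t\in[0,1].$$
This is precisely the convexity of $D_\theta$ along $d_{\widetilde \sigma_2}$-geodesics.

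To conclude: take $\widetilde a',\widetilde b'\in V_\theta$, so that $D_\theta(\widetilde a'),D_\theta(\widetilde b')\le c$. By unique geodesicity of $(\widetilde X_2,d_{\widetilde \sigma_2})$ there is a unique geodesic $\gamma$ from $\widetilde a'$ to $\widetilde b'$, and the inequality above forces $D_\theta(\gamma(t))\le c$ for every $t\in[0,1]$; hence the whole segment lies in $V_\theta$, which is therefore convex. There is no real obstacle here — the only point worth keeping in mind is that the argument genuinely uses the absence of simple poles (through the $\mathrm{CAT}(0)$ hypothesis), which is exactly the reason the authors emphasize this assumption when comparing with the general $\mathcal{TQ}^{n}_g$ setting.
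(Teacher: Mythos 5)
Your proof is correct and follows essentially the same route as the paper: both identify $V_\theta$ as a sublevel set of the displacement function of the isometry $\theta$, both use that $\theta\cdot\gamma$ is a geodesic with the same parametrization, and both invoke the $\mathrm{CAT}(0)$ (Busemann) convexity of the distance between two geodesics to conclude that the displacement function is convex along geodesics. The paper phrases this in terms of "Busemann-convexity" rather than the displacement-function language, but the argument is the same.
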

\begin{proof}
Consider any two points $\widetilde x',\widetilde y'\in V_\theta$. Since $\widetilde \sigma_2$ is invariant by covering transformations, it is possible to obtain two parametrizations $\widetilde \tau,\widetilde \tau_\theta:[0,1]\rightarrow \widetilde X_2$ respectively of $\overline{\widetilde x'\widetilde y'}$ and of $\overline{(\theta\cdot \widetilde x')(\theta\cdot \widetilde y')}$ such that $\widetilde \tau_\theta(s)=\theta\cdot \widetilde \tau(s)$.\\
The space $(\widetilde X_2,d_{\widetilde \sigma_2})$ is $Cat(0)$ and consequently Busemann-convex: this means that the function $$s\mapsto d_{\widetilde \sigma_2}(\tau(s),\tau_\theta(s))=d_{\widetilde \sigma_2}(\tau(s),\theta\cdot\tau(s))$$ is convex. From this fact we get $\tau(s)\in V_\theta$ for every $s\in [0,1]$.
\end{proof}

For every $\widetilde x\in \mathcal{X}(\widetilde p_1)$ we define the following closed ball
$$B^2_{d_{\widetilde q_1}(\widetilde x,\widetilde p_1)}(\widetilde x'):=\{\widetilde p'\in \widetilde X_2|d_{\widetilde \sigma_2}(\widetilde p',\widetilde x')\le d_{\widetilde q_1}(\widetilde x,\widetilde p_1)\}.$$

Clearly, our goal is to prove that the set $\Pi(\widetilde p_1)$,
$$\Pi(\widetilde p_1):=\left(\bigcap\limits_{\widetilde x\in \mathcal{X}(\widetilde p_1)}B^2_{d_{\widetilde q_1}(\widetilde x,\widetilde p_1)}(\widetilde x')\right)\bigcap\left(\bigcap\limits_{\theta\in \Theta(\widetilde p_1)}V_\theta\right)$$
is not empty, in order to being able to choose $\hat\phi(\widetilde p_1)\in\Pi(\widetilde p_1)$.\\ 
Since the sets $B^2_{d_{\widetilde q_1}(\widetilde x,\widetilde p_1)}(\widetilde x')$ and $V_\theta$ are convex, we can use Helly's lemma \ref{hellyiv} for uniquely geodesic spaces to prove $\Pi(\widetilde p_1)\neq \emptyset$.
There are four cases:
\begin{enumerate}
\item $B^2_{d_{\widetilde q_1}(\widetilde x_1,\widetilde p_1)}(\widetilde x_1')\cap B^2_{d_{\widetilde q_1}(\widetilde x_2,\widetilde p_1)}(\widetilde x_2')\cap B^2_{d_{\widetilde q_1}(\widetilde x_3,\widetilde p_1)}(\widetilde x_3')\neq \emptyset,$
\item $V_{\theta_1}\cap B^2_{d_{\widetilde q_1}(\widetilde x_1,\widetilde p_1)}(\widetilde x_1')\cap B^2_{d_{\widetilde q_1}(\widetilde x_2,\widetilde p_1)}(\widetilde x_2')\neq \emptyset,$
\item $V_{\theta_1}\cap V_{\theta_2}\cap B^2_{d_{\widetilde q_1}(\widetilde x,\widetilde p_1)}(\widetilde x')\neq \emptyset,$
\item $V_{\theta_1}\cap V_{\theta_2}\cap V_{\theta_3}\neq \emptyset.$
\end{enumerate}

The proofs of the four cases will be presented later, since we feel it is now best to conclude the procedure of the definition of $\hat \phi$.\\
So suppose we have proved each of the four preceding cases and we have chosen $\hat \phi(\widetilde p_1)\in \Pi(\widetilde p_1)$, we now have to find the image of a second point $\widetilde p_2\in P\setminus \pi^{-1}(\Sigma)$ in such a way that it results: 
\begin{enumerate}[label=(\roman*)]
\item  
$$d_{\widetilde \sigma_2}(\hat\phi(\widetilde p_2),\gamma\cdot \widetilde x')\le d_{\widetilde q_1}(\widetilde p_2,\gamma\cdot \widetilde x)$$
for every zero $\widetilde x$ of $\widetilde q_1$ contained in $P$ and $\gamma\in \pi_1(S_g,x_0)$ such that $\overline{\widetilde p_2(\gamma\cdot \widetilde x)}$ is smooth and does not contain other zeroes of $\widetilde q_1$, 
\item 
$$d_{\widetilde \sigma_2}(\hat\phi(\widetilde p_2),\gamma\cdot \hat\phi(\widetilde p_1))\le d_{\widetilde q_1}(\widetilde p_2,\gamma\cdot \widetilde p_1)$$
for every $\gamma\in \pi_1(S_g,x_0)$ such that $\overline{\widetilde p_2(\gamma\cdot \widetilde p_1)}$ is smooth and does not contain zeroes of $\widetilde q_1$,
\item 
 $$d_{\widetilde \sigma_2}(\hat\phi(\widetilde p_2),\theta\cdot \hat\phi(\widetilde p_2))\le d_{\widetilde q_1}(\widetilde p_2,\theta\cdot \widetilde p_2)$$
for every $\theta\in \pi_1(S_g,x_0)$ such that $\overline{\widetilde p_2(\theta\cdot \widetilde p_2)}$ is smooth and does not contain zeroes of $\widetilde q_1$.
\end{enumerate}
As we did for $\widetilde p_1$, we now define the sets $\mathcal{X}(\widetilde p_2)_\Sigma, \mathcal{X}(\widetilde p_2)_{\widetilde p_1}$ and $\Theta(\widetilde p_2)$:
$$ \mathcal{X}(\widetilde p_2)_\Sigma:=\{\widetilde x\in \pi^{-1}(\Sigma)\enskip|\enspace \overline{\widetilde p_2 \widetilde x}\text{ is smooth and does not contain other zeroes of }\widetilde{q_1}\},$$
$$ \mathcal{X}(\widetilde p_2)_{\widetilde p_1}:=\{\gamma\cdot \widetilde p_1\enspace|\enspace\gamma\in \pi_1(S_g,x_0) \text{ and } \overline{\widetilde p_2(\gamma\cdot \widetilde p_1)}\text{ is smooth and does not contain zeroes of }\widetilde{q_1}\},$$
$$\Theta(\widetilde p_2):=\{\theta\in \pi_1(S_g,x_0)\enskip|\enskip \overline{\widetilde p_2(\theta\cdot \widetilde p_2)}\text{ is smooth and does not contain zeroes of }\widetilde{q_1}\}.$$
We define the following intersections:
$$B_\Sigma:=\bigcap\limits_{\widetilde x\in \mathcal{X}(\widetilde p_2)_\Sigma}B^2_{d_{\widetilde q_1}(\widetilde x,\widetilde p_2)}(\widetilde x'),$$
$$B_{\widetilde p_1}:=\bigcap\limits_{\gamma\cdot \widetilde p_1\in \mathcal{X}(\widetilde p_2)_{\widetilde p_1}}B^2_{d_{\widetilde q_1}(\gamma\cdot\widetilde p_1,\widetilde p_2)}(\gamma\cdot \hat\phi(\widetilde p_1)),$$
$$V_{\widetilde p_2}:=\bigcap\limits_{\theta\in \Theta(\widetilde p_2)}V_\theta.$$
Again, we want to prove
$$\Pi(\widetilde p_2):=B_\Sigma\cap B_{\widetilde p_1}\cap V_{\widetilde p_2}\neq \emptyset$$
in order to pick $\hat\phi(\widetilde p_2)\in \Pi(\widetilde p_2)$. One can consider the four cases we previously deduced for $\Pi(\widetilde p_1)$, noting that this time the closed balls can also be centered in points $\gamma\cdot \hat \phi(\widetilde p_1)$.\\

We now proceed in the same way, defining $\hat\phi$ on $P$ one point at a time. \\
Suppose $\hat \phi$ is already defined on the points $\widetilde p_1,\dots,\widetilde p_n\in P\setminus \pi^{-1}(\Sigma)$ and that we wish to determine its value at $\widetilde p_{n+1}$. In order to do so we define the following sets:
$$ \mathcal{X}(\widetilde p_{n+1})_\Sigma:=\{\widetilde x\in \pi^{-1}(\Sigma)\enskip|\enskip \overline{\widetilde p_{n+1}\widetilde x}\text{ is smooth and does not contain any other zero of } \widetilde{q_1}\},$$
$$\Theta(\widetilde p_{n+1}):=\{\theta\in \pi_1(S_g,x_0)\enskip|\enskip\overline{\widetilde p_{n+1}(\theta\cdot \widetilde p_{n+1})}\text{ is smooth and does not contain zeroes of } \widetilde{q_1}\},$$
$$ \mathcal{X}(\widetilde p_{n+1})_{\widetilde p_i}:=\{\gamma\cdot \widetilde p_i\enskip|\enskip\gamma\in \pi_1(S_g,x_0) \text{ and } \overline{\widetilde p_{n+1}(\gamma\cdot \widetilde p_i)}\text{ is smooth and does not contain zeroes of }\widetilde{q_1}\},$$
for every $i=1,\dots,n$.\\
Again, we want to prove

$$\Pi(\widetilde p_{n+1}):=B_\Sigma\cap\left(\bigcap\limits_{i=1,\dots,n}B_{\widetilde p_{i}}\right)\cap V_{\widetilde p_{n+1}}\neq \emptyset,$$
where the sets $B_\Sigma,B_{\widetilde p_i},V_{\widetilde p_{n+1}}$ are defined as follows:
$$B_\Sigma:=\bigcap\limits_{\widetilde x\in \mathcal{X}(\widetilde p_{n+1})_\Sigma}B^2_{d_{\widetilde q_1}(\widetilde x,\widetilde p_{n+1})}(\widetilde x'),$$
$$B_{\widetilde p_i}:=\bigcap\limits_{\gamma\cdot \widetilde p_i\in \mathcal{X}(\widetilde p_{n+1})_{\widetilde p_i}}B^2_{d_{\widetilde q_1}(\gamma\cdot\widetilde p_i,\widetilde p_{n+1})}(\gamma\cdot \hat\phi(\widetilde p_i)),$$
$$V_{\widetilde p_{n+1}}:=\bigcap\limits_{\theta\in \Theta(\widetilde p_{n+1})}V_\theta.$$

Then we will pick $\hat\phi(\widetilde p_{n+1})\in\Pi(\widetilde p_{n+1})$: notice that even in this case there are only the same four types of intersections we pointed out for $\widetilde p_1$.\\

Since we have now fully explained our method to define $\hat \phi$ on a dense countable subset of $P$, we can now concentrate on the four types of intersections which appear in the sets $\Pi(\widetilde p_{i})$ (we will prove it for $\Pi(\widetilde p_{n+1})$, the reasoning will be the same for the other sets $\Pi(\widetilde p_{i})$).\\
The following procedure will not vary in case closed balls are centered in zeroes of $\widetilde q_1$ or in points outside $\pi^{-1}(\Sigma)$: in order to lighten the notation, given any point $\widetilde x=\gamma\cdot \widetilde p_i\in \mathcal{X}(\widetilde p_{n+1})_{\widetilde p_i}$, we will denote the corresponding point $\gamma\cdot \hat \phi(\widetilde p_i)$ simply as $\widetilde x'$.\\
From now on we will also denote the set $\mathcal{X}(\widetilde p_{n+1})_\Sigma\cup (\cup_{i=1}^n\mathcal{X}(\widetilde p_{n+1})_{\widetilde p_i})$ simply as $\mathcal X(\widetilde p_{n+1})$.\\ 

The first case concerns the intersection of three closed balls and is the most important, since it will imply all other three cases. Its proof is quite long and involves the two statements about 1-Lipschitz maps between polygons we introduced at the beginning of this section: for these reasons we feel it is best to postpone it and dedicate to it the whole next section.\\We will thus state the following theorem and take it for granted.

\begin{thm} 
If following conjecture 3.31 is true, for every $\widetilde x_1,\widetilde x_2,\widetilde x_3\in \mathcal{X}(\widetilde p_{n+1})$ it results $$B^2_{d_{\widetilde q_1}(\widetilde x_1,\widetilde p_{n+1})}(\widetilde x'_1)\cap B^2_{d_{\widetilde q_1}(\widetilde x_2,\widetilde p_{n+1})}(\widetilde x'_2)\cap B^2_{d_{\widetilde q_1}(\widetilde x_3,\widetilde p_{n+1})}(\widetilde x'_3)\neq \emptyset.$$
\end{thm}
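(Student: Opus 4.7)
The plan is to reduce the claim to a direct application of theorem 3.21 or conjecture 3.31, depending on the cyclic order of the two triples. Concretely, I would associate to the configuration a pair of planar polygons $\Delta$ and $\Delta'$ as follows. The geodesics $\overline{\widetilde p_{n+1}\widetilde x_i}$, $i=1,2,3$, are smooth and avoid the cone points of $|\widetilde q_1|$ by the very definition of $\mathcal{X}(\widetilde p_{n+1})$; together with the three geodesic sides $\overline{\widetilde x_i \widetilde x_j}$, $1\le i<j\le 3$, they cut out a topological disk $T\subset \widetilde X_1$ containing $\widetilde p_{n+1}$ as an interior point. Since $(\widetilde X_1, d_{\widetilde q_1})$ is $Cat(0)$, one develops $T$ isometrically into the Euclidean plane (cutting open at the finitely many cone points lying on $\partial T$), obtaining a planar polygon $\Delta$ whose three strictly convex vertices are exactly the images $x_1, x_2, x_3$ of $\widetilde x_1,\widetilde x_2,\widetilde x_3$; the remaining vertices of $\Delta$ correspond to cone points of $|\widetilde q_1|$ on $\partial T$. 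I would construct $\Delta'$ analogously from the geodesic triangle in $(\widetilde X_2, d_{\widetilde \sigma_2})$ with vertices $\widetilde x_1', \widetilde x_2', \widetilde x_3'$, and define $\iota$ on strictly convex vertices by $\iota(x_i)=x_i'$.

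Next I would verify the metric hypothesis $d_\Delta(x_i,x_j)\ge d_{\Delta'}(x_i',x_j')$ required to invoke theorem 3.21 and conjecture 3.31. Since the developing maps are isometries for the intrinsic metrics, this reduces to $d_{\widetilde q_1}(\widetilde x_i,\widetilde x_j)\ge d_{\widetilde \sigma_2}(\widetilde x_i',\widetilde x_j')$. When both $\widetilde x_i,\widetilde x_j$ are zeroes of $\widetilde q_1$, this was shown at the start of the construction using the definition of $\sigma_2$ via $K_F^a$. Otherwise at least one of the two is a translate of some previously processed $\widetilde p_k$ with $k\le n$, and the inequality is built into the inductive choice of $\hat\phi(\widetilde p_k)$ inside the corresponding ball. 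With the hypothesis in hand, split into two cases according to the cyclic order of $\{x_i\}$ on $\partial \Delta$ and $\{x_i'\}$ on $\partial \Delta'$. If the two orders agree, theorem 3.21 gives a 1-Lipschitz map $f:\Delta\to\Delta'$ with $f(x_i)=x_i'$, and one sets $p':=f(p)$ where $p\in\Delta$ is the image of $\widetilde p_{n+1}$ under the developing map of $T$. If they disagree, conjecture 3.31 provides such a $p'$ directly. In either case, $d_{\Delta'}(p',x_i')\le d_\Delta(p,x_i)=d_{\widetilde q_1}(\widetilde p_{n+1},\widetilde x_i)$ for $i=1,2,3$.

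Pulling $p'$ back through the inverse development of the triangle in $\widetilde X_2$ produces $\widetilde p'\in \widetilde X_2$ satisfying $d_{\widetilde \sigma_2}(\widetilde p',\widetilde x_i')\le d_{\Delta'}(p',x_i')\le d_{\widetilde q_1}(\widetilde p_{n+1},\widetilde x_i)$, so $\widetilde p'$ belongs to each ball $B^2_{d_{\widetilde q_1}(\widetilde x_i,\widetilde p_{n+1})}(\widetilde x_i')$, giving the nonempty intersection. The principal obstacle in executing this plan is the polygon construction itself: one must ensure that $\Delta$ and $\Delta'$ can be given a common combinatorial type so that $\iota$ extends consistently to all vertices (the non-strictly convex ones, arising from cone points, need to be paired carefully; this is usually achieved by inserting auxiliary collinear vertices on the boundary), and that $\widetilde p_{n+1}$ really develops to an interior point of $\Delta$. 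Both points rest on the smoothness of the three geodesics $\overline{\widetilde p_{n+1}\widetilde x_i}$ together with the $Cat(0)$ geometry of the universal covers, and constitute the bulk of the technical work that the paper defers to the next section.
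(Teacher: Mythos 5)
Your high-level plan --- develop the geodesic triangle $\Delta$ in $(\widetilde X_1,d_{\widetilde q_1})$ into a planar polygon, do the same for $\Delta'$ in $(\widetilde X_2,d_{\widetilde \sigma_2})$, verify the vertex-distance inequalities, then apply theorem 3.21 or conjecture 3.31 depending on the cyclic order, and pull back the resulting point --- is the same strategy the paper uses. But the middle of your argument has a genuine gap. You assert that the region $T$ cut out by the three sides $\overline{\widetilde x_i\widetilde x_j}$ together with the three geodesics $\overline{\widetilde p_{n+1}\widetilde x_i}$ is a topological disk containing $\widetilde p_{n+1}$ in its interior. This is false in general: smoothness of the three geodesics from $\widetilde p_{n+1}$ only says those geodesics avoid cone points; it does not put $\widetilde p_{n+1}$ inside the filled geodesic triangle of $\widetilde x_1,\widetilde x_2,\widetilde x_3$. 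The paper explicitly splits into the two cases $\widetilde p_{n+1}\in \Delta$ and $\widetilde p_{n+1}\notin\Delta$; in the second case $\Delta$ can have one-dimensional components or even be entirely degenerate, so your disk $T$ does not exist. The device the paper uses and that your proposal needs is the nearest-point projection $pr:\widetilde X_1\to\Delta$ onto the convex set $\Delta$ in the CAT(0) space: since $pr$ is $1$-Lipschitz, $d_{\widetilde q_1}(\widetilde x_i,pr(\widetilde p_{n+1}))\le d_{\widetilde q_1}(\widetilde x_i,\widetilde p_{n+1})$, and one finds $p'$ corresponding to $pr(\widetilde p_{n+1})\in\Delta$ instead.

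There is a second, smaller gap in the vertex-matching. You mention inserting auxiliary collinear vertices, but the real problem is different: a cone point $\widetilde z$ on $\partial\Delta$ may have its counterpart $\widetilde z'$ land outside $\Delta'$, so there is no vertex of $\Delta'$ to serve as $\iota(z)$. The paper fixes this by projecting again, this time with $pr:(\widetilde X_2,d_{\widetilde\sigma_2})\to\Delta'$, redefining $\iota(z):=pr(\widetilde z')$; the $1$-Lipschitz property of $pr$ preserves all the distance inequalities needed to make $\Delta'$ a degenerate polygon comparable to $\Delta$. Beyond that, you gloss over the degenerate cases for $\Delta'$ (one-dimensional with or without one of $x_1',x_2',x_3'$ in the interior of the segment), which the paper disposes of by hand before the main reduction. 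Your verification of the hypothesis $d_{\widetilde q_1}(\widetilde x_i,\widetilde x_j)\ge d_{\widetilde\sigma_2}(\widetilde x_i',\widetilde x_j')$ via the inductive choice of $\hat\phi$ is correct and is what the paper does, but without the two CAT(0) projections and the degenerate-polygon bookkeeping the reduction to the planar theorems does not close.
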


It is important to notice that all next results will be implied by theorem 3.11: the reader is advised to keep in mind that they consequently depend on conjecture 3.31.\\

We state the following corollary, which is a consequence of theorem 3.11, Helly's lemma and some observations we already made.
\begin{cor}\label{corintersection}
Consider any finite number of points $\widetilde y_1,\dots,\widetilde y_n\in \widetilde X_1\setminus \pi^{-1}(\Sigma)$ and $\widetilde y_1',\dots,\widetilde y_n'\in \widetilde X_2\setminus \pi^{-1}(\Sigma)$ such that
$$d_{\widetilde \sigma_2}(\widetilde y_i',\widetilde y_j')\le d_{\widetilde q_1}(\widetilde y_i,\widetilde y_j) \quad \forall i,j=1,\dots,n$$
and
$$d_{\widetilde \sigma_2}(\widetilde y_i',\widetilde z')\le d_{\widetilde q_1}(\widetilde y_i,\widetilde z)$$ for every $\widetilde z\in \pi^{-1}(\Sigma)$ and $i=1,\dots,n$.\\
Then for every finite set of zeroes $\widetilde x_1,\dots,\widetilde x_m\in \pi^{-1}(\Sigma)$ and for every $\widetilde p\in \widetilde X_1$ it results
$$\left(\bigcap\limits_{i=1,\dots,n}B^2_{d_{\widetilde q_1}(\widetilde y_i,\widetilde p)}(\widetilde y_i')\right) \bigcap\left(\bigcap\limits_{i=1,\dots,m}B^2_{d_{\widetilde q_1}(\widetilde x_i,\widetilde p)}(\widetilde x_i')\right) \neq \emptyset.$$

\end{cor}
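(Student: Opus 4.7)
The plan is to reduce the problem to triple intersections via Helly's lemma and then invoke Theorem 3.11 on each triple. The space $(\widetilde X_2, d_{\widetilde \sigma_2})$ is $Cat(0)$ because $\sigma_2$ has no simple poles; in particular it is uniquely geodesic and of topological dimension $2$. Moreover, every closed metric ball in a $Cat(0)$ space is convex, since the distance to a fixed point is a convex function along geodesics. Lemma \ref{hellyiv} therefore applies to the finite collection
$$\{B^2_{d_{\widetilde q_1}(\widetilde y_i, \widetilde p)}(\widetilde y_i')\}_{i=1}^n \cup \{B^2_{d_{\widetilde q_1}(\widetilde x_j, \widetilde p)}(\widetilde x_j')\}_{j=1}^m,$$
and the intersection of all of them is nonempty provided every subcollection of three has nonempty intersection.

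Thus it suffices to fix an arbitrary triple of centers $\widetilde u_1, \widetilde u_2, \widetilde u_3$, each drawn from $\{\widetilde y_1,\dots,\widetilde y_n\} \cup \{\widetilde x_1,\dots,\widetilde x_m\}$, and to verify
$$\bigcap_{k=1}^{3} B^2_{d_{\widetilde q_1}(\widetilde u_k, \widetilde p)}(\widetilde u_k') \neq \emptyset.$$
For this I would appeal to Theorem 3.11, whose only substantive hypothesis is the pairwise Lipschitz inequality $d_{\widetilde \sigma_2}(\widetilde u_i', \widetilde u_j') \le d_{\widetilde q_1}(\widetilde u_i, \widetilde u_j)$ for $i,j \in \{1,2,3\}$. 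When both centers are among the $\widetilde y$'s this is the first hypothesis of the corollary; when exactly one is a zero this is the second hypothesis; and when both are zeroes it is the inequality $d_{\widetilde \sigma_2}(\widetilde z_1', \widetilde z_2') \le d_{\widetilde q_1}(\widetilde z_1, \widetilde z_2)$ established at the outset of the construction of $\hat\phi$, which rested on the definition of $\sigma_2$ together with the decomposition of a $q_1$-geodesic between zeroes into saddle connections.

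Although Theorem 3.11 is phrased for centers lying in the specific set $\mathcal{X}(\widetilde p_{n+1})$ attached to the construction step, the only feature of that set used in its proof is the availability of the pairwise Lipschitz bounds between the three centers together with the compatibility of their images with the zero set. Both are granted here, so the theorem applies verbatim to the abstract triple $\widetilde u_1, \widetilde u_2, \widetilde u_3$, and the triple intersection is nonempty. Combined with Helly's lemma this finishes the argument. The substantive content is of course packed into Theorem 3.11, and hence into Conjecture 3.31; the only fresh work here is the bookkeeping that lifts the triple result from the construction-specific set $\mathcal{X}(\widetilde p_{n+1})$ to an arbitrary configuration satisfying the stated Lipschitz conditions, which is also the only step where one must be mildly careful (handling the case where some $\widetilde u_i$ equals $\widetilde p$ or where two centers coincide causes no issue, since the corresponding ball is either the whole space or shared).
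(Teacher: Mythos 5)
Your proposal follows the same outline as the paper --- reduce to triples via Helly's lemma \ref{hellyiv} (using convexity of closed $d_{\widetilde\sigma_2}$-balls), then invoke Theorem 3.11 on each triple --- but it skips a reduction step that the paper needs, and the way you bridge the gap is a hand-wave.

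The claim that Theorem 3.11 applies \emph{verbatim} to an arbitrary triple $\widetilde u_1,\widetilde u_2,\widetilde u_3$ satisfying the Lipschitz bounds is not justified, and in fact the smoothness requirement built into $\mathcal{X}(\widetilde p_{n+1})$ is genuinely used in the proof of Theorem 3.11. When $\widetilde p_{n+1}$ lies inside the filled geodesic triangle $\Delta$ with vertices $\widetilde x_1,\widetilde x_2,\widetilde x_3$, the proof concludes that $\Delta$ has no one--dimensional components precisely because the geodesics $\overline{\widetilde p_{n+1}\widetilde x_i}$ are smooth and contain no zeroes of $\widetilde q_1$; this is what drives the case split (1,i)--(1,iii) versus (2,i)--(2,iii). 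For an arbitrary $\widetilde u_i$ the geodesic $\overline{\widetilde p\,\widetilde u_i}$ may well break at a cone point, and then $\widetilde p$ could sit on a one--dimensional component of $\Delta$, a situation the ``case 1'' argument does not handle. So ``the only feature used is the Lipschitz bounds'' is false as stated.

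What the paper does instead is a replacement step that restores the setting of Theorem 3.11. If $\overline{\widetilde y_i\widetilde p}$ is not smooth, write it as $\overline{\widetilde p\widetilde z}*\widetilde\tau_1^{q_1}*\dots*\widetilde\tau_r^{q_1}*\overline{\widetilde w\widetilde y_i}$ with $\widetilde z,\widetilde w\in\pi^{-1}(\Sigma)$, $\overline{\widetilde p\widetilde z}$ smooth and zero--free, and the $\widetilde\tau_j^{q_1}$ saddle connections. Using $d_{\widetilde\sigma_2}(\widetilde y_i',\widetilde z')\le d_{\widetilde q_1}(\widetilde y_i,\widetilde z)$ (the corollary's second hypothesis) and the triangle inequality, one checks
$$B^2_{d_{\widetilde q_1}(\widetilde z,\widetilde p)}(\widetilde z')\subseteq B^2_{d_{\widetilde q_1}(\widetilde y_i,\widetilde p)}(\widetilde y_i'),$$
so replacing the ball around $\widetilde y_i'$ by the smaller ball around $\widetilde z'$ only shrinks the intersection; doing this for every non-smooth center (and likewise for the $\widetilde x_i$'s whose geodesic to $\widetilde p$ meets intermediate zeroes) produces a triple all of whose geodesics to $\widetilde p$ are smooth and zero-free, which is exactly the $\mathcal{X}(\widetilde p)$ condition. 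The Lipschitz bounds between the replaced centers are the ones already established at the outset of the construction of $\hat\phi$ for pairs of zeroes, together with the corollary's hypotheses. That reduction is the ``fresh work'' this corollary actually requires; your bookkeeping paragraph is precisely where it was needed, and where it is missing.

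Two minor remarks. Your Helly reduction and the convexity of metric balls in the $\mathrm{Cat}(0)$ cover are correct and match the paper. And your edge-case note (coinciding centers, $\widetilde u_i=\widetilde p$) is fine but not the real issue.
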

\begin{proof}
Closed balls of $d_{\widetilde \sigma_2}$ are convex, so one can use Helly's lemma \ref{hellyiv} and prove that the intersection of every triple of closed balls is not empty.\\
As we have already seen, given a point $\widetilde y_i$, if it results 
$$\overline{\widetilde y_i\widetilde p}=\overline{\widetilde p\widetilde z}*\widetilde \tau_1^{q_1}*\dots *\widetilde \tau_r^{q_1}*\overline{\widetilde w\widetilde y_i}$$
with $\widetilde w,\widetilde z\in \pi^{-1}(\Sigma)$,  $\widetilde \tau_i^{q_1}$  saddle connections and $\overline{\widetilde p\widetilde z}, \overline{\widetilde w\widetilde y_i}$ smooth, one can replace the ball $B^2_{d_{\widetilde q_1}(\widetilde y_i,\widetilde p)}(\widetilde y_i')$ in the intersection with the ball $B^2_{d_{\widetilde q_1}(\widetilde z,\widetilde p)}(\widetilde z')$. The same is true for all points $\widetilde x_i$.\\
The result then follows directly from theorem 3.11.
\end{proof}

We now want to focus ourselves on the remaining three cases. In order to do so we first need to characterize closed geodesics and flat cylinders of a semi-translation surface $(X,q)$. A proof of the following lemma can be found in \cite{St}.

\begin{lem}
Let $\theta$ be a simple closed geodesic for $|q|$ on $X$. Then $\theta$ is a cylinder curve of a flat cylinder $C$ of $(X,q)$. This means that $C$ is foliated by simple closed geodesics all parallel to $\theta$ and of the same length. The border of $C$ is composed by two components, both consisting of saddle connections of $q$ parallel to $\theta$. The length of both components equals the length of $\theta$.

\end{lem}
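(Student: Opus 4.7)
The strategy is to work in the universal cover $\pi\colon(\widetilde X,|\widetilde q|)\to(X,|q|)$ with the pullback flat metric, which is globally $\mathrm{Cat}(0)$. As a preliminary step I would show that $\theta$ is disjoint from $\Sigma$: if $\theta$ passed through a point $z\in\Sigma$ of cone order $k>0$, then, being a geodesic, the two angles at $z$ on either side of $\theta$ would each be at least $\pi$; but the total cone angle $\pi(k+2)$ at $z$ strictly exceeds $2\pi$, so one side has angle strictly greater than $\pi$. Developing that side isometrically into a Euclidean sector, the straight chord between two points of $\theta$ close to $z$ on that side is strictly shorter than the sub-arc of $\theta$ through $z$, contradicting the local length-minimality of $\theta$.

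Having ruled out singularities on $\theta$, I would lift it to a bi-infinite geodesic line $\widetilde\theta\subset\widetilde X$; the associated deck transformation $\gamma_\theta$ translates $\widetilde\theta$ by $L:=l_q(\theta)$. Exponentiating perpendicularly to $\widetilde\theta$ and continuing until one first meets a singularity on either side yields a $\gamma_\theta$-equivariant isometric embedding of a maximal open flat strip $\widetilde S=\mathbb R\times(-R_-,R_+)$ (with coordinates $(t,s)$ and $\widetilde\theta$ at $s=0$) into $\widetilde X$; the map is automatically injective because $\widetilde X$ is $\mathrm{Cat}(0)$ and simply connected. Both widths $R_\pm$ are finite, since the image cylinder $\widetilde S/\langle\gamma_\theta\rangle$ has area $L(R_++R_-)$ and maps locally isometrically onto a subset of the finite-area surface $X$; maximality of $R_\pm$ forces at least one singularity of $\widetilde q$ to sit on each of the limiting lines $\{s=R_+\}$ and $\{s=-R_-\}$.

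Quotienting $\widetilde S$ by $\langle\gamma_\theta\rangle$ produces a flat cylinder $C$ of circumference $L$, foliated by the horizontal circles $\{s=\mathrm{const}\}$, which are simple closed geodesics parallel to $\theta$ of common length $L$. On each side, the singularities on the limiting circle are discrete and $\gamma_\theta$-invariant, splitting the circle into Euclidean segments whose interiors avoid $\Sigma$, i.e., into saddle connections parallel to $\theta$; their cyclic concatenation forms a boundary component of total length $L$, and symmetrically on the other side. The principal obstacle I anticipate is promoting $C$ from an immersed cylinder in $X$ (inherited from $\widetilde X/\langle\gamma_\theta\rangle$) to an embedded one: equivalently, one must rule out the existence of a deck transformation $\gamma\in\pi_1(S_g)\setminus\langle\gamma_\theta\rangle$ identifying two points of $\widetilde S$. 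This is exactly where the simplicity of $\theta$ is used decisively, through the fact that any such $\gamma$ would force two distinct horizontal geodesics of $\widetilde S$ to project to the same simple closed curve in $X$, contradicting the cyclic nature of the stabilizer of a lift of a simple closed geodesic in $\pi_1(S_g)$.
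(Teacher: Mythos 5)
The paper provides no proof of this lemma: it is quoted from Strebel (\cite{St}), so there is no in-paper argument to compare against, and I review your sketch on its own merits.

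The preliminary step is wrong, and the claim it tries to establish is in fact false as stated. You argue that since the cone angle at $z$ exceeds $2\pi$, at least one side of $\theta$ at $z$ has angle $\alpha>\pi$, and that developing that side gives a chord shorter than the subarc of $\theta$ through $z$. But a Euclidean sector of angle $\alpha>\pi$ is not convex: the straight segment joining two points on its boundary rays exits the sector, and the intrinsic geodesic between them is exactly the bent path through the vertex, of length $r_1+r_2$. The corner-cutting argument applies only when $\alpha<\pi$; that is precisely why a geodesic through a cone point must have both angles $\ge\pi$, and once both angles are $\ge\pi$ no local shortening is available. Indeed a simple closed concatenation of saddle connections with all incidence angles $\ge\pi$ is a simple closed geodesic passing through $\Sigma$ which bounds no flat cylinder, so the conclusion you want does not follow from simplicity plus the geodesic condition. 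The lemma has to be read (as it is in Strebel, and as it is used in the paper, where the relevant elements of $\Theta(\widetilde p)$ produce smooth geodesics by construction) with the implicit hypothesis that $\theta$ is a smooth closed geodesic disjoint from $\Sigma$; smoothness should be taken as a hypothesis, not derived.

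Once $\theta\cap\Sigma=\emptyset$ is granted, the rest of your outline --- lift to the $\mathrm{Cat}(0)$ universal cover, normal exponentiation to a maximal flat strip, finiteness of the widths from the area bound, identification of the boundary as a union of saddle connections parallel to $\theta$, and the deck-transformation argument for embeddedness via simplicity of $\theta$ --- is the standard construction and is sound at the level of detail you give.
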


\begin{lem}
Consider any $\theta\in \Theta(\widetilde p_{n+1})$ and let $\widetilde C$ be the lifting to $\widetilde X_1$ of the flat cylinder of $(X_1,q_1)$ corresponding to $\theta$.\\ 
Let $\widetilde y$ be any point of $\partial \widetilde C$ and $\widetilde z_1,\widetilde z_2$ the two zeroes on $\partial \widetilde C$ such that $\overline{\widetilde z_1\widetilde z_2}$ is a saddle connection containing $\widetilde y$. Then it results
$$B^2_{d_{\widetilde q_1}(\widetilde y,\widetilde z_1)}(\widetilde z_1')\cap B^2_{d_{\widetilde q_1}(\widetilde y,\widetilde z_2)}(\widetilde z_2')\subset V_\theta.$$
\end{lem}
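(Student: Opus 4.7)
The plan is to pick an arbitrary $\widetilde p'$ in $B^2_{d_{\widetilde q_1}(\widetilde y,\widetilde z_1)}(\widetilde z_1')\cap B^2_{d_{\widetilde q_1}(\widetilde y,\widetilde z_2)}(\widetilde z_2')$ and bound $d_{\widetilde \sigma_2}(\widetilde p',\theta\cdot\widetilde p')$ by routing a three-segment path through the two auxiliary points $\widetilde z_2'$ and $\theta\cdot\widetilde z_1'$. Set $\ell:=d_{\widetilde q_1}(\widetilde p_{n+1},\theta\cdot\widetilde p_{n+1})$. By the preceding lemma, $\theta$ acts on $\widetilde X_1$ by isometries preserving $\widetilde C$ and translating each boundary component of $\widetilde C$ along itself by distance $\ell$. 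In $\widetilde X_1$, each boundary component of $\widetilde C$ is a straight line (tiled by saddle connections of $\widetilde q_1$, all parallel to the cylinder direction). Up to swapping the labels of $\widetilde z_1,\widetilde z_2$, I may arrange that the points $\widetilde z_1,\widetilde z_2,\theta\cdot\widetilde z_1$ lie in this order along the boundary line containing $\widetilde y$, so that the $|\widetilde q_1|$-length of the straight boundary path from $\widetilde z_2$ to $\theta\cdot\widetilde z_1$ equals $\ell - d_{\widetilde q_1}(\widetilde z_1,\widetilde z_2)$, giving $d_{\widetilde q_1}(\widetilde z_2,\theta\cdot\widetilde z_1)\le \ell - d_{\widetilde q_1}(\widetilde z_1,\widetilde z_2)$.

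For the outer legs of the chosen path, the hypothesis on $\widetilde p'$ immediately gives $d_{\widetilde \sigma_2}(\widetilde p',\widetilde z_2')\le d_{\widetilde q_1}(\widetilde y,\widetilde z_2)$, while the isometry of the $\theta$-action on $(\widetilde X_2,d_{\widetilde \sigma_2})$ combined with the other hypothesis yields $d_{\widetilde \sigma_2}(\theta\cdot\widetilde z_1',\theta\cdot\widetilde p')=d_{\widetilde \sigma_2}(\widetilde z_1',\widetilde p')\le d_{\widetilde q_1}(\widetilde y,\widetilde z_1)$. For the middle leg I would invoke the zero-to-zero comparison already established at the opening of the construction: for any two zeroes $\widetilde z,\widetilde w$ in $\widetilde X_1$ one has $d_{\widetilde \sigma_2}(\widetilde z',\widetilde w')\le d_{\widetilde q_1}(\widetilde z,\widetilde w)$ (originally stated for zeroes in the fundamental domain $P$, but immediate for arbitrary pairs of zeroes by equivariance of the covering action and the definition of $\sigma_2$). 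Combined with the boundary estimate of the previous paragraph this produces $d_{\widetilde \sigma_2}(\widetilde z_2',\theta\cdot\widetilde z_1')\le \ell - d_{\widetilde q_1}(\widetilde z_1,\widetilde z_2)$.

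Summing the three bounds and using that $\widetilde y$ lies on the straight saddle connection $\overline{\widetilde z_1\widetilde z_2}$, so that $d_{\widetilde q_1}(\widetilde y,\widetilde z_1)+d_{\widetilde q_1}(\widetilde y,\widetilde z_2)=d_{\widetilde q_1}(\widetilde z_1,\widetilde z_2)$, one obtains an exact cancellation leaving $d_{\widetilde \sigma_2}(\widetilde p',\theta\cdot\widetilde p')\le \ell = d_{\widetilde q_1}(\widetilde p_{n+1},\theta\cdot\widetilde p_{n+1})$, which is precisely the condition $\widetilde p'\in V_\theta$. The only non-routine part of the argument is the choice of path: a naive triangle estimate through a single zero (say via $\widetilde z_1'$ and $\theta\cdot\widetilde z_1'$) would introduce a term $2d_{\widetilde q_1}(\widetilde y,\widetilde z_1)+d_{\widetilde \sigma_2}(\widetilde z_1',\theta\cdot\widetilde z_1')$ with no reason to be bounded by $\ell$. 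The cancellation depends crucially on $\widetilde y$ being on the saddle connection itself, not merely close to it, which is exactly the content of the hypothesis $\widetilde y\in\overline{\widetilde z_1\widetilde z_2}\subset\partial\widetilde C$.
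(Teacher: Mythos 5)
Your proof is correct and follows essentially the same route as the paper: route a three-leg path through the two zeroes bounding the saddle connection (one of them shifted by $\theta$), bound the outer legs by the ball hypotheses together with $\theta$-equivariance of $d_{\widetilde\sigma_2}$, and exploit the collinearity $d_{\widetilde q_1}(\widetilde y,\widetilde z_1)+d_{\widetilde q_1}(\widetilde y,\widetilde z_2)=d_{\widetilde q_1}(\widetilde z_1,\widetilde z_2)$ to produce the exact cancellation. The only cosmetic difference is that the paper bounds the middle leg by re-expanding $\overline{\widetilde z_1(\theta\cdot\widetilde z_2)}$ into its constituent saddle connections and applying $\hat l_{\widetilde\sigma_2}(\tau_i^{q_1})\le l_{\widetilde q_1}(\tau_i^{q_1})$ directly, whereas you invoke the previously established zero-to-zero comparison (correctly noting it extends to arbitrary lifts of zeroes by equivariance) and then separately bound the $q_1$-distance by the boundary path; these are the same estimate packaged differently, and your relabelling of $\widetilde z_1,\widetilde z_2$ to reverse the traversal direction is immaterial.
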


\begin{proof}
Let $\widetilde \tau_1^{q_1},\dots,\widetilde \tau_k^{q_1}$ be the saddle connections such that
$$\overline{\widetilde y(\theta \cdot \widetilde y)}=\overline{\widetilde y\widetilde z_1}*\widetilde \tau_1^{q_1}*\dots *\widetilde \tau_k^{q_1}*\overline{(\theta\cdot \widetilde z_2)(\theta\cdot\widetilde y)}.$$
Then for every point $\widetilde y'\in B^2_{d_{\widetilde q_1}(\widetilde y,\widetilde z_1)}(\widetilde z_1')\cap B^2_{d_{\widetilde q_1}(\widetilde y,\widetilde z_2)}(\widetilde z_2')$ it results
$$d_{\widetilde \sigma_2}(\widetilde y',\theta\cdot \widetilde y')\le d_{\widetilde \sigma_2}(\widetilde y',\widetilde z_1')+\sum\limits_{i=1,\dots,k}\hat l_{\widetilde \sigma_2}(\tau_i^q)+ d_{\widetilde \sigma_2}(\theta \cdot \widetilde y',\theta\cdot \widetilde z_2')= $$$$= d_{\widetilde \sigma_2}(\widetilde y',\widetilde z_1')+\sum\limits_{i=1,\dots,k}\hat l_{\widetilde \sigma_2}(\tau_i^{q_1})+ d_{\widetilde \sigma_2}(\widetilde y', \widetilde z_2')\le d_{\widetilde q_1}(\widetilde y,\widetilde z_1)+\sum\limits_{i=1,\dots,k} l_{\widetilde q_1}(\tau_i^{q_1})+ d_{\widetilde q_1}(\widetilde y, \widetilde z_2)=$$$$=d_{\widetilde q_1}(\widetilde y,\widetilde z_1)+\sum\limits_{i=1,\dots,k} l_{\widetilde q_1}(\tau_i^{q_1})+ d_{\widetilde q_1}(\theta\cdot\widetilde y, \theta\cdot \widetilde z_2)=d_{\widetilde q_1}(\widetilde y,\theta\cdot \widetilde y)$$
and consequently $B^2_{d_{\widetilde q_1}(\widetilde y,\widetilde z_1)}(\widetilde z_1')\cap B^2_{d_{\widetilde q_1}(\widetilde y,\widetilde z_2)}(\widetilde z_2')\subset V_\theta$. 
\end{proof}

We are now ready to prove the case of the second type of intersections.
\begin{prop}
For every $\theta\in \Theta(\widetilde p_{n+1})$ and $\widetilde x_1,\widetilde x_2\in \mathcal{X}(\widetilde p_{n+1})$ it results
$$V_\theta\cap B^2_{d_{\widetilde q_1}(\widetilde x_1,\widetilde p_{n+1})}(\widetilde x_1')\cap B^2_{d_{\widetilde q_1}(\widetilde x_2,\widetilde p_{n+1})}(\widetilde x_2')\neq \emptyset.$$ 
\end{prop}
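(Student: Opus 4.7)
The plan is to replace $V_\theta$ with a concrete subset guaranteed by the preceding lemma, reducing the claim to a four-ball intersection, and then to invoke Helly's lemma~\ref{hellyiv} together with corollary~\ref{corintersection} to conclude.  Since $\theta\in\Theta(\widetilde p_{n+1})$, the segment $\overline{\widetilde p_{n+1}(\theta\cdot\widetilde p_{n+1})}$ is a closed geodesic lying in the interior of the flat cylinder $\widetilde C$.  I drop the perpendicular from $\widetilde p_{n+1}$ to one of the two boundary components of $\widetilde C$ and call its foot $\widetilde y\in\partial\widetilde C$; let $\widetilde z_1,\widetilde z_2$ be the two zeroes bounding the saddle connection of $\partial\widetilde C$ that contains $\widetilde y$, and set $r_i:=d_{\widetilde q_1}(\widetilde y,\widetilde z_i)$, $R_i:=d_{\widetilde q_1}(\widetilde p_{n+1},\widetilde z_i)$.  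The right triangle $\widetilde p_{n+1}\widetilde y\widetilde z_i$ inside the flat strip yields the Pythagorean identity $R_i^2=r_i^2+d_{\widetilde q_1}(\widetilde p_{n+1},\widetilde y)^2$, hence in particular $r_i\le R_i$.  By the preceding lemma,
$$B^2_{r_1}(\widetilde z_1')\cap B^2_{r_2}(\widetilde z_2')\subset V_\theta,$$
so the proposition reduces to the nonemptiness of
$$\Pi:=B^2_{r_1}(\widetilde z_1')\cap B^2_{r_2}(\widetilde z_2')\cap B^2_{R_{\widetilde x_1}}(\widetilde x_1')\cap B^2_{R_{\widetilde x_2}}(\widetilde x_2'),$$
where $R_{\widetilde x_j}:=d_{\widetilde q_1}(\widetilde x_j,\widetilde p_{n+1})$.

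Closed $d_{\widetilde\sigma_2}$-balls are convex in the uniquely geodesic, topologically two-dimensional $Cat(0)$ space $(\widetilde X_2,d_{\widetilde\sigma_2})$, so by Helly's lemma~\ref{hellyiv} it suffices to check that every subcollection of three of the four balls in $\Pi$ has nonempty intersection.  There are exactly four such triples: two of the form $B^2_{r_1}(\widetilde z_1')\cap B^2_{r_2}(\widetilde z_2')\cap B^2_{R_{\widetilde x_j}}(\widetilde x_j')$ and two of the form $B^2_{r_i}(\widetilde z_i')\cap B^2_{R_{\widetilde x_1}}(\widetilde x_1')\cap B^2_{R_{\widetilde x_2}}(\widetilde x_2')$.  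For the first type I would apply corollary~\ref{corintersection} with reference point $\widetilde y$, taking the pair $(\widetilde x_j,\widetilde x_j')$ as a general point and $\widetilde z_1,\widetilde z_2$ as zeroes; this produces a point inside $B^2_{r_1}(\widetilde z_1')\cap B^2_{r_2}(\widetilde z_2')\cap B^2_{d_{\widetilde q_1}(\widetilde x_j,\widetilde y)}(\widetilde x_j')$, from which one concludes provided $d_{\widetilde q_1}(\widetilde x_j,\widetilde y)\le R_{\widetilde x_j}$.  For the second type, a symmetric argument uses corollary~\ref{corintersection} with reference point $\widetilde p_{n+1}$ and then forces the solution into the smaller ball $B^2_{r_i}(\widetilde z_i')$ by using that $\overline{\widetilde p_{n+1}\widetilde y}$ is perpendicular to $\overline{\widetilde z_1\widetilde z_2}$.

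The main obstacle is precisely this reconciliation of the two reference points $\widetilde y$ and $\widetilde p_{n+1}$: the ``small'' balls $B^2_{r_i}(\widetilde z_i')$ are naturally carried by the preceding lemma with respect to $\widetilde y$, while the prescribed balls $B^2_{R_{\widetilde x_j}}(\widetilde x_j')$ have their radii measured from $\widetilde p_{n+1}$, and the compatibility inequality $d_{\widetilde q_1}(\widetilde x_j,\widetilde y)\le d_{\widetilde q_1}(\widetilde x_j,\widetilde p_{n+1})$ can fail when $\widetilde x_j$ lies on the opposite side of $\overline{\widetilde z_1\widetilde z_2}$ from $\widetilde p_{n+1}$.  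I plan to circumvent this by exploiting the freedom in the preceding lemma (which applies to every $\widetilde y\in\partial\widetilde C$) to choose $\widetilde y$ adapted to each triple, and by using a $Cat(0)$-convexity argument to slide a candidate point along a $d_{\widetilde\sigma_2}$-geodesic into the desired smaller ball; handling these cases carefully is the technical heart of the argument.
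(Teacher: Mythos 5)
Your high-level plan is aligned with the paper in one of the two main configurations (reduce $V_\theta$ to a pair of balls via the preceding lemma, then invoke Corollary~\ref{corintersection} and Helly), but the specific choice of $\widetilde y$ and the subsequent case analysis leave a gap that the sketch does not close.

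The first problem is the choice of reference point. Dropping the perpendicular from $\widetilde p_{n+1}$ to a boundary component of $\widetilde C$ gives a foot $\widetilde y$ for which the needed compatibility inequality $d_{\widetilde q_1}(\widetilde x_j,\widetilde y)\le d_{\widetilde q_1}(\widetilde x_j,\widetilde p_{n+1})$ has no reason to hold, which you yourself observe. The paper avoids this issue entirely by taking $\widetilde z_i := \overline{\widetilde p_{n+1}\widetilde x_i}\cap\partial\widetilde C$ and then choosing $\widetilde z\in\overline{\widetilde z_1\widetilde z_2}\cap\partial\widetilde C$ with $d_{\widetilde q_1}(\widetilde z,\widetilde z_i)\le d_{\widetilde q_1}(\widetilde z_i,\widetilde p_{n+1})$ for $i=1,2$. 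Because $\widetilde z_i$ lies on the geodesic $\overline{\widetilde p_{n+1}\widetilde x_i}$, this immediately yields $d_{\widetilde q_1}(\widetilde z,\widetilde x_i)\le d_{\widetilde q_1}(\widetilde x_i,\widetilde p_{n+1})$ by the triangle inequality, and then Corollary~\ref{corintersection} applied at $\widetilde z$ together with the preceding lemma closes the argument directly — no separate Helly step on the four balls is even needed, since the corollary already produces a point of $\Lambda$. Your proposal to ``choose $\widetilde y$ adapted to each triple'' does not repair the problem: Helly's lemma needs a fixed family, so you cannot let the two balls $B^2_{r_i}(\widetilde z_i')$ vary from triple to triple.

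The second and more serious problem is that this single-point strategy simply fails when $\overline{\widetilde x_1\widetilde x_2}$ traverses the cylinder, because then $\widetilde z_1$ and $\widetilde z_2$ lie on opposite boundary components of $\widetilde C$ and no single point $\widetilde z\in\partial\widetilde C$ can be used to fit everything into the preceding lemma. In this case the paper switches to a genuinely two-step argument: it produces two auxiliary points $\widetilde z_1',\widetilde z_2'\in V_\theta$ (each obtained from Corollary~\ref{corintersection} and the preceding lemma applied at its own boundary component), observes that the geodesic $\overline{\widetilde z_1'\widetilde z_2'}$ lies in the convex set $V_\theta$, and then slides along this geodesic to locate a point that is simultaneously within the required distance of $\widetilde x_1'$ and $\widetilde x_2'$. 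Your ``sliding'' remark gestures at convexity of $V_\theta$, but you never construct the two endpoints or establish the distance estimates that make the sliding step work. You also do not treat the cases $\widetilde x_1\in\widetilde C$ or $\widetilde x_2\in\widetilde C$, which the paper handles separately (the latter being essentially trivial since then $\widetilde x_1',\widetilde x_2'\in V_\theta$ already).
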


\begin{proof}
Let $\widetilde C$ be the lifting to $\widetilde X_1$ of the flat cylinder corresponding to $\theta$.\\ 
We will first consider the case $\widetilde x_1\not\in \widetilde C$ and $\widetilde x_2\not\in \widetilde C$, since it is the more complicated one.\\  
We define the following points $\widetilde z_1,\widetilde z_2\in \widetilde X_1$:
$$\widetilde z_1:=\overline{\widetilde p_{n+1}\widetilde x_1}\cap \partial \widetilde C,\quad \quad \widetilde z_2:=\overline {\widetilde p_{n+1}\widetilde x_2}\cap \partial \widetilde C.$$
Consider the following two cases:
\begin{itemize}
\item $\overline{\widetilde x_1\widetilde x_2}$ does not traverse $\widetilde C$.

\begin{figure}[h!]
 \centering
  \includegraphics[scale=0.45]{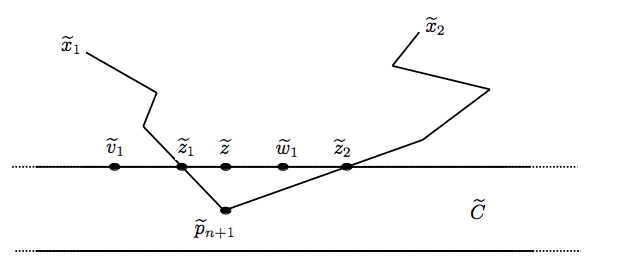}
 \caption{The case $\overline{\widetilde x_1\widetilde x_2}$ does not traverse $\widetilde C$}
 \end{figure}

There is a point $\widetilde z\in \overline{\widetilde z_1\widetilde z_2}$ (eventually equal to $\widetilde z_1$ or $\widetilde z_2$) such that $\widetilde z\in \partial \widetilde C$ and
$$d_{\widetilde q_1}(\widetilde z,\widetilde z_i)\le d_{\widetilde q_1}(\widetilde z_i,\widetilde p_{n+1}),\quad i=1,2$$  
and consequently
$$d_{\widetilde q_1}(\widetilde z,\widetilde x_i)\le d_{\widetilde q_1}(\widetilde x_i,\widetilde p_{n+1}).$$
Let $\widetilde v_1$ and $\widetilde w_1$ be the two zeroes on $\partial \widetilde C$ such that $\overline{\widetilde v_1\widetilde w_1}$ is a saddle connection containing $\widetilde z$.\\ 
From corollary \ref{corintersection} it follows
$$ \Lambda:=B^2_{d_{\widetilde q_1}(\widetilde v_1,\widetilde z)}(\widetilde v_1')\cap B^2_{d_{\widetilde q_1}(\widetilde w_1,\widetilde z)}(\widetilde w_1')\cap B^2_{d_{\widetilde q_1}(\widetilde x_1,\widetilde z)}(\widetilde x_1')\cap B^2_{d_{\widetilde q_1}(\widetilde x_2,\widetilde z)}(\widetilde x_2')\neq \emptyset.$$
The inequality $d_{\widetilde q_1}(\widetilde z,\widetilde x_i)\le d_{\widetilde q_1}(\widetilde x_i,\widetilde p_{n+1})$ grants
$$\Lambda\subset B^2_{d_{\widetilde q_1}(\widetilde v_1,\widetilde z)}(\widetilde v_1')\cap B^2_{d_{\widetilde q_1}(\widetilde w_1,\widetilde z)}(\widetilde w_1')\cap B^2_{d_{\widetilde q_1}(\widetilde x_1,\widetilde p_{n+1})}(\widetilde x_1')\cap B^2_{d_{\widetilde q_1}(\widetilde x_2,\widetilde p_{n+1})}(\widetilde x_2')$$
and applying the preceding lemma we can finally get
$$\Lambda\subset V_\theta\cap B^2_{d_{\widetilde q_1}(\widetilde x_1,\widetilde p_{n+1})}(\widetilde x_1')\cap B^2_{d_{\widetilde q_1}(\widetilde x_2,\widetilde p_{n+1})}(\widetilde x_2').$$

\item $\overline{\widetilde x_1\widetilde x_2}$ traverses $\widetilde C$.

\begin{figure}[h!]
 \centering
  \includegraphics[scale=0.45]{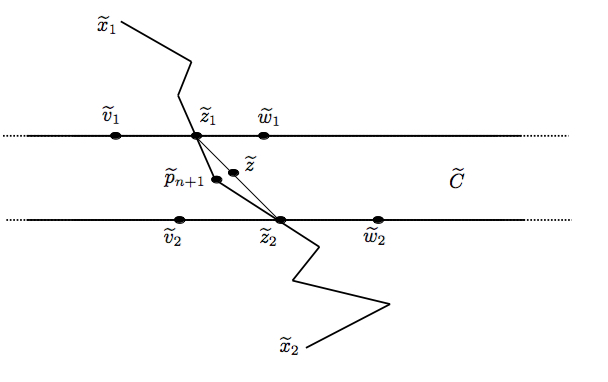}
 \caption{The case $\overline{\widetilde x_1\widetilde x_2}$ traverses $\widetilde C$.}
 \end{figure}
There is a point $\widetilde z\in \overline{\widetilde z_1\widetilde z_2}$ such that $d_{\widetilde q_1}(\widetilde z,\widetilde z_i)\le d_{\widetilde q_1}(\widetilde p_{n+1},\widetilde z_i)$, $i=1,2$.\\
For $i=1,2$, let $\widetilde v_i$ and $\widetilde w_i$ the two zeroes on $\partial \widetilde C$ such that $\overline{\widetilde v_i\widetilde w_i}$ is a saddle connection and $\widetilde z_i\in \overline{\widetilde v_i\widetilde w_i}$.\\
Denote by $\mathcal{X}(\widetilde z_1)$ the set of the zeroes of $\widetilde q_1$ joined to $\widetilde z_1$ by a smooth geodesic of $|\widetilde q_1|$: clearly $\widetilde v_1,\widetilde w_1\in \mathcal{X}(\widetilde z_1)$.\\
Corollary \ref{corintersection} and the previous lemma grant the existence of the following points $\widetilde z_i'\in \widetilde X_2$:
$$\widetilde z_1'\in \left( \bigcap\limits_{\widetilde x\in \mathcal{X}(\widetilde z_1)} B^2_{d_{\widetilde q_1}(\widetilde z_1,\widetilde x)}(\widetilde x')\right)\cap B^2_{d_{\widetilde q_1}(\widetilde z_1,\widetilde x_1)}(\widetilde x_1')\cap B^2_{d_{\widetilde q_1}(\widetilde z_1,\widetilde x_2)}(\widetilde x_2')\subset $$ $$\subset B^2_{d_{\widetilde q_1}(\widetilde z_1,\widetilde x_1)}(\widetilde x_1')\cap B^2_{d_{\widetilde q_1}(\widetilde z_1,\widetilde x_2)}(\widetilde x_2')\cap V_\theta,$$
$$\widetilde z_2'\in B^2_{d_{\widetilde q_1}(\widetilde z_2,\widetilde v_2)}(\widetilde v_2')\cap B^2_{d_{\widetilde q_1}(\widetilde z_2,\widetilde w_2)}(\widetilde w_2')\cap B^2_{d_{\widetilde q_1}(\widetilde z_2,\widetilde z_1)}(z_1')\cap B^2_{d_{\widetilde q_1}(\widetilde z_2,\widetilde x_2)}(\widetilde x_2')\subset$$ $$\subset B^2_{d_{\widetilde q_1}(\widetilde z_2,\widetilde z_1)}(\widetilde z_1')\cap B^2_{d_{\widetilde q_1}(\widetilde z_2,\widetilde x_2)}(\widetilde x_2')\cap V_\theta.$$
The set $V_\theta$ is convex, so it follows $\overline{\widetilde z_1'\widetilde z_2'}\subset V_\theta$ and since $d_{\widetilde \sigma_2}(\widetilde z_1',\widetilde z_2')\le d_{\widetilde q_1}(\widetilde z_1,\widetilde z_2)$, we can choose $\widetilde z'\in \overline{\widetilde z_1'\widetilde z_2'}$ such that $d_{\widetilde \sigma_2}(\widetilde z',\widetilde z_i')\le d_{\widetilde q_1}(\widetilde z,\widetilde z_i)$, $i=1,2$. \\
In this way one finally gets the following inequalities:
$$d_{\widetilde \sigma_2}(\widetilde z',\widetilde x_i')\le d_{\widetilde \sigma_2}(\widetilde z',\widetilde z_i')+d_{\widetilde \sigma_2}(\widetilde z_i',\widetilde x_i')\le $$ $$\le d_{\widetilde q_1}(\widetilde z,\widetilde z_i)+d_{\widetilde q_1}(\widetilde z_i,\widetilde x_i)\le d_{\widetilde q_1}(\widetilde p_{n+1},\widetilde z_i)+d_{\widetilde q_1}(\widetilde z_i,\widetilde x_i)=d_{\widetilde q_1}(\widetilde p_{n+1},\widetilde x_i).$$

\end{itemize}

The case $\widetilde x_1\in \widetilde C$ and $\widetilde x_2\not\in \widetilde C$ can be solved in the same way. Define as before $\widetilde z_2:=\overline{\widetilde p_{n+1}\widetilde x_2}\cap \partial \widetilde C$, then one just has to notice that there always is a point $\widetilde z\in \overline{\widetilde z_2\widetilde x_1}$ such that $d_{\widetilde q_1}(\widetilde z,\widetilde x_1)\le d_{\widetilde q_1}(\widetilde p_{n+1},\widetilde x_1)$ and  $d_{\widetilde q_1}(\widetilde z,\widetilde z_2)\le d_{\widetilde q_1}(\widetilde p_{n+1},\widetilde z_2)$. \\

Finally, if $\widetilde x_1\in \widetilde C$ and $\widetilde x_2\in \widetilde C$ one could notice that it results $\overline{\widetilde{x}_1'\widetilde{x}_2'}\subset V_\theta$. Since $d_{\widetilde \sigma_2}(\widetilde x_1',\widetilde x_2')\le d_{\widetilde q_1}(\widetilde x_1,\widetilde x_2)$, there is a point $\widetilde p_{n+1}'\in \overline{\widetilde{x}_1'\widetilde{x}_2'}$ such that $$d_{\widetilde \sigma_2}(\widetilde p_{n+1}',\widetilde x_i')\le d_{\widetilde q_1}(\widetilde p_{n+1},\widetilde x_i),\quad  i=1,2.$$ 

\end{proof}

\begin{cor}\label{cor529}
For every $\theta\in \Theta(\widetilde p_{n+1})$ and $\widetilde x_i\in \mathcal{X}(\widetilde p_{n+1})$, $i=1,\dots,n$, it results:
$$V_\theta\cap \bigcap\limits_{i=1,\dots,n}B^2_{d_{\widetilde q_1}(\widetilde x_i,\widetilde p_{n+1})}(\widetilde x_i')\neq \emptyset.$$ 
\end{cor}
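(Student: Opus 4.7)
The plan is a direct application of Helly's lemma for uniquely geodesic spaces (Lemma \ref{hellyiv}) to the finite family
$$\mathcal{F} := \{V_\theta\}\cup\bigl\{B^2_{d_{\widetilde q_1}(\widetilde x_i,\widetilde p_{n+1})}(\widetilde x_i')\bigr\}_{i=1}^n$$
of convex subsets of $(\widetilde X_2, d_{\widetilde\sigma_2})$. Since this space is $Cat(0)$, hence uniquely geodesic, and has topological dimension $2$, Helly's lemma reduces the whole corollary to the claim that every subcollection of $\mathcal{F}$ of cardinality at most $3$ has nonempty intersection.

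All members of $\mathcal{F}$ are convex: closed balls are convex in any $Cat(0)$ space, and $V_\theta$ is convex by the lemma preceding the previous proposition (via Busemann-convexity of the displacement function under $\theta$). I would then split the nontrivial triples into two types. A triple of three closed balls has nonempty intersection by Corollary \ref{corintersection}. A triple formed by $V_\theta$ together with two closed balls has nonempty intersection by the previous proposition, applied to the pair of centers $\widetilde x_i,\widetilde x_j$. Pairs and singletons reduce to these, with the only additional remark that $V_\theta$ itself is nonempty, because the axis of $\theta$ in $(\widetilde X_2, d_{\widetilde\sigma_2})$ lies in $V_\theta$: passing from $q_1$ to $\sigma_2 = q_2/e^{K^a_F(q_1,q_2)}$ shortens the geodesic representative of every saddle connection, so the translation length of $\theta$ under $\widetilde\sigma_2$ is at most its translation length under $\widetilde q_1$, which itself is bounded by $d_{\widetilde q_1}(\widetilde p_{n+1},\theta\cdot \widetilde p_{n+1})$.

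With all the hypotheses of Helly's lemma verified, the required intersection $\bigcap_{A\in\mathcal{F}}A$ is nonempty. The corollary is essentially a bookkeeping extension of the previous proposition from two closed balls to $n$ closed balls, and no new geometric obstruction appears in this step; the genuinely hard ingredient, namely the fact that any three closed balls intersect, has already been absorbed into Corollary \ref{corintersection}, which in turn depends on Theorem 3.11 and hence on Conjecture 3.31.
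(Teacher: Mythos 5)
Your proposal is correct and matches the paper's argument: the paper's proof is the one-line statement that the corollary follows from Helly's lemma (Lemma \ref{hellyiv}) together with the previous results, and you have spelled out exactly those ingredients (convexity of $V_\theta$ and of the closed balls, Corollary \ref{corintersection} for triples of balls, Proposition 3.15 for $V_\theta$ plus two balls). The additional remark on nonemptiness of $V_\theta$ via the $\widetilde\sigma_2$-translation length of $\theta$ is a harmless extra check; it also follows already from Lemma 3.14 combined with Corollary \ref{corintersection}.
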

\begin{proof}
It is a consequence of previous results and Helly's lemma for uniquely geodesic spaces.
\end{proof}

Finally, we can prove that the intersection is not empty also in the last two cases.

\begin{prop}
For every $\theta_1,\theta_2\in \Theta(\widetilde p_{n+1})$ and $\widetilde x\in \mathcal{X}(\widetilde p_{n+1})$, it follows
$$V_{\theta_1}\cap V_{\theta_2}\cap B^2_{d_{\widetilde q_1}(\widetilde x,\widetilde p_{n+1})}(\widetilde x')\neq \emptyset.$$ 
\end{prop}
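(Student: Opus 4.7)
The plan is to mimic and extend the proof strategy of Proposition 3.15, now with two simultaneous strip constraints. For $i=1,2$, let $\widetilde C_i\subset \widetilde X_1$ denote the lift of the flat cylinder associated to $\theta_i$. The argument will split into cases according to the relative positions of $\widetilde p_{n+1}$, $\widetilde x$, and the two cylinders $\widetilde C_1,\widetilde C_2$. The generic and most involved case is the one in which the $d_{\widetilde q_1}$-geodesic $\overline{\widetilde p_{n+1}\widetilde x}$ crosses both $\partial \widetilde C_1$ and $\partial \widetilde C_2$; the remaining configurations (the segment missing some $\partial \widetilde C_i$, an endpoint lying inside a cylinder, or one cylinder nested in the other) reduce either to Proposition 3.15 or to an easier variant of the same scheme.

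In the main case, I would pick $\widetilde z_i\in \overline{\widetilde p_{n+1}\widetilde x}\cap \partial \widetilde C_i$ between $\widetilde p_{n+1}$ and $\widetilde x$, so that $d_{\widetilde q_1}(\widetilde z_i,\widetilde x)\le d_{\widetilde q_1}(\widetilde p_{n+1},\widetilde x)$. Let $\widetilde v_i,\widetilde w_i$ be the two zeroes on $\partial \widetilde C_i$ such that $\widetilde z_i\in \overline{\widetilde v_i\widetilde w_i}$. Lemma 3.14 then gives
$$B^2_{d_{\widetilde q_1}(\widetilde z_i,\widetilde v_i)}(\widetilde v_i')\cap B^2_{d_{\widetilde q_1}(\widetilde z_i,\widetilde w_i)}(\widetilde w_i')\subset V_{\theta_i},\qquad i=1,2,$$
so it suffices to show that the intersection of the five closed balls above, together with $B^2_{d_{\widetilde q_1}(\widetilde x,\widetilde p_{n+1})}(\widetilde x')$, is non-empty. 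All five sets are convex subsets of the uniquely geodesic $Cat(0)$ space $(\widetilde X_2,d_{\widetilde \sigma_2})$ of topological dimension $2$, so Helly's Lemma 3.10 reduces the problem to checking that each of the $\binom{5}{3}=10$ triples has non-empty intersection.

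For each triple, I would exhibit a reference point $\widetilde p\in \widetilde X_1$ (chosen among $\widetilde z_1$, $\widetilde z_2$, $\widetilde p_{n+1}$, or a suitable intermediate point of $\overline{\widetilde p_{n+1}\widetilde x}$) whose distance to each center dominates the corresponding radius; Corollary 3.12 then produces a point in a smaller triple intersection, which is therefore contained in the desired one. For triples involving only centers from a single cylinder's boundary plus $\widetilde x'$, the choice $\widetilde p=\widetilde z_i$ works directly since $\widetilde z_i$ lies on $\overline{\widetilde p_{n+1}\widetilde x}$. For the more delicate mixed triples, I would first construct image points $\widetilde z_1',\widetilde z_2'\in \widetilde X_2$ satisfying the pairwise Lipschitz constraints with all zeroes of $\widetilde q_1$ and with each other (via a layered application of Corollary 3.12, in the spirit of the inductive construction of $\hat\phi$), and then use $\widetilde z_i'$ as an auxiliary non-zero point $\widetilde y_i'$ when reapplying Corollary 3.12 with $\widetilde p=\widetilde p_{n+1}$.

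The hard part, which I view as the main obstacle, is precisely these mixed triples: the two cylinder reductions involve distinct reference points $\widetilde z_1,\widetilde z_2$, so no single application of Corollary 3.12 swallows the whole five-ball intersection at once. Overcoming this requires the layered construction sketched above together with a careful case analysis on the relative position of $\widetilde C_1,\widetilde C_2$ and the segment $\overline{\widetilde p_{n+1}\widetilde x}$ (mirroring the two-subcase split in the proof of Proposition 3.15). The Busemann convexity of $(\widetilde X_2,d_{\widetilde \sigma_2})$ used in Lemma 3.14, combined with the convexity of all sets $V_\theta$ and of closed balls, should suffice to collapse each residual configuration into a concrete intersection of finitely many balls centered at zeroes, which Corollary 3.12 handles directly.
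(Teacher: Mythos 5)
You have correctly isolated the hard case (the geodesic $\overline{\widetilde p_{n+1}\widetilde x}$ crossing both $\partial \widetilde C_1$ and $\partial \widetilde C_2$) and you have also correctly identified, and not resolved, the exact obstruction. Once you fix two distinct boundary crossings $\widetilde z_1,\widetilde z_2$ and replace $V_{\theta_1},V_{\theta_2}$ by pairs of balls via Lemma 3.14, you are left with five balls whose radii are measured from three different base points ($\widetilde z_1$, $\widetilde z_2$, and $\widetilde p_{n+1}$). Corollary 3.12 and Theorem 3.11 only apply when all radii come from distances to a single reference point, and for a mixed triple such as $B^2_{d_{\widetilde q_1}(\widetilde z_1,\widetilde v_1)}(\widetilde v_1')\cap B^2_{d_{\widetilde q_1}(\widetilde z_2,\widetilde v_2)}(\widetilde v_2')\cap B^2_{d_{\widetilde q_1}(\widetilde p_{n+1},\widetilde x)}(\widetilde x')$ there is no candidate $\widetilde p$ whose distances to $\widetilde v_1,\widetilde v_2,\widetilde x$ are simultaneously dominated by the three radii: taking $\widetilde p=\widetilde z_1$ loses control of $d_{\widetilde q_1}(\widetilde z_1,\widetilde v_2)$ versus $d_{\widetilde q_1}(\widetilde z_2,\widetilde v_2)$, and symmetrically for $\widetilde z_2$. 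The ``layered construction'' you invoke to patch this is not an argument; it is a restatement of the difficulty.

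The paper avoids the five-ball configuration entirely by choosing a \emph{single} intermediate point $\widetilde z:=\overline{\widetilde x\widetilde p_{n+1}}\cap \partial(\widetilde C_1\cap\widetilde C_2)$. Because $\widetilde p_{n+1}\in\widetilde C_1\cap\widetilde C_2$, this is the exit point from the intersection of the two cylinders, so (say, after relabelling) $\widetilde z\in\partial\widetilde C_1$ while $\widetilde z$ is still contained in $\widetilde C_2$. Only $V_{\theta_1}$ is then replaced by balls via Lemma 3.14, and the remaining set
$$B^2_{d_{\widetilde q_1}(\widetilde z,\widetilde v_1)}(\widetilde v_1')\cap B^2_{d_{\widetilde q_1}(\widetilde z,\widetilde w_1)}(\widetilde w_1')\cap B^2_{d_{\widetilde q_1}(\widetilde x,\widetilde z)}(\widetilde x')\cap V_{\theta_2}$$
has all radii measured from the common base point $\widetilde z$, with $\theta_2\in\Theta(\widetilde z)$ because $\widetilde z$ lies in $\widetilde C_2$. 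This is precisely the form handled by Corollary \ref{cor529}, so one application finishes the generic case, and the boundary cases ($\widetilde x\in\widetilde C_1\setminus\widetilde C_2$, $\widetilde x\in\widetilde C_1\cap\widetilde C_2$) are immediate. If you insist on your two-point decomposition you would in effect have to reprove Proposition 3.15 at the auxiliary point $\widetilde z_1$ or $\widetilde z_2$ anyway; the single-point choice is what lets the existing Corollary \ref{cor529} do all the work.
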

\begin{proof}
Let $\widetilde C_i$ be the lifting to $\widetilde X_1$ of the flat cylinder corresponding to $\theta_i$, $i=1,2$. \\
We first consider the case $\widetilde x\not\in \widetilde C_1\cup \widetilde C_2$, since it is the more complicated one.\\
We choose the point $\widetilde z$:
$$\widetilde z:=\overline{\widetilde x\widetilde p_{n+1}}\cap \partial(\widetilde C_1\cap \widetilde C_2).$$
Notice that it results $d_{\widetilde q_1}(\widetilde p_{n+1},\widetilde x)\ge d_{\widetilde q_1}(\widetilde z,\widetilde x)$ and suppose $\widetilde z\in \partial \widetilde C_1$.\\ 
Let $\widetilde v_1$ and $\widetilde w_1$ be the two zeroes of $\widetilde q_1$ such that $\overline{\widetilde v_1\widetilde w_1}$ is the saddle connection of $\partial \widetilde C_1$ containing $\widetilde z$.\\ 
Then one gets the following inclusion of sets:
$$B^2_{d_{\widetilde q_1}(\widetilde z,\widetilde v_1)}(\widetilde v_1')\cap B^2_{d_{\widetilde q_1}(\widetilde z,\widetilde w_1)}(\widetilde w_1')\cap B^2_{d_{\widetilde q_1}(\widetilde x,\widetilde z)}(\widetilde x')\cap V_{\theta_2}\subset V_{\theta_1}\cap B^2_{d_{\widetilde q_1}(\widetilde x,\widetilde p_{n+1})}(\widetilde x')\cap V_{\theta_2}$$
and we can conclude applying corollary \ref{cor529}.\\
The case $\widetilde x\in \widetilde C_1$ and $\widetilde x\not\in \widetilde C_2$  can be solved in the same way, choosing $$\widetilde z:=\overline{\widetilde x\widetilde p_{n+1}}\cap \partial\widetilde C_1.$$
Finally, the case $\widetilde x\in \widetilde C_1\cap \widetilde C_2$ is trivial since $\widetilde x'\in V_{\theta_1}\cap V_{\theta_2}$.
\end{proof}

\begin{prop}
For every $\theta_1,\theta_2,\theta_3\in \Theta(\widetilde p_{n+1})$ it follows
$$V_{\theta_1}\cap V_{\theta_2}\cap V_{\theta_3}\neq \emptyset.$$ 
\end{prop}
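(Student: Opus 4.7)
The proof parallels the strategy of the preceding proposition, now with three convex sets $V_{\theta_i}$ instead of two $V_{\theta_i}$'s and one closed ball. For each $i=1,2,3$, let $\widetilde C_i$ denote the lift to $\widetilde X_1$ of the flat cylinder corresponding to $\theta_i$. The plan is to reduce $V_{\theta_1}$ to an intersection of two closed balls centered at zeroes of $\widetilde q_1$ via the preceding cylinder-boundary lemma, and then combine the two preceding propositions with Helly's lemma \ref{hellyiv}.

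First, if $\widetilde p_{n+1}\in \widetilde C_1\cap \widetilde C_2\cap \widetilde C_3$ the conclusion is immediate, and if $\widetilde p_{n+1}$ lies in exactly two of the three cylinders the problem reduces, via Corollary \ref{cor529}, to the single remaining $V_{\theta_i}$. Otherwise we may assume $\widetilde p_{n+1}\notin \widetilde C_1$. Choose $\widetilde z\in \partial \widetilde C_1$ on a suitable geodesic from $\widetilde p_{n+1}$ toward the interior of $\widetilde C_1$, and let $\widetilde v_1,\widetilde w_1$ be the endpoints of the saddle connection on $\partial \widetilde C_1$ containing $\widetilde z$. By the cylinder-boundary lemma proved just before the $V_{\theta_1}\cap V_{\theta_2}\cap B$ proposition,
\[
B^2_{d_{\widetilde q_1}(\widetilde z,\widetilde v_1)}(\widetilde v_1')\cap B^2_{d_{\widetilde q_1}(\widetilde z,\widetilde w_1)}(\widetilde w_1')\subset V_{\theta_1},
\]
so it suffices to show that these two balls meet $V_{\theta_2}\cap V_{\theta_3}$. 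Since $(\widetilde X_2,d_{\widetilde\sigma_2})$ is uniquely geodesic of topological dimension $2$, Lemma \ref{hellyiv} reduces this to checking the non-emptiness of each triple among the four convex sets $\{B_1,B_2,V_{\theta_2},V_{\theta_3}\}$: the two triples of the form $B\cap V_{\theta_2}\cap V_{\theta_3}$ follow from the preceding $V\cap V\cap B$ proposition, and the two triples of the form $B\cap B\cap V_{\theta_j}$ from the earlier $V\cap B\cap B$ proposition.

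The main obstacle is that those two propositions are stated for balls of radii $d_{\widetilde q_1}(\widetilde x,\widetilde p_{n+1})$, whereas in our reduction the balls have radii $d_{\widetilde q_1}(\widetilde z,\widetilde v_i)$ measured from the auxiliary point $\widetilde z$. Their proofs, however, rely only on Theorem 3.11 and Corollary \ref{corintersection}, whose statements allow the ``central point'' to be any point of $\widetilde X_1$; by choosing $\widetilde z$ so that $d_{\widetilde q_1}(\widetilde z,\widetilde v_i)\le d_{\widetilde q_1}(\widetilde p_{n+1},\widetilde v_i)$ for each zero involved in the corresponding reductions, the same arguments go through with $\widetilde z$ replacing $\widetilde p_{n+1}$. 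Carrying out this verification through a case analysis on the relative positions of $\widetilde p_{n+1}$ and the cylinders $\widetilde C_2,\widetilde C_3$ is the only remaining technical point.
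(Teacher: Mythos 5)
The opening case analysis is vacuous, and this propagates to the rest of the argument. Since $\theta_i\in\Theta(\widetilde p_{n+1})$, the geodesic $\overline{\widetilde p_{n+1}(\theta_i\cdot\widetilde p_{n+1})}$ is smooth and avoids zeroes; its projection to $S_g$ is therefore a smooth closed geodesic (a corner at the basepoint would force two angles $\ge\pi$ summing to $2\pi$, hence both equal to $\pi$), so $\widetilde p_{n+1}$ lies in the interior of each cylinder $\widetilde C_i$. The case ``$\widetilde p_{n+1}\notin\widetilde C_1$'' that you reduce to never occurs, and the first case, which always occurs, is asserted to be ``immediate'' without justification --- note that $\widetilde p_{n+1}$ is not a zero nor a previously defined $\gamma\cdot\widetilde p_i$, so there is no already-known image point $\widetilde p_{n+1}'$ to place in $V_{\theta_1}\cap V_{\theta_2}\cap V_{\theta_3}$. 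You seem to be transplanting the case split used in Propositions 3.15 and 3.17, where it applies to the ball centers $\widetilde x\in\mathcal{X}(\widetilde p_{n+1})$, onto the point $\widetilde p_{n+1}$ itself, where it does not.

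Setting the case split aside and reading your intent as ``pick some $\widetilde z\in\partial\widetilde C_1$ to replace $V_{\theta_1}$ by two balls via Lemma 3.14, then apply Helly,'' the remaining technical point you flag is exactly where the content lies, and it is not closed by the argument you sketch. To invoke the $V\cap V\cap B$ and $V\cap B\cap B$ propositions for the triples among $\{B_1,B_2,V_{\theta_2},V_{\theta_3}\}$ with the reference point changed from $\widetilde p_{n+1}$ to $\widetilde z$, you need $\theta_2,\theta_3\in\Theta(\widetilde z)$ (so that $\widetilde z$ is in the interior of $\widetilde C_2$ and of $\widetilde C_3$); an arbitrary boundary point $\widetilde z$ of $\widetilde C_1$ has no reason to satisfy this. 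The paper's proof is designed around precisely this constraint: it chooses $\widetilde z$ on $\partial\widetilde C_1\cap\partial\widetilde C_2$ and inside $\widetilde C_3$ (possible after renumbering), so that Lemma 3.14 collapses \emph{both} $V_{\theta_1}$ and $V_{\theta_2}$ to pairs of balls of radii measured from $\widetilde z$, and the only surviving convexity set $V_{\theta_3}$ is one for which $\widetilde z$ is an admissible central point; Corollary \ref{cor529} then finishes. Your version, collapsing only $V_{\theta_1}$ and leaving two $V_{\theta}$'s, requires a joint condition on $\widetilde z$ relative to $\widetilde C_2$ and $\widetilde C_3$ that you neither impose nor verify.
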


\begin{proof}
As before, denote by $\widetilde C_i$ the lifting to $\widetilde X_1$ of the flat cylinder corresponding to $\theta_i$, $i=1,2,3$. Up to renumbering the indexes, we can suppose there is a point $\widetilde z\in \partial(\widetilde C_1\cap \widetilde C_2)\cap \widetilde C_3$. \\
Then, for $i=1,2$ let $\widetilde v_i,\widetilde w_i$ be the zeroes of $\widetilde q_1$ on the border of $\widetilde C_i$ such that $\overline{\widetilde v_i\widetilde w_i}$ is a saddle connection and $\widetilde z\in \overline{\widetilde v_i\widetilde w_i}$.\\
Using lemma 3.14 we just have to prove
$$B^2_{d_{\widetilde q_1}(\widetilde z,\widetilde v_1)}(\widetilde v_1')\cap B^2_{d_{\widetilde q_1}(\widetilde z,\widetilde w_1)}(\widetilde w_1')\cap B^2_{d_{\widetilde q_1}(\widetilde z,\widetilde v_2)}(\widetilde v_2')\cap B^2_{d_{\widetilde q_1}(\widetilde z,\widetilde w_2)}(\widetilde w_2')\cap \widetilde V_{\theta_3}\neq \emptyset$$
which is granted by corollary \ref{cor529}.
\end{proof}

This ends the proof of the existence of the desired function $\phi$: if conjecture 3.31 is true, we have described how to obtain the equality $L_F^a(q_1,q_2)=K_F^a(q_1,q_2)$.

\bigskip
\subsection{Proof of theorem 3.11}

The first step towards the proof of theorem 3.11 consists in the characterization of geodesic triangles in $(\widetilde X,d_{\widetilde q})$ (where as before $(X,q)$ is a semi-translation surface and $\pi:(\widetilde X,|\widetilde q|)\rightarrow (X,|q|)$ is a metric universal cover). We will use the following lemma, the proof of which can be found in \cite{St}, theorem 16.1.

\begin{lem}\label{streb}
Let $\widetilde \gamma:[0,1]\rightarrow \widetilde X$ be a locally minimizing geodesic for $d_{\widetilde q}$. It follows
$$d_{\widetilde q}(\widetilde \gamma(0),\widetilde \gamma(1))=l_{\widetilde q}(\widetilde \gamma)$$
that is, $\widetilde \gamma$ is also globally minimizing. Furthermore, $\widetilde \gamma$ is the unique geodesic with these properties.
\end{lem}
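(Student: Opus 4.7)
The plan is to reduce the statement to two standard facts about CAT(0) spaces, applied to the metric universal cover $(\widetilde X, d_{\widetilde q})$, which I will first verify is a Hadamard space (complete, simply connected, globally CAT(0)). In such spaces, local geodesics are automatically globally length‑minimizing, and geodesics joining two given points are unique, which gives both conclusions at once.

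First I would establish that $(\widetilde X, d_{\widetilde q})$ is globally CAT(0). The paper already records that $(X, |q|)$ is locally CAT(0): away from singularities the metric is Euclidean, while at a zero of order $k>0$ the cone angle $\pi(k+2) \ge 3\pi > 2\pi$ gives the non‑positive curvature condition. The cover $\widetilde X$ is simply connected by construction and inherits completeness from the compact base. By the Cartan–Hadamard theorem for metric spaces (Bridson–Haefliger, Theorem II.4.1), a complete, simply connected, locally CAT(0) space is globally CAT(0).

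Second, I would invoke two well‑known consequences: (a) between any two points of a CAT(0) space there exists a unique minimizing geodesic, and (b) any local geodesic is globally minimizing. Fact (a) follows from the CN inequality applied to midpoints: if $\alpha,\beta$ are two unit‑speed geodesics from $p$ to $q$, comparing the triangles $(p,q,q)$ via their midpoints forces $d(\alpha(t),\beta(t))=0$ for every $t$. For fact (b), given $\widetilde\gamma$ as in the statement I would consider the set
\[
I := \{\, t\in[0,1] : \widetilde\gamma|_{[0,t]} \text{ is globally minimizing}\,\}.
\]
By the local minimizing hypothesis, $I$ contains a neighbourhood of $0$; by lower semicontinuity of length and continuity of $d_{\widetilde q}$, it is closed. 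For openness, suppose $t_0\in I$ with $t_0<1$. Pick $\varepsilon>0$ small enough that $\widetilde\gamma|_{[t_0-\varepsilon,\,t_0+\varepsilon]}$ is minimizing (local geodesic property), and let $\widetilde\sigma$ be the unique minimizing geodesic from $\widetilde\gamma(0)$ to $\widetilde\gamma(t_0+\varepsilon)$. Applying uniqueness (a) to the pair $\widetilde\sigma|_{[0,t_0]}$ and $\widetilde\gamma|_{[0,t_0]}$ (both minimize between $\widetilde\gamma(0)$ and a point at the same distance $\mathrm{length}(\widetilde\gamma|_{[0,t_0]})$), one sees $\widetilde\sigma$ passes through $\widetilde\gamma(t_0)$; then the CAT(0) comparison applied to the (degenerate) triangle $(\widetilde\gamma(0),\widetilde\gamma(t_0),\widetilde\gamma(t_0+\varepsilon))$ forces $\widetilde\sigma$ to continue as $\widetilde\gamma$ on $[t_0,t_0+\varepsilon]$. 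Hence $t_0+\varepsilon\in I$, so $I=[0,1]$. Uniqueness of $\widetilde\gamma$ among globally minimizing paths is then exactly (a).

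The main obstacle is precisely the openness step: it must be ruled out that $\widetilde\gamma$ "turns a corner'' at $\widetilde\gamma(t_0)$ relative to the global geodesic $\widetilde\sigma$, since a priori a local geodesic could follow the global one up to $t_0$ and then branch off. The CAT(0) hypothesis is exactly what forbids this, because bending would create a strict triangle inequality incompatible with the comparison triangle in $\mathbb{R}^2$. In fact the whole argument above is the content of Proposition II.1.4 of Bridson–Haefliger once one has established that $\widetilde X$ is CAT(0), so in practice I would cite that proposition directly after verifying the Hadamard property.
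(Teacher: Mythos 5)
Your approach is correct but genuinely different from what the paper does: the paper gives no proof at all, instead citing Strebel's Theorem~16.1, which works directly with the local structure of trajectories of a quadratic differential. You instead route through the general metric-geometry framework: verify $(\widetilde X,d_{\widetilde q})$ is a Hadamard space (complete, simply connected, locally CAT(0) implies globally CAT(0) by the metric Cartan--Hadamard theorem), and then invoke the CAT(0) facts that local geodesics are globally minimizing and that geodesics between two points are unique. This buys generality and brevity at the cost of the heavier Cartan--Hadamard machinery, whereas Strebel's argument is self-contained within the theory of quadratic differentials.

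One step in your sketch of the openness argument is not valid as written. You apply uniqueness of geodesics to $\widetilde\sigma|_{[0,t_0]}$ and $\widetilde\gamma|_{[0,t_0]}$, but uniqueness only compares geodesics sharing \emph{both} endpoints; these two curves share $\widetilde\gamma(0)$ but a priori have different endpoints $\widetilde\sigma(t_0)$ and $\widetilde\gamma(t_0)$, so the step is circular. The standard fix is to use the CAT(0) comparison (or convexity of the metric) on the triangle $\bigl(\widetilde\gamma(0),\widetilde\gamma(t_0),\widetilde\gamma(t_0+\varepsilon)\bigr)$ together with the fact that the Alexandrov angle of $\widetilde\gamma$ at $\widetilde\gamma(t_0)$ is $\pi$, which forces $d(\widetilde\gamma(0),\widetilde\gamma(t_0+\varepsilon))=d(\widetilde\gamma(0),\widetilde\gamma(t_0))+d(\widetilde\gamma(t_0),\widetilde\gamma(t_0+\varepsilon))$. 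Since you anyway propose to cite the standard statement, note that the right reference in Bridson--Haefliger is the Cartan--Hadamard section (the proposition that local geodesics in complete simply connected nonpositively curved spaces are global geodesics, around II.4.13--II.4.14), not Proposition II.1.4, which gives only uniqueness.
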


Given any triple of points $\widetilde x_1,\widetilde x_2, \widetilde x_3\in \widetilde X$ denote by $T$ the corresponding geodesic triangle for $d_{\widetilde q}$, which is the subset of $\widetilde X$ composed by the three geodesics $\overline{\widetilde x_i\widetilde x_j}$.\\ 
Since $\widetilde X\simeq \mathbb{H}$, it makes sense to define the internal part ${\mathop \Delta\limits^ \circ}$ of $T$: we call \textit{filled geodesic triangle} the set $T\cup {\mathop \Delta\limits^ \circ}$ and we denote it by $\Delta$.\\

Given any planar polygon $P$, we denote by  $d_P$ its \textit{intrinsic Euclidean metric}: for every $x_1,x_2\in P$, we define $d_P(x_1,x_2)$ as the infimum of the lengths, computed with respect to the Euclidean metric, of all paths from $x_1$ to $x_2$ entirely contained in $P$. \\
Every polygon used in the following proofs will be endowed with such intrinsic Euclidean metric.

\begin{prop}
Filled geodesic triangles of $d_{\widetilde q}$ are convex and do not contain zeroes of $\widetilde q$ in their internal part, which is connected.\\
Given a triple of points $\widetilde x_1,\widetilde x_2,\widetilde x_3\in \widetilde X$, the corresponding filled geodesic triangle $\Delta$ can have one dimensional components. For every $i=1,2,3$ we define $\widetilde v_i$ as the point on $\overline{\widetilde x_i\widetilde x_j}\cap \overline{\widetilde x_i\widetilde x_k}$, $i\neq j\neq k$ which has maximum distance with $\widetilde x_i$.\\
If ${\mathop \Delta\limits^ \circ}$ is not empty, then its border is exactly $\overline{\widetilde v_1\widetilde v_2}\cup\overline{\widetilde v_2\widetilde v_3}\cup\overline{\widetilde v_1\widetilde v_3}$ and for every $i=1,2,3$, if $\widetilde x_i\neq \widetilde v_i$, then $\overline{\widetilde x_i\widetilde v_i}$ is the only one dimensional component of $\Delta$ starting from $\widetilde x_i$.\\
The internal angles of $\overline{\mathop \Delta\limits^ \circ}$ in the three points $\widetilde v_i$ are strictly convex, while all other internal angles are concave and less than $2\pi$.\\
Finally, every filled geodesic triangle for $d_{\widetilde q}$ is isometric to a planar polygon, which eventually could be degenerate (one dimensional) or with at most three one dimensional components.
\end{prop}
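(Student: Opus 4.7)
The plan would rely on two structural facts: $(\widetilde X, d_{\widetilde q})$ is CAT(0), since $|\widetilde q|$ is a locally CAT(0) metric on a simply connected space; and the cone angle at every zero of $\widetilde q$ equals $\pi(k+2)\ge 3\pi$, because each order $k$ is a positive integer. I would organize the argument into three stages.

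First I would establish convexity of $\Delta$ and connectedness of $\mathop \Delta\limits^\circ$. Since $(\widetilde X, d_{\widetilde q})$ is CAT(0), geodesics are unique by Lemma 3.16 and the distance function is convex along geodesics; a standard CAT(0) argument (or Reshetnyak's majorization) gives that the $d_{\widetilde q}$-geodesic between any two points of $\Delta$ remains in $\Delta$, yielding convexity. Because $\widetilde X\simeq\mathbb{R}^2$ topologically and $T$ is the image of a closed curve, the Jordan curve theorem singles out a unique bounded component of $\widetilde X\setminus T$, which is by definition $\mathop \Delta\limits^\circ$; hence $\mathop \Delta\limits^\circ$ is connected.

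The crucial step, and the one I expect to be the main obstacle, is showing that no zero $\widetilde z$ of $\widetilde q$ lies in $\mathop \Delta\limits^\circ$. Assume by contradiction $\widetilde z\in\mathop \Delta\limits^\circ$, with cone angle $\alpha=\pi(k+2)\ge 3\pi$. By convexity, the three geodesics $\overline{\widetilde z \widetilde x_i}$ lie in $\Delta$; they partition the cone at $\widetilde z$ into three sectors whose angles $\beta_1,\beta_2,\beta_3$ (with $\beta_i$ the sector between $\overline{\widetilde z \widetilde x_j}$ and $\overline{\widetilde z \widetilde x_k}$ on the side not containing $\overline{\widetilde z \widetilde x_i}$) satisfy $\beta_1+\beta_2+\beta_3=\alpha$. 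Because $\widetilde z$ is in the open interior, each side $\overline{\widetilde x_j\widetilde x_k}$ must develop as a straight segment inside the planar wedge of angle $\beta_i$ without passing through the cone tip; the geodesic characterization of $|\widetilde q|$ recalled at the beginning of Section 2.1 then forces $\beta_i<\pi$ strictly. Summing, $\alpha<3\pi$, contradicting $\alpha\ge 3\pi$. The subtle point is the identification of the correct sector facing $\overline{\widetilde x_j\widetilde x_k}$, which must be justified by observing that $\widetilde x_i$ and $\overline{\widetilde x_j\widetilde x_k}$ lie on opposite sides of $\widetilde z$ in the topological disc $\Delta$ precisely because $\widetilde z\in\mathop\Delta\limits^\circ$.

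For the remaining assertions I would develop the flat structure. With no interior zeros, the restriction of $|\widetilde q|$ to $\mathop\Delta\limits^\circ$ is flat; since $\mathop\Delta\limits^\circ$ is simply connected, the developing map $\mathop\Delta\limits^\circ\to\mathbb{R}^2$ is a well-defined local isometry, and convexity of both $\Delta$ and of its image forces injectivity, giving an isometry onto a planar region that extends continuously to $\Delta$. The points $\widetilde v_i$ are by definition the last common points of $\overline{\widetilde x_i\widetilde x_j}$ and $\overline{\widetilde x_i\widetilde x_k}$; the internal angle of $\mathop\Delta\limits^\circ$ at $\widetilde v_i$ must be strictly less than $\pi$, for otherwise the two sides would coincide slightly beyond $\widetilde v_i$, contradicting the maximality in the definition of $\widetilde v_i$. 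At any other zero on a side, the geodesic characterization at a cone point gives internal angle $\ge\pi$ (concavity), and embeddedness of $\Delta$ in $\widetilde X\simeq\mathbb{R}^2$ forces it to be strictly less than $2\pi$, since equality would turn the zero into an interior point, ruled out in the previous stage. Each common initial segment $\overline{\widetilde x_i\widetilde v_i}$, present precisely when $\widetilde x_i\neq\widetilde v_i$, becomes a one-dimensional protrusion of the polygon; there are at most three such, and the fully degenerate case corresponds to $\mathop\Delta\limits^\circ=\emptyset$.
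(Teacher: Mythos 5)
Your proposal inverts the logical order of the paper's argument in a way that creates real gaps. The paper proves convexity of $\Delta$ \emph{last}, after establishing that $\overline{\mathop\Delta\limits^\circ}$ develops injectively to a planar polygon with angles $<\pi$ at the $\widetilde v_i$ and in $[\pi,2\pi)$ elsewhere; convexity then follows because an intrinsic Euclidean geodesic of the polygon is a local $d_{\widetilde q}$-geodesic, hence global by Lemma~3.16. You instead assert convexity at the outset via ``a standard CAT(0) argument (or Reshetnyak's majorization),'' but there is no such standard fact: Reshetnyak majorization produces a $1$-Lipschitz map \emph{from} the comparison triangle, not a convexity statement about the filled geodesic triangle, and filled geodesic triangles in singular CAT(0) surfaces are not obviously convex until one already controls the boundary angles. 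Since your interior-zero argument then relies on convexity to place the sides $\overline{\widetilde x_j\widetilde x_k}$ and the segments $\overline{\widetilde z\widetilde x_i}$ inside $\Delta$, the dependence becomes circular.

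The interior-zero step itself also has a gap. You claim each sector angle satisfies $\beta_i<\pi$ because the opposite side ``must develop as a straight segment inside the planar wedge of angle $\beta_i$.'' But $\overline{\widetilde x_j\widetilde x_k}$ need not be a single segment (it may break at boundary zeros), and the geodesic characterization gives a constraint at cone points \emph{on} a geodesic, not at $\widetilde z$, which the side avoids. The CAT(0) angle-sum inequality does not rescue this either, because it bounds the Alexandrov angle $\min(\beta_i,\pi)$, which is vacuous once $\beta_i>\pi$. The paper avoids all of this by applying Gauss--Bonnet to $\overline{\mathop\Delta\limits^\circ}$ directly, combining the bounds $\alpha_i<\pi$, $\beta_v\in[\pi,2\pi)$ and the cone contributions $\theta_j\ge 3\pi$ to get a contradiction. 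Two smaller issues: your justification that boundary angles are $<2\pi$ (``equality would turn the zero into an interior point'') is not correct, since an interior angle of exactly $2\pi$ at a cone point of total angle $\ge 3\pi$ still leaves angle outside $\Delta$ --- the paper instead observes that $\beta_v\ge 2\pi$ forces $\overline{\widetilde v_1\widetilde v_3}$ or $\overline{\widetilde v_2\widetilde v_3}$ to pass through $\widetilde v$; and ``convexity of both $\Delta$ and of its image forces injectivity'' of the developing map is circular, since the image is exactly what you need to show is a simple polygon. The paper's counting of strictly convex angles of the developed image is the concrete mechanism you are missing.
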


\begin{figure}[h!]
 \centering
  \includegraphics[scale=0.5]{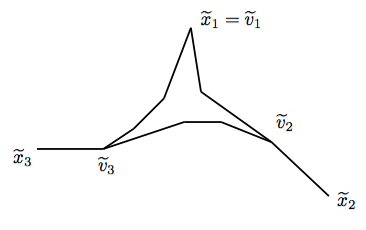}
 \caption{An example of a filled geodesic triangle $\Delta$.}
 \end{figure}

\begin{proof}

By lemma \ref{streb}, if $\overline{\widetilde x_i \widetilde x_j}$ and $\overline{\widetilde x_i\widetilde x_k}$ intersect in a point $\widetilde p\neq \widetilde x_i$, then they must coincide over all $\overline{\widetilde p\widetilde x_i}$. It follows that ${\mathop \Delta\limits^ \circ}$ is connected and its border is $\overline{\widetilde v_1\widetilde v_2}\cup\overline{\widetilde v_1\widetilde v_3}\cup\overline{\widetilde v_2\widetilde v_3}$.\\

Suppose ${\mathop \Delta\limits^ \circ}\neq \emptyset$ and denote by $\alpha_1$ the internal angle of $\overline{{\mathop \Delta\limits^ \circ}}$ in $\widetilde v_1$: we prove $\alpha_1< \pi$. \\
Let $\alpha_{12}$ be the angle in $\widetilde v_1$ determined by $\overline{\widetilde x_1\widetilde v_1}$ and $\overline{\widetilde v_1\widetilde v_2}$ completely outside $\Delta$ and let $\alpha_{13}$ be the angle in $\widetilde v_1$ determined by $\overline{\widetilde x_1\widetilde v_1}$ and $\overline{\widetilde v_1\widetilde v_3}$ completely outside $\Delta$. Clearly it results $\alpha_{12}\ge \pi$ and $\alpha_{13}\ge \pi$: if $\alpha_1\ge \pi$ then lemma \ref{streb} would imply $\widetilde v_1\in \overline{\widetilde v_2\widetilde v_3}$ and consequently $\mathop \Delta\limits^ \circ=\emptyset$. In the same way one proves that the internal angles of $\overline{{\mathop \Delta\limits^ \circ}}$ in $\widetilde v_2,\widetilde v_3$ must be strictly convex.\\

If $\widetilde v$ is a zero of $\widetilde q$ in the border of $\mathop \Delta\limits^ \circ$, $\widetilde v\neq \widetilde v_1,\widetilde v_2,\widetilde v_3$, the internal angle $\beta_{\widetilde v}$ of $\overline{{\mathop \Delta\limits^ \circ}}$ in $\widetilde v$ must be concave, and we now also prove $\beta_{\widetilde v}<2\pi$.\\
Let $\widetilde v\in \overline{\widetilde v_1\widetilde v_2}$, and suppose by contradiction $\beta_{\widetilde v}\ge 2\pi$. Let $\tau_i$, $i=1,2$, be the angle in $\widetilde v$ determined by $\overline{\widetilde v_3\widetilde v}$ and $\overline{\widetilde v\widetilde v_i}$ inside $\Delta$. Since $\beta_{\widetilde v}\ge 2\pi$, it must follow $\tau_1\ge \pi$ or $\tau_2\ge \pi$. Suppose $\tau_1\ge 1$, then $\overline{\widetilde v_3\widetilde v_1}$ would be a concatenation of $\overline{\widetilde v_1\widetilde v}$ and $\overline{\widetilde v\widetilde v_3}$, implying $\widetilde v=\widetilde v_1$. This last equality contradicts the previous assumption $\widetilde v\neq \widetilde v_1,\widetilde v_2,\widetilde v_3$.\\

Finally, suppose by contradiction that one or more zeroes $\widetilde z_j$ of $\widetilde q$ are contained in ${\mathop \Delta\limits^ \circ}$. 
Denote by $\alpha_i,\theta_j,\beta_k$ the internal angles of $\overline{{\mathop \Delta\limits^ \circ}}$ respectively in $\widetilde v_i,\widetilde z_j,\widetilde w_k$, where $\widetilde w_k$ is a zero of $\widetilde q$ on the border of $\mathop \Delta\limits^ \circ$.\\
Applying Gauss-Bonnet formula on $\overline{{\mathop \Delta\limits^ \circ}}$ one gets:
$$\sum_i(\pi-\beta_i)+(\pi-\alpha_1)+(\pi-\alpha_2)+(\pi-\alpha_{3})=2\pi+\sum_j(\theta_j-2\pi).$$
From what we have proved it follows 
$$\sum_k(\pi-\beta_k)\le 0,\quad (\pi-\alpha_1)+(\pi-\alpha_2)+(\pi-\alpha_{3})<3 \pi$$ 
and consequently we now get 
$$2\pi+\sum_j(\theta_j-2\pi)<3\pi.$$
The total angle in $\widetilde z_j\in \mathop \Delta\limits^ \circ$ must be greater than or equal to $3\pi$, but this contradicts the last inequality.\\

In order to prove that $\Delta$ is isometric to a planar polygon endowed with its intrinsic Euclidean metric it is clearly sufficient to prove that $ \overline{{\mathop \Delta\limits^ \circ}}$ is isometric to a planar polygon.\\
Let $Dev:(\widetilde X,|\widetilde q|)\rightarrow \mathbb{R}^2$ be the developing map (for a precise definition see for example \cite{Tr2}), notice that, if $Dev$ is injective on a point $\widetilde v$ on the border of $ \overline{{\mathop \Delta\limits^ \circ}}$, then the internal angle of $\overline{{\mathop \Delta\limits^ \circ}}$ in $\widetilde v$ coincides with the internal angle of $Dev(\overline{{\mathop \Delta\limits^ \circ}})$ in $Dev(\widetilde v)$.\\
We will prove that $Dev:\overline{{\mathop \Delta\limits^ \circ}}\rightarrow \mathbb{R}^2$ is injective, or equivalently that $Dev(\overline{{\mathop \Delta\limits^ \circ}})$ is a simple polygon (not self-intersecting).\\
Suppose by contradiction that $Dev$ is not injective. We divide two cases:

\begin{enumerate}
\item $Dev$ is not injective on any of the points $\widetilde v_i$. Then denote by $P_1$ be the simple polygon identified by the external border of $Dev(\overline{{\mathop \Delta\limits^ \circ}})$. Notice that internal angles of $P_1$ can correspond to internal angles of $\overline{{\mathop \Delta\limits^ \circ}}$ or can be originated by overlays on points where $Dev$ fails to be injective. Internal angles of the latter kind must be strictly concave and consequently convex internal angles of $P_1$ must correspond to convex internal angles of $\overline{{\mathop \Delta\limits^ \circ}}$. Since $P_1$ is simple, it must have at least three strictly convex internal angles. It would follow that $\overline{{\mathop \Delta\limits^ \circ}}$ must have at least six strictly convex internal angles: the three angles $\alpha_i$ plus the angles which correspond to the three strictly convex internal angles of $P_1$. This fact clearly contradicts the hypothesis.
\item $Dev$ is injective on $\widetilde v_1$. Then there is a polygon $P_2\subset Dev(\overline{{\mathop \Delta\limits^ \circ}})$ which is maximal with respect to inclusion on the set of polygons $\{Q\}$ such that 
\begin{itemize}
\item $Q\subset  Dev(\overline{{\mathop \Delta\limits^ \circ}})$,
\item $Dev(\widetilde v_1)$ is a vertex of $Q$,
\item $Dev$ is injective on $Dev^{-1}(Q)$.
\end{itemize}
Let $P_0$ be the simple polygon identified by the external border of $Dev(\overline{{\mathop \Delta\limits^ \circ}})$ and define $P_1:=\overline{P_0\setminus P_2}$ (see figure 4 for an example.). As before, convex internal angles of $P_1$ must correspond to convex internal angles of $\overline{{\mathop \Delta\limits^ \circ}}$.

\begin{figure}[h!]
 \centering
  \includegraphics[scale=0.4]{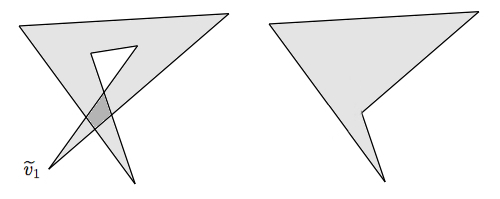}
 \caption{ On the left there is an example of $Dev(\overline{{\mathop \Delta\limits^ \circ}})$ we want to exclude. On the right there is the corresponding polygon $P_1$.}
 \end{figure}
It would follow that $\overline{{\mathop \Delta\limits^ \circ}}$ must have at least four strictly convex internal angles: $\alpha_1$ plus the angles which correspond to the three strictly convex internal angles of $P_1$. This fact clearly contradicts the hypothesis.

\end{enumerate}

Finally, convexity of $\Delta$ follows from the fact that, given any pair $\widetilde x,\widetilde y\in \Delta$, the geodesic for the intrinsic Euclidean metric connecting them is also a locally minimizing geodesic for $d_{\widetilde q}$ and consequently also globally minimizing.

\end{proof}

We now go back to consider the fundamental domain $P$ defined in the preceding section.\\
Given the point $\widetilde p_{n+1}\in P$ and the three points $\widetilde x_1,\widetilde x_2,\widetilde x_3\in \mathcal{X}(\widetilde p_n)$ corresponding to the centers of the closed balls, we consider the filled geodesic triangle $\Delta$ for $(\widetilde X_1,d_{\widetilde q_1})$ with vertices $\widetilde x_1,\widetilde x_2,\widetilde x_3$. Following the characterization of the previous proposition, we divide two cases:
\begin{enumerate}
\item $\widetilde p_{n+1}\in \Delta$, then, since the three geodesics $\overline{\widetilde p_{n+1}\widetilde x_i}$ are smooth and do not contain other zeroes of $\widetilde q_1$, it follows that $\Delta$ can not have one dimensional components.
\item $\widetilde p_{n+1}\not \in \Delta$, then $\Delta$ can have one dimensional components and even be a degenerate polygon (one dimensional). 
\end{enumerate}

Denote by $\Delta'$ the filled geodesic triangle for $(\widetilde X_2,d_{\widetilde \sigma_2})$ with vertices $\widetilde x_1',\widetilde x_2',\widetilde x_3'$. \\
Again, we divide three cases:
\begin{enumerate}[label=(\roman*)]
\item $\Delta'$ is not one dimensional, but can have at most three one dimensional components,
\item $\Delta'$ is one dimensional and it is not possible to renumber the vertices in order to obtain $x_3'\in \overline{x_1'x_2'}$,
\item $\Delta'$ is one dimensional and it is possible to renumber the vertices in order to obtain $x_3'\in \overline{x_1'x_2'}$.
\end{enumerate}

Combining them, we have a total of six cases we need to care care of.\\
In cases (1,i),(1,ii),(1,iii) our goal is to find a point $\widetilde p_{n+1}'\in \Delta'$ such that
$$d_{\widetilde q_1}(\widetilde x_i,\widetilde p_{n+1})\ge d_{\widetilde \sigma_2}(\widetilde x_i',\widetilde p_{n+1}') \text{ for } i=1,2,3.$$
In the remaining cases (2,i),(2,ii),(2,iii) we will use the orthogonal projection on convex sets in $Cat(0)$ spaces:
$$pr:(\widetilde X_1,d_{\widetilde q_1})\rightarrow \Delta,$$
where the image $pr(\widetilde x)$ of every point $\widetilde x\in \widetilde X_1$ is defined as the unique point such that 
$$d_{\widetilde q_1}(\widetilde x,pr(\widetilde x))=\inf \limits_{\widetilde y\in \Delta}d_{\widetilde q_1}(\widetilde x,\widetilde y).$$
The projection $pr$ does not increase distances (for a proof and a list of other properties of $pr$ one could see \cite{BH}, proposition 2.4, page 176) and in particular it results 
$$d_{\widetilde q_1}(\widetilde x_i,\widetilde p_{n+1})\ge d_{\widetilde q_1}(\widetilde x_i, pr(\widetilde p_{n+1})) \text{ for } i=1,2,3.$$
Then, we will look for a point $\widetilde p_{n+1}'\in \Delta'$ such that
$$d_{\widetilde q_1}(\widetilde x_i, pr(\widetilde p_{n+1}))\ge d_{\widetilde \sigma_2}(\widetilde x_i',\widetilde p_{n+1}') \text{ for } i=1,2,3.$$
We chose to confront distances with $pr(\widetilde p_{n+1})$ instead of $\widetilde p_{n+1}$ because in the following procedures it will be crucial to always consider points inside $\Delta$.\\

Cases (1,ii),(1,iii),(2,ii) and (2,iii) are easily solvable. We will prove only case (1,ii), since the others are almost identical. \\
If $\Delta'$ is one dimensional, then it always contains a vertex $\widetilde v'$ such that $$\widetilde v'\in \overline{\widetilde x_1'\widetilde x_2'}\cap \overline{\widetilde x_1'\widetilde x_3'}\cap \overline{\widetilde x_2'\widetilde x_3'}.$$

\begin{figure}[h!]
 \centering
  \includegraphics[scale=0.4]{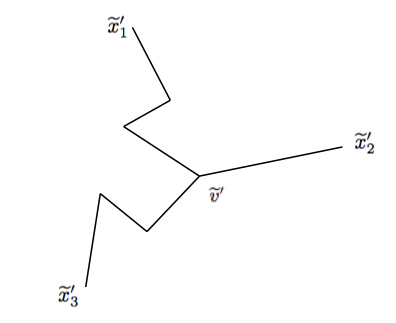}
 \caption{An example of vertex $\widetilde v'\in \Delta'$.}
 \end{figure}

If, for every index $i=1,2,3$, it results $d_{\widetilde \sigma_2}(\widetilde x_i',\widetilde v')\le d_{\widetilde q_1}(\widetilde x_i,\widetilde p_{n+1})$, then we can choose $\widetilde p_{n+1}'=\widetilde v'$.\\
If, up to renumbering the indexes, it results $d_{\widetilde \sigma_2}(\widetilde x_1',\widetilde v')>d_{\widetilde q_1}(\widetilde x_1,\widetilde p_{n+1})$, we choose $\widetilde p_{n+1}'$ to be the point on $\overline{\widetilde x_1'\widetilde v'}$ such that $d_{\widetilde q_1}(\widetilde x_1,\widetilde p_{n+1})=d_{\widetilde \sigma_2}(\widetilde x_1',\widetilde p_{n+1}')$.\\
Then it will follow $d_{\widetilde \sigma_2}(\widetilde x_i',\widetilde p_{n+1}')\le d_{\widetilde q_1}(\widetilde x_i,\widetilde p_{n+1})$ for $i=2,3$, since 
$$d_{\widetilde \sigma_2}(\widetilde x_i',\widetilde p_{n+1}')=d_{\widetilde \sigma_2}(\widetilde x_1',\widetilde x_i')-d_{\widetilde q_1}(\widetilde x_1,\widetilde p_{n+1})\le d_{\widetilde q_1}(\widetilde x_1,\widetilde x_i)-d_{\widetilde q_1}(\widetilde x_1,\widetilde p_{n+1})\le d_{\widetilde q_1}(\widetilde p_{n+1},\widetilde x_i).$$

In case (2,i) it will always be possible to suppose $\Delta$ does not have one dimensional components, since

\begin{itemize}
\item if $pr(\widetilde p_{n+1})$ is on a one dimensional component $\overline{\widetilde{x}_1\widetilde v_1}$ of $\Delta$ then it suffices to choose $\widetilde p_{n+1}'\in  \overline{\widetilde x_1'\widetilde v_1'}$ such that $$d_{\widetilde \sigma_2}(\widetilde p_{n+1}',\widetilde x_1')\le d_{\widetilde q_1}(pr(\widetilde p_{n+1}),\widetilde x_1) \text{ and } d_{\widetilde \sigma_2}(\widetilde p_{n+1}',\widetilde v_1')\le d_{\widetilde q_1}(pr(\widetilde p_{n+1}),\widetilde v_1).$$
\item Otherwise, $pr(\widetilde p_{n+1})\in \overline{{\mathop \Delta\limits^ \circ}}$, where $\overline{{\mathop \Delta\limits^ \circ}}$ corresponds to the filled geodesic triangle of vertices $\widetilde v_1,\widetilde v_2,\widetilde v_3$ (which are the vertices with strictly convex internal angle as in proposition 3.20). \\
In this case one can choose $\widetilde p_{n+1}'$ such that $d_{\Delta}(\widetilde v_i,\widetilde p_{n+1})\ge d_{\Delta'}(\widetilde v_i',\widetilde p_{n+1}')$.\\ 
In this way one obtains 
$$d_{\widetilde \sigma_2}(\widetilde p_{n+1}',\widetilde x_i')\le d_{\widetilde \sigma_2}(\widetilde p_{n+1}',\widetilde v_i')+d_{\widetilde \sigma_2}(\widetilde v_i',\widetilde x_i')\le$$ $$\le d_{\widetilde q_1}(pr(\widetilde p_{n+1}),\widetilde v_i)+d_{\widetilde q_1}(\widetilde v_i,\widetilde x_i)=d_{\widetilde q_1}(pr(\widetilde p_{n+1}),\widetilde x_i)$$ for $i=1,2,3$ as desired.
\end{itemize} 

The rest of the section will be devoted to the explanation of our method to find $\widetilde p_{n+1}$ in cases (1,i) and (2,i). As we anticipated it will depend on following theorem 3.21 and conjecture 3.31.\\

Consider the two previously defined filled geodesic triangles of vertices respectively $\widetilde x_1,\widetilde x_2,\widetilde x_3$ and $\widetilde x_1',\widetilde x_2',\widetilde x_3'$. Zeroes on the border of $\Delta$ can change position in $\Delta'$ and in particular the following things can happen:

\begin{enumerate}[label=(\roman*)]
\item if $\widetilde z\in\overline{\widetilde x_i\widetilde x_j}$, then it can happen $\widetilde z'\in \overline{\widetilde x_i'\widetilde x_k'}$,
\item a zero $ \widetilde z$ on the border of $\Delta$  can be such that $\widetilde z'\not\in \Delta'$, 
\item a zero $ \widetilde z'$ on the border of $\Delta'$  can be such that $\widetilde z\not\in \Delta$. 
\end{enumerate}

Every time case (ii) is verified, we consider the previously defined orthogonal projection on convex sets in $Cat(0)$ spaces
$$pr:(\widetilde X_2,d_{\widetilde \sigma_2})\rightarrow \Delta'$$
and take into account the point $pr(\widetilde z')\in \partial\Delta'$. Then it will follow
$$d_{\widetilde \sigma_2}(pr(\widetilde z'),\widetilde x_i')\le d_{\widetilde \sigma_2}(\widetilde z',\widetilde x_i')\le d_{\widetilde q_1}(\widetilde z,\widetilde x_i)$$
for $i=1,2,3$ and 
$$d_{\widetilde \sigma_2}(pr(\widetilde z'),\widetilde w')\le d_{\widetilde \sigma_2}(\widetilde z',\widetilde w')\le d_{\widetilde q_1}(\widetilde z,\widetilde w)$$ 
for every zero $\widetilde w'$ on the border of $\Delta'$.\\
In the following construction we will need to consider, for every point on the border of $\Delta$, a corresponding point on the border of $\Delta'$. For this reason, by abuse of notation, every time previous case (ii) is verified we will denote the point $pr(\widetilde z')$ simply by $\widetilde z'$ and consider it the point on the border of $\Delta'$ corresponding to $\widetilde z$.\\
Notice that proceeding in this way $\Delta'$ could end up having two or more coinciding vertices: this will not be a problem.\\

From now on it will be more convenient to consider filled geodesic triangles $\Delta$ and $\Delta'$ exclusively as planar polygons endowed respectively with the intrinsic Euclidean metrics $d_\Delta$ and $d_{\Delta'}$. For this reason we will consider zeroes on the border of $\Delta$ simply as vertices of the polygon. Furthermore, in order to lighten up the notation, vertices will be denoted without the  overlying tilde.\\
For every couple of points $u,v\in \Delta$ we will denote by $\overline{uv}$ the geodesic for $d_\Delta$ connecting them. Given any two points $u',v'\in \Delta'$, we will denote by $\overline{u'v'}$ the geodesic for $d_{\Delta'}$ connecting them. \\

We will initially consider the case there is a function $$\iota :Vertices(\Delta)\rightarrow Vertices(\Delta')$$ which to every vertex $z$ of $\Delta$ associates a vertex $\iota(z)=z'$ of $\Delta'$ in such a way that vertices of $\Delta$ and of $\iota (Vertices(\Delta))$ are \textit{disposed in the same order}. This means that: 
\begin{itemize}
\item for every vertex $z$ of $\Delta$, if $z\in \overline{x_ix_j}$, then $z'\in \overline{x_ix_j}$,
\item for every couple of vertices $z_1,z_2\in \overline{x_ix_j}$, if $d_\Delta(x_i,z_1)<d_{\Delta}(x_i,z_2)$, then $d_{\Delta'}(x_i',z_1')\le d_{\Delta'}(x_i',z_2')$.
\end{itemize}
We will summarize this condition on the vertices of $\Delta$ and $\Delta'$ saying that the common vertex of $\Delta$ and $\Delta'$ have the same order.\\
We noticed that, given two vertices $v_1,v_2$ of $\Delta$, it can happen that their corresponding vertices of $\Delta'$ coincide as points on $\partial\Delta'$. For a reason which will be clear in the following proofs, we will consider $v_1'$ and $v_2'$ as distinct vertices of $\Delta'$ which are at distance zero on $\partial \Delta'$: we will refer to them as multiple vertices. \\
We can thus suppose the function $\iota$ is always injective and the number of vertices of $\Delta'$ is always greater than or equal to the number of vertices of $\Delta$.\\

We underline again an important hypothesis on distances between vertices of $\Delta$ and $\Delta'$: for every pair of vertices $u,v$ of $\Delta$ such that $\overline{uv}$ is smooth it results
$$d_\Delta(u,v)\ge d_{\Delta'}(u',v').$$ 
This fact clearly implies the same inequality also in case $\overline{uv}$ is a concatenation of smooth segments.\\

The following theorem is our fundamental tool to find the desired point $\widetilde p_{n+1}'$. 
\begin{thm}\label{lemshort1}
Suppose the number of vertices of $\Delta'$ is greater than or equal to the number of vertices of $\Delta$ and that the common vertices have the same order, in the sense we explained earlier. Suppose furthermore that $\Delta'$ can have one dimensional components. \\
Then there is a 1-Lipschitz map $f:\Delta\rightarrow \Delta'$ (with respect to the intrinsic Euclidean metrics of the polygons) such that: 
$$f(z)=z'$$ 
for every vertex $z$ of $\Delta$.
\end{thm}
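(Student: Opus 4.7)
The plan is to adapt Valentine's proof of Kirszbraun's theorem to the present polygonal setting, building $f$ on a countable dense subset of $\Delta$ one point at a time and extending by uniform continuity. The initial datum is the vertex map $z\mapsto z'$, which is 1-Lipschitz for the intrinsic Euclidean metrics because the hypothesis $d_\Delta(u,v)\ge d_{\Delta'}(u',v')$ is given for $\overline{uv}$ smooth and extends to the general case by additivity of intrinsic geodesics over their smooth pieces. To handle the extra vertices of $\Delta'$ that have no preimage under $\iota$, I would also add to the initial datum auxiliary source points on $\partial\Delta$, placed at matching positions on the corresponding edges of $\Delta$; the ``same order'' hypothesis on the vertices makes this placement unambiguous and guarantees the required distance inequalities at the new auxiliary points.

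In the inductive step, given a finite set $S_n\subset\Delta$ on which $f$ has been defined 1-Lipschitz with image in $\Delta'$, I would locate $f(p_{n+1})$ inside the feasible set
$$\Delta'\;\cap\;\bigcap_{q\in S_n}\overline B^{\Delta'}_{d_\Delta(p_{n+1},q)}(f(q)),$$
the intersection of closed intrinsic-metric balls in $\Delta'$. Since $(\Delta',d_{\Delta'})$ is a uniquely geodesic space (it is a simply connected planar $Cat(0)$ region) and closed metric balls are convex, Helly's Lemma 3.7 reduces non-emptiness of the whole intersection to non-emptiness of every triple intersection. Each triple intersection is then treated by Lemma 3.6 applied to the three constraint centres, with the embedding of $\Delta'$ as a planar polygon (Proposition 3.20) providing the ambient $\mathbb R^2$ required by the lemma, and with the triples chosen so that the corresponding image triangle lies inside $\Delta'$. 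Having constructed $f$ on a dense subset in this way, I extend it to all of $\Delta$ by the same continuity argument used in Section 3.1.

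The main obstacle is that Lemma 3.6 places the new image inside the Euclidean triangle spanned by the three centres, and that triangle need not be contained in $\Delta'$ when some centres lie at concave vertices or when the spanning triangle traverses a concavity. Overcoming this will require a careful combinatorial choice of triples at each step: typically one favours triples comprising a strictly convex vertex $x_i'$ together with two nearby centres, so that the spanned triangle remains inside $\Delta'$. The cyclic-order hypothesis is essential here, as it both enables this consistent selection and underwrites the placement of the auxiliary boundary points in the initial datum. A secondary subtlety is the distinction between intrinsic-metric balls (the true feasible set) and ambient Euclidean balls (produced by Lemma 3.6): this mismatch must be resolved by ensuring that, near each application, the two metrics agree on the relevant region of $\Delta'$, which holds locally whenever no concave vertex of $\Delta'$ lies in the interior of the spanning triangle.
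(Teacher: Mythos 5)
Your approach is genuinely different from the paper's and, as written, has a gap that I do not think is repairable along the lines you indicate. The paper does not prove Theorem~3.21 by a point-by-point Kirszbraun extension inside the polygon; instead it proves the stronger Theorem~3.22 by deforming the target polygon $P'$ into the source polygon $P$ through a finite sequence of ``elementary steps'' (controlled local stretches of a side or a diagonal, each furnishing an explicit 1-Lipschitz map), composing these intermediate maps, and running an induction on the number of vertices of $P$ by cutting along a diagonal whose length is already maximal. The Kirszbraun--Valentine--Helly machinery is used by the paper only one level up, in Section~3.1 on the universal cover, where $(\widetilde X_2,d_{\widetilde\sigma_2})$ really is $\mathrm{CAT}(0)$ and Helly applies; the triple-intersection case there is precisely Theorem~3.11, whose proof in turn invokes Theorem~3.21 and Conjecture~3.31 as its base. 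So the triangle lemma is not available as a tool for proving Theorem~3.21---it is, in disguise, the thing Theorem~3.21 is needed to supply.

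The concrete gap in your proposal is the handling of triple intersections. Helly's lemma (Lemma~3.7) only delivers nonemptiness of the full feasible set if \emph{every} triple of constraint balls has nonempty intersection; you cannot restrict attention to ``favourable'' triples whose spanned triangle stays inside $\Delta'$ and near a convex corner $x_i'$. Lemma~3.6 is stated for the ambient Euclidean metric on $\mathbb R^2$, and when the Euclidean triangle spanned by three centres $f(q_1),f(q_2),f(q_3)$ exits the (generally nonconvex) polygon $\Delta'$, or when a concave vertex of $\Delta'$ lies inside it, the conclusion of Lemma~3.6 does not control intrinsic $d_{\Delta'}$-distances and the produced point may not lie in $\Delta'$. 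Your final remark (requiring the two metrics to ``agree on the relevant region'') is exactly the condition that can fail, and there is no freedom to avoid the bad triples. Separately, the placement of auxiliary preimages on $\partial\Delta$ for the extra vertices of $\Delta'$ is asserted but not justified: one must verify $d_\Delta(y,z)\ge d_{\Delta'}(y',z')$ against \emph{all} previously fixed $z$, which the ``same order'' hypothesis alone does not obviously guarantee. In short, what your proposal needs is an intrinsic-metric analogue of Lemma~3.6 valid for filled geodesic triangles, and establishing that is essentially the content of Theorem~3.21 itself; the paper's elementary-step deformation is the substitute argument that avoids this circularity.
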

Clearly, given any point $\widetilde p_{n+1}\in \Delta$, we will set the point $\widetilde p_{n+1}'$ to be $f(\widetilde p_{n+1})$. \\
Instead of proving theorem \ref{lemshort1} directly, we will prove the following theorem \ref{lemshort2} which will then imply theorem \ref{lemshort1}. The reason for this choice will be made clear in the proof of theorem \ref{lemshort2} and in particular by the example of figure 11.\\

Given any planar polygon $P$ with $n\ge 3$ vertices, we will say that $P'$ is a \textit{degenerate polygon comparable with P} if $P'$ is obtained connecting planar polygons through common vertices or one dimensional components and furthermore all the following conditions are satisfied.
\begin{enumerate}[label=(\roman*)]
\item $P'$ is connected, simply connected, can be embedded in $\mathbb{R}^2$ and contains at least one planar polygon.
\item Every planar polygon of $P'$ is linked (by shared vertices or one dimensional components) to at most other two planar polygons of $P'$. The degenerate polygon $P'$ can have one dimensional components which are linked to just one planar polygon of $P'$ (as polygons $\Delta'$ corresponding to geodesic triangles of $d_{\widetilde q}$ do).
\item There is an injective function $\iota : Vertices(P)\rightarrow Vertices(P')$, which to every vertex $z$ of $P$ associates a unique vertex $z'$ of $P'$. \\
Given two vertices $z_1,z_2$ of $P$, their corresponding vertices of $P'$ can coincide as points on $\partial P'$: we will consider $z_1',z_2'$ as distinct vertices of $P'$ at distance zero on $\partial P'$ and refer to them as multiple vertices.\\ 
Consequently, the total number of vertices of $P'$ is $m\ge n$.
\item For every pair of vertices $z_1,z_2$ of $P$ it results $$d_{P}(z_1,z_2)\ge d_{P'}(z_1',z_2').$$
\item If $y'$ is a vertex of $P'$ which does not correspond to any vertex of $P$ and $y'$ does not lie on a one dimensional component, then the internal angle at $y'$ is:
\begin{itemize}
\item convex, if $y'$ is a shared vertex of two planar polygons of $P'$ or from $y'$ starts a one dimensional component, 
\item concave, otherwise.
\end{itemize}
 A vertex $y'$ which does not correspond to any vertex of $P$ can also lie on a one dimensional component, but it can not be at the extremity which is not connected to a planar polygon. 
\item The vertices of $P$ and of $\iota(Vertices(P))$ \textit{are disposed in the same order} in the following sense. There is a continuous, surjective function $\tau:[0,1]\rightarrow \partial P'$ such that $\tau(0)=z'\in \iota(Vertices(P))$ and  for every $x'\in \partial P'$ the cardinality of $\tau^{-1}(x')$ is:
\begin{itemize}
\item two, if $x'$ is a shared vertex of two planar polygons of $P'$ or $x'$ in on a one dimensional component, 
\item one, otherwise.
\end{itemize}
Then one can choose a parametrization $\gamma:[0,1]\rightarrow \partial P$ of $\partial P$ such that $\gamma(0)=z$ and $\gamma$ and $\tau$ meet respectively the vertices of $P$ and of $\iota(Verices(P))$ in the same order (up to removing one copy of the vertices which $\tau$ meets twice). 
\end{enumerate}
In figure 6 there are some example which will clarify our definition of degenerate polygons comparable with $P$ and of condition (vi).
\begin{figure}[h!]
 \centering
  \includegraphics[scale=0.5]{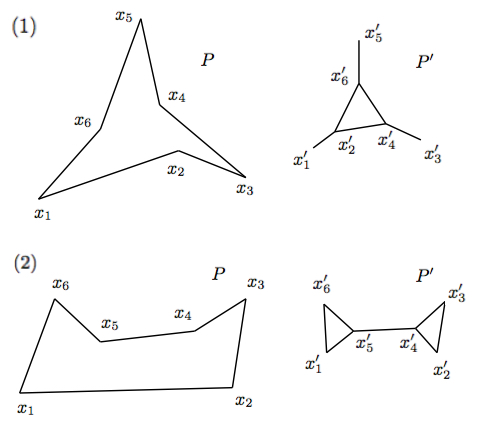}
 \caption{In example (1) one can find $\tau$ such that it encounters the vertices of $P'$ in the order $x_1',x_2',x_4',x_3',x_4',x_6',x_5',x_6',x_2'$. One then discards the first copy of $x_4'$ and $x_6'$ and the last copy of $x_2'$.
In example (2) one can find $\tau$ such that it encounters the vertices of $P'$ in the order $x_1',x_5',x_4',x_2',x_3',x_4',x_5',x_6'$. One then discards the first copy of $x_5'$ and $x_4'$.}
 \end{figure}

As it is easily verifiable, polygons $\Delta$ and $\Delta'$ as in the hypothesis of theorem \ref{lemshort1} satisfy all previous conditions.\\

If $u,v$ are vertices of $P$, $\overline{uv}$ is smooth and lies entirely on the border of $P$ then we will call $\overline{uv}$ a $side$ of $P$ and sometimes denote it simply by $\gamma$. If $w,z$ are vertices of $P$, $\overline{wz}$ is smooth and $\overline{wz}\cap \partial P=\{w,z\}$, then we will call $\overline{uv}$ a \textit{smooth diagonal} of $P$ and sometimes denote it simply by $d$. If $\overline{wz}$ is a concatenation of segments and is not entirely contained in the border of $P$ we call $\overline{wz}$ a diagonal of $P$ and sometimes denote it with the same symbol $d$. \\
We define sides $\gamma'$ of $P'$ in the same way. A diagonal $d'$ of $P'$ is a geodesic $\overline{u'v'}$ such that $\overline{uv}$ is a diagonal of $P$. In particular one should notice that: 
\begin{itemize}
\item a diagonal $d'$ of $P'$ can be entirely contained in a one dimensional component, 
\item $\overline{u'v'}$ can be a diagonal of $P'$ only if $u',v'\in \iota(Vertices(P))$.
\end{itemize}
Given sides $\gamma,\gamma'$ and diagonals $d,d'$, we will denote by $l(\gamma),l(\gamma'),l(d),l(d')$ their lengths (of which the first and the third are computed with respect to $d_{P}$ and the second and the fourth with respect to $d_{P'}$).\\

Before starting the proof, we feel it is necessary to anticipate why we decided to consider such a complicated set of degenerate polygons. The short answer is that the set of degenerate polygons $P'$ comparable to $P$ is closed with respect to the operation of \textit{cutting along a diagonal $d'$ of $P'$}, operation which is crucial in the proof of theorem \ref{lemshort2}. We will further clarify this concept in the proof.

\begin{thm} \label{lemshort2}
Let $P$ be a planar polygon with $n\ge 3$ vertices and $P'$ a degenerate polygon which is comparable with $P$ in the sense we just explained.
Then there is a 1-Lipschitz map $f:P\rightarrow P'$ (with respect to the intrinsic Euclidean metrics of the polygons) such that 
$$f(z)=z'$$ 
for every other vertex $z$ of $P$.
\end{thm}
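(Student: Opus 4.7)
My plan is to prove Theorem \ref{lemshort2} by induction on the number $n \ge 3$ of vertices of $P$. The base case $n=3$ reduces to Theorem \ref{lemshort1} (the triangle version stated earlier): one first constructs the 1-Lipschitz map onto the core planar sub-polygon of $P'$ containing the three strictly convex vertices using Theorem \ref{lemshort1}, and then extends to each one-dimensional component of $P'$ individually. Each such component is an interval emanating from a vertex already matched, and the contracting distance hypothesis combined with Kirszbraun's theorem (Theorem \ref{kirsz}) supplies the needed 1-Lipschitz extension into the convex hull of the image, which in this case lies inside $P'$ because the component is itself contained in every convex set containing its endpoints.

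For the inductive step with $n \ge 4$, I would choose a smooth ``ear'' diagonal $d = \overline{uv}$ of $P$ cutting off a triangle $P_1$ and leaving a polygon $P_2$ with $n-1$ vertices (the existence of an ear is classical for simple polygons). Hypothesis (iv) on vertex distances gives $l(d) \ge l(d')$ for the corresponding (possibly broken) diagonal $d'$ of $P'$. Cutting $P'$ along $d'$ yields two degenerate polygons $P_1', P_2'$; the intermediate vertices that $d'$ passes through become new vertices of both $P_i'$, and the angle condition (v) in the definition is inherited because each such vertex was either a shared vertex of two planar sub-polygons of $P'$ or a point lying on a one-dimensional component of $P'$. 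In either situation the cut distributes the correct angle type on each side of $d'$, and one checks that conditions (i)--(v) are preserved for each pair $(P_i, P_i')$.

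To glue the two pieces I would strengthen the inductive statement to allow prescribing the boundary map along one distinguished side of $P$. Concretely, fix an arc-length reparametrization $\phi : d \to d'$ (which is 1-Lipschitz because $l(d) \ge l(d')$) and require both $f_1, f_2$ produced by the inductive hypothesis to coincide with $\phi$ along $d$; then the glued map $f = f_1 \cup f_2$ is globally 1-Lipschitz, since the intrinsic Euclidean distance between two points on opposite sides of $d$ is the infimum over broken paths crossing $d$, each crossing being controlled by the 1-Lipschitz property on each piece. The main obstacle, and the reason for the intricate definition of a ``degenerate polygon comparable with $P$'', is the verification that condition (vi) is preserved after cutting: one must produce natural parametrizations of $\partial P_i'$ visiting the inherited vertices in the same cyclic order as the parametrizations of $\partial P_i$, which ultimately reduces to an analysis of how the broken line $d'$ interacts with the boundary order of $P'$ guaranteed by (vi) for the original pair.
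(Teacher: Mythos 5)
Your induction-on-$n$ skeleton matches the paper's, but the gluing step along your cut diagonal has a genuine gap that the paper is specifically structured to avoid. The paper also cuts $P$ and $P'$ along a diagonal and applies the inductive hypothesis to the two halves (the lemma of the proof corresponding to cutting), but it does so \emph{only} when $l(\overline{v'w'}) = l(\overline{vw})$. In that special case each inductive map $f_i$ sends $v\mapsto v'$, $w\mapsto w'$, is $1$-Lipschitz, and $\overline{v'w'}$ is the unique geodesic between $v'$ and $w'$ of the same length as $\overline{vw}$; so $f_i|_{\overline{vw}}$ is forced to be the arclength isometry onto $\overline{v'w'}$, and the two halves agree on the cut \emph{automatically}. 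You instead cut along an arbitrary ear diagonal $d$, where generically $l(d') < l(d)$, and try to force agreement by prescribing an arclength reparametrization $\phi:d\to d'$ as a side condition in a strengthened inductive hypothesis. But $\phi$ chosen this way is in general not the restriction of any $1$-Lipschitz map sending the ear vertex $w$ to $w'$: take $u=(0,0)$, $v=(10,0)$, $w=(5,\epsilon)$ and $u'=(0,0)$, $v'=(1,0)$, $w'=(2,\epsilon)$; all pairwise distances contract, but the midpoint $p=(5,0)$ of $d$ has $d_P(p,w)=\epsilon$ while $\phi(p)=(1/2,0)$ has $d_{P'}(\phi(p),w')\approx 3/2$. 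Lemma \ref{triang} only gives \emph{existence} of a compatible image for $p$, not that your chosen $\phi(p)$ is compatible, so the strengthened inductive statement you'd need is false as formulated, and no fix is offered.

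What you are missing is the paper's central mechanism: \emph{elementary steps} of type one and two. These are controlled deformations of $P'$ (opening a side or a smooth diagonal inside a quadrilateral) that never increase any diagonal beyond the length of its counterpart in $P$, each equipped with an explicit $1$-Lipschitz map; composing a finite sequence of them deforms $P'$ monotonically toward $P$, and the moment some diagonal attains its target length one cuts there, where the gluing is automatic as above. Your proposal contains no substitute for this machinery. Two secondary problems: citing Theorem \ref{lemshort1} for the $n=3$ base case is circular, since the paper deduces Theorem \ref{lemshort1} \emph{from} Theorem \ref{lemshort2} and the base case has to be proved from scratch (the paper does so with elementary steps); and the Kirszbraun-based extension onto one-dimensional components is asserted rather than proved — Theorem \ref{kirsz} gives an extension into $\mathbb{R}^2$, whereas you need a map into the non-convex intrinsic space $P'$, and the claim that a one-dimensional component ``lies inside every convex set containing its endpoints'' fails when that component is a broken geodesic through cone points.
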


The idea of the proof will be to turn $P'$ into the polygon $P$ through a finite number of steps, called \textit{elementary steps}, which will modify lengths of sides and diagonals of $P'$. Each elementary step will provide us of a 1-Lipschitz map: the final 1-Lipschitz map $f$ will be the composition of all intermediate 1-Lipschitz maps. Of course, all intermediate polygons will be endowed with the corresponding intrinsic Euclidean metric and the intermediate maps will have Lipschitz coefficient 1 with respect to those metrics.\\
We specify that intermediate polygons obtained through elementary steps can fail to be planar and just be \textit{generalized polygons}: a generalized polygon is a polygon which is obtained gluing planar polygons along sides of the same length and which can not be embedded in $\mathbb{R}^2$. For any generalized polygon it still makes sense to define the intrinsic Euclidean metric.\\
Given any generalized polygon $Q$ and a vertex $v$ of $Q$, in the following proofs we will denote by $\alpha_{v}$ the internal angle of $Q$ at $v$.\\

We will now define the two types of elementary steps we will use. In order to make the definition easier, we will first make the assumption $P'$ does not have one dimensional components. \\

\textbf{Elementary step of type one}: \\
If $\gamma'$ is a side of $P'$ such that $l(\gamma')<l(\gamma)$ then though an elementary step of type one on the side $\gamma'$ of $P'$ it is possible to obtain a polygon $\hat P$ and a 1-Lipschitz map $\phi: \hat P\rightarrow P'$ such that: 
\begin{itemize}
\item $l(\gamma) \ge l(\hat \gamma)>l(\gamma')$ (where $\hat \gamma$ denotes the side of $\hat P$ corresponding to $\gamma'$) and all other sides of $\hat P$ are of the same length of the corresponding sides of $P'$,
\item all the diagonals $\hat d$ of $\hat P$ are such that $l(d)\ge l(\hat d)\ge l(d')$.
\end{itemize}
We presently explain how the elementary step of type one on the side $\gamma'=\overline{x'y'}$ of $P'$ is performed.\\
Let $z'$ be another vertex of $P'$ such that $\overline{x'z'}$ and $\overline{y'z'}$ are sides or smooth diagonals of $P'$ (it is always possible to suppose the existence of such $z'$). Let $P(x',y',z')\subset P'$ be the triangle of vertices $x',y',z'$, the set $\overline{P'\setminus P(x',y',z')}$ consists of a number of polygons $Q'_i$ which varies between zero and two. \\
Denote by $P(\hat x,\hat y,\hat z)$ the triangle obtained from $P(x',y',z')$ increasing $d_{P'}(x',y')$ and without changing $d_{P'}(x',z')$ and $d_{P'}(y',z')$. \\

\begin{figure}[h!]
 \centering
  \includegraphics[scale=0.4]{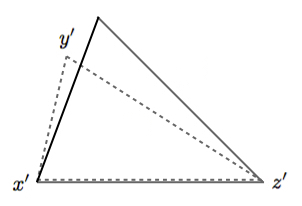}
 \caption{An example of an elementary step of type one: $P(x',y',z')$ is the triangle drawn with a dashed line, while $P(\hat x,\hat y,\hat z)$ is the triangle drawn with a continuous line.}
 \end{figure}

The polygon $\hat P$ is then obtained gluing back on the sides of $P(\hat x,\hat y,\hat z)$ the corresponding polygons $Q'_i$. \\
There is a 1-Lipschitz map $\phi_1:P(\hat x,\hat y,\hat z)\rightarrow P(x',y',z')$ which is the identity on the two sides whose length is not increased. The map $\phi_1$ can then be extended to a 1-Lipschitz map $\phi:\hat P\rightarrow P'$ by defining it as the identity on $Q'_i$.\\

We will also consider degenerate elementary steps of type one, in which $P(x',y',z')$ is one dimensional and is then turned into a triangle. This will happen for example in case of coinciding vertices, which correspond to sides of length zero.\\

\textbf{Elementary step of type two}: \\
If $d'$ is a smooth diagonal of $P'$ such that $l(d')<l(d)$ then through an elementary step of type two on the diagonal $d'$ of $P'$ it is possible to obtain a polygon $\hat P$ and a 1-Lipschitz map $\psi:\hat P\rightarrow P'$ such that: 
\begin{itemize}
\item all sides of $\hat P$ have the same length of the corresponding sides of $P'$,
\item all diagonals $\hat d$ of $\hat P$ are such that $l(d)\ge l(\hat d)\ge l(d')$.
\end{itemize}
We presently explain how the elementary step of type two on the smooth diagonal $d'=\overline{x'y'}$ of $P'$ is performed.\\

Unlike elementary steps of type one, it is possible to perform an elementary step on a smooth diagonal $d'=\overline{x'y'}$ of $P'$ only if there are other vertices $u',v'$ of $P'$ such that:
\begin{itemize}
\item all four geodesics $\overline{u'x'},\overline{x'v'},\overline{v'y'},\overline{y'u'}$ are smooth and thus define a quadrilateral \\ $P(x',y',u',v')\subset P'$,
\item $\overline{x'y'}$ is a smooth diagonal of $P(x',y',u',v')$,
\item $P(x',y',u',v')$ has only one strictly concave internal angle, which is in $x'$ or $y'$. Consequently all other three internal angles of $P(x',y',u',v')$ are strictly convex.
\end{itemize} 

We allow the quadrilateral $P(x',y',u',v')$ to be $degenerate$ in the sense that one of the internal angles of $P(x',y',u',v')$ in $u'$ or $v'$ can be zero.\\
The set $\overline{P'\setminus P(x',y',u',v')}$ consists of a number of polygons $Q_i'$ which varies between zero and four. It is possible to obtain another quadrilateral $P(\hat x,\hat y,\hat u,\hat v)$ from $ P(x',y',u',v')$ increasing $d_{P'}(x',y')$, decreasing the strictly concave angle of the quadrilateral $P(x',y',u',v')$ and leaving unchanged the lengths of its sides. The polygon $\hat P$ is then obtained gluing back the polygons $Q_i'$ on the corresponding sides of $P(\hat x,\hat y,\hat u,\hat v)$.\\

\begin{figure}[h!]
 \centering
  \includegraphics[scale=0.45]{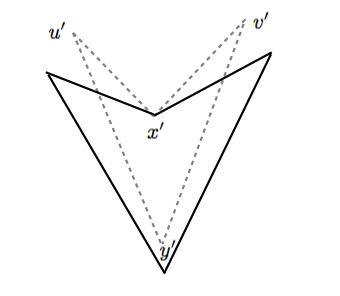}
 \caption{An example of an elementary step of type two: $P(x',y',u',v')$ is the quadrilateral drawn with a dashed line, while $P(\hat x,\hat y,\hat u,\hat v)$ is the quadrilateral drawn with a continuous line.}
 \end{figure}

There is a 1-Lipschitz map $ \psi_1:P(\hat x,\hat y,\hat u,\hat v)\rightarrow P(x',y',u',v')$, with respect to the intrinsic Euclidean metrics of the polygons, which is the identity on the sides of the quadrilaterals. It can be extended to a 1-Lipschitz map $\psi:\hat P\rightarrow P'$ defining it as the identity on the polygons $Q_i'$. \\

Notice that both types of elementary steps do not change the sum of the internal angles of polygons $Q'$ on which they are performed.\\

Now that we have defined the two types of elementary steps, we can go back to explaining how to obtain the desired 1-Lipschitz map $f:P\rightarrow P'$.\\
We will use the following lemma regarding generalized polygons. Notice that any generalized polygon $Q'$ with $m$ vertices has sum of internal angles equal to $\pi(m-2)$. Indeed, suppose $Q$ is obtained gluing two planar polygons $Q_1$ and $Q_2$ along a side $\overline{vw}$: denote by $m_1,m_2$ the number of vertices of $Q_1$ and $Q_2$, then it must follow $m_1+m_2=m+2$ (since gluing $Q_1$ and $Q_2$ we lose two vertexes which are identified together). Consequently the sum of internal angles of $Q$ is equal to $\pi(m_1-2)+\pi(m_2-2)=\pi(m-2)$.

\label{conv}\begin{lem}
Let $Q'$ be a generalized polygon with $n$ vertices. Then it is possible to apply a finite sequence of elementary steps of type two on $Q'$ turning it into a convex polygon $\hat Q$ such that  all sides of $\hat Q$ are of the same length of the corresponding sides of $Q'$.
\end{lem}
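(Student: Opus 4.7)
The plan is to argue by induction on the number $k$ of strictly concave vertices of $Q'$. The base case $k=0$ is tautological, since then $Q'$ is already convex and no elementary steps are needed. For the inductive step I would single out one strictly concave vertex $x'$ and use a single application (or a short cascade) of elementary steps of type two that lowers the internal angle at $x'$ to at most $\pi$, thereby reducing $k$; the inductive hypothesis applied to the resulting polygon then finishes the argument. Since type-two steps by definition leave every side length unchanged, the conclusion that $\hat Q$ has the same side lengths as $Q'$ is automatic throughout.

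To perform the inductive step, let $x'$ be a strictly concave vertex with adjacent boundary vertices $u',v'$. Because the interior angle at $x'$ exceeds $\pi$, the triangle spanned by $u',x',v'$ "opens" into the interior of $Q'$: a short segment from $x'$ into the half-plane bounded by $\overline{u'v'}$ away from the reflex corner meets the interior. Rotating this segment inside the open reflex cone at $x'$, the first vertex $y'\in Vertices(Q')$ that it encounters yields a smooth diagonal $\overline{x'y'}$. I would verify that the quadrilateral $P(x',y',u',v')\subset Q'$ is non-degenerate, that $\overline{x'y'}$ is a smooth diagonal of it, and that its only strictly concave angle is at $x'$ (the angles at $u',v',y'$ being strictly convex because $\overline{x'y'}$ was chosen as the first vertex swept by the rotation, so no other vertex of $Q'$ lies inside the quadrilateral).

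Once such a quadrilateral is in hand, the elementary step of type two on $\overline{x'y'}$ is available. I would push the step continuously, decreasing the concave angle at $x'$ and simultaneously lengthening $d(x',y')$, until either (a) the angle at $x'$ reaches $\pi$, in which case $x'$ ceases to be a (strictly concave) vertex and $k$ drops, or (b) the quadrilateral $P(\hat x,\hat y,\hat u,\hat v)$ first degenerates so that one of the convex angles at $\hat u,\hat v$ or $\hat y$ reaches $\pi$; in the latter case the quadrilateral collapses to a triangle, one vertex of $Q'$ becomes redundant, and the angular budget shifted from $x'$ still strictly decreases its concavity, so iterating this argument a bounded number of times on $x'$ eventually renders its angle no greater than $\pi$. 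All vertices lying in the untouched pieces $Q_i'$ keep their angles, so no new concave vertices are created and $k$ strictly decreases.

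The main obstacle will be the geometric existence claim in the second paragraph: producing a fourth vertex $y'$ and a smooth diagonal $\overline{x'y'}$ so that the resulting quadrilateral has exactly the required convex/concave signature and sits inside $Q'$. This is a visibility argument that must handle possibly non-planar generalized polygons, multiple vertices at the same point, and the fact that $Q'$ need not be simply situated in the plane. I would prove it by a sweeping argument from $x'$ inside the reflex cone, using finiteness of the vertex set together with the fact that the intrinsic geodesic metric on the planar pieces of $Q'$ is Euclidean, so that "first vertex hit" is well defined. Termination of the whole procedure follows because at each inductive step $k$ decreases by at least one, so after at most $n$ invocations the polygon is convex.
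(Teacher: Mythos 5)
Your inductive scheme is genuinely different from the paper's, which inducts on the number $m$ of vertices: cut $Q'$ along a smooth diagonal $\overline{v'w'}$, make both pieces convex by the inductive hypothesis, glue them back, and then flatten the at most two reflex corners $\hat v,\hat w$. The point of that structure is that the quadrilateral $P(\hat v_1,\hat v,\hat v_2,\hat w)$ used in the flattening step automatically has all four sides smooth: $\overline{\hat v_1\hat v}$ and $\overline{\hat v\hat v_2}$ are sides of $\hat Q$, while $\overline{\hat v_1\hat w}$ and $\overline{\hat v_2\hat w}$ are diagonals of the two \emph{convex} glued pieces. You instead induct on the number of reflex vertices, flattening one at a time via a quadrilateral $P(x',y',u',v')$ with $u',v'$ the two neighbors of $x'$ and $y'$ produced by a visibility sweep; that is exactly where the argument breaks.

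The sweep only ensures that $\overline{x'y'}$ is a smooth diagonal; it gives no control over $\overline{u'y'}$ and $\overline{v'y'}$, nor does it force $P(x',y',u',v')$ to be a simple quadrilateral inside $Q'$ free of other vertices, so the parenthetical inference ``first vertex swept $\Rightarrow$ no vertex inside the quadrilateral'' does not hold. Concretely, take the planar polygon with vertices $A=(0,0)$, $B=(10,0)$, $C=(10,10)$, $D=(5,2)$, $E=(0,10)$ in cyclic order, so $D$ is the unique reflex vertex with neighbors $u'=C$, $v'=E$. The first vertex visible from $D$ inside the reflex cone is $B$ and $\overline{DB}$ is indeed a smooth diagonal; however the geodesic in $Q'$ from $B$ to $E$ is $\overline{BD}*\overline{DE}$, since the straight segment $BE$ leaves $Q'$ near $(5,5)$, so $\overline{BE}$ is not smooth, and the planar quadrilateral $D,C,B,E$ is even self-intersecting. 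Hence $P(D,B,C,E)$ is not an admissible quadrilateral for a type-two step and the inductive step as written fails. A usable quadrilateral does exist here, e.g.\ $P(D,B,C,A)$ with $A$ \emph{not} adjacent to $D$, but proving that such a choice always exists in a generalized polygon is precisely the nontrivial content that the paper's cut-and-glue induction supplies structurally. Finally, the closing claim that ``no new concave vertices are created'' also needs an argument: stretching $\overline{x'y'}$ increases the quadrilateral angles at $\hat u,\hat v,\hat y$, and after regluing one of these could pass $\pi$.
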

\begin{proof}
We proceed by induction on the number $m$ of vertices of $Q'$. \\
If $m=4$ then the result is trivial. 
Suppose the thesis is true for all polygons $Q'$ with number of vertices between 4 and $m>4$, then we will prove it for polygons $Q'$ with $m+1$ vertices.\\
We cut $Q'$ along a smooth diagonal $\overline{v'w'}$ obtaining two generalized polygons $Q_i$ on which we can apply the inductive hypothesis thus turning them into two convex polygons $\hat Q_i$: we glue $\hat Q_1,\hat Q_2$ back together along $\overline{\hat v\hat w}$ obtaining a polygon $\hat Q$ which can have strictly concave internal angles only in $\hat v$ and $\hat w$. If $\alpha_{\hat v}>\pi$ then one performs an elementary step of type two on $P(\hat v_1,\hat v,\hat v_2,\hat w)$ (where $\hat v_1,\hat v_2$ are the vertices next to $\hat v$) stretching $\overline{\hat v\hat w}$ until $\alpha_{\hat v}=\pi$. Finally, only the angle $\alpha_{\hat w}$ can be strictly concave. Notice that all diagonals $\overline{\hat w\hat z}$ must be smooth, where $\hat z$ is any vertex of $\hat Q$ not adjacent to $\hat v$: using this fact and the hypothesis on the internal angles of $Q'$ one gets that it is always possible to flatten the angle $\alpha_{\hat w}$ performing elementary steps of type 2 stretching $\overline{\hat w\hat x}$, where $\hat x$ is a vertex of $\hat Q$ such that $\alpha_{\hat x}<\pi$, without making any angle $\alpha_{\hat x}$ strictly concave.
\end{proof}

\begin{oss}Notice that, given polygons $Q'$ and $\hat Q$ as in the previous lemma, if $x'$ is a vertex of $Q'$ such that $\alpha_{x'}\ge \pi$, then it can not result $\alpha_{\hat x}<\pi$.\\
To see this, denote by $x_1',x_2'$ the two vertices of $Q'$ adjacent to $x'$ and by $\hat x_1,\hat x_2$ the two corresponding vertices of $\hat Q$. If $\alpha_{\hat x}<\pi$ then $\overline{\hat x_1\hat x_2}$ is a segment of length strictly smaller than $d_{Q'}(x_1',x')+d_{Q'}(x',x_2')=d_{Q'}(x_1',x_2')$ and consequently it would result $d_{Q'}(x_1',x_2')>d_{\hat Q}(\hat x_1,\hat x_2)$.\\
This inequality would contradict the fact that, since $\hat Q$ is obtained from $Q'$ through a sequence of elementary steps of type two, there is a 1-Lipschitz map $f:\hat Q\rightarrow Q'$ which sends vertices to corresponding vertices.
\end{oss}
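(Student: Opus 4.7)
The plan is to argue by contradiction, comparing $d_{Q'}(x_1', x_2')$ with $d_{\hat Q}(\hat x_1, \hat x_2)$ via the 1-Lipschitz map produced by the previous lemma. Since each elementary step of type two preserves the length of every side and comes equipped with a 1-Lipschitz map that fixes corresponding vertices, composing the finitely many steps used to produce $\hat Q$ from $Q'$ yields a 1-Lipschitz map $f:(\hat Q,d_{\hat Q})\to (Q',d_{Q'})$ with $f(\hat v)=v'$ for every vertex. In particular, $d_{\hat Q}(\hat x_1,\hat x)=d_{Q'}(x_1',x')$ and $d_{\hat Q}(\hat x,\hat x_2)=d_{Q'}(x',x_2')$.

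Suppose towards a contradiction that $\alpha_{\hat x}<\pi$. By the conclusion of the previous lemma, $\hat Q$ is planar and convex, so $d_{\hat Q}$ coincides with the ambient Euclidean metric restricted to $\hat Q$. The strict convexity of the angle at $\hat x$ forces $\hat x_1$, $\hat x$, $\hat x_2$ to be non-collinear, so the strict Euclidean triangle inequality gives
$$d_{\hat Q}(\hat x_1,\hat x_2)<d_{\hat Q}(\hat x_1,\hat x)+d_{\hat Q}(\hat x,\hat x_2).$$

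On the other hand, the hypothesis $\alpha_{x'}\ge \pi$ means that $x'$ is a flat or reflex vertex of $Q'$: in the flat case the segment $\overline{x_1'x_2'}$ passes straight through $x'$ and lies in $Q'$; in the reflex case the ambient straight segment from $x_1'$ to $x_2'$ exits $Q'$ near $x'$, and any intrinsic geodesic between $x_1'$ and $x_2'$ must bend at the boundary, with $x'$ being the only candidate vertex at which the bend can occur with a concave interior angle. In either situation the broken path $x_1'\to x'\to x_2'$ realises the intrinsic distance:
$$d_{Q'}(x_1',x_2')=d_{Q'}(x_1',x')+d_{Q'}(x',x_2').$$

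Combining the two displays with the side-length equalities recorded in the first paragraph yields $d_{Q'}(x_1',x_2')>d_{\hat Q}(\hat x_1,\hat x_2)$, which violates the 1-Lipschitz inequality $d_{Q'}(f(\hat x_1),f(\hat x_2))\le d_{\hat Q}(\hat x_1,\hat x_2)$. This contradiction proves $\alpha_{\hat x}\ge \pi$. The only subtle point is justifying that the broken line through $x'$ realises $d_{Q'}(x_1',x_2')$ in the reflex case, but this is standard: in a simply connected planar domain, an intrinsic geodesic can only bend at boundary vertices of interior angle at least $\pi$, and here $x'$ is precisely such a vertex.
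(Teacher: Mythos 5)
Your argument is correct and follows essentially the same route as the paper: compose the elementary-step Lipschitz maps, use convexity of $\hat Q$ to get the strict triangle inequality at $\hat x$, use the reflex/flat hypothesis at $x'$ to show the broken path $x_1'\to x'\to x_2'$ realizes $d_{Q'}(x_1',x_2')$, and contradict the 1-Lipschitz bound. The only difference is that you spell out the justification of $d_{Q'}(x_1',x')+d_{Q'}(x',x_2')=d_{Q'}(x_1',x_2')$, which the paper merely asserts; note, though, that your phrasing via an "ambient straight segment" implicitly treats $Q'$ as planar, whereas $Q'$ is a generalized polygon that need not embed in $\mathbb{R}^2$ — the correct reason is that the two boundary sides form a local (hence, by unique geodesicity, global) geodesic because the interior angle at $x'$ is at least $\pi$.
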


We can now start the proof of theorem \ref{lemshort2}, using induction on the number $n$ of vertices of $P$. In order to make the proof more easily readable, we will divide the following arguments in succeeding lemmas. \\
Suppose $n=3$, then $P$ is an Euclidean triangle, while $P'$ can have many more vertices than $P$. In order to satisfy condition (v) of the definition of degenerate polygons comparable to $P$, $P'$ can have only one planar subpolygon and at most three one dimensional components ending in points of $\iota(Vertices(\Delta))$.\\
Consequently, for $n=3$, $P$ and $P'$ will be polygons of the type described in theorem \ref{lemshort1}: for this reason we will denote them by $\Delta,\Delta'$. 

\begin{lem}
If $n=3$, it is possible to turn $\Delta'$ into $\Delta$ using only elementary steps of type one and two and consequently get a 1-Lipschitz map $f:\Delta\rightarrow \Delta'$ obtained composing all intermediate 1-Lipschitz maps between intermediate polygons.
\end{lem}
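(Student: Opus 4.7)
The plan is to exhibit a finite sequence of elementary steps of types one and two that transforms $\Delta'$ into a polygon isometric to $\Delta$. Each elementary step applied to a polygon $Q$ produces a larger polygon $\hat Q$ together with a $1$-Lipschitz map $\hat Q\to Q$, so composing these maps in reverse order yields the desired $f:\Delta\to\Delta'$. The three corners $x_1',x_2',x_3'$ must be fixed throughout, and correspondence of every other vertex of $\Delta'$ with a vertex of $\Delta$ is vacuous since $\Delta$ has only three vertices.

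First I would eliminate degeneracies. By condition (v) and the assumption $n=3$, each one-dimensional component of $\Delta'$ has its far endpoint at a vertex corresponding to some $x_i$; thus each such segment links the planar subpolygon directly to one of $x_1',x_2',x_3'$. I regard every one-dimensional component as a collapsed triangle (its base a pair of multiple vertices at distance zero) and apply a degenerate elementary step of type one to open it into a thin genuine planar triangle. Multiple (coinciding) vertices are separated in the same manner. After this preprocessing $\Delta'$ is a proper planar polygon whose only strictly convex corners are $x_1',x_2',x_3'$ and whose remaining boundary vertices are strictly concave by condition (v).

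Second I would flatten every intermediate vertex. Given an intermediate concave vertex $w'$ sitting between $x_i'$ and $x_j'$, let $x_k'$ be the remaining corner of $\Delta'$ and let $u',v'$ be the boundary vertices adjacent to $w'$. The quadrilateral $P(x_k',w',u',v')$ has its unique strictly concave angle at $w'$, so an elementary step of type two on the diagonal $\overline{x_k'w'}$ is admissible and strictly decreases the angle at $w'$. Iterating (and convexifying adjacent subregions via Lemma \ref{conv} when necessary) drives this angle to $\pi$, at which point $w'$ lies on the straight segment $\overline{u'v'}$ and is absorbed. Performing this for every intermediate vertex leaves a Euclidean triangle $\tilde\Delta'$ with vertices $x_1',x_2',x_3'$ and sides satisfying $d_{\tilde\Delta'}(x_i',x_j')\le d_\Delta(x_i,x_j)$ thanks to condition (iv) together with the length bounds built into each elementary step.

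Finally I would reach $\Delta$ by a cyclic sequence of small elementary steps of type one, each stretching one side of the triangle by a small amount so as to interpolate linearly in all three side lengths from $\tilde\Delta'$ to $\Delta$. The triangle inequality is a family of affine inequalities in the three side lengths, so it is preserved under this linear interpolation because both endpoints satisfy it; every intermediate configuration is therefore a valid Euclidean triangle and the process terminates at $\Delta$ after finitely many steps.

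The main obstacle is the second phase. At every stage one must verify that the chosen quadrilateral $P(x_k',w',u',v')$ really has its unique strictly concave angle at $w'$, that the diagonal $\overline{x_k'w'}$ is smooth, and that pushing the angle at $w'$ up to $\pi$ does not inadvertently make some other angle strictly concave or destroy the planarity of the polygon. These checks rely essentially on conditions (iv)--(vi) and on the monotonicity observation following Lemma \ref{conv} that an angle which is not strictly convex in $Q$ cannot become strictly convex in $\hat Q$ under elementary steps of type two.
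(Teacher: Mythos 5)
Your three-phase plan --- remove the degeneracies, convexify to a triangle with vertices $x_1',x_2',x_3'$ via Lemma~\ref{conv}, then cyclically stretch the three sides by elementary steps of type one --- has the same skeleton as the paper's proof, and your phases two and three are essentially the paper's argument. (In the final phase the paper controls termination quantitatively by pushing one angle to $\pi-\epsilon$ at each step, while you appeal to convexity of the triangle-inequality region and leave the termination count implicit; both are at a comparable level of informality.)

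The genuine gap is in the first phase. You propose to open each one-dimensional component with a degenerate elementary step of type one, ``regarding it as a collapsed triangle (its base a pair of multiple vertices at distance zero).'' But a type-one step, by construction, reattaches the complementary pieces $Q_i'$ only along the two \emph{unchanged} sides $\overline{\hat x\hat z}$, $\overline{\hat y\hat z}$ of the modified triangle. Here the planar subpolygon of $\Delta'$ meets the one-dimensional component at a single corner point $v_1'$ (the vertex where the boundary traversal turns around onto the tail), not along either of the unchanged sides. Stretching the zero-length base therefore gives no prescription for reattaching the planar subpolygon, and whatever convention you adopt you still end with a thin triangle joined to the main polygon at a single vertex --- still a degenerate polygon --- contradicting your claim that after preprocessing $\Delta'$ is a proper planar polygon with strictly convex corners only at $x_1',x_2',x_3'$. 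The paper handles this by a different elementary step: a step of \emph{type two} on the degenerate quadrilateral $P(u_1',v_1',v_2',w_1')$, where $u_1',w_1'$ are the boundary neighbors of the attachment vertex $v_1'$ and the internal angle at $v_2'$ is zero. That step slides $\hat v_1$ onto $\overline{\hat w_1\hat v_2}$ or $\overline{\hat u_1\hat v_2}$, so the next one-dimensional vertex $v_2'$ becomes an ordinary boundary vertex of the planar piece and the tail shortens by one segment; iterating absorbs the whole tail. Your type-one device is correct for genuinely coinciding boundary vertices (a zero-length side of a single planar polygon), but for the one-dimensional components attached at a vertex you need the type-two mechanism.
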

\begin{proof}
We first get rid of the one dimensional components of $\Delta'$, turning them into part of $\overline{\mathop \Delta\limits^ \circ}$ using elementary steps as thus explained (we will explain the procedure only for the one dimensional component starting at $x_1'$, the other two will be treated in the same way). \\
Suppose there is a one dimensional component $\overline{x_1'v_1'}$ starting at $x_1'$ and $v_1'\in\overline{\mathop \Delta\limits^ \circ}$ is the vertex such that the corresponding internal angle of $\overline{\mathop \Delta\limits^ \circ}$ must be strictly convex. Let $v_2'$ be the vertex of $\overline{x_1'v_1'}$ closer to $v_1'$ and let $w_1',u_1'$ be the two vertices of $\overline{\mathop \Delta\limits^ \circ}$ adjacent to $v_1'$. We perform an elementary step of type two on $P(u_1',v_1',v_2',w_1')$ (which is a degenerate polygon, since the internal angle in $v_2'$ is zero) until $P(u_1',v_1',v_2',w_1')$ is no longer degenerate. Notice that there are two ways of performing an elementary step of type two on a degenerate quadrilateral $P(u_1',v_1',v_2',w_1')$ (as it is showed in figure 9): in one way $\overline{u_1'v_1'}$ is stretched and it will result $\hat v_1\in \overline{\hat w_1 \hat v_2}$ and in the other way $\overline{v_1'w_1'}$ is stretched and it will result $\hat  v_1\in \overline{\hat u_1\hat v_2}$. Since we do not care on which side the vertex $\hat v_1$ will end up, we can choose either way.\\

\begin{figure}[h!]
 \centering
  \includegraphics[scale=0.5]{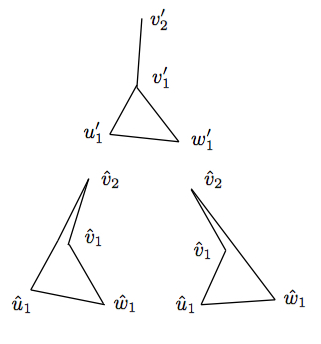}
 \caption{Two ways of performing an elementary step of type two on $P(u_1',v_1',v_2',w_1')$}
 \end{figure}

We then proceed in the same way considering the vertex $v_3'$ of the one dimensional component $\overline{v_2'x_1'}$ closer to $v_2'$.\\
Having done so, we obtain a polygon without one dimensional components and with concave internal angles in all vertices which are not in $\iota(Vertices(\Delta))$: we turn it into an Euclidean triangle $\widehat \Delta$ of vertices $\hat x_i$, $i=1,2,3$ using lemma \ref{conv} through elementary steps of type two. Finally, we perform a finite sequence of elementary steps of type one on the three sides $\overline{\hat x_i\hat x_j}$ of $\widehat \Delta$ in order to make them of the same length of the corresponding sides of $\Delta$.\\
Suppose all three sides of $\widehat \Delta$ are such that $d_{\widehat \Delta}(\hat x_i,\hat x_j)<d_\Delta(x_i,x_j)$. We start by stretching the length of $\overline{\hat x_1\hat x_2}$ until the angle in $\hat x_3$ is equal to $\pi-\epsilon$, with $\epsilon>0$ very small: in this way it results $l(\overline{\hat x_1\hat x_3})^2+l(\overline{\hat x_2\hat x_3})^2=l(\overline{\hat x_1\hat x_2})^2+\psi(\epsilon)$ with $\lim\limits_{\epsilon\rightarrow 0}\frac{\psi(\epsilon)}{\epsilon}=0$. Performing again an elementary step of type one stretching $\overline{\hat x_2\hat x_3}$ until the angle in $\hat x_1$ is equal to $\pi-\epsilon$  one gets $2l(\overline{\hat x_1\hat x_3})^2+l(\overline{\hat x_2\hat x_3})^2=l(\overline{\hat x_2\hat x_3})^2+\psi_1(\epsilon)$ with $\lim\limits_{\epsilon\rightarrow 0}\frac{\psi_1(\epsilon)}{\epsilon}=0$. Consequently, proceeding in this way, after a finite number of steps one side must reach its maximum length. Then we proceed in the same way until all sides of $\widehat \Delta$ are of the same length of the corresponding sides of $\Delta$.
\end{proof}

Now suppose the inductive hypothesis is verified if the number of vertices of $P$ is not greater than $n$, then we shall find the 1-Lipschitz map if $P$ has $n+1$ vertices.
\begin{lem}
If there is a diagonal $\overline{v'w'}$ of $P'$ such that $l(\overline{v'w'})=l(\overline{vw})$, then it is possible to apply the inductive hypothesis to obtain the 1-Lipschitz map $f:P\rightarrow P'$.
\end{lem}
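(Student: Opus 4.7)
The plan is to use the equal-length diagonal $\overline{v'w'}$ to split both polygons and invoke the inductive hypothesis on the pieces, then glue the resulting maps along the cut. First I would cut $P$ along $\overline{vw}$ into two subpolygons $P_1, P_2$ and $P'$ along $\overline{v'w'}$ into $P_1', P_2'$, with the labeling chosen so that each $P_i'$ contains the $\iota$-images of the vertices of $P_i$. Each $P_i$ has strictly fewer than $n+1$ vertices, and each $P_i'$ inherits from $P'$ the structure of a degenerate polygon comparable with $P_i$: simple connectedness, correct ordering of vertices, and the angle conditions at shared vertices all pass to the pieces by direct inspection of the cut.

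The crucial verification is condition (iv) of the definition of comparability, namely that $d_{P_i}(a,b) \geq d_{P_i'}(a',b')$ for every pair of vertices $a,b$ of $P_i$. For this, consider the geodesic $\sigma'$ in $P'$ from $a'$ to $b'$. If $\sigma'$ is contained in $P_i'$, then $d_{P_i'}(a',b') = d_{P'}(a',b') \le d_P(a,b) \le d_{P_i}(a,b)$. Otherwise $\sigma'$ enters $P_{3-i}'$ through $\overline{v'w'}$ at some point $p'$ and exits at some point $q'$; since $\overline{v'w'}$ is itself a geodesic segment of $P'$, replacing the piece of $\sigma'$ from $p'$ to $q'$ with the straight segment along $\overline{v'w'}$ produces a path in $P_i'$ of the same length. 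Iterating this straightening if $\sigma'$ crosses the diagonal more than once yields $d_{P_i'}(a',b') \le d_{P'}(a',b') \le d_{P_i}(a,b)$.

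Having established the hypotheses, the inductive hypothesis produces 1-Lipschitz maps $f_i: P_i \to P_i'$ sending vertices to their $\iota$-images. Because $l(\overline{vw}) = l(\overline{v'w'})$ and $f_i(v) = v'$, $f_i(w) = w'$, the 1-Lipschitz constraint together with the triangle inequality forces each $f_i$ to be an isometry on $\overline{vw}$ (equality in the triangle inequality along a geodesic places the image on that geodesic with the correct parametrization); hence $f_1$ and $f_2$ agree on the cut and glue to a well-defined map $f: P \to P'$. This $f$ is 1-Lipschitz: for $a \in P_1, b \in P_2$, any geodesic in $P$ from $a$ to $b$ crosses $\overline{vw}$ at some point $p$, and $d_{P'}(f(a), f(b)) \le d_{P_1'}(f_1(a), f_1(p)) + d_{P_2'}(f_2(p), f_2(b)) \le d_{P_1}(a, p) + d_{P_2}(p, b) = d_P(a, b)$.

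The main obstacle is the verification of condition (iv) on distances in the subpolygons, which hinges on the observation that the cut itself is a geodesic segment along which any straying $\sigma'$ can be straightened without length cost. A secondary technical point is the case when $\overline{vw}$ is a non-smooth diagonal (a concatenation of segments bending at concave vertices); here the cut introduces the bending vertices as additional shared vertices of $P_1'$ and $P_2'$, but equality of lengths propagates to each smooth subsegment and the gluing argument applies unchanged.
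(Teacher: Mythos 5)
Your proposal follows the paper's argument closely: cut $P$ along $\overline{vw}$ and $P'$ along $\overline{v'w'}$, argue that each piece $P_i'$ is a degenerate polygon comparable with $P_i$, invoke the inductive hypothesis, and glue the resulting 1-Lipschitz maps along the cut. You are in fact more explicit than the paper on two points --- condition (iv) on distances (where the key observation, correctly identified, is that the cut is a $d_{P'}$-geodesic so any $\sigma'$ straying into the other piece can be rerouted through it without increasing length) and the reason $f_1$ and $f_2$ agree on $\overline{vw}$ (equality of lengths together with uniqueness of geodesics forces each $f_i$ to parametrize $\overline{v'w'}$ isometrically).

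The paper does however attend to two edge cases you leave implicit. First, if $\overline{v'w'}$ runs along part of $\partial P'$, one piece $P_i'$ can collapse to a one-dimensional set, which does not satisfy condition (i) of a degenerate comparable polygon; the paper first applies a degenerate elementary step of type one to that piece before invoking induction. Second, for the non-smooth $\overline{vw}$ the paper does not cut along the whole bent concatenation: it observes that $l(\overline{vw})=l(\overline{v'w'})$ together with the distance hypothesis forces $\overline{v'w'}=\overline{v'v_1'}*\cdots*\overline{v_m'w'}$ with $l(\overline{vv_1})=l(\overline{v'v_1'})$, and then cuts along the single smooth subdiagonal $\overline{vv_1}$, which is cleaner than introducing the bending vertices as new shared vertices of both pieces as you suggest. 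These are repairable gaps, not a wrong approach, and the core idea of your proposal is the same as the paper's.
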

\begin{proof}
We divide two cases.
\begin{itemize}
\item if $\overline{vw}$ is smooth then we cut the polygons $P$ and $P'$ in the following way:
\begin{itemize}
\item we cut $P$ along $\overline{vw}$ obtaining $P_1$ and $P_2$,
\item we cut $P'$ along $\overline{v'w'}$ obtaining $P_1'$ and $P_2'$.
\end{itemize}
Notice that if $\overline{v'w'}$ is not smooth then the operation of \textit{cutting along $\overline{v'w'}$} must be further clarified. If $d'$ passes through a side of $P'$ (resp. a one dimensional component), then such side (resp. one dimensional component) will appear on both polygons $P_i'$. Notice that in this way the polygons $P_i'$ could acquire new one dimensional components and new vertices. We will follow this rule to name the new vertices: if  $u'$ is a vertex of $\iota(Vertices(P))$ on $\overline{v'w'}$ and $u\in \Delta_1$ (resp. $u\in \Delta_2$), then $u'$ will be a vertex only of $\Delta_1'$ (resp. $\Delta_2'$).

\begin{figure}[h!]
 \centering
  \includegraphics[scale=0.5]{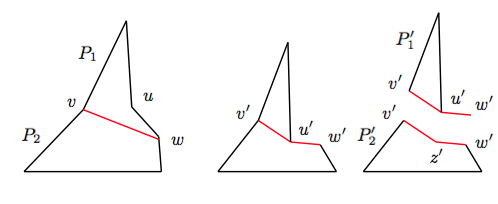}
 \caption{An example of cutting in case $d'$ is not smooth: notice that the points $v',w'$ appear on both $P_1'$ and $P_2'$, while $u'$ appears only on $P_1'$, since $u\in P_1$. On $P_2'$ there is a new vertex $z'\not\in \iota(Vertices(P_2))$.}
 \end{figure}

Sometimes a polygon $P_i'$ could be entirely degenerate (i.e. one dimensional): in that case we perform a degenerate elementary step of type one on $P_i'$ turning it into a degenerate polygon which includes at least one planar polygon.\\  
We can thus suppose both newly obtained polygons $P_1'$ and $P_2'$ are degenerate polygons comparable respectively with $P_1$ and $P_2$. Indeed, condition (v) of the definition is verified since, if $z'\in\overline{v'w'}$, $z'\not\in \iota(Vertices(P))$ is a vertex of a planar polygon of $P'$, a corresponding vertex $z_i'\in P_i'$ can have strictly convex internal angle only if from $z_i'$ starts a one dimensional component. 

This is the crucial property we were looking for: we can now apply the inductive hypothesis and obtain two 1-Lipschitz maps $f_i:\Delta_i\rightarrow \Delta_i'$ which must agree on $\overline{vw}$: we will define $f:\Delta\rightarrow \Delta'$ to be such that $f|_{\Delta_i}:=f_i$.\\
Notice that the same reasoning could not have been done considering polygons $\Delta$ and $\Delta'$ of the hypothesis of theorem \ref{lemshort1}, as explained in picture 5.11.
\begin{figure}[h!]
 \centering
  \includegraphics[scale=0.55]{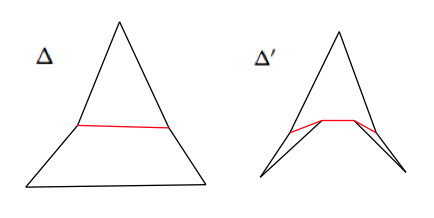}
 \caption{The diagonals $d$ and $d'$ are drawn in red. One clearly sees that the bottom half of $\Delta$ has four strictly convex angles, while the bottom half of $\Delta'$ is composed by two triangles connected by a one dimensional component.}
 \end{figure}

\item if  $\overline{vw}$ is not smooth, then suppose $\overline{vw}$ is the concatenation of segments $\overline{vv_1}*\overline{v_1v_2}*\cdots *\overline{v_{m}w}$: at least one of them must be a smooth diagonal, so suppose $\overline{vv_1}$ is. Notice that if $l(\overline{vw})=l(\overline{v'w'})$ then it must follow $\overline{v'w'}=\overline{v'v_1'}*\overline{v_1'v_2'}*\cdots *\overline{v_{m}'w'}$, otherwise one would get
$$l(\overline{vv_1})+l(\overline{v_1v_2})+\cdots +l(\overline{v_mw})< l(\overline{v'v_1'})+l(\overline{v_1'v_2'})+\cdots +l(\overline{v_m'w'})$$
which contradicts the hypothesis on the distances in $P$ and $P'$.\\
Now one can just consider the diagonals $\overline{vv_1}$ (which is smooth) and $\overline{v'v_1'}$ and fall into the previous case.
\end{itemize}

\end{proof}

After these considerations we can always suppose all diagonals of $P'$ are strictly shorter than the corresponding diagonals of $P$. We will now deal with one dimensional components of $P'$.
\begin{lem}
Suppose all diagonals of $P'$ are strictly shorter than the corresponding diagonals of $P$. Then, using elementary steps, it is possible to turn $P'$ in a degenerate polygon comparable to $P$ without one dimensional components. If in doing so one diagonal of $P'$ reaches its maximum length (i.e. the length of the corresponding diagonal of $P$) it is possible to apply the inductive hypothesis to obtain the desired 1-Lipschitz map $f:P\rightarrow P'$.
\end{lem}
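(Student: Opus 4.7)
The plan is to mimic the procedure used in the base case $n=3$ and apply it to each one-dimensional component of $P'$ in turn. Fix such a component $C$: by condition (v) in the definition of degenerate polygon comparable to $P$, the extremity of $C$ not attached to any planar subpolygon must be a vertex $x' \in \iota(\mathrm{Vertices}(P))$, while the other extremity $v_1'$ belongs to some planar subpolygon $Q'$ of $P'$ and has strictly convex internal angle in $Q'$ (otherwise $C$ could not emanate from it). Label the vertices of $C$ as $v_1', v_2', \dots, v_k' = x'$ going from $v_1'$ outward, and let $u_1', w_1'$ be the two vertices of $Q'$ adjacent to $v_1'$ along $\partial Q'$. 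Exactly as in the base case, I would perform an elementary step of type two on the degenerate quadrilateral $P(u_1', v_1', v_2', w_1')$, stretching either $\overline{u_1' v_1'}$ or $\overline{v_1' w_1'}$ until either the quadrilateral becomes non-degenerate (so $v_2'$ has detached from $C$ and been absorbed as a vertex of the planar piece) or the stretched segment attains the length of its counterpart in $P$.

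In the first case, $C$ has been shortened by one vertex, and I would iterate on $v_3'$ with the new neighbors of the detached $v_2'$ playing the role of $u_1', w_1'$. Iterating this over every vertex of every one-dimensional component yields a degenerate polygon $\hat P$ comparable to $P$ and free of one-dimensional components, together with a 1-Lipschitz map $\hat P \to P'$ obtained by composing all the intermediate elementary-step maps. In the second case, I would halt and invoke the previous lemma to cut both the current intermediate polygon and $P$ along the newly saturated diagonal. The two pairs of pieces produced by this cut are each degenerate polygons comparable in the required sense, and the two pieces of $P$ each have strictly fewer than $n+1$ vertices; the inductive hypothesis therefore applies to each pair, and the two sub-maps glue along the shared diagonal to give the desired $f: P \to P'$.

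The main obstacle is verifying that at every iteration the hypotheses of the elementary step of type two are in force: in particular, that the degenerate quadrilateral $P(u_1', v_1', v_2', w_1')$ has exactly one strictly concave internal angle (which will be located at $v_1'$, since the strict convexity of the internal angle of $Q'$ at $v_1'$ together with the presence of $C$ forces this), and that the segment to be stretched is strictly shorter than its counterpart in $P$ (which is the blanket hypothesis of this lemma). One must also check that, after each step, the resulting intermediate polygon remains a degenerate polygon comparable to $P$ in the sense of the six defining conditions. Side lengths are unchanged by construction, the distance comparison between corresponding vertices is preserved because diagonals can only shorten relative to their counterparts in $P$, and the convexity pattern required at non-$\iota$-vertices is preserved because the vertex $v_2'$ after the step either ends up on a side (with vanishing angle, effectively disappearing as a separate vertex) or in the interior with a strictly concave angle.
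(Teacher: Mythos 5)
Your overall strategy matches the paper's: iterate elementary steps of type two on the degenerate quadrilateral at the base of each one-dimensional component, exactly as in the base case $n=3$, halting if any diagonal saturates and then cutting along it via the previous lemma. This is the right idea. However, you leave two points unaddressed that the paper handles explicitly, and both are needed for the intermediate polygons to actually remain degenerate polygons comparable to $P$.

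First, an elementary step of type two on the degenerate quadrilateral $P(u_1',v_1',v_2',w_1')$ can be performed in two ways (producing $\hat v_1\in\overline{\hat w_1\hat v_2}$ or $\hat v_1\in\overline{\hat u_1\hat v_2}$), and condition (vi) — the cyclic-ordering condition — is not automatically preserved: you must make the choice that mirrors the position of $v_1$ in $P$ whenever $v_1',u_1',w_1',v_2'$ all lie in $\iota(\mathrm{Vertices}(P))$. You acknowledge that comparability must be checked but only argue for conditions (iv) and (v); condition (vi) genuinely constrains which of the two steps is legal, and your proposal doesn't record this rule. Second, after the one-dimensional components are absorbed, the procedure can leave behind a vertex that still links two planar subpolygons of $P'$; the paper removes these with a further elementary step of type two (its figure~12), and this cleanup matters because the next lemma in the chain applies lemma~\ref{conv} to a generalized polygon, which cannot have two planar pieces joined only at a vertex. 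Your proposal stops once the hairs are gone and so does not produce the object the subsequent lemma needs. Neither omission is conceptually deep, but both are steps the paper's proof relies on and that your write-up would need to supply.
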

\begin{proof}
We will proceed in a way which is almost identical to the one applied in the previous case $n=3$. Let $\overline{v_1'x_1'}$ be a one dimensional component of $P'$ and $v_2'$ the vertex of $\overline{v_1'x_1'}$ closer to $v_1'$, then we will apply an elementary step of type two on $P(u_1',v_1',v_2',w_1')$ (where, as before, $u_1'$ and $w_1'$ are vertices of a planar subpolygon of $P'$ adjacent to $v_1'$) in such a way that the newly obtained degenerate polygon $\hat P$ satisfies axiom (vi). In particular, if $v_1',u_1',w_1',v_2'$ are all vertices of $\iota(Vertices(P))$ then we will apply the elementary step which gives $\hat v_1\in \overline{\hat w_1\hat v_2}$ (resp. $\hat v_1\in \overline{\hat u_1\hat v_2}$) if $v_1\in \overline{ w_1 v_2}$ (resp. $ v_1\in \overline{ u_1v_2}$). If $v_1'$ is not a vertex of $\iota(Vertices(P'))$, then it is possible to perform both types of elementary step of type one. \\
Proceeding in this way one could end up with a vertex $x_1'$ which connects two planar polygons of $P'$: it is possible to get rid of this "pathology" with another elementary step of type two as it is explained in figure 12.\\

\begin{figure}[h!]
 \centering
  \includegraphics[scale=0.5]{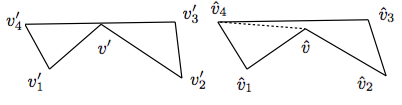}
 \caption{If $v\in \overline{v_1v_2}$ then one performs an elementary step of type two on $P(v_4',v',v_2',v_3')$.}
 \end{figure}

In this way we explained also how to get rid of vertices of $P'$ which link two different planar polygons. Clearly, if at any point during this procedure of elimination of one dimensional components, one ends up with a diagonal $d'$ of $P'$ such that $l(d')=l(d)$ then the 1-Lipschitz map $f:P\rightarrow P'$ is obtained as explained before.
\end{proof}

At this point, we can suppose $P'$ does not have one dimensional components, but it can stil have more vertices than $P$. Notice that a straightforward consequence of the definition of elementary steps of type two and of condition (v) of the definition of degenerate polygons comparable to $P$ is that all internal angles in vertices of $P'$ which are not in $\iota(Vertices(P))$ will have concave internal angle.
\begin{lem}
Suppose all diagonals of $P'$ are strictly shorter than the corresponding diagonals of $P$ and $P'$ does not have one dimensional components. Then, using elementary steps, it is possible to turn $P'$ in a degenerate polygon comparable to $P$ with the same vertices of $P$. If in doing so one diagonal of $P'$ reaches its maximum length (i.e. the length of the corresponding diagonal of $P$) it is possible to apply the inductive hypothesis to obtain the desired 1-Lipschitz map $f:P\rightarrow P'$.
\end{lem}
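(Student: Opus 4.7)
The plan is to eliminate the extra vertices of $P'$ (those not lying in $\iota(Vertices(P))$) one at a time by means of elementary steps of type two, while simultaneously monitoring whether any diagonal reaches the length of its counterpart in $P$. As remarked just before the statement of the lemma, the absence of one-dimensional components in $P'$ together with condition (v) of the definition of a degenerate polygon comparable to $P$ forces every extra vertex of $P'$ to have strictly concave internal angle. Each elementary step of type two will either remove one such vertex from $P'$ or trigger the applicability of the preceding lemma via the appearance of a diagonal of maximal length.

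First I would pick an extra vertex $y'$ of $P'$ and call $y_1', y_2'$ its two neighbors along $\partial P'$. Because $\alpha_{y'}>\pi$, a short argument (analogous to the one used in Lemma \ref{conv} to flatten reflex angles of generalized polygons) produces a vertex $z'$ of $P'$ such that the four segments $\overline{y_1'y'}, \overline{y'y_2'}, \overline{y_2'z'}, \overline{z'y_1'}$ are smooth and enclose a quadrilateral $P(y_1',y',y_2',z') \subset P'$ whose only strictly concave internal angle sits at $y'$, and such that $\overline{y'z'}$ is a smooth diagonal of this quadrilateral. I then perform an elementary step of type two on this quadrilateral stretching $\overline{y'z'}$, which continuously decreases $\alpha_{y'}$. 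I continue the stretching until one of the following occurs: either $\alpha_{y'}$ reaches $\pi$, in which case $y'$ ends up on a straight segment and the polygon has one fewer vertex; or the length of $\overline{y'z'}$ (or of some other diagonal of $P'$ whose length inevitably varies during the step, since geodesics through the quadrilateral are affected) reaches that of the corresponding diagonal of $P$, in which case the previous lemma supplies the desired $1$-Lipschitz map via the inductive hypothesis.

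Assuming the first alternative occurs, I would verify that the resulting polygon is still a degenerate polygon comparable to $P$ in the sense of conditions (i)--(vi). Conditions (i), (ii) and (iii) are preserved because only an extra vertex has been removed, without altering the combinatorial structure; condition (iv) survives because the elementary step is built from a $1$-Lipschitz map onto the previous polygon, so any pairwise distance between vertices in $\iota(Vertices(P))$ can only have decreased, preserving the inequality $d_{P'}(u',v')\le d_P(u,v)$; condition (v) is untouched since no other extra vertex has changed type; and condition (vi) is preserved since a type-two elementary step does not affect the cyclic order of the vertices along $\partial P'$. Iterating the procedure therefore either removes every extra vertex in finitely many steps, delivering a polygon whose vertices are exactly those of $P$, or else triggers case (b) along the way and lets us conclude through the preceding lemma.

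The main obstacle is the technical verification that a suitable partner vertex $z'$ always exists, so that the quadrilateral $P(y_1',y',y_2',z')$ fulfils the structural requirements of an elementary step of type two: its four edges must be smooth and its only reflex angle must sit at $y'$. In borderline configurations, for instance when $y'$ is flanked by further extra vertices, or when $y'$ sits close to a junction between two planar subpolygons of $P'$, one may first need to apply a preparatory elementary step to reshape the neighborhood of $y'$, in the same spirit as the preparatory steps used in the preceding lemma to eliminate one-dimensional components. Once this technicality is handled, the inductive reduction terminates and the lemma follows.
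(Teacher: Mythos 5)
Your strategy is in the same spirit as the paper's — use type-two elementary steps to flatten the reflex angles at the extra vertices of $P'$ until they become straight and can be dropped — but the paper's actual proof is far shorter: it simply applies Lemma~\ref{conv} to turn $P'$ into a convex polygon $\hat P$, then invokes the observation immediately following Lemma~\ref{conv} (which says that a vertex with internal angle $\ge\pi$ in $P'$ cannot acquire an angle $<\pi$ after the convexifying sequence of type-two steps). Since $\hat P$ is convex, each extra vertex must therefore end up with angle exactly $\pi$ and can be forgotten. You never cite Lemma~\ref{conv} or that observation, and instead try to rebuild a local version of them from scratch; this creates two genuine gaps.

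First, a single type-two step on the quadrilateral $P(y_1',y',y_2',z')$ flattens $\alpha_{y'}$ at the cost of increasing the angles at $y_1'$ and $y_2'$; if either of those is another extra vertex (already reflex), your step can push it past $2\pi$ or at least fail to make progress, and your ``preparatory step'' remark does not explain how to break the cycle. This is exactly what Lemma~\ref{conv} resolves by an induction on the number of vertices (cut along a diagonal, convexify each half, reglue and fix the two junction angles); you would need to reproduce that argument, and at that point you should just be citing the lemma.

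Second, your verification of condition (iv) is backwards. The $1$-Lipschitz map supplied by an elementary step goes from the \emph{new} polygon $\hat P$ to the \emph{old} polygon $P'$, so distances in $\hat P$ are $\ge$ distances in $P'$: elementary steps of type two \emph{stretch} diagonals, they do not shrink them. Condition (iv) is preserved not because distances decrease, but because by definition a type-two step keeps every diagonal $\hat d$ in the range $l(d')\le l(\hat d)\le l(d)$, and the lemma's procedure halts the moment any diagonal hits the upper bound $l(d)$ — at which point the previous lemma (equal-length diagonal) takes over. As written, your justification of (iv) would need to be replaced by this observation.
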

\begin{proof}
One just has to apply lemma \ref{conv} (and its following observation), turning $P'$ into a convex polygon $\hat P$. The polygon $\hat P$ will have flat internal angles at vertices $\hat z$ such that the corresponding vertex $z'$ of $P'$ is not in $\iota(Vertices(\Delta))$. At this point one simply "forgets" about $\hat z$ and removes it from the set of vertices of $\hat P$.\\
Again, if, performing any of the elementary steps of type two of lemma \ref{conv}, one diagonal $d'$ of $P'$ is stretched until $l(\hat d)=l(d)$, then  the procedure is finished as we already explained.
\end{proof}
We will now stretch all sides of $P'$ until they become of the same length of the corresponding sides of $P$.
\begin{lem}
Suppose all diagonals of $P'$ are strictly shorter than the corresponding diagonals of $P$, $P'$ has the same vertices of $P$ and $P'$ does not have one dimensional components. Then, using elementary steps of type one, it is possible to stretch all sides of $P'$ until they become of the same length of the corresponding sides of $P$. If in doing so one diagonal of $P'$ reaches its maximum length (i.e. the length of the corresponding diagonal of $P$) it is possible to apply the inductive hypothesis to obtain the desired 1-Lipschitz map $f:P\rightarrow P'$.
\end{lem}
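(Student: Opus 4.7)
The plan is to apply elementary steps of type one iteratively, processing one short side at a time, until either every side of $P'$ is stretched to the length of the corresponding side of $P$ or some diagonal of $P'$ reaches its maximum length $l(d)$, in which case the previous lemma closes the argument. Given any side $\gamma' = \overline{x'y'}$ of $P'$ with $l(\gamma') < l(\gamma)$, I would pick a reference vertex $z'$ of $P'$ such that $x',y',z'$ bound a triangle inside $P'$ (such a $z'$ exists because $P'$ is a non-degenerate planar polygon with $n+1 \ge 4$ vertices and no one-dimensional components), and then continuously apply the elementary step of type one on $\overline{x'y'}$ with reference vertex $z'$. The stretching halts at the first of three possible events: $(a)$ the side reaches its target length $l(\hat\gamma) = l(\gamma)$; $(b)$ some diagonal $\hat d$ attains $l(\hat d) = l(d)$; or $(c)$ the triangle $P(\hat x,\hat y,\hat z)$ degenerates, with $\hat z$ landing on $\overline{\hat x\hat y}$ as a flat-angle vertex.

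In case $(a)$ the side is correctly stretched, so I move on to the next short side. In case $(b)$, the diagonal of maximum length produced by the step feeds directly into the previous lemma (cutting $P$ and $P'$ along that diagonal and applying the inductive hypothesis to the two smaller comparable pairs), finishing the construction of $f$. In case $(c)$, $\hat z$ has become a flat-angle vertex on the new side $\overline{\hat x\hat y}$ and the stretching target has not yet been met, so I choose a fresh reference vertex $z''$ distinct from $\hat z$ and repeat the elementary step on the same side. At every stage the other sides of $P'$ remain unchanged and the comparability conditions are preserved by the defining properties of the elementary step of type one.

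The main obstacle is proving termination of this procedure, which reduces to a law-of-cosines estimate of the kind already used in the base case $n=3$. When $\overline{x'y'}$ is stretched using reference $z'$, the side length satisfies
$$l(\hat\gamma)^2 = l(\overline{x'z'})^2 + l(\overline{z'y'})^2 - 2\,l(\overline{x'z'})\,l(\overline{z'y'})\cos\alpha,$$
where $\alpha$ is the internal angle at $\hat z$; as $\alpha \to \pi$, $l(\hat\gamma)$ approaches $l(\overline{x'z'}) + l(\overline{z'y'})$, and the same Pythagorean-style bookkeeping as in the three-vertex base case shows that each elementary step contributes a definite positive increment to $l(\gamma')$. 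Since $P'$ has only finitely many vertices eligible as references, case $(c)$ can recur only finitely many times on a given side before case $(a)$ or $(b)$ triggers, and iterating over the $n+1$ sides of $P'$ produces either the fully stretched polygon or a diagonal of maximum length. Either outcome suffices to close the inductive step of theorem \ref{lemshort2}.
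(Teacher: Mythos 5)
Your proposal diverges from the paper's argument and, as written, has a real gap in the termination reasoning. You propose to fully stretch one side at a time, switching reference vertices whenever the auxiliary triangle degenerates, and you justify termination by asserting that "each elementary step contributes a definite positive increment to $l(\gamma')$" and that finitely many reference vertices are available. But after cycling through every admissible reference vertex for a fixed side $\overline{x_1'x_2'}$, what you have achieved is (up to $\epsilon$) the stretching of $\overline{x_1'x_2'}$ to the sum of the lengths of \emph{all other sides of $P'$}, and this quantity can still be strictly smaller than the target $l(\overline{x_1x_2})$, because the other sides of $P'$ are themselves shorter than the corresponding sides of $P$. In that situation neither case $(a)$ nor case $(b)$ has fired, you have exhausted case $(c)$, and the procedure stalls. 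Nothing in your argument forces a diagonal to reach its maximum before this happens, so the dichotomy you rely on is not established. The vague appeal to "Pythagorean-style bookkeeping" from the base case does not fix this, because the base-case estimate controls progress over a full round-robin sweep of all three sides of a triangle, not the progress made on a single side in isolation.

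The paper's proof avoids this by organizing the stretching globally rather than side-by-side. It first invokes Lemma 3.24 to convexify $P'$ via elementary steps of type two, then uses a sequence of elementary steps of type one to flatten every internal angle of $\hat P$ except the ones at $\hat x_1,\hat x_2,\hat x_m$, so that $\hat P$ is driven to a near-degenerate triangle with internal angle $\pi-\epsilon$ at $\hat x_m$. At that stage $l(\overline{\hat x_1\hat x_2})^2$ is controlled in terms of the squares of all the other side lengths up to an error $\psi(\epsilon)$ with $\psi(\epsilon)/\epsilon\to 0$, and the round-robin counting argument of Lemma 3.25 (the $n=3$ base case) is what guarantees that after finitely many such sweeps some side reaches its maximum. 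Your proposal omits the convexification step entirely and never sets up the global quantitative estimate, which is precisely what makes the termination argument work; this is the missing idea.
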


\begin{proof}
First, notice that it is not always possible to stretch a side of $\Delta'$ with just one elementary step of type one until it reaches its maximum length. Indeed, let $\overline{x_1'x_2'}$ be a side of $P'$ such that $l(\overline{x_1'x_2'})<l(\overline{x_1x_2})$ and $P(x_1',x_2',z')$ a triangle as in the definition of elementary step of type one. Notice that the upper limit of the length of the side $\overline{x_1'x_2'}$ obtainable through an elementary step of type one on $P(x_1',x_2',z')$ is $l(\overline{x_1'z'})+l(\overline{x_2'z'})$ (at whose length $P(x_1',x_2',z')$ becomes a segment). \\

In order to overcome this difficulty, we number the vertices of $P'$ in an increasing order starting from from $x_1'$, in such a way that its adjacent vertices are $x_2'$ and $x_m'$. Then we will explain how to turn $P'$ into a triangle with convex angles in $x_1',x_2'$ and internal angle in $x_m'$ equal to $\pi-\epsilon$. In this way it will result that $l(\overline{x_1'x_2'})^2$ is equal to the sum of the squares of the lengths of all other sides of $P'$ minus a term $\psi(\epsilon)$ such that $\lim\limits_{\epsilon\rightarrow 0}\frac{\psi(\epsilon)}{\epsilon}=0$: the conclusion will follow in the same way of the case of the proof of lemma 3.25. Clearly, if doing so one diagonal $d'$ of $P'$ is stretched until $l(d')=l(d)$ then the procedure is finished as explained before. One should notice that coinciding vertices do not constitute a problem, since they just correspond to sides of length zero (and will now be stretched by degenerate elementary steps of type one).\\

We now explain how to turn $P'$ into a triangle with convex angles in $x_1',x_2'$ and internal angle in $x_m'$ equal to $\pi-\epsilon$: first of all we apply lemma \ref{conv} and turn $P'$ into a convex polygon $\hat P$.\\ 
Denote by $\alpha_{\hat x_i}$ the internal angle of $\hat P$ in $\hat x_i$. If $\alpha_{\hat x_2}<\pi$ and $\alpha_{\hat x_j}=\pi$ for $j=3,\dots,l-1$, we perform an elementary step of type one on $P(\hat x_1,\hat x_2,\hat x_l)$ until $\alpha_{\hat x_j}=\pi$.\\
If $\alpha_{\hat x_i}=\pi$ for $i=2,\dots, k-1$, we perform an elementary step of type one on $P(\hat x_1,\hat x_2,\hat x_k)$ until $\alpha_{\hat x_k}=\pi$ and then perform an elementary step of type one on $P(\hat x_1,\hat x_2,\hat x_{k-1})$ until $\alpha_{\hat x_{k-1}}=\pi$. \\
Proceeding in this way one can flatten all angles $\alpha_{\hat x_i}$, $i=3,\dots,m-1$, until $\hat P$ becomes a triangle with convex angles only in $\hat x_1,\hat x_2,\hat x_m$. Finally, one performs an elementary step of type one until $\alpha_{\hat x_m}=\pi-\epsilon$.
\end{proof}

The following lemma concludes the proof of theorem \ref{lemshort2}.
\begin{lem}
Suppose all diagonals of $P'$ are strictly shorter than the corresponding diagonals of $P$, $P'$ does not have one dimensional components, $P'$ has the same vertices of $P$ and all sides of $P'$ have the same length of the corresponding sides of $P$. Then it is possible to obtain the desired 1-Lipschitz map $f:P\rightarrow P'$.
\end{lem}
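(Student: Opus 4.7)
The plan is to turn $P'$ into $P$ via a sequence of elementary steps of type two applied to smooth diagonals of $P'$. Since corresponding sides of $P$ and $P'$ already coincide in length, only type two steps are needed: these preserve all side lengths while strictly stretching a single diagonal, so every such step produces a 1-Lipschitz map in the correct direction.

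The first task is to show that, provided $P'\neq P$, one can always find a smooth diagonal $\overline{v'w'}$ of $P'$ on which an elementary step of type two is admissible, i.e., for which one can pick neighbouring vertices $u',z'$ of $v',w'$ such that the quadrilateral $P(v',w',u',z')$ has all four sides smooth and only one strictly concave internal angle, located at $v'$ or $w'$. The existence of such a configuration should follow from the hypotheses: $P'$ has no one-dimensional components and its vertex set equals $\iota(Vertices(P))$, so no extraneous concave angles interfere; combined with the fact that every simple polygon has at least one convex vertex (an ``ear''), this yields the required quadrilateral around some shorter diagonal.

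Performing the elementary step produces an intermediate polygon $\hat P$ together with a 1-Lipschitz map $\phi\colon \hat P\to P'$: the diagonal $\overline{\hat v\hat w}$ satisfies $l(\overline{v'w'})<l(\overline{\hat v\hat w})\leq l(\overline{vw})$, while the remaining diagonals $\hat d$ satisfy $l(d)\geq l(\hat d)\geq l(d')$ and all sides are unchanged. If at any point during the stretching some diagonal of $\hat P$ reaches the length of the corresponding diagonal of $P$, I would invoke the argument of the preceding lemma on matched diagonals (Lemma 3.24): cut $P$ and $\hat P$ along the matched diagonal, apply the inductive hypothesis on Theorem 3.22 to each of the two smaller degenerate polygons, and glue the two resulting 1-Lipschitz maps into a single 1-Lipschitz map $P\to\hat P$. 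Composing with $\phi$ yields the desired $f\colon P\to P'$. Otherwise, iterate the procedure on $\hat P$.

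The main obstacle is termination: one must guarantee that this procedure cannot run indefinitely without either matching a diagonal or reaching $P$ itself. To handle this I would introduce a monotone quantity, for instance $D(P'):=\sum_d l(d')^2$ summed over all diagonals of the current polygon; each elementary step of type two strictly increases $D$, while $D$ is bounded above by $D(P)$. Combined with the continuity and finite-dimensionality of the configuration space of polygons with prescribed side lengths and vertex ordering, this should force either termination after finitely many steps (via a matched diagonal, handled by induction) or convergence to $P$. In fact, by the rigidity of polygons with fixed side lengths one may suspect that the situation ``$P\neq P'$ with all sides equal and every diagonal strictly shorter'' is itself vacuous, in which case $P=P'$ and the map $f$ is simply the identity; the argument above should cover both eventualities uniformly.
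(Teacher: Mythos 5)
Your framework — iterate elementary steps of type two until some diagonal of $P'$ matches the corresponding diagonal of $P$, then cut along it and invoke the earlier matched‑diagonal lemma (Lemma 3.26) together with the inductive hypothesis — is the same as the paper's, and the reduction to finding an admissible step via Lemma~\ref{conv} is also present in the paper. However, the termination step has a genuine gap, and that gap is precisely where the paper's real work is done.

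A strictly increasing quantity $D(P')=\sum_d l(d')^2$ that is bounded above does \emph{not} imply the process halts after finitely many steps: the increments can decay to zero with no diagonal ever reaching its maximum length, and the appeal to ``continuity and finite‑dimensionality'' or to a conjectural vacuity of the hypothesis does not close this. The paper avoids any limiting argument entirely. It applies Lemma~\ref{conv} to convexify $P'$ to a convex polygon $\hat P$ in \emph{finitely many} type‑two steps, and then argues by contradiction: if throughout this convexification every diagonal of $\hat P$ remained strictly shorter than the corresponding diagonal of $P$, then at each convex vertex $x$ of $P$ the internal angle $\alpha_x$ would be strictly larger than $\alpha_{\hat x}$ (proved by a local deformation argument inside the fan $Q$ at $x$, shortening diagonals $\overline{xx_i}$ one at a time via triangle and quadrilateral comparisons). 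Combining this with the observation following Lemma~\ref{conv} for the vertices of $P$ with concave angle, one obtains $\alpha_x>\alpha_{\hat x}$ at \emph{every} vertex, which contradicts the equality of the two total angle sums (both polygons have the same number of vertices). Hence some diagonal $\hat d$ of $\hat P$ satisfies $l(\hat d)\ge l(d)$, and by the intermediate value property of a single elementary step, some diagonal exactly matches during the convexification, at which point Lemma 3.26 applies. This angle‑sum contradiction is the missing idea in your proposal; without it, the iteration is not known to halt, nor is the situation known to be vacuous.
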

\begin{proof}
We will prove that it will always be possible to obtain a diagonal $d'$ of $P'$ of maximum length, applying a finite number of elementary steps of type two on $P'$: then the conclusion will follow as in the previous lemmas.\\
Once again, we turn $P'$ into a convex polygon $\hat P$ using lemma \ref{conv}. In case $\hat P\neq P$, there must be a vertex $\hat x$ of $\hat P$ such that $\alpha_{\hat x}>\alpha_x$. If we can prove that this implies the existence of a diagonal $\hat d$ of $\hat P$ such that $l(\hat d)\ge l(d)$ then the proof is finished, since this means that at some point during the sequence of elementary steps of type two which turns $P'$ into $\hat P$ one gets $l(\hat d)=l(d)$.\\
We prove the equivalent statement that if all diagonals of $\hat P$ are strictly shorter than the corresponding diagonals of $P$, then all convex angles of $P$ must be greater than the corresponding angles of $\hat P$.\\
Denote by $y$ and $z$ the vertices of $P$ next to $x$: suppose $\overline{yz}$ is the concatenation of the smooth segments $\overline{yx_1}*\overline{x_1x_2}*\cdots *\overline{x_kz}$ for $k\ge 0$.\\
Denote by $Q$ the polygon delimited by $\overline{xy},\overline{xz}$ and $\overline{yz}$: all internal angles of $Q$ are concave except for the ones in $x,y,z$ and $\overline{xx_i}$, $i=1,\dots,k$ are smooth diagonals contained in $Q$. We claim that decreasing the length of all diagonals $\overline{xx_i}$ without increasing the length of the sides of $Q$ and without changing the lengths of $\overline{xy}$ and $\overline{xz}$, the angle $\alpha_x$ will decrease: this can be proved modifying the lengths of sides of $Q$ one at a time. \\
Indeed, if only $d_Q(x_i,x_{i+1})$ decreases, then $\alpha_x$ must decrease: this can be easily seen shortening the side $\overline{x_ix_{i+1}}$ of the triangle $P(x,x_i,x_{i+1})$ of vertices $x,x_i,x_{i+1}$ without changing the lengths of the other two sides of $P(x,x_i,x_{i+1})$.  In the same way, if only $d_Q(x,x_{i})$ decreases, then $\alpha_x$ must decrease: this can be easily seen shortening the diagonal $\overline{xx_i}$ of the quadrilateral $P(x,x_{i-1},x_i,x_{i+1})$ of vertices $x,x_{i-1},x_i,x_{i+1}$ without changing the lengths of the sides of $P(x,x_{i-1},x_i,x_{i+1})$.
\end{proof}

As we said, this ends the proof of theorem \ref{lemshort2} and consequently also theorem 3.21 is proved.\\

We are now left with the case common vertices of $\Delta$ and of $\Delta'$ are not disposed in the same order.\\
From now on, we will denote the vertices of $\Delta$ with concave internal angle in the following way, which will be useful in the succeeding reasonings.
\begin{itemize}
\item Denote by $w_j$ the vertices on $\overline{x_1x_2}$, ordered in increasing order from $x_1$ to $x_2$.
\item Denote by $u_k$ the vertices on $\overline{x_2x_3}$, ordered in increasing order from $x_3$ to $x_2$.
\item Denote by $v_l$ the vertices on $\overline{x_1x_3}$, ordered in increasing order from $x_3$ to $x_1$.
\end{itemize}

As before, we denote by  $w_j',u_k',v_l'$ the corresponding vertices of $\Delta'$.\\
We say a vertex $w_j$ has \textit{changed side} on $\Delta'$ if $w_j'\not\in \overline{x_1'x_2'}$.\\
Two vertices $w_m',w_n' \in \overline{x_1'x_2'}$ have \textit{changed their order} if $m<n$ and it results $d_{\Delta'}(w_n',x_1')<d_{\Delta'}(w_m',x_1')$.\\
Changes of side and order of vertices $u_k$ and $v_l$ are defined in the same way.\\
Common vertices of $\Delta$ and of $\Delta'$ are not disposed in the same order if there is at least one change of side or one change of order.\\
As we anticipated, we are not able to prove a statement similar to the one of theorem 3.21 in case common vertices of $\Delta$ and $\Delta'$ are not disposed in the same order. So we can only state the following conjecture.

\begin{conj}
Suppose the number of vertices of $\Delta'$ can be greater than the number of vertices of $\Delta$, $\Delta'$ can have one dimensional components and the common vertices of $\Delta$ and $\Delta'$ are not disposed in the same order. \\
Then for every $p\in \Delta$ there is a corresponding point $p'\in \Delta'$ such that $$d_{\Delta'}(p',x_i')\le d_\Delta(p,x_i),\quad i=1,2,3.$$
\end{conj}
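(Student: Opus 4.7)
My plan is to prove Conjecture 3.31 by induction on a combinatorial measure of disorder between $Vertices(\Delta)$ and $\iota(Vertices(\Delta))$. Define $\delta(\Delta')$ to be the number of vertices of $\Delta$ whose image under $\iota$ changes side (in the sense defined just before the conjecture statement), plus the number of inversion pairs among vertices that lie on matching sides of $\Delta$ and $\Delta'$. When $\delta(\Delta')=0$ the common vertices are disposed in the same order and Theorem 3.21 yields a 1-Lipschitz map $f:\Delta\to\Delta'$, so $p':=f(p)$ immediately satisfies the three required inequalities.

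For the inductive step I would isolate an outermost disordered vertex $w$ and construct a modified polygon $\widetilde{\Delta}$ with $\delta(\widetilde{\Delta})<\delta(\Delta')$ satisfying three conditions: (i) it still has exactly the three strictly convex vertices $x_1',x_2',x_3'$ at identical pairwise intrinsic distances; (ii) it remains comparable with $\Delta$ in the sense that $d_\Delta(u,v)\ge d_{\widetilde{\Delta}}(u',v')$ for every pair of vertices $u,v\in Vertices(\Delta)$; and (iii) there exists a distance-non-increasing map $\widetilde{\Delta}\to\Delta'$ fixing $x_1',x_2',x_3'$. The natural candidate for this construction is a local cut-and-paste of $\Delta'$ in the spirit of the elementary steps used in the proof of Theorem 3.21: cut $\Delta'$ along a well-chosen smooth diagonal $\overline{w'z'}$ from the misplaced vertex to a suitable zero $z'$, then slide or reflect the resulting sub-polygon so that $w'$ migrates to its correct relative position on $\partial\Delta'$. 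Applying the inductive hypothesis (or Theorem 3.21 directly when $\delta(\widetilde{\Delta})=0$) yields $\widetilde{p}'\in\widetilde{\Delta}$ at distances at most $d_\Delta(p,x_i)$ from each $x_i'$, and pulling back by the map of (iii) gives $p'\in\Delta'$ as desired.

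The hard part — and precisely why this remains a conjecture in the paper — is executing the inductive step rigorously. The local modification at $w'$ is not obviously well-behaved: repairing one disorder may introduce others, so $\delta$ need not decrease strictly, and even when it does, the pullback map of (iii) can fail because sliding $w'$ stretches distances elsewhere on $\Delta'$. The resolution likely requires a careful analysis of the planar combinatorics of $\Delta'$ (which extra vertices are concave, where one-dimensional components attach) together with a clever choice of which disorder to repair first, perhaps one that is lexicographically extremal relative to a parametrization of $\partial\Delta'$. As a parallel strategy I would pursue a Helly approach in the $CAT(0)$ universal cover $\widetilde{X}_2$: since $\Delta'$ and each closed ball $B^2_{r_i}(x_i')$ with $r_i:=d_\Delta(p,x_i)$ is convex there, Lemma 3.9 reduces the conjecture to verifying that every triple among these four convex sets meets. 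The three triples involving $\Delta'$ are elementary, containing arcs of the geodesics $\overline{x_i'x_j'}$, so what remains is $\bigcap_{i=1}^3 B^2_{r_i}(x_i')\neq\emptyset$ in $\widetilde{X}_2$. This is a Kirszbraun-type statement which cannot be invoked directly from Alexander–Kapovitch–Petrunin (since $\widetilde{X}_1$ is $CAT(0)$ rather than $CBB(0)$), but it is strictly weaker than the full conjecture and might admit an independent proof via the developing map applied to the configuration $(p,x_1,x_2,x_3)\subset\Delta$.
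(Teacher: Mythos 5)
The paper does not prove Conjecture~3.31 --- it is explicitly left open, with only partial results offered in its direction --- so there is no proof to measure your proposal against; the question is whether your sketch adds anything or contains errors, and on both counts there are problems.

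Your first prong (induction on a disorder count $\delta$, repairing an outermost misplaced vertex by cut-and-paste) is broadly aligned in spirit with what the paper actually attempts. The paper's strategy is to formulate Conjecture~3.34, asserting that $\Delta'$ always contains a subpolygon $\widehat\Delta$ comparable to $\Delta$ with the vertices in the right order, after which Theorem~3.22 applies; Propositions~3.35 and~3.36 verify this in the two simplest disorder patterns (one side-change, one adjacent order-swap), and Propositions~3.32 and~3.33 handle some configurations directly without the subpolygon. Your inductive framing is a reasonable alternative packaging of the same idea, but you correctly identify the fatal gap --- there is no monovariant that is guaranteed to decrease, and the local modification at $w'$ can violate the distance inequalities elsewhere --- and you do not resolve it, so the prong produces nothing beyond what the paper already records.

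Your second prong is circular. You reduce the conjecture to $\bigcap_{i=1}^3 B^2_{r_i}(x_i')\neq\emptyset$ in $\widetilde X_2$ with $r_i=d_\Delta(p,x_i)$ and call this \emph{strictly weaker}, but it is not: $\Delta'$ is convex in the $CAT(0)$ space $\widetilde X_2$, so the nearest-point projection $pr:\widetilde X_2\rightarrow\Delta'$ is $1$-Lipschitz and fixes each $x_i'$, and applying $pr$ to any point of the triple intersection yields a point $p'\in\Delta'$ with $d_{\Delta'}(p',x_i')=d_{\widetilde\sigma_2}(p',x_i')\le r_i$. Conversely the conjecture trivially gives a point of the triple intersection. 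The two statements are equivalent, and the triple-ball statement is precisely the conclusion of Theorem~3.11, which the paper proves \emph{only assuming} Conjecture~3.31. You have therefore not reduced the conjecture to something simpler, only restated it; invoking Lemma~3.9 on $\{\Delta',B^2_{r_1},B^2_{r_2},B^2_{r_3}\}$ and discharging the triples that contain $\Delta'$ is exactly the bookkeeping the paper already does en route to Theorem~3.11, and what remains is the conjecture itself. The suggestion to attack it via the developing map is also blocked for the reason the paper gives on page~one of Section~3: the natural Kirszbraun-type source result of Alexander--Kapovitch--Petrunin requires the domain to be $CBB(k)$, whereas $(\widetilde X_1,d_{\widetilde q_1})$ is $CAT(0)$ with cone singularities of angle $\ge 3\pi$, which is not $CBB(0)$, and you acknowledge this without offering a substitute.
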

Clearly, it is not possible to adapt the proof of theorem \ref{lemshort2} to prove conjecture 5.31, since the method consisting of elementary steps would only work if common vertices of $\Delta$ and $\Delta'$ have the same order.\\
Nonetheless, we are quite confident conjecture 5.31 must be true: this is because changes of side or order of vertices force the polygon $\Delta'$ to become smaller.\\
Indeed, if two vertices $w_m,w_n$ of $\overline{x_1x_2}$ change order in $\Delta'$, then it must result $d_{\Delta'}(x_1',x_2')\le d_\Delta(x_1,x_2)-d_\Delta(w_m,w_n)$. Since each change of order of the vertices contributes to the shortening of $\overline{x_1'x_2'}$, as the number of changes of order of vertices of $\overline{x_1x_2}$ increases, the shortening of $\overline{x_1'x_2'}$ also increases.  \\
In a similar way, if a vertex of $\Delta$ changes side and for example it is $u_{k_0}'\in \overline{x_1'x_3'}$, then, since the distances $d_{\Delta'}(u_k',u_{k_0}')$ can not be greater than the corresponding distances $d_{\Delta}(u_k,u_{k_0})$, all other vertices $u_k'$ are forced to "follow" $u_{k_0}'$ and become closer to vertices of $\overline{x_1'x_3'}$. This fact will force some distances inside $\Delta'$ to become smaller than the corresponding distances in $\Delta$. \\
In light of these observations, one could even consider the case common vertices of $\Delta$ and $\Delta'$ are disposed in the same order as the worst one to prove the existence of $p'$, since no distance inside $\Delta'$ is forced to decrease.\\

The following two propositions should support our intuition. Indeed, they show some cases where the change of side of one or more vertices of $\Delta'$ directly implies the existence of $p'$.

\begin{prop}
Suppose there is at least one vertex of $\Delta'$ which changes side, for example $u'\in \overline{x_1'x_3'}$. Then for every $p\in \Delta$ such that $d_\Delta(p,x_3)\le d_{\Delta'}(u',x_3')$ there is a point $p'\in \Delta'$ such that $$d_{\Delta'}(p',x_i')\le d_\Delta(p,x_i),\quad i=1,2,3.$$
\end{prop}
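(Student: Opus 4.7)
The key observation is that the change of side $u'\in\overline{x_1'x_3'}$ collapses a useful portion of $\Delta'$ to a one-dimensional segment, and the hypothesis $d_\Delta(p,x_3)\le d_{\Delta'}(u',x_3')$ is calibrated precisely so that one can place $p'$ inside this segment. The plan is therefore to define $p'$ as the unique point of $\overline{u'x_3'}\subset\overline{x_1'x_3'}$ satisfying $d_{\Delta'}(p',x_3')=d_\Delta(p,x_3)$, and to verify the three required inequalities by triangle-chasing using only the standing vertex-distance hypothesis and the fact that $u$ lies on $\overline{x_2x_3}$.

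For $i=3$ the inequality is an equality by construction. For $i=1$, collinearity of $p',x_1',x_3'$ along $\overline{x_1'x_3'}$ yields $d_{\Delta'}(p',x_1')=d_{\Delta'}(x_1',x_3')-d_\Delta(p,x_3)$; combining $d_{\Delta'}(x_1',x_3')\le d_\Delta(x_1,x_3)$ with the triangle inequality $d_\Delta(x_1,x_3)\le d_\Delta(p,x_1)+d_\Delta(p,x_3)$ in $\Delta$ concludes this case.

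The case $i=2$ is where the geometry of $\Delta$ really enters. I would estimate $d_{\Delta'}(p',x_2')\le d_{\Delta'}(p',u')+d_{\Delta'}(u',x_2')$, then substitute $d_{\Delta'}(p',u')=d_{\Delta'}(u',x_3')-d_\Delta(p,x_3)\le d_\Delta(u,x_3)-d_\Delta(p,x_3)$ together with $d_{\Delta'}(u',x_2')\le d_\Delta(u,x_2)$, and exploit that $u$ is a concave vertex on the side $\overline{x_2x_3}$ of $\Delta$, so that $d_\Delta(u,x_2)+d_\Delta(u,x_3)=d_\Delta(x_2,x_3)$. Adding the estimates one obtains $d_{\Delta'}(p',x_2')\le d_\Delta(x_2,x_3)-d_\Delta(p,x_3)\le d_\Delta(p,x_2)$, the last step being the triangle inequality in $\Delta$ applied to the triple $p,x_2,x_3$.

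No step presents a genuine obstacle: the whole statement reduces to arithmetic along the segment $\overline{x_1'x_3'}$, made available by the change of side. In particular, this argument avoids conjecture 3.31, consistent with the paper's intuition that a change of side already supplies enough rigidity on its own to force $\Delta'$ to be small in the relevant direction.
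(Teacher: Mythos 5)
Your proof is correct and follows exactly the same route as the paper's: you pick the same point $p'$ on $\overline{u'x_3'}$ at distance $d_\Delta(p,x_3)$ from $x_3'$, and the three inequalities are obtained by the same two chains the paper writes in compressed form. The only difference is that you have unpacked the intermediate substitutions that the paper elides.
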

\begin{proof}
Choose $p'\in \overline{u'x_3'}$ at distance $d_\Delta(p,x_3)$ from $x_3'$. Then it results
$$d_{\Delta'}(p',x_1')=d_{\Delta'}(x_1',x_3')-d_{\Delta'}(p',x_3')\le d_{\Delta}(x_1,x_3)-d_\Delta(p,x_3)\le d_\Delta(p,x_1),$$
$$d_{\Delta'}(p',x_2')\le d_{\Delta'}(p',u')+d_{\Delta'}(u',x_2')\le d_\Delta(x_2,x_3)-d_\Delta(p,x_3)\le d_\Delta(p,x_2).$$
\end{proof}

\begin{prop}
Suppose one of the following three conditions is satisfied:
\begin{enumerate}[label=(\roman*)]
\item there are vertices $u',v'\in \overline{x_1'x_2'}$ such that $d_{\Delta'}(x_1',v')>d_{\Delta'}(x_1',u')$,
\item there are vertices $u',w'\in \overline{x_1'x_3'}$ such that $d_{\Delta'}(x_1',w')>d_{\Delta'}(x_1',u')$,
\item there are vertices $v',w'\in \overline{x_2'x_3'}$ such that $d_{\Delta'}(x_2',w')>d_{\Delta'}(x_2',v')$.
\end{enumerate}
Then for every $p\in \Delta$ there is a corresponding point $p'\in \Delta'$ such that $$d_{\Delta'}(p',x_i')\le d_\Delta(p,x_i),\quad i=1,2,3.$$
\end{prop}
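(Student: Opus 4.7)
The three conditions (i), (ii), (iii) are equivalent under relabeling the corners of the triangle (they correspond to the three possible ways for two "opposite-side" vertices to accumulate on a single side of $\Delta'$ in a crossed order). So I only need to address case (i). Fix $p\in\Delta$. Because both $u'$ and $v'$ lie on the single straight segment $\overline{x_1'x_2'}\subset\Delta'$, the natural strategy is to look for $p'$ on this same segment. Parameterize it by $t:=d_{\Delta'}(p'(t),x_1')\in[0,L]$, where $L:=d_{\Delta'}(x_1',x_2')$, so that $d_{\Delta'}(p'(t),x_2')=L-t$, and set $t_u:=d_{\Delta'}(u',x_1')$ and $t_v:=d_{\Delta'}(v',x_1')$ with $t_u<t_v$ by hypothesis.

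Step 1 is to identify the range of $t$ ensuring the first two required inequalities. The bounds $d_{\Delta'}(p'(t),x_i')\le d_\Delta(p,x_i)$ for $i=1,2$ translate into
\[
\max\bigl(0,L-d_\Delta(p,x_2)\bigr)\le t\le \min\bigl(L,d_\Delta(p,x_1)\bigr),
\]
and this interval is nonempty because $L\le d_\Delta(x_1,x_2)\le d_\Delta(p,x_1)+d_\Delta(p,x_2)$ by the vertex hypothesis and the triangle inequality in $\Delta$. So the first two distance conditions are easily handled by any choice of $t$ in this interval.

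Step 2 is the crux: showing that some $t$ in this interval also satisfies $d_{\Delta'}(p'(t),x_3')\le d_\Delta(p,x_3)$. Here I would use the fact that $t\mapsto d_{\Delta'}(p'(t),x_3')$ is $1$-Lipschitz, together with the two upper bounds coming from $u'$ and $v'$:
\[
d_{\Delta'}(p'(t),x_3')\le d_\Delta(u,x_3)+|t-t_u|,\qquad d_{\Delta'}(p'(t),x_3')\le d_\Delta(v,x_3)+|t-t_v|.
\]
The original positions $u\in\overline{x_2x_3}$ and $v\in\overline{x_1x_3}$ let me rewrite $d_\Delta(u,x_3)=d_\Delta(x_2,x_3)-d_\Delta(x_2,u)$ and $d_\Delta(v,x_3)=d_\Delta(x_1,x_3)-d_\Delta(x_1,v)$, which are the key algebraic identities linking the above bounds to distances involving $x_1,x_2$. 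The crossed order $t_u<t_v$ further provides the inequality $t_v-t_u=d_{\Delta'}(u',v')\le d_\Delta(u,v)$, which bounds the geometric gap between the two anchor points on the segment and makes the bounds from $u'$ and $v'$ compatible. By choosing $t$ appropriately inside the valid interval of Step 1 — for instance, $t=d_\Delta(p,x_1)$ whenever it falls inside, and one of the endpoints $t_u$ or $t_v$ otherwise — one obtains the third inequality from the triangle inequality in $\Delta$ combined with the cancellations produced by the collinearity of $x_1',u',v',x_2'$.

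The main obstacle is precisely Step 2: unlike in Proposition 3.32, where a single vertex change of side allows a one-line computation, here one must balance two competing Lipschitz bounds simultaneously with the two side constraints, and the analysis naturally splits into sub-cases depending on how $d_\Delta(p,x_1)$ compares with $t_u$ and $t_v$. The auxiliary bounds through $u'$ (respectively $v'$) alone are not sharp enough, because the triangle inequality $d_\Delta(p,u)+d_\Delta(u,x_3)\ge d_\Delta(p,x_3)$ goes the wrong way; it is only the combined use of both anchor points, together with the crossed ordering that forces $L$ to be short, that yields the required estimate. Cases (ii) and (iii) follow by the same scheme after permuting the roles of $x_1,x_2,x_3$ and the families $u_k,v_l,w_j$.
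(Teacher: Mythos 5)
Your Step~1 is fine, but the overall strategy has a structural flaw that cannot be repaired within the framework you set up: you constrain the candidate $p'$ to lie on the segment $\overline{x_1'x_2'}$, and for many $p\in\Delta$ no such $p'$ can exist. Concretely, take $p$ close to $x_3$. Then $d_\Delta(p,x_3)$ is small, so the required inequality $d_{\Delta'}(p',x_3')\le d_\Delta(p,x_3)$ forces $p'$ to be close to $x_3'$. But $x_3'$ is not on $\overline{x_1'x_2'}$ (we are in the case where $\Delta'$ is genuinely two-dimensional), so $d_{\Delta'}(\,\cdot\,,x_3')$ is bounded below by a positive constant on $\overline{x_1'x_2'}$, and no $t$ can work. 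The nonemptiness of your interval in Step~1 guarantees only the two ``easy'' constraints; it says nothing about the third, and in the regime $p\approx x_3$ the third is unsatisfiable on $\overline{x_1'x_2'}$. This is not a sub-case you can push through with sharper triangle inequalities; you must allow $p'$ off that side.

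This is exactly what the paper's proof does. It splits into three cases depending on how $d_\Delta(p,x_2)$ and $d_\Delta(p,x_3)$ compare with $d_{\Delta'}(u',x_2')$ and $d_{\Delta'}(u',x_3')$. In the case $d_\Delta(p,x_3)\le d_{\Delta'}(u',x_3')$ (which covers $p$ near $x_3$), the candidate $p'$ is placed on the \emph{diagonal} $\overline{x_3'u'}$, not on the side $\overline{x_1'x_2'}$, and the bound on $d_{\Delta'}(p',x_1')$ is then extracted via the inequality $d_{\Delta'}(p',x_3')+d_{\Delta'}(p',x_1')\le d_{\Delta'}(v',x_3')+d_{\Delta'}(v',x_1')\le d_\Delta(x_1,x_3)$, which uses $v\in\overline{x_1x_3}$ and the convexity of $q\mapsto d(q,x_1')+d(q,x_3')$ along $\overline{x_3'u'}$. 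Your Step~2 also stops short of a proof: you list two $1$-Lipschitz upper bounds and assert that a suitable $t$ exists ``by the triangle inequality combined with the cancellations,'' but you never verify it, and you yourself flag it as the unresolved obstacle. To make progress you would need to abandon the single-segment ansatz and adopt a case split in which $p'$ can lie on a diagonal such as $\overline{x_3'u'}$, which is the crux the paper relies on.
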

\begin{proof}
We will prove the proposition only for case $(i)$, since the proof is identical for the other two cases.\\ 
One can find $p'$ as follows.
\begin{enumerate}
\item If $d_\Delta(p,x_2)\le d_{\Delta'}(u',x_2')$ let $p'$ be the point on $\overline{u'x_2'}$ at distance $d_\Delta(p,x_2)$ from $x_2$. It then results
$$d_{\Delta'}(p',x_1')=d_{\Delta'}(x_1',x_2')-d_{\Delta'}(p',x_2')\le d_\Delta(x_1,x_2)-d_\Delta(p,x_2)\le d_\Delta(p,x_1),$$
$$d_{\Delta'}(p',x_3')\le d_{\Delta'}(x_3',u')+d_{\Delta'}(u',x_2')-d_{\Delta'}(p',x_2')\le d_\Delta(x_2,x_3)-d_\Delta(p,x_2)\le d_\Delta(p,x_3).$$
\item If $d_\Delta(p,x_3)\le d_{\Delta'}(u',x_3')$ let $p'$ be the point on $\overline{x_3'u'}$ at distance $d_\Delta(p,x_3)$ from $x_3'$. It results
$$d_{\Delta'}(p',x_2')\le d_{\Delta'}(x_3',u')+d_{\Delta'}(u',x_2')-d_{\Delta'}(p',x_3')\le d_\Delta(x_2,x_3)-d_\Delta(p,x_3)\le d_\Delta(p,x_2),$$
$$d_{\Delta'}(p',x_3')+d_{\Delta'}(p',x_1')\le d_{\Delta'}(v',x_3')+d_{\Delta'}(v',x_1')\le d_\Delta(x_1,x_3),$$
$$d_{\Delta'}(p',x_1')\le d_\Delta(x_1,x_3)-d_{\Delta'}(p',x_3')=d_\Delta(x_1,x_3)-d_\Delta(p,x_3)\le d_\Delta(p,x_1)$$
\item If $d_\Delta(p,x_2)> d_{\Delta'}(u',x_2')$ and $d_\Delta(p,x_3)> d_{\Delta'}(u',x_3')$ then there is always a point $p'\in \overline{x_1'u'}$ such that one of the following two conditions is satisfied:
\begin{itemize}
\item $d_{\Delta'}(p',x_2')=d_\Delta(p,x_2)$ and $d_{\Delta'}(p',x_3')\le d_\Delta(p,x_3)$, then one can proceed as in previous case (1),
\item $d_{\Delta'}(p',x_3')=d_\Delta(p,x_3)$ and $d_{\Delta'}(p',x_2')\le d_\Delta(p,x_2)$, then one can proceed as in previous case (2).
\end{itemize}
\end{enumerate}
\end{proof}

One could try to prove conjecture 5.31 using the following approach. \\
Consider a subpolygon $\widehat \Delta\subset \Delta'$ such that to every vertex $x_i,w_j,u_k, v_l$ of $\Delta$ there is a unique corresponding vertex $\hat x_i,\hat w_j,\hat u_k,\hat v_l$ of $\widehat \Delta$.\\ 
We say that $\widehat \Delta$ is a \textit{subpolygon of $\Delta'$ comparable to $\Delta$} if $\hat x_i=x_i'$, $i=1,2,3$ and $\Delta,\widehat \Delta$ satisfy the hypothesis of theorem 5.3.4. In particular, this condition implies that :
\begin{itemize}
\item common vertices of $\widehat \Delta$ and of $\Delta$ are disposed in the same order,
\item the distance between any two vertices of $\Delta$ is greater than or equal to the distance between the corresponding two points of $\widehat \Delta$.
\end{itemize}

If such polygon $\widehat \Delta$ exists, then preceding theorem \ref{lemshort2} will grant the existence of a 1-Lipschitz map $\phi:\Delta\rightarrow \widehat \Delta$ which sends vertices of $\Delta$ to corresponding vertices of $\widehat \Delta$. Since for every couple of points $x',y'\in \widehat \Delta$ it results $d_{\widehat \Delta}(x',y')\ge d_{\Delta'}(x',y')$, one will conclude that $\phi$ is also a 1-Lipschitz map from $\Delta$ to $\Delta'$ such that $\phi(x_i)=x_i'$, $i=1,2,3$.\\
Notice that there is no need to require the polygon $\widehat \Delta$ to have exactly three strictly convex internal angles, since it is not required in the hypothesis of theorem \ref{lemshort2}.\\
Unfortunately we were not able to develop a method which always produces such polygon $\widehat \Delta$ for every $\Delta,\Delta'$. Indeed, we can only make the following conjecture.

\begin{conj}
Suppose the number of vertices of $\Delta'$ can be greater than the number of vertices of $\Delta$, $\Delta'$ can have one dimensional components and the common vertices of $\Delta$ and $\Delta'$ are not disposed in the same order. \\
Then there always is a subpolygon $\widehat \Delta$ of $\Delta'$ comparable to $\Delta$.
\end{conj}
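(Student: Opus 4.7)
The plan is to construct $\widehat \Delta$ explicitly inside $\Delta'$ using the filled geodesic triangle structure guaranteed by Proposition 3.20, and then to reduce the verification of the comparability conditions to a Euclidean distance problem on planar polygons. First, I would set $\hat x_i := x_i'$ for $i=1,2,3$ and consider the filled geodesic triangle $\widehat T \subset \Delta'$ spanned by $\hat x_1, \hat x_2, \hat x_3$ in the intrinsic Euclidean metric of $\Delta'$. By Proposition 3.20 applied with the roles of $\Delta$ and $\Delta'$ swapped, $\widehat T$ is convex in $\Delta'$ and isometric to a planar polygon whose three strictly convex corners are $\hat x_1, \hat x_2, \hat x_3$; any other vertex of this planar polygon comes from a concave boundary vertex of $\Delta'$ that happens to lie on one of the geodesic sides $\overline{\hat x_i \hat x_j}$. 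In particular, the three side lengths $A, B, C$ of $\widehat T$ satisfy $A \leq a$, $B \leq b$, $C \leq c$, where $a, b, c$ denote the corresponding side lengths of $\Delta$.

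Next, for each concave vertex $w_j$ of $\Delta$ on $\overline{x_1 x_2}$, numbered in increasing order from $x_1$, I would place $\hat w_j$ on the geodesic $\overline{\hat x_1 \hat x_2} \subset \widehat T$ at arclength $\min(d_\Delta(x_1, w_j), A)$ from $\hat x_1$, and symmetrically for each $\hat u_k$ on $\overline{\hat x_2 \hat x_3}$ and each $\hat v_l$ on $\overline{\hat x_1 \hat x_3}$. If several parameters collide, the resulting points are treated as multiple vertices in the sense of condition (iii) of the definition of degenerate polygons comparable to $\Delta$. Let $\widehat \Delta$ be $\widehat T$ equipped with this marked vertex structure: by construction the common vertices of $\widehat \Delta$ and $\Delta$ are disposed in the same order, so it only remains to verify the distance inequalities. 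The \emph{same-side} inequalities $d_{\widehat \Delta}(\hat y_1, \hat y_2) \leq d_\Delta(y_1, y_2)$ follow directly from the placement rule and the bounds $A \leq a$, $B \leq b$, $C \leq c$ by a short arclength computation, entirely analogous to the one already carried out in the paper for the easy subcases of Proposition 3.33.

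The substantive step is the verification of the \emph{cross-side} inequalities, such as $d_{\widehat \Delta}(\hat w_j, \hat x_3) \leq d_\Delta(w_j, x_3)$ or $d_{\widehat \Delta}(\hat w_j, \hat u_k) \leq d_\Delta(w_j, u_k)$. To handle these I would work in the planar polygon representation of $\widehat T$ and exploit the convexity of $s \mapsto d_{\widehat \Delta}(\gamma(s), \hat x_3)$ along the geodesic $\overline{\hat x_1 \hat x_2}$ (a consequence of the locally $Cat(0)$ character of $\Delta'$), together with the boundary bounds $d_{\widehat \Delta}(\hat x_i, \hat x_3) \leq d_\Delta(x_i, x_3)$. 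The intended strategy is to mirror the elementary-step machinery of the proof of Theorem 3.22: apply a finite sequence of elementary steps of types one and two on $\widehat T$ to stretch it until its corners match those of the triangle $x_1 x_2 x_3$ inside $\Delta$, and track how each cross-distance evolves during this deformation to show that it only grows, so that its initial value inside $\widehat \Delta$ is bounded by its final value, which by construction equals the corresponding distance in $\Delta$.

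The hard part will be precisely this last verification: although the three sides of $\widehat T$ are no longer than those of $\Delta$, the internal angles of $\widehat T$ at $\hat x_1, \hat x_2, \hat x_3$ are not a priori controlled by the hypothesis, and a naive comparison of cross-distances may fail when one of those angles happens to be much more obtuse than the corresponding angle of $\Delta$. The paper's own heuristic following Proposition 3.33, to the effect that out-of-order rearrangements of vertices force $\Delta'$ to shrink isotropically, supports the conjecture, but I do not see a short route to making it rigorous. A natural alternative, which I would try if the rigid placement on $\partial \widehat T$ proves insufficient, is to allow $\hat w_j, \hat u_k, \hat v_l$ to float in the interior of $\widehat T$ and to determine them by a simultaneous ball-intersection argument in the spirit of the Valentine/Kirszbraun proof reproduced in Section 3: namely, by an iterated use of Lemma 3.9 (Helly's lemma in uniquely geodesic spaces) applied to suitable balls centred at the $\hat x_i$ and at the already-placed vertices. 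This reformulates Conjecture 3.32 as a strengthened multi-point version of Conjecture 3.31, whose inductive step is exactly what currently resists a clean proof.
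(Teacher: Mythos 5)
This statement is Conjecture~3.34 in the paper; the author leaves it \emph{open}, proving it only in two special cases (a single vertex changing side, Proposition~3.35; a single pair of adjacent vertices swapping order, Proposition~3.36). There is therefore no paper proof to compare against, and your acknowledgement that the cross-side inequalities resist a clean argument is an accurate reflection of the state of the problem rather than a defect of your write-up.

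Beyond the angle-control issue you flag, there is a more structural objection to the arclength-clamping placement. First, $\widehat T$ coincides with $\Delta'$ itself, since $\Delta'$ is already the convex geodesic triangle on $x_1',x_2',x_3'$ and the intrinsic metric of $\Delta'$ restricts the ambient $\mathrm{Cat}(0)$ metric; so your first paragraph amounts to re-marking vertices $\hat w_j, \hat u_k, \hat v_l$ on $\partial\Delta'$ from scratch, independently of where the given images $w_j',u_k',v_l'$ actually lie. But the only distance hypotheses available, namely $d_{\Delta'}(u',v')\le d_\Delta(u,v)$ for pairs of vertices of $\Delta$, constrain the \emph{original} marked points and not your re-placed ones. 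Nothing in the data then bounds $d_{\widehat\Delta}(\hat w_j,\hat x_3)$ or $d_{\widehat\Delta}(\hat w_j,\hat u_k)$; under this construction the cross-side problem is not merely technically delicate but receives no direct input from the hypotheses. The paper's two partial results succeed precisely because they perturb $w_j',u_k',v_l'$ minimally (projecting a single offending vertex toward $\overline{x_2'x_3'}$, or collapsing an out-of-order adjacent pair onto one representative) and track at each step which of the given inequalities constrain the moved vertex. Any serious attack, including the Helly-style variant you sketch at the end, will need to be similarly anchored to the original marked points rather than discarding them.
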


As we just explained, conjecture 3.34 implies conjecture 5.31. \\
We feel conjecture 3.34 must be true for the same reasons we explained to justify conjecture 5.31: each vertex which changes side or order in $\Delta'$ forces some distances to decrease.\\
We are only able to prove conjecture 3.34 in two simple cases, which we now illustrate.  

\begin{prop}
If there is only one point $u_{k_0}'$ of $\Delta'$ such that $u_k'\in \overline{x_1'x_3'}$ and no other vertex changes order or side, then there is a subpolygon $\widehat \Delta$ of $\Delta'$ comparable to $\Delta$.
\end{prop}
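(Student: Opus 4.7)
The plan is to construct $\widehat{\Delta}$ explicitly by modifying the vertex labeling of $\Delta'$ rather than cutting out a proper subregion of the plane. Since only $u_{k_0}$ has changed side, the polygon $\Delta'$ is ``almost'' comparable to $\Delta$: its sole defect is that the intended image of $u_{k_0}$ lies on $\overline{x_1' x_3'}$ instead of $\overline{x_2' x_3'}$. I would therefore set $\widehat{\Delta}=\Delta'$ as a subset of the plane, but declare $\hat{u}_{k_0}$ to be a designated point on $\overline{x_2' x_3'}$ strictly between $u_{k_0+1}'$ and $u_{k_0-1}'$, while retaining the original vertex $u_{k_0}'$ of $\Delta'$ as an extra concave vertex of $\widehat{\Delta}$ that is not in the image of $\iota$. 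The angle condition (v) of the definition of a comparable degenerate polygon is automatic because $u_{k_0}'$ is concave in $\Delta'$; the order condition (vi) is also automatic, since on $\partial\widehat{\Delta}=\partial\Delta'$ the $\iota$-images of the vertices of $\Delta$ then appear in the correct cyclic sequence, with $u_{k_0}'$ visited only as a non-$\iota$-image vertex on the arc from $x_3'$ to $x_1'$.

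I would place $\hat{u}_{k_0}$ on $\overline{x_2' x_3'}$ at distance $t$ from $x_2'$, with $t$ in the nonempty interval $[d_{\Delta'}(x_2', u_{k_0+1}'), d_{\Delta'}(x_2', u_{k_0-1}')]$ guaranteed by the hypothesis that no vertex other than $u_{k_0}'$ has changed order. A natural candidate is $t=d_\Delta(u_{k_0}, x_2)$, clamped to this interval when necessary; the clamping is compatible with the lower-bound $t \ge d_\Delta(u_{k_0}, x_2)-(d_\Delta(x_2,x_3)-d_{\Delta'}(x_2',x_3'))$ arising from the distance to $x_3'$, because the total-length inequality $d_\Delta(x_2,x_3)\ge d_{\Delta'}(x_2',x_3')$ gives the needed slack.

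For the distance inequalities $d_\Delta(z_1,z_2)\ge d_{\widehat{\Delta}}(\hat{z}_1,\hat{z}_2)$, those involving only vertices different from $\hat{u}_{k_0}$ reduce directly to the standing hypothesis $d_\Delta(z_1,z_2)\ge d_{\Delta'}(z_1',z_2')$, since $d_{\widehat{\Delta}}=d_{\Delta'}$. The inequalities between $\hat{u}_{k_0}$ and the three auxiliary vertices $x_2'$, $x_3'$, and any $u_k'$ (with $k\ne k_0$) are straightforward from the choice of $t$ and the linear additivity of distances along $\overline{x_2' x_3'}$. The remaining inequalities, in particular $d_{\Delta'}(\hat{u}_{k_0}, x_1') \le d_\Delta(u_{k_0},x_1)$ and the analogous ones involving the $w_j'$ and $v_l'$, would be verified by comparing paths through $u_{k_0}'$ in $\Delta'$ and using the bound $d_{\Delta'}(u_{k_0}', z') \le d_\Delta(u_{k_0}, z)$ provided by the standing hypothesis applied at $u_{k_0}$.

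The hard part will be precisely these last inequalities. When $\hat{u}_{k_0}$ and $u_{k_0}'$ sit on opposite sides of $\Delta'$, there is no a priori reason that every geodesic from $\hat{u}_{k_0}$ to a far vertex should be shorter than the corresponding geodesic from $u_{k_0}'$. If the naive placement on $\overline{x_2'x_3'}$ fails in some case, the construction must be refined by choosing $\hat{u}_{k_0}$ in the interior of $\Delta'$ and realising $\widehat{\Delta}$ as a proper sub-polygon of $\Delta'$ obtained by cutting along two segments from $u_{k_0\pm 1}'$ to $\hat{u}_{k_0}$. Verifying that such segments lie entirely inside $\Delta'$, and that the resulting interior point $\hat{u}_{k_0}$ satisfies every distance inequality simultaneously, is the main obstacle and will require delicate case analysis of the relative positions of $u_{k_0}'$ and the $v_l'$ on $\overline{x_1'x_3'}$.
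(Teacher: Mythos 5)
Your construction starts from the opposite end of the paper's and, as you yourself acknowledge, never closes the essential gap. You place $\hat u_{k_0}$ directly on $\overline{x_2'x_3'}$, which makes the ordering condition automatic but leaves the distance bounds from $\hat u_{k_0}$ to the vertices on the far side of $\Delta'$ (in particular $x_1'$, the $w_j'$, and the $v_l'$) completely unestablished — and these are not implied by the standing hypothesis, since $u_{k_0}'$ sits on $\overline{x_1'x_3'}$ while your $\hat u_{k_0}$ sits on $\overline{x_2'x_3'}$, so there is no reason geodesics from $\hat u_{k_0}$ should be shorter than the corresponding ones from $u_{k_0}'$. Your final two paragraphs concede this and defer the resolution to an unspecified ``delicate case analysis,'' which means the argument as written is not a proof. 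The lesser issues (the edge cases $k_0=1$ or $k_0=k$, and the ``straightforward'' claim for distances to the other $u_j'$ under clamping, which does not actually follow without more work) are secondary to this central omission.

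The paper instead starts by setting $\hat u_{k_0}:=u_{k_0}'$ at its actual position on $\overline{x_1'x_3'}$, replacing $\overline{x_2'x_3'}$ by $\overline{x_3'u_{k_0}'}*\overline{u_{k_0}'x_2'}$, so the resulting degenerate polygon (with a one-dimensional component) trivially satisfies every distance bound, at the possible cost of the ordering condition. Then it slides $\hat u_{k_0}$ along $\overline{u_{k_0}'\,pr(u_{k_0}')}$ toward the orthogonal projection on $\overline{x_2'x_3'}$, and by a continuity argument it either reaches a configuration where the remaining $\hat u_j$ can be placed so that the order condition holds, or some distance to $x_1'$, $v_j'$, or $w_l'$ becomes equal to the corresponding distance in $\Delta$ first. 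In the latter case it cuts both $\Delta$ and $\widehat\Delta$ along the tight diagonal, applies Theorem 3.21 to each half, and glues the two 1-Lipschitz maps along the cut (which have matching boundary lengths precisely because the distance is tight). This intermediate-value / cut-and-paste step is exactly the missing ingredient in your proposal: rather than guessing a position on $\overline{x_2'x_3'}$ and hoping the far distances work out, the paper moves from a position where they are automatically satisfied and stops at the first obstruction, reducing to the already-proved same-order case.
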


\begin{proof}
In this case it is possible to obtain $\widehat \Delta$ in the following way.\\ 
One replaces $\overline{x_2'x_3'}$ with $\overline{x_3'u_{k_0}'}*\overline{u_{k_0}'x_2'}$ and evaluates if it is possible to find points \\ $\hat u_1,\dots,\hat u_{k_0-1},\hat u_{k_0+1},\dots,\hat u_k\in \overline{x_3'u_{k_0}'}*\overline{u_{k_0}'x_2'}$ corresponding to $u_1',\dots,u_{k_0-1}',u_{k_0+1}',\dots,u_k'$ such that the polygon identified by $\overline{x_1'x_2'},\overline{x_3'u_{k_0}'}*\overline{u_{k_0}'x_2'}, \overline{x_1'x_3'}$ is a subpolygon of $\Delta'$ comparable to $\Delta$.\\ 
If so, the proof is concluded, since we have found the desired polygon $\widehat \Delta$. \\
If not, consider the orthogonal projection $pr:\Delta'\rightarrow \overline{x_2'x_3'}$: one moves the point $u_{k_0}$ in $\hat u_{k_0}$ on $\overline{u_{k_0}'pr(u_{k_0}')}$ towards $pr(u_{k_0}')$ until one of the following events happens.
\begin{enumerate}[label=(\roman*)]
\item Replacing $\overline{x_2'x_3'}$ with $\overline{x_3'\hat u_{k_0}}*\overline{\hat u_{k_0}x_2'}$ it is possible to find points $\hat u_1,\dots,\hat u_{k_0-1},$ $\hat u_{k_0+1},\dots,\hat u_k\in \overline{x_3'\hat u_{k_0}}*\overline{\hat u_{k_0}x_2'}$ corresponding to $u_1',\dots,u_{k_0-1}',u_{k_0+1}',\dots,u_k'$ such that the polygon identified by $\overline{x_1'x_2'},\overline{x_3'\hat u_{k_0}}*\overline{\hat u_{k_0}x_2'}, \overline{x_1'x_3'}$ is a subpolygon of $\Delta'$ comparable to $\Delta$. Then the polygon $\widehat \Delta$ is found.

\item The distance of $\hat u_{k_0}$ with one of the points $x_1', v_j', w_l'$ becomes equal to the distance between corresponding points of $\Delta$ (suppose for example $d_{\Delta'}(\hat u_{k_0},v_j')=d_\Delta(u_{k_0},v_j)$). \\
Define $\widehat \Delta$ as the polygon obtained from $\Delta'$ replacing $\overline{x_2'x_3'}$ with $\overline{x_3'\hat u_{k_0}}*\overline{\hat u_{k_0}x_2'}$. The vertices $\hat u_1,\dots, \hat u_{k_0-1}$ of $\widehat \Delta$ corresponding to $ u_1',\dots,  u_{k_0-1}'$ will be their orthogonal projection on $\overline{x_3'\hat u_{k_0}}$, while the vertices $\hat u_{k_0+1},\dots, \hat u_{k}$ of $\widehat \Delta$ corresponding to $ u_{k_0+1}',\dots,  u_{k}'$ will be their orthogonal projection on $\overline{\hat u_{k_0}x_2'}$. One then cuts $\widehat \Delta$ along $\overline{\hat u_{k_0}v_j'}$ obtaining $\widehat \Delta_1,\widehat \Delta_2$ and cuts $\Delta$ along $\overline{u_{k_0}v_j}$ obtaining $\Delta_1,\Delta_2$.\\
Since both $\Delta_1,\widehat \Delta_1$ and $\Delta_2,\widehat \Delta_2$ satisfy the hypothesis of theorem \ref{lemshort2}, one can conclude there are two 1-Lipschitz maps $\phi_i:\Delta_i\rightarrow \widehat \Delta_i$, $i=1,2$. From the equality $d_{\Delta'}(\hat u_{k_0},v_j')=d_\Delta(u_{k_0},v_j)$ one gets that it is possible to obtain a 1-Lipschitz map $\phi:\Delta\rightarrow \widehat \Delta$ sending vertices to corresponding vertices and such that $\phi(p):=\phi_i(p)$ if $p\in \Delta_i$. This proves that the distance between any two vertices of $\Delta$ is greater than or equal to the distance between the two corresponding vertices of $\widehat \Delta$.
\end{enumerate}
\end{proof}

\begin{prop}
If only two adjacent vertices $w_m,w_{m+1}$ of $\Delta$ change order in $\Delta'$ and no vertex of $\Delta$ changes side, then there is a subpolygon $\widehat \Delta$ of $\Delta'$ comparable to $\Delta$.
\end{prop}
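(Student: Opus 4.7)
The plan is to imitate the construction of Proposition 3.35, but now the broken path is introduced on the side $\overline{x_1'x_2'}$ containing the swapped pair, rather than on the opposite side. Since $w_m'$ and $w_{m+1}'$ appear on $\overline{x_1'x_2'}$ in the wrong order while their distance is still controlled by $d_\Delta(w_m,w_{m+1})$, the swap forces the length $d_{\Delta'}(x_1',x_2')$ to be bounded by $d_\Delta(x_1,x_2)-d_\Delta(w_m,w_{m+1})$, which provides exactly the slack needed to accommodate a kinked boundary of total length at most $d_\Delta(x_1,x_2)$.

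First I would consider the one-parameter family of polygons $\widehat\Delta(t)$ obtained from $\Delta'$ by replacing the side $\overline{x_1'x_2'}$ with the broken path $\overline{x_1'\hat w(t)}\ast\overline{\hat w(t)x_2'}$, where $\hat w(t)$ is the point obtained by moving the midpoint of $\overline{w_m'w_{m+1}'}$ a distance $t\ge 0$ perpendicularly to $\overline{x_1'x_2'}$ in the direction of $x_3'$. For the remaining vertices I would put $\hat w_j$ (for $j<m$) at the orthogonal projection of $w_j'$ onto $\overline{x_1'\hat w(t)}$, put $\hat w_j$ (for $j>m+1$) at the orthogonal projection of $w_j'$ onto $\overline{\hat w(t)x_2'}$, and place $\hat w_m$ on $\overline{x_1'\hat w(t)}$ and $\hat w_{m+1}$ on $\overline{\hat w(t)x_2'}$ so that the two new points are correctly ordered from $x_1'$ to $x_2'$ along the broken path; the vertices $\hat u_k,\hat v_l$ are kept at $u_k',v_l'$.

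Next I would check that $\widehat\Delta(t)$ is a subpolygon of $\Delta'$ comparable to $\Delta$ for some $t$. Since $\widehat\Delta(t)\subset\Delta'$, the only distance inequalities in the definition of comparability that are not automatic are those involving $\hat w_m(t)$ and $\hat w_{m+1}(t)$. I would then increase $t$ continuously starting from $t=0$ (where the two points coincide at the midpoint of $\overline{w_m'w_{m+1}'}$) until either (i) all the relevant distance inequalities hold, in which case $\widehat\Delta:=\widehat\Delta(t)$ is the required comparable subpolygon, or (ii) a critical distance becomes tight, say $d_{\widehat\Delta(t)}(\hat w_m(t),y')=d_\Delta(w_m,y)$ for some other vertex $y$. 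In case (ii) I would cut $\widehat\Delta(t)$ along the diagonal $\overline{\hat w_m(t)\,y'}$ into two smaller subpolygons and simultaneously cut $\Delta$ along $\overline{w_m y}$, then apply Theorem 3.21 to each piece (the common vertices are now in the same order by construction) and reglue the two 1-Lipschitz maps along the tight diagonal to obtain a 1-Lipschitz map $\Delta\to\widehat\Delta(t)$ sending vertices to vertices.

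The main obstacle is showing that the procedure terminates with a polygon genuinely comparable to $\Delta$. The critical observation is that as $t$ grows, the total length of the broken path $\overline{x_1'\hat w(t)}\ast\overline{\hat w(t)x_2'}$ increases continuously from $d_{\Delta'}(x_1',x_2')$ to values exceeding $d_\Delta(x_1,x_2)$; there is thus a first $t$ at which either the distance between $\hat w_m$ and $\hat w_{m+1}$ reaches $d_\Delta(w_m,w_{m+1})$ in the proper order, or one of the finitely many diagonal distances in $\widehat\Delta(t)$ becomes tight. Verifying monotonicity of each relevant distance in $t$, and checking that at the critical time $t^*$ the two pieces obtained by cutting along the tight diagonal both satisfy the hypotheses of Theorem 3.21, is the delicate step on which the proof hinges.
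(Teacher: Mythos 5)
Your approach is genuinely different from the paper's and, as written, has a gap that I do not see how to close. The paper does not introduce a kinked boundary at all in this proposition. Instead, it collapses the swapped pair to a \emph{single multiple vertex}: it sets either $\hat w_m=\hat w_{m+1}:=w_{m+1}'$ (so that $w_m'$ becomes a vertex of $\widehat\Delta$ not in $\iota(\mathrm{Vertices}(\Delta))$) or, symmetrically, $\hat w_m=\hat w_{m+1}:=w_m'$. It then defines the sets $T_{w_m},T_{w_{m+1}}$ (vertices of $\Delta$ closer to $w_m$, resp. $w_{m+1}$), their images $T_{w_m}',T_{w_{m+1}}'$, and the subsets $\hat T_{w_m},\hat T_{w_{m+1}}$ of those that remain closer to the ``wrong'' vertex in $\Delta'$; only points in $\hat T_{w_m}$ (resp. $\hat T_{w_{m+1}}$) can obstruct the first (resp.\ second) collapse. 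Lemma~3.37 together with the observation that there are no further order swaps then rules out $\hat T_{w_m}$ and $\hat T_{w_{m+1}}$ being simultaneously nonempty, so one of the two collapses always succeeds. The argument is purely combinatorial and requires no continuity or monotonicity.

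Your continuous perturbation has three unverified claims that are actually false or at least unproven. First, at $t=0$ your vertices $\hat w_m(0)=\hat w_{m+1}(0)$ sit at the \emph{midpoint} of $\overline{w_m'w_{m+1}'}$; the comparability condition then demands $d_{\Delta'}(\text{midpt},y')\le\min\bigl(d_\Delta(w_m,y),d_\Delta(w_{m+1},y)\bigr)$ for every vertex $y$, but convexity of the distance only gives an upper bound by the \emph{maximum}, so the base case already fails in general. (The paper collapses to $w_m'$ or $w_{m+1}'$ precisely to sidestep this: for the subset of vertices on the favourable side the inequality is immediate.) Second, the distances $d_{\widehat\Delta(t)}(\hat w_m(t),y')$ are not monotone in $t$ — kinking inward shortens distances to $x_3'$-side vertices while lengthening distances to $x_1'$, $x_2'$ and neighbouring $w_j'$, so the ``first $t$ at which the inequalities hold'' is not guaranteed to exist. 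Third, in your case (ii) you cut along a tight diagonal and invoke Theorem~3.21 on the two pieces, but you never verify that the cut pieces inherit comparability — in particular that the orthogonal projections $\hat w_j$ you placed on the broken path keep the vertices in the same order as in $\Delta$ and keep all pairwise distance inequalities; the swapped pair could, after the cut, still lie in the wrong relative position inside one of the halves. The paper's discrete collapse avoids all three issues.
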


\begin{proof}
In figure 13 is represented an example of this situation with $m=1$.
\begin{figure}[h!]
 \centering
  \includegraphics[scale=0.5]{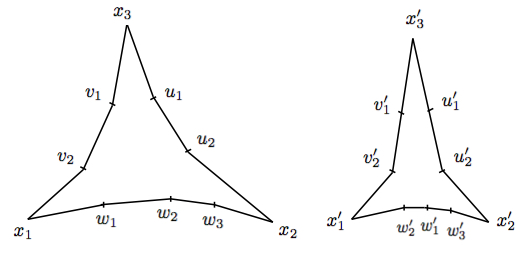}
 \caption{The order of $w_1'$ and $w_2'$ is changed.}
 \end{figure}

We will move only one vertex between $w_m'$ and $w_{m+1}'$ and prove it will always be possible to set $\hat w_m=\hat w_{m+1}:=w_{m+1}'$ or $\hat w_m=\hat w_{m+1}:=w_{m}'$.\\
Clearly, in case one sets $\hat w_m=\hat w_{m+1}:=w_{m+1}'$ then $w_{m+1}'$ will become a multiple vertex of $\widehat \Delta$ and $w_m'$ will be a vertex of $\widehat \Delta$ not in $\iota(Vertices(\Delta))$.\\
All other vertices of $\widehat \Delta$ will coincide with the corresponding vertices of $\Delta'$. As it will be clear, all our considerations will not change in case $\Delta'$ has one dimensional components, more vertices than $\Delta$ or multiple vertices.\\
We define the following subsets of the sets of vertices of $\Delta$ and $\Delta'$:
$$T_{w_m}:=\{p \text{ is a vertex of } \Delta \text{ such that } d_{\Delta}(p,w_m)\le d_{\Delta}(p,w_{m+1})\},$$
$$T_{w_{m+1}}:=\{p \text{ is a vertex of } \Delta \text{ such that } d_{\Delta}(p,w_{m+1})\le d_{\Delta}(p,w_m)\},$$
$$T_{w_m}':=\{p' \text{ is a vertex of } \Delta' \text{ such that } p\in T_{w_m}\},$$
$$T_{w_{m+1}}':=\{p' \text{ is a vertex of } \Delta' \text{ such that } p\in T_{w_{m+1}}\}.$$
The meaning of these sets is that in order to being able to impose $\hat w_m:=w_{m+1}'$ one has to check only the distances of $w_{m+1}'$ with the points of $T_{w_m}'$, since for every $p'\in T_{w_{m+1}}'$ it results 
$$d_{\Delta'}(p',w_{m+1}')\le d_{\Delta}(p,w_{m+1})\le d_{\Delta}(p,w_m).$$
For the same reason, in order to set $\hat w_{m+1}:=w_{m}'$ one has to check only the distances of $w_m$ with the points of $T_{w_{m+1}}'$.\\
It is possible to further develop this reasoning defining the following two sets:
$$\hat T_{w_m}:=\{p' \in T_{w_m}'  \text{ such that } d_{\Delta'}(p',w_m')\le d_{\Delta'}(p',w_{m+1}')\},$$
$$\hat T_{w_{m+1}}:=\{p' \in T_{w_{m+1}}'  \text{ such that } d_{\Delta'}(p',w_{m+1}')\le  d_{\Delta'}(p',w_m')\}.$$
Following our previous idea, only points of $\hat T_{w_m}$ (resp. of $\hat T_{w_{m+1}}$) can prevent one from imposing $\hat w_m:=w_{m+1}'$ (resp. $\hat w_{m+1}:=w_{m}'$).\\
The example of figure 13 is particularly simple, since the sets $\hat T_{w_1},\hat T_{w_2}$ are empty and consequently it is possible to impose both $\hat w_1=\hat w_2:=w_2'$ and $\hat w_1=\hat w_2:=w_1'$.

\begin{figure}[h!]
 \centering
  \includegraphics[scale=0.5]{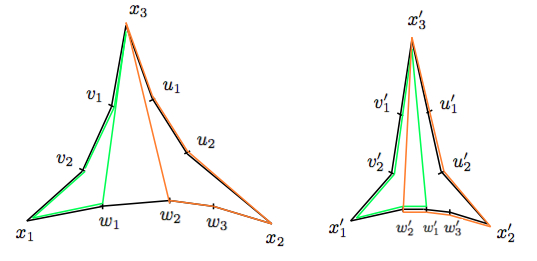}
 \caption{The convex envelope of $T_{w_1}$ and $T_{w_1}'$ is drawn in green and the convex envelope of $T_{w_2}$ and $T_{w_2}'$ is drawn in orange.}
 \end{figure}

One will not always be so lucky: we will use the following simple lemma to study the general case.
\begin{lem}
Consider $w_m',w_{m+1}'\in \overline{x_1'x_2'}$ such that $d_{\Delta'}(x_1',x_{m+1}')<d_{\Delta'}(x_1',x_m')$. \\
If there is a vertex $v'\in \overline{x_1'x_3'}$ such that  $d_{\Delta'}(v',w_m') \le d_{\Delta'}(v',w_{m+1}')$, then for every $p'\in \overline{v'x_3'},\overline{w_m'x_2'},\overline{x_2'x_3'}$ it will also follow $d_{\Delta'}(p',w_m') \le d_{\Delta'}(p',w_{m+1}')$.
\end{lem}
\begin{proof}
We will first prove $d_{\Delta'}(p_1',w_m') \le d_{\Delta'}(p_1',w_{m+1}')$ for every $p_1'\in \overline{v'w_{m}'}$. Suppose by contradiction there is a point $p_2'\in \overline{v'w_{m}'}$ such that $d_{\Delta'}(p_2',w_m') > d_{\Delta'}(p_2',w_{m+1}')$ and denote by $pr:\Delta'\rightarrow \overline{v'w_{m+1}'}$ the orthogonal projection on $\overline{v'w_{m+1}'}$. Then it would follow:
$$d_{\Delta'}(v',w_{m+1}')=d_{\Delta'}(v',pr(p_2'))+d_{\Delta'}(pr(p_2'),w_{m+1}')\le d_{\Delta'}(v',p_2')+d_{\Delta'}(p_2',w_{m+1}')<$$ $$<d_{\Delta'}(v',p_2')+d_{\Delta'}(p_2',w_{m}')=d_{\Delta'}(v',w_m')$$
which contradicts the hypothesis.\\
For every point  $p'\in \overline{v'x_3'},\overline{w_m'x_2'},\overline{x_2'x_3'}$ then define $\widetilde p:=\overline{p'w_{m+1}'}\cap\overline{v'w_m'}$ (notice that if $v'w_m'$ is not smooth then it can happen $p'=\widetilde p$ if $p'\in \overline{x_2'x_3'}$). It results:
$$d_{\Delta'}(p',w_{m+1}')=d_{\Delta'}(p',\widetilde p)+d_{\Delta'}(\widetilde p,w_{m+1}')\ge d_{\Delta'}(p',\widetilde p)+d_{\Delta'}(\widetilde p,w_{m}')\ge d_{\Delta'}(p',w_{m}').$$
\end{proof}

Notice that there can not be points $w_j'$ in $\hat T_{w_m}$ or $\hat T_{w_{m+1}}$, otherwise there would be another change of order of the vertices.\\
One can apply the preceding lemma to make the following inferences.
\begin{itemize}
\item There can not be a point $u_j'\in \hat T_{w_{m+1}}$ and a point $v_i'\in \hat T_{w_m}$, since if it results $d_{\Delta'}(v_i',w_{m}')\le d_{\Delta'}(v_i',w_{m+1}')$ then it must follow $d_{\Delta'}(u_j',w_{m}')\le d_{\Delta'}(u_j',w_{m+1}')$.
\item There can not be a point $v_i'\in \hat T_{w_{m+1}}$ and a point $u_j'\in \hat T_{w_m}$, since if it results $d_{\Delta}(v_i,w_{m+1})\le d_{\Delta}(v_i,w_m)$ then it must follow $d_{\Delta}(u_j,w_{m+1})\le d_{\Delta}(u_j,w_m)$ (since clearly there is an analogue version of the preceding lemma on $\Delta$).
\item There can not be a point $v_i'\in  \hat T_{w_{m+1}}$ and a point $v_j'\in \hat T_{w_m}$, since they would be forced to have inverted order.
\end{itemize}

One can conclude that the two sets $\hat T_{w_{m+1}},\hat T_{w_{m}}$ can not be both non-empty and consequently it is always possible to set $\hat w_m:=w_{m+1}'$ or $\hat w_{m+1}:=w_m'$.

\end{proof}


\begin{thebibliography}{99} 

 \bibitem{AKP} S.Alexander, V.Kapovitch and A.Petrunin: \textit{Alexandrov meets Kirszbraun}, Proceedings of $17^{th}$ G\"okova Geomtry-Topology Conference pp.88-109.

\bibitem{BH} M.R. Bridson, A. Haefliger, \textit{Metric spaces of non positive-curvature}, Springer-Verlag, Berlin (1999).
 \bibitem{DP} S.Daneri and A.Pratelli, \textit{A planar bi-Lipschitz extension theorem}, Adv. Calc. Var. 8,3 (2015), 221-266.
\bibitem{He} E. Helly, \textit{\"Uber Mengen konvexer K\"urper mit gemeinschaftlichen Punkten}, Jber. Deutschen Math. Verein vol.32 (1923).
\bibitem{Iv} S.Ivanov, \textit{On Helly's theorem in geodesic spaces}, Electron. Res. Announc. Math. Sci. 21 (2014), 109-112. 
\bibitem{Ki} M.D. Kirszbraun, \textit{\"Uber die zusammenziehende und Lipschitzsche Transformationen}, Fundamenta Math. 22 (1934), 77-108.
\bibitem{KMS} S.Kerckhoff, H.Masur and J.Smillie, \textit{Ergodicity of Billiard Flows and Quadratic Differentials}, Annals of Mathematics, second series, Vol.124, No2 (sep.1986) pp. 293,311.
\bibitem{Ma2} H.Masur, \textit{Closed trajectories for quadratic differentials with an application to billiards}, Duke Math. J. 53 (1986), no.2, 307-314.
\bibitem{Mo} T.Morzadec, \textit{Measured flat geodesic laminations}, arXiv:1501.04057, 2014. 
\bibitem{PT} A.Papadopoulos, G.Th\'eret, \textit{On the topology defined by Thuston's asymmetric metric}, Mathematical Proceedings, Cambridge University Press (CUP), 2007, 142 (3), pp.487-496.
\bibitem{St} K.Strebel, \textit{Quadratic differentials}, Springer Verlag, 1984.
\bibitem{Th} W.P.Thurston, \textit{Minimal stretch maps between hyperbolic surfaces}, preprint, arXiv:math/9801039, 1986. 
\bibitem{Tr2} M. Troyanov, \textit{On the Moduli Space of Singular Euclidean Surfaces}, chapter 12 of Handbook of Teichm\"uller theory, volume 1.
\bibitem{Va} F.A. Valentine, \textit{On the extension of a vector function so as to preserve a Lipschitz condition}, Bull. Amer. Math. Soc., vol. 49, 1943, pp 100-108.
\bibitem{Ve2} W.A.Veech, \textit{Moduli spaces of quadratic differentials}, Journal d'Analyse Mathematique, december 1990, Volume 55, Issue 1, pp 117-171.
\bibitem{Ve3} W.A.Veech, \textit{Dynamical systems on analytic manifolds of quadratic differentials. I. $F$-strctures. H. Analytic manifolds of quadratic differentials}, Preprints, 1984. 


\end{thebibliography}
\end{document}